\def\appendix#1{
\addtocounter{section}{1} \setcounter{equation}{0}
\renewcommand{\thesection}{\Alph{section}}
\section*{Appendix \thesection\protect\indent\quad
#1}
}
\renewcommand{\theequation}{\thesection.\arabic{equation}}
\def\marginnote#1{}
\newtoks\amorpm
\edef\standardtime{{\ifnum\hour<12 \global\amorpm={am}%
        \else\global\amorpm={pm}\advance\hour by-12 \fi
        \ifnum\hour=0 \hour=12 \fi
        \number\hour:\ifnum\minute<10 0\fi\number\minute\the\amorpm}}
\edef\militarytime{\number\hour:\ifnum\minute<100\fi\number\minute}
\newcommand{\tcr}{\textcolor{red}}
\newcommand{\tcm}{\textcolor{magenta}}
\newcommand{\tcb}{\textcolor{blue}}
\newcommand{\tcw}{\textcolor{white}}
\def\draftlabel#1{{\@bsphack\if@filesw {\let\thepage\relax
      \xdef\@gtempa{\write\@auxout{\string
          \newlabel{#1}{{\@currentlabel}{\thepage}}}}}\@gtempa \if@nobreak
    \ifvmode\nobreak\fi\fi\fi\@esphack} \gdef\@eqnlabel{#1}}
    \def\@eqnlabel{}
\def\@vacuum{}
\def\draftmarginnote#1{\marginpar{\raggedright\scriptsize\tt#1}}
\def\draft{
%
%
  \oddsidemargin -.5truein
  \def\@oddfoot{\footnotesize \sl preliminary draft \hfil
    \rm\thepage\hfil\sl\today\quad\militarytime}
  \let\@evenfoot\@oddfoot \overfullrule 3pt
    \let\label=\draftlabel
    \let\marginnote=\draftmarginnote
  \def\@eqnnum{(\theequation)\rlap{\kern\marginparsep\tt\@eqnlabel}%
    \global\let\@eqnlabel\@vacuum}

  }
\newcommand{\tr}{\,{\rm Tr}\,}
\def\be{\begin{equation}}
\def\ee{\end{equation}}
\def\bea{\begin{eqnarray}}
\def\eea{\end{eqnarray}}
\def\<{\langle}
\def\>{\rangle}
\def\nn{\nonumber}
\def\ZZ{{\Bbb Z}}
\def\A{\mathcal{A}}
\def\X{\mathcal{X}}
\def\bb{{\bf b}}
 \def\aa{{\bf a}}
 \def\rr{{\bf r}}
\def\M{{\mathcal M}}
\let\wtd=\widetilde
\def\Tr{{\rm Tr}}
\def\g{{\mathfrak g}}
\def\gl{{\mathfrak{gl}}}
\def\G{{\mathcal G}}
\def\h{\mathfrak h}
\def\D{{\mathfrak d}}
\def\U{{\mathcal U}}
\def\vv{{\mathbf v}}
\def\tr{\mathop{\rm{tr}}}
\def\ocomma{{\phantom{\Bigm|}^{\phantom {X}}_{\raise-1.5pt\hbox{,}}\!\!\!\!\!\!\otimes}}
\def\End{\operatorname{End}}
\def\Id{{\operatorname {Id}}}
\def\Poi{{\{\cdot,\cdot\}}}
\newcommand{\col}[1]{{\raise-2pt\hbox{\tiny$\bullet$}\hskip -4.5pt \raise4pt\hbox{\tiny$\bullet$}{{#1}} \raise-2pt\hbox{\tiny$\bullet$}\hskip -4.5pt \raise4pt\hbox{\tiny$\bullet$}}}
\newcommand{\sheet}[2]{{\stackrel{{#1}}{{#2}}}}
\def\dnabla{{\raisebox{2pt}{$\bigtriangledown$}}\negthinspace}
\newtheorem{theorem}{Theorem}[section]
\newtheorem{lemma}[theorem]{Lemma}
\theoremstyle{definition}
\newtheorem{definition}[theorem]{Definition}
\newtheorem{example}[theorem]{Example}
\newtheorem{remark}[theorem]{Remark}
\newtheorem{corollary}[theorem]{Corollary}
\begin{document}

\title[Darboux coordinates for symplectic groupoid and cluster algebras]
{Darboux coordinates for symplectic groupoid and cluster algebras}
\author{Leonid O. Chekhov$^{\ast}$}
\thanks{$^{\ast}$ Steklov Mathematical
Institute, Moscow, Russia, National Research University Higher School of Economics, Russia, and Michigan State University, East Lansing, USA. Email: chekhov@msu.edu.} 
\author{Michael Shapiro$^{\ast\ast}$} 
\thanks{$^{\ast\ast}$ Michigan State University, East Lansing, USA and National Research University Higher School of Economics, Russia . Email: mshapiro@msu.edu.} 


\begin{abstract}
Using Fock--Goncharov higher Teichm\"uller space variables we derive Darboux coordinate representation for entries of general symplectic leaves of the $\mathcal A_n$ groupoid of upper-triangular matrices and, in a more general setting, of higher-dimensional symplectic leaves for algebras governed by the reflection equation with the trigonometric $R$-matrix. The obtained results are in a perfect agreement with the previously obtained Poisson and quantum representations of groupoid variables for $\mathcal A_3$ and $\mathcal A_4$ in terms of geodesic functions for Riemann surfaces with holes. We realize braid-group transformations for $\mathcal A_n$ via sequences of cluster mutations in the special $\mathcal A_n$-quiver. We prove the groupoid relations for quantum transport matrices and, as a byproduct, obtain the Goldman bracket in the semiclassical limit. We prove the quantum algebraic relations of transport matrices for arbitrary (cyclic or acyclic) directed planar network.
\end{abstract}

\dedicatory{Dedicated to the memory of great mathematician and person Boris Dubrovin}

\maketitle

\section{Introduction}\label{s:intro}
\setcounter{equation}{0}

\subsection{Symplectic groupoid, induced Poisson structure on the unipotent upper triangular matrices}

Let $V$ denote an $n$-dimensional vector space, $\mathcal A$ be some subspace of bilinear forms on $V$. Fixing the basis in $V$, one can identify $\mathcal{A}$ with a subspace in the space of $n\times n$ matrices.
The matrix $B$ of a change of a basis  in $V$ takes a matrix of bilinear form  $\mathbb A\in \mathcal A$ to $B\mathbb AB^{\text{T}}$. 

Below we consider an important particular  case  when  
$\mathcal A$ is the space of unipotent forms identified with the space of the unipotent matrices. The basis change $B$ acts on $\mathcal A$ only if the product  $B\mathbb AB^{\text{T}}$ be unipotent itself. We thus introduce the space of \emph{morphisms} identified with admissible pairs of matrices $(B,\mathbb A)$ such that  
$$
\mathcal M=\bigl\{ (B,\mathbb A) \bigm | B\in GL(V),\ \mathbb A\in \mathcal A,\ B\mathbb AB^{\text{T}}\in \mathcal A  \bigr\}.
$$
We then have the standard set of maps:
$$
\begin{array}{llll}
\hbox{source}&s:&\mathcal M\to\mathcal A&(B,\mathbb A)\to \mathbb A,\\
\hbox{target}&t:&\mathcal M\to\mathcal A&(B,\mathbb A)\to B\mathbb AB^{\text{T}},\\
\hbox{injection}&e:&\mathcal A\to\mathcal M&\mathbb A\to (E, \mathbb A),\\
\hbox{inversion}&i:&\mathcal M\to\mathcal M&(B,\mathbb A)\to (B^{-1}, B\mathbb AB^{\text{T}}),\\
\hbox{multiplication}&m:&\mathcal M^{(2)}\to\mathcal M&\bigl((C, B\mathbb AB^{\text{T}}),(B,\mathbb A)\bigr)\to (CB, \mathbb A)
\end{array}
$$
such that the following diagram, where $p_1$ and $p_2$ are natural projections to the first and the second morphism in an admissible pair of morphisms, is commutative:
$$
\begin{tikzcd}
&\mathcal M \arrow[dr, "s"]&\\
\mathcal M^{(2)}\arrow[ur, "p_2"] \arrow[dr, "p_1"]&&\mathcal A\\
&\mathcal M \arrow[ur, "t"]&
\end{tikzcd}
$$
The crucial point of the construction is the existence of a \emph{symplectic structure}: a smooth groupoid endowed with a symplectic form $\omega\in \Omega^2\mathcal M$ on the morphism space $\mathcal M$ that satisfies the splitting (consistency) condition \cite{Karasev,Weinstein}
$$
m^\star \omega=p_1^\star \omega + p_2^\star \omega,
$$
which implies, in particular, that the source and target maps Poisson commute being respectively an automorphism and an anti-automorphism of the initial Poisson algebra. Since $p_1^\star \omega$ and $p_2^\star \omega$ are nondegenerate, they admit a (unique) Poisson structure, and because the immersion map $e$ is Lagrangian, this Poisson structure yields a Poisson structure on $\mathcal A$.

Identifying $\mathcal A$ with $\mathcal A_n$---the space of unipotent upper triangular matrices, in 2000, Bondal \cite{Bondal} obtained the Poisson structure on $\mathcal A_n$ using the algebroid construction; assuming $B=e^{\mathfrak g}$, we obtain the Bondal algebroid using the anchor map $D_{\mathbb A}$ to the tangent space $ T_{\mathbb A}\mathcal A_n$
\be
\label{gA}
\begin{array}{lccl}
D_{\mathbb A}:&\mathfrak g_{\mathbb A}&\to& T_{\mathbb A}\mathcal A_n\\
& g &\mapsto &{\mathbb A}g+g^{\text{T}}{\mathbb A},\quad \mathbb A\in \mathcal A_n,\\ \end{array}
\ee
where  $\mathfrak{g}_{\mathbb A}$ is the linear subspace
$$
\mathfrak{g}_{\mathbb A}:=\left\{ g\in\gl_{n}(\mathbb C),|\,
\mathbb A+ {\mathbb A}g+g^{\text{T}}{\mathbb A}\in\mathcal A _n\right\}
$$
of elements $g$ leaving $\mathbb A$ unipotent.

\begin{lemma}\label{lemma-g}\cite{Bondal}
The map
\be
\label{P_A}
\begin{array}{lccl}
P_{\mathbb A}:&T^\ast_{\mathbb A_n}\mathcal A&\to& \mathfrak g_{\mathbb A}\\
& w &\mapsto &P_{-,1/2}(w{\mathbb A})-P_{+,1/2}(w^{\text{T}}{\mathbb A}^{\text{T}}),\\ \end{array}
\ee
where $P_{\pm,1/2}$ are the projection operators:
\be\label{eq:pr}
P_{\pm,1/2}a_{i,j}:=\frac{1\pm {\rm sign}(j-i)}{2}a_{i,j}, \quad i, j=1,\dots,n,
\ee
and $w\in T^\ast\mathcal A_n$ is a strictly lower triangular matrix, 
defines an isomorphism between the Lie algebroid $(\mathfrak g,D_{\mathbb A})$ and the Lie algebroid
$\left(T^\ast \mathcal A_n,D_{\mathbb A}P_{\mathbb A}  \right)$.
\end{lemma}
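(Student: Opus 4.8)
The plan is to split the claim into three stages: (i) that $P_{\mathbb A}$ actually lands in $\mathfrak g_{\mathbb A}$; (ii) that it is a fibrewise linear isomorphism $T^\ast_{\mathbb A}\mathcal A_n\to\mathfrak g_{\mathbb A}$; and (iii) that it intertwines the algebroid structures. The computational backbone of all three is one observation about the projectors \eqref{eq:pr}: for any matrix $X$ one has $P_{+,1/2}(X^{\text{T}})=\bigl(P_{-,1/2}(X)\bigr)^{\text{T}}$, since transposition swaps $\mathrm{sign}(j-i)$ with $\mathrm{sign}(i-j)=-\mathrm{sign}(j-i)$ while fixing the diagonal weight $\tfrac12$. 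Applying this to $X=\mathbb A w$ and using $w^{\text{T}}\mathbb A^{\text{T}}=(\mathbb A w)^{\text{T}}$, the defining formula \eqref{P_A} becomes $P_{\mathbb A}(w)=P_{-,1/2}(w\mathbb A)-\bigl(P_{-,1/2}(\mathbb A w)\bigr)^{\text{T}}$, so that, writing any matrix as $X=X_-+X_0+X_+$ (strictly-lower, diagonal, strictly-upper), the parts of $g:=P_{\mathbb A}(w)$ can be read off explicitly: $g_-=(w\mathbb A)_-$, $g_+=-\bigl((\mathbb A w)_-\bigr)^{\text{T}}$, and $g_0=\tfrac12\bigl((w\mathbb A)_0-(\mathbb A w)_0\bigr)$.

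For stage (i) I would set $\mathbb A=E+N$ with $N$ strictly upper-triangular and expand the anchor \eqref{gA} as $D_{\mathbb A}(g)=\mathbb A g+g^{\text{T}}\mathbb A=(g+g^{\text{T}})+(Ng+g^{\text{T}}N)$. Using the parts above, the lower-triangular (strictly-lower plus diagonal) piece of $g+g^{\text{T}}$ is exactly the corresponding piece of the commutator $[w,N]=wN-Nw$, and the remaining task is to verify that $Ng+g^{\text{T}}N$ cancels this piece on and below the diagonal, leaving only a strictly-upper-triangular remainder, which is the assertion $g\in\mathfrak g_{\mathbb A}$. I expect this cancellation to be the main obstacle: it is an entrywise bookkeeping argument best organised by the subdiagonal level $i-j$, and it is precisely the reason the diagonal must be split with weight $\tfrac12$ rather than handed to one triangle. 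The case $n=2$, where $D_{\mathbb A}\circ P_{\mathbb A}\equiv 0$ because $\dim\mathcal A_2=1$, already shows how finely tuned the weights are.

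For stage (ii), injectivity is clean: if $P_{\mathbb A}(w)=0$ then $(w\mathbb A)_-=0$, and since $\mathbb A$ is unitriangular the map $w\mapsto(w\mathbb A)_-$ on strictly-lower matrices is unipotent for the filtration by subdiagonal level (at the deepest nonzero level one has $(w\mathbb A)_{ij}=w_{ij}$), hence invertible, forcing $w=0$. Together with stage (i) this gives $\dim\mathfrak g_{\mathbb A}\ge\binom{n}{2}=\dim T^\ast_{\mathbb A}\mathcal A_n$. For the reverse inequality I would show that the strictly-lower-part map $g\mapsto g_-$ is injective on $\mathfrak g_{\mathbb A}$: if $g\in\mathfrak g_{\mathbb A}$ with $g_-=0$ then $g=g_0+g_+$, and imposing that the strictly-lower entries of $\mathbb A g+g^{\text{T}}\mathbb A$ vanish yields, level by level, the relations $g_{ji}=-\sum_{k<j}g_{ki}N_{kj}$ ($i>j$), which express each superdiagonal entry through entries of strictly higher level; a downward induction on that level kills all of $g_+$, and the diagonal relations then kill $g_0$. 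Hence $\dim\mathfrak g_{\mathbb A}\le\binom{n}{2}$, so $\dim\mathfrak g_{\mathbb A}=\binom{n}{2}$ and the injective $P_{\mathbb A}$ is a fibrewise isomorphism.

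Finally, for stage (iii) the anchor compatibility is built into the statement, the anchor of the target being declared to be $D_{\mathbb A}\circ P_{\mathbb A}$; once $P_{\mathbb A}$ is a fibrewise isomorphism over $\mathrm{id}_{\mathcal A_n}$ one transports the Bondal bracket of $(\mathfrak g,D_{\mathbb A})$ to $T^\ast\mathcal A_n$, whereupon $P_{\mathbb A}$ is an isomorphism of Lie algebroids by construction. To identify the transported bracket with the canonical cotangent (Koszul) bracket, i.e. to confirm that $D_{\mathbb A}\circ P_{\mathbb A}$ is the sharp map of a genuine Poisson bivector on $\mathcal A_n$, I would check the skew-symmetry $\tr\!\bigl(w_1\,D_{\mathbb A}P_{\mathbb A}(w_2)\bigr)=-\tr\!\bigl(w_2\,D_{\mathbb A}P_{\mathbb A}(w_1)\bigr)$ of the anchor pairing by a short computation with the explicit parts of $g$, the Jacobi identity then following from the Lie-algebroid axioms on $(\mathfrak g,D_{\mathbb A})$ carried across the isomorphism.
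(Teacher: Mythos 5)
The paper does not actually prove this lemma; it is imported from \cite{Bondal} with no argument given, so there is no in-paper proof to compare you against and I can only judge your reconstruction on its own terms. It is sound, and the one step you flag as the likely failure point --- the cancellation in stage (i) --- does close, more cleanly than the ``entrywise bookkeeping by subdiagonal level'' you anticipate. Write $X_{\le 0}$ for the lower-triangular-plus-diagonal part of a matrix $X$ and use your form $g=P_{-,1/2}(w\mathbb A)-\bigl(P_{-,1/2}(\mathbb A w)\bigr)^{\text{T}}$ together with $\mathbb A=E+N$. Since $P_{-,1/2}(X)=X_-+\tfrac12X_0$ and $N$ is strictly upper triangular, one has $\bigl(NP_{-,1/2}(X)\bigr)_{\le 0}=(NX)_{\le 0}$ and $\bigl(P_{-,1/2}(X)N\bigr)_{\le 0}=(XN)_{\le 0}$ for any $X$, while $\bigl(N(P_{-,1/2}(X))^{\text{T}}\bigr)_{\le 0}=\bigl((P_{-,1/2}(X))^{\text{T}}N\bigr)_{\le 0}=0$ because $(P_{-,1/2}(X))^{\text{T}}$ is upper triangular. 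Hence
\[
(Ng+g^{\text{T}}N)_{\le 0}=(Nw\mathbb A-\mathbb A wN)_{\le 0}=(Nw+NwN-wN-NwN)_{\le 0}=-\bigl([w,N]\bigr)_{\le 0},
\]
which cancels your $(g+g^{\text{T}})_{\le 0}=([w,N])_{\le 0}$ exactly, so $D_{\mathbb A}P_{\mathbb A}(w)$ is strictly upper triangular and $P_{\mathbb A}$ lands in $\mathfrak g_{\mathbb A}$. Your stage (ii) is correct as written: both filtration arguments (unipotence of $w\mapsto(w\mathbb A)_-$ on strictly lower matrices, and the downward induction showing $g_-=0$ forces $g_+=0$ and then $g_0=0$ for $g\in\mathfrak g_{\mathbb A}$) check out and give $\dim\mathfrak g_{\mathbb A}=\binom{n}{2}$. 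Your reading of stage (iii) --- that the target bracket is the transported one, so the only substantive extra check is skew-symmetry of the induced pairing --- is consistent with how the paper uses the lemma, since it explicitly defers that verification to the sentence following the definition of the bivector $\Pi$. The only thing separating your proposal from a complete proof is that the displayed cancellation must actually be carried out rather than postponed: it is the entire content of the claim that the image of $P_{\mathbb A}$ lies in $\mathfrak g_{\mathbb A}$, and without it the dimension count in stage (ii) has nothing to bite on.
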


The Poisson bi-vector $\Pi$ on $\mathcal A_n$ is then obtained by the anchor map on the Lie algebroid
$\left(T^\ast \mathcal A_n,D_{\mathbb A}P_{\mathbb A}  \right)$ (see Proposition 10.1.4 in \cite{kirill}) as:
\be
\label{eq:biv}
\begin{array}{lccl}
\Pi:&T^\ast_{\mathbb A}\mathcal A_n\times T^\ast_{\mathbb A}\mathcal A_n
&\mapsto& \mathcal C^\infty(\mathcal A_n)\\
&(\omega_1,\omega_2)&\to&\Tr\left(\omega_1 D_{\mathbb A}P_{\mathbb A}  (\omega_2)
\right)
\end{array}
\ee
It can be checked explicitly that the above bilinear form is in fact skew-symmetric and gives rise to the Poisson bracket
\be
\label{Poisson-bracket}
\{a_{i,k},a_{j,l}\}:=\frac{\partial}{\partial{\rm d}a_{i,k}}\wedge\frac{\partial}{\partial{\rm d}a_{j,l}}
\Tr\left( {\rm d}a_{i,k} D_{\mathbb A}P_{\mathbb A}  ({\rm d}a_{j,l})\right),
\ee
having the following form in components:
\begin{eqnarray}\label{eq:du}
&&
\left\{a_{i,k},a_{j,l}\right\}=0,\quad\hbox{for}\ i<k<j<l,\hbox{ and }  i<j<l<k,\nn
\\&&
\left\{a_{i,k},a_{j,l}\right\}=2 \left(a_{i,j}a_{k,l}-a_{i,l}a_{k,j}\right),\quad\hbox{for}\  i<j<k<l,
\\&&
\left\{a_{i,k},a_{k,l}\right\}=a_{i,k}a_{k,l}-2a_{i,l},\quad\hbox{for}\  i<k<l,\nn
\\&&
\left\{a_{i,k},a_{j,k}\right\}=-a_{i,k}a_{j,k}+2a_{i,j},\quad\hbox{for} \  i< j<k,\nn
\\&&
\left\{a_{i,k},a_{i,l}\right\}=-a_{i,k}a_{i,l}+a_{k,l},\quad\hbox{for} \  i<k<l.\nn
\end{eqnarray}
This bracket turned out to coincide with the bracket previously known in mathematical physics as Gavrilik--Klimyk--Nelson--Regge--Dubrovin--Ugaglia bracket \cite{GK91,NR,NRZ,Dub,Ugaglia} and it arises from skein relations satisfied by a special finite subset of {\it geodesic functions} (traces of monodromies of $SL_2$ Fuchsian systems, which are in 1-1 correspondence with closed geodesics on a Riemann surface $\Sigma_{g,s}$) described in \cite{ChF3}; a simple constant log-canonical (Darboux) bracket on the space of Thurston shear coordinates $z_\alpha$  on the Teichm\"uller space $T_{g,s}$ of Riemann surfaces $\Sigma_{g,s}$ of genus $g$ with $s=1,2$ holes was shown \cite{ChF2} to induce the above bracket on a special subset of geodesic functions identified with the matrix elements $a_{i,k}$. All such geodesic functions admit an explicit combinatorial description \cite{F97}, which immediately implies that they are Laurent polynomials with positive integer coefficients of $e^{z_\alpha/2}$. The Poisson bracket of $z_\alpha$ spanning the Teichm\"uller space $T_{g,s}$ has exactly $s$ Casimirs, which are linear combinations of shear coordinates incident to the holes, so the subspace of $z_{\alpha}$ orthogonal to the subspace of Casimirs parameterizes a symplectic leaf in the Teichm\"uller space which we call a {\it geometric symplectic leaf}. 

In\cite{ChF3}, the Poisson embedding of geometric symplectic leaf into $\mathcal A_n$ was constructed. Note however that the size $n$ of matrix $\mathbb A$ is related to the genus and the number of holes as $n=2g+s$ (with $s$ taking only two values, 1 and 2) and that the (real) dimension of $T_{g,s}$ is $6g-6+3s$ increasing linearly with $g$ whereas the total dimension of $\mathcal A_n$ is obviously $n(n-1)/2$ increasing quadratically with $n$; for $n=3$ and $n=4$ these two dimensions coincide and the geometric symplectic leaf having the dimension $6g-6+2s$ is of maximum dimension. 

For $n=5$, the dimension of the geometric symplectic leaf has still the maximum value $8$ of dimensions of symplectic leaves in $\mathcal A_5$, but we have just one central element in the corresponding Teichm\"uller space $T_{2,1}$ and two central elements in $\mathcal A_5$. For all larger $n$ the dimension of geometric symplectic leaf is strictly less than the maximal dimension of symplectic leaf in $\mathcal A_n$, so the geometric systems do not describe maximal symplectic leaves in the total Poisson space of $\mathcal A_n$. The Darboux coordinates in geometric situation are well known to be the above shear coordinates, but, as just mentioned, they can not help in constructing Darboux coordinates in $\mathcal A_n$ for $n\ge 5$.
	
The  {\bf first problem}  addressed in this publication is a construction of Darboux coordinates for a general symplectic leaf of $\mathcal A_n$ and explicit expression of  matrix elements $a_{i,j}$ in terms of these Darboux coordinates. It was expected for long, and we show below that these Darboux coordinates are related to cluster algebras, similar to the geometric cases $n=3,4$.

From the integrable models standpoint, algebras (\ref{eq:du}) (either with a unipotent $\mathbb A$ or with a general $\mathbb A_{\text{gen}}\in \gl_n$ are known under the name of {\it reflection equation algebras}. A task closely related to the first problem is to construct a Darboux coordinate representation for a general matrix $\mathbb A_{\text{gen}}$ enjoying the reflection equation.

\subsection{Standard Poisson-Lie group $\G$ and its dual
}
\label{double}

Another description of the Poisson structure on the space of triangular forms $\mathcal A_n$ as a push-forward of the standard Poisson bracket on the dual group $\G^*=SL_n^*$ to the set of fixed points of the natural involution was given in~\cite{Boalch}.

A reductive complex Lie group $\G$ equipped with a Poisson bracket $\Poi$ is called a {\em Poisson--Lie group\/}
if the multiplication map
$\G\times \G \ni (X,Y) \mapsto XY \in \G$
is Poisson. Denote by 
$\langle \ , \ \rangle$ an invariant nondegenerate form on
the corresponding Lie algebra $\g=Lie(\G)$, and by $\nabla^R$, $\nabla^L$ the right and
left gradients of functions on $\G$ with respect to this form defined by
\begin{equation*}
\left\langle \nabla^R f(X),\xi\right\rangle=\left.\frac d{dt}\right|_{t=0}f(Xe^{t\xi}),  \quad
\left\langle \nabla^L f(X),\xi\right\rangle=\left.\frac d{dt}\right|_{t=0}f(e^{t\xi}X)
\end{equation*}
for any $\xi\in\g$, $X\in\G$.

Let $\pi_{>0}, \pi_{<0}$ be projections of  
$\g$ onto subalgebras spanned by positive and negative roots, $\pi_0$ be the projection onto the Cartan 
subalgebra $\h$, and let $R=\pi_{>0} - \pi_{<0}$. 
The {\em standard Poisson-Lie bracket\/} $\Poi_r$ on $\G$  can be written as
\begin{equation}
\{f_1,f_2\}_r = \frac 1 2  \left( \left\langle R(\nabla^L f_1), \nabla^L f_2 \rangle - \langle R(\nabla^R f_1), \nabla^R f_2 \right\rangle \right).
\label{sklyabra}
\end{equation}

The standard Poisson--Lie structure is a particular case of Poisson--Lie structures corresponding to
quasitriangular Lie bialgebras. For a detailed exposition of these structures see, e.~g., 
\cite[Ch.~1]{CP}, \cite{r-sts} and \cite{Ya}.

Following \cite{r-sts}, let us recall the construction of {\em the Drinfeld double}. The double of $\g$ is 
$D(\g)=\g  \oplus \g$ equipped with an invariant nondegenerate bilinear form
$\langle\langle (\xi,\eta), (\xi',\eta')\rangle\rangle = \langle \xi, \xi'\rangle - \langle \eta, \eta'\rangle$. 
Define subalgebras $\D_\pm$ of $D(\g)$ by
$\D_+=\{( \xi,\xi) : \xi \in\g\}$ and $\D_-=\{ (R_+(\xi),R_-(\xi)) : \xi \in\g\}$,
where $R_\pm\in \End\g$ is given by $R_\pm=\frac{1}{2} ( R \pm \Id)$. 
The operator $R_D= \pi_{\D_+} - \pi_{\D_-}$ can be used to define 
a Poisson--Lie structure on $D(\G)=\G\times \G$, the double of the group $\G$, via
\begin{equation}
\{f_1,f_2\}_D = \frac{1}{2}\left (\left\langle\left\langle R_D(\dnabla^L f_1), \dnabla{^L} f_2 \right\rangle\right\rangle 
- \left\langle\left\langle R_D(\dnabla^R f_1), \dnabla^R f_2 \right\rangle\right\rangle \right),
\label{sklyadouble}
\end{equation}
where $\dnabla^R$ and $\dnabla^L$ are right and left gradients with respect to $\langle\langle \cdot ,\cdot \rangle\rangle$.
The diagonal subgroup $\{ (X,X)\ : \ X\in \G\}$ is a Poisson--Lie subgroup of $D(\G)$ (whose Lie algebra is $\D_+$) naturally isomorphic
to $(\G,\Poi_r)$.

The group $\G^*$  whose Lie algebra is $\D_-$ is a Poisson-Lie subgroup of $D(\G)$ called {\em the dual Poisson-Lie group of $\G$}.
The Poisson bracket $\{\cdot,\cdot\}_D$ induces the Poisson bracket on $\G^*$.
%


%
For $\G=SL_n$ 
the dual group $\G^*=\{(X_+,Y_-)\}\in B_+\times B_-$ satisfying the additional relation $\pi_0(X_+)\pi_0(Y_-)=\Id$ where $B_+(B_-)\subset SL_n$ are Borel subgroups of nondegenerate upper (lower) triangular matrices.

The involution $\iota_{\G^*}:\G^*\to \G^*$ takes $(X_+,Y_-)$ to $(Y_-^t,X_+^t)$.

The subgroup $\U_+$ of unipotent upper triangular matrices is embedded diagonally in $\G^*$. The embedding $\epsilon:\U_+\hookrightarrow\G^*$
maps $X\in \U_+$ to $(X,X)$.  The image $\epsilon(\U_+)$ is the set of fixed points of involution $\iota_{\G^*}$.

The image $\epsilon(\U_+)$ is not a Poisson subvariety of $\G^*$ however the Dirac reduction induces the Poisson bi-vector $\Pi$~(\ref{eq:biv}) on $\U_+$.

To remind the definition of Dirac reduction we consider a subvariety $X$ of a Poisson variety $(V,\{ \cdot,\cdot\}_{PB})$ defined by constrains $\phi_i=const$. The second class constrains are constrains $\tilde\phi_a$ whose  Poisson brackets with at least one other constraint do not vanish on the constraint surface.

Define matrix $U$ with entries
$U_{ab}=\{\tilde\phi_a,\tilde\phi_b\}_{PB}$. Note that $U$ is always invertible.

Then, Dirac bracket of functions $f$ and $g$ on $X$ is 
$$\{f, g\}_{DB} = \{f, g\}_{PB} - \sum_{a, b}\{f,\tilde{\phi}_a\}_{PB} U^{-1}_{ab}\{\tilde{\phi}_b,g\}_{PB},$$
see \cite{HenTeit} for details.


\subsection{Braid-group action on the unipotent matrices}\label{ss:braid}

The next important result concerning $\mathcal A_n$ is that this space admits the discrete braid-group action generated by morphisms $\beta_{i,i+1}:{\mathcal A_n}\to{\mathcal A_n}$, $i=1,\dots,n-1$, such that
\be
\label{beta}
\beta_{i,i+1}[{\mathbb A}]=B_{i,i+1}{\mathbb A}B_{i,i+1}^T\equiv \wtd{\mathbb A}\in \mathcal A_n,
\ee
where the matrix $B_{i,i+1}$ has the block form
\be
\label{Bii+1}
B_{i,i+1}=\begin{array}{c}\vdots\cr i\cr i+1\cr \vdots \end{array}\left[\begin{array}{cccccccc}1&&&&&&&\cr
&\ddots&&&&&&\cr&&1&&&&&\cr&&&a_{i,i+1}&-1&&&\cr
&&&1&0&&&\cr
&&&&&1&&\cr&&&&&&\ddots&\cr&&&&&&&1
\end{array}\right],
\ee
and this action is a Poisson morphism  \cite{Bondal}, \cite{Ugaglia}.
When acting on $\mathbb A$, $\beta_{i,i+1}$  satisfy the standard braid-group relations $\beta_{i,i+1} \beta_{i+1,i+2}\beta_{i,i+1}\mathbb A=\beta_{i+1,i+2}\beta_{i,i+1}\beta_{i+1,i+2}\mathbb A$ for $i=1,\dots,n-2$ together with the additional relation $\beta_{n-1,n}\beta_{n-2,n-1}\cdots \beta_{2,3}\beta_{1,2}\mathbb A=S_n \mathbb A$, where $S_n$ is an element of the group of permutations of matrix entries $a_{i,j}$ whose $n$th power is the identity transformation. Note that $\beta^2_{i,i+1}\mathbb A\ne \mathbb A$.

In  \cite{ChM}, the quantum version of the above transformations was constructed for a {\sl quantum} upper-triangular matrix 
\be\label{A-quantum}
\mathbb A^\hbar:=\left[ \begin{array}{ccccc}
q^{-1/2}  & a^\hbar_{1,2}  & a^\hbar_{1,3}  &\dots & a^\hbar_{1,n}   \\
 0 & q^{-1/2}  & a^\hbar_{2,3} & \dots  & a^\hbar_{2,n}  \\
 0 & 0 & q^{-1/2}  & \ddots &   \vdots \\
 \vdots & \vdots   & \ddots & \ddots &  a^\hbar_{n-1,n} \\
 0 & 0 & \dots & 0 & q^{-1/2} 
 \end{array}\right].
\ee
Here $a^\hbar_{i,j}$ are self-adjoint $\bigl( \bigl[a^\hbar_{i,j}\bigr]^\star=a^\hbar_{i,j}\bigr)$ operators enjoying quadratic--linear algebraic relations following from the  quantum reflection equation (see Theorem~\ref{th:A}) and coinciding with relations obtained for {\sl quantum geodesic functions} upon imposing quantum skein relations on the corresponding geodesics and $q=e^{-i\hbar/2}$. The analogous quantum braid-group action is $\mathbb A^\hbar\to B^\hbar_{i,i+1}\mathbb A^\hbar \bigl[ B^\hbar_{i,i+1} \bigr]^\dagger$ with
\be
\label{Bii+1-hbar}
B^\hbar_{i,i+1}=\begin{array}{c}\vdots\cr i\cr i+1\cr \vdots \end{array}\left[\begin{array}{cccccccc}1&&&&&&&\cr
&\ddots&&&&&&\cr&&1&&&&&\cr&&&q^{1/2}a^\hbar_{i,i+1}&-q&&&\cr
&&&1&0&&&\cr
&&&&&1&&\cr&&&&&&\ddots&\cr&&&&&&&1
\end{array}\right],
\ee

In the geometric cases, the above braid-group morphisms are related to modular transformations generated by (classical or quantum \cite{Kashaev-Dehn}) Dehn twists along geodesics corresponding to the geodesic functions $a_{i,i+1}$ (see \cite{ChF3}).  In the absence of geometric interpretation, the only possibility we may resort to is to address the {\bf second problem}: to find a sequence of cluster mutations in a quiver still to be constructed that produces the above braid-group transformation for a generic symplectic leaf of $\mathcal A_n$.

We solve the both formulated problems in this paper: we explicitly construct the quiver (called an $\mathcal A_n$-quiver) such that the entries $a_{i,j}$ of the unipotent matrix $\mathbb A$ are positive Laurent polynomials of the cluster quiver variables, construct a quantum version of this quiver thus realizing the representation (\ref{A-quantum})  and finding explicitly chains of mutations of the $\mathcal A_n$-quiver that produce the braid-group transformations.

The structure of the paper is as follows:

In Sec.~\ref{s:triangle}, we describe quantum algebras of transport matrices in the Fock--Goncharov $SL_n$-quiver (Theorem~\ref{th:MM}); this quantum algebra is based on a more general Lemma~\ref{lem:r-matrix} proven in Sec.~\ref{sec:QuantumMeasurements} for any planar (acyclic) directed network. We also prove the groupoid condition (Theorem~\ref{th:groupoid}) satisfied by quantum transport matrices in the $SL_n$-quiver. 

In Sec.~\ref{s:Goldman}, we briefly describe general algebraic relations enjoyed by quantum transport matrices for $SL_n$ character variety on a general triangulated Riemann surface $\Sigma_{g,s,p}$ with $p>0$ marked points on the hole boundaries; namely we demonstrate the satisfaction of quantum Goldman relations.

Section~\ref{s:reflection} contains the first main result: out of cluster variables of the $SL_n$-quiver we construct a unipotent $\mathbb A$ satisfying the quantum reflection equation (Theorem~\ref{th:A}). We generalize this construction to solutions of quantum reflection equation that are not necessarily unipotent (Theorem~\ref{th:A-gen}).

In Sec.~\ref{s:braid}, we associate the unipotent $\mathbb A$ constructed in the preceding section with a special $\mathcal A_n$-quiver and prove that special sequences of mutations at vertices of this quiver generate braid-group transformations of elements of $\mathbb A$ (Theorem~\ref{th:braid}).  

In Sec.~\ref{s:Casimirs}, we collect statements about Casimir elements of $SL_n$- and $\mathcal A_n$-quivers. 

In Sec.~\ref{sec:QuantumMeasurements} we consider quantum transport matrices for general acyclic planar directed networks, establish the relation to Postnikov's quantum Grassmannians and measurement maps, and prove the general $R$-matrix relation for the corresponding quantum transport matrices (Lemma~\ref{lem:r-matrix}).

In Sec.~\ref{s:cycles}, we generalize the results of Sec.~\ref{sec:QuantumMeasurements} to arbitrary planar directed network (relaxing the acyclicity condition) showing in Theorem~\ref{thm:qnetwork} that quantum transport elements in any such network satisfy the same closed algebraic relations as elements of an acyclic planar directed network.

Section~\ref{s:conclusion} is a brief conclusion.

\section{$sl_n$-Algebras for the triangle $\Sigma_{0,1,3}$}\label{s:triangle}
\setcounter{equation}{0}

Let $\Sigma_{g,s,p}$ denote a topological genus $g$ surface with $s$ boundary components and $p$ marked points.
In this section, we concentrate on the case of the disk with 3 marked points on the boundary $\Sigma_{0,1,3}$.
(To simplify notations, we use $\Sigma=\Sigma_{0,1,3}$.)
In this section we review the definition of quantized moduli space ${\X}_{SL_n,\Sigma}$  of \emph{framed}   $SL_n$-local systems on the disk with three marked points (\cite{FG1}).
We call disk with three marked points $A,B,C$ on its boundary \emph{triangle with vertices $A,B,C$} and use notation $\triangle ABC$ (see Fig~\ref{fi:triSigma}). 


\subsection{Quantum torus} Let lattice $\Lambda=\mathbb{Z}^m$ be equipped with a skew-symmetric integer form $\langle \cdot, \cdot \rangle$. Introduce the $q$-multiplication operation in the vector space 
$\Upsilon=\operatorname{Span}\{ Z_\lambda\}_{\lambda\in\Lambda}$ as follows 
$Z_\lambda Z_\mu=q^{\langle \lambda,\mu\rangle} Z_{\lambda+\mu}$.  The algebra $\Upsilon$ is called a \emph{quantum torus}. Fix a basis $\{e_i\}$ in $\Lambda$, we consider $\Upsilon$ as a non-commutative algebra of Laurent polynomials in variables $Z_i:=Z_{e_i}$, $i\in [1,N]$.
For any sequence ${\bf a}=(a_1,\dots,a_t)$, $a_i\in [1,m]$, let $\Pi_{\bf s}$ denote the monomial $\Pi_{\bf s}=Z_{a_1} Z_{a_2} \dots Z_{a_t}$. Let $\lambda_{\bf s}=\sum_{j=1}^t e_{a_j}$.
Element $Z_{\lambda_{\bf s}}$ is called in physical literature the Weyl form
of $\Pi_{\bf s}$ and we denote it by two-sided colons $\col{\Pi_{\bf s}}$  It is easy to see that 
$\col{\Pi_{\bf s}}=Z_{\lambda_{\bf s}}=q^{-\sum_{j<k}\langle e_j, e_k\rangle} \Pi_{\bf s}$. 

\begin{figure}[H]
\begin{pspicture}(-1.5,-1.5)(1.5,1.) {\psset{unit=0.8}
\psline(-0.9,-1)(0,0.5)
\psline(-0.9,-1)(0.9,-1)
\psline(0.9,-1)(0,0.5)
\put(0.2,0.6){\makebox(0,0)[cc]{$1$}}
\put(1.1,-0.7){\makebox(0,0)[cc]{$2$}}
\put(-1.2,-0.7){\makebox(0,0)[cc]{$3$}}
\psarc[linecolor=blue]{<-}(0,0.5){0.7}{220}{300} 
\psarc[linecolor=blue]{->}(0.9,-1){0.7}{120}{200} 
\put(-0.9,-0.2){\makebox(0,0)[cc]{$M_1$}}
\put(-0.2,-1.3){\makebox(0,0)[cc]{$M_2$}}
}
\end{pspicture} 
\caption{Triangle $\triangle 123$ with vertices $1,2,3$.}
\label{fi:triSigma}
\end{figure}
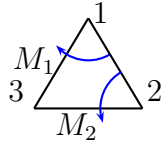

\subsection{Moduli space $\X_{SL_n,\Sigma}$ and transport matrices}

Following \cite{FG1}, recall that a moduli space $\X_{SL_n,\Sigma}$ parametrizes framed $SL_n$-local systems on $\Sigma$ that is isomorphic to the triple of flags in $\mathbb C^n$ in general position.  Any  framed $SL_n$-local system in the triangle $\triangle 123$ determines  \emph{transport} matrices. 

Two transport matrices $M_1$ and $M_2$ correspond respectively to directed paths 
going from one side of $\Sigma$ to the other side as on Fig~\ref{fi:triSigma}.
If $M$ is associated to a directed path then the inverse matrix $M^{-1}$ corresponds to the same path in the opposite  direction.

As mentioned above,  $\X_{SL_n,\Sigma}$ parametrizes the configurations of triples of complete flags in general position $(F_1)_\bullet,(F_2)_\bullet,(F_3)_\bullet$ in $\mathbb C^n$. 
Recall that a \emph{complete flag} $F_\bullet$ is a collection of consecutively embedded subspaces $\{0=F_0 \subset F_1\subset\dots\subset F_k\subset\dots\subset F_{n-1}\subset F_n={\mathbb C}^n\}$ where $F_k$ is a linear subspace of dimension $k$. Denote by $F^a=F_{n-a}$, $a=0,1,\dots,n$, the vector subspace of codimension $a$.

Consider the subtriangulation of $\triangle 123$ into $n\choose 2$ white upright triangles and $n-1\choose 2$ upside-down black triangles (see Fig.~\ref{fi:triSL3}).
Label all white upright triangles by triples $\{(a,b,c) | a,b,c\ge 0\, \&\, a+b+c=n-1\}$. Each white triangle $(a,b,c)$ corresponds to a line $\ell_{abc}=(F_1)^a\cap(F_2)^b\cap(F_3)^c$.
Similarly, label black upside-down triangles by triples $\{(a,b,c)|a,b,c\ge 0\,\&\,a+b+c=n-2\}$.
Each upside-down triangle $(a,b,c)$ is associated with the plane $P_{abc}=(F_1)^a\cap(F_2)^b\cap(F_3)^c$.
Note that every plane $P_{abc}$ of a black triangle contains all three lines $\ell_{(a+1)bc},\ell_{a(b+1)c},\ell_{ab(c+1)}$ of white triangles which are neighbors of the black one. For every such plane $P_{abc}$ choose three vectors $\vv_{(a+1)bc}\in\ell_{(a+1)bc}$, $\vv_{a(b+1)c}\in\ell_{a(b+1)c}$, $\vv_{ab(c+1)}\in\ell_{ab(c+1)}$ such that they satisfy condition $\vv_{(a+1)bc}+\vv_{a(b+1)c}=\vv_{ab(c+1)}$.
Hence, given a configuration of lines corresponding to triple of flags $((F_1)_\bullet,(F_2)_\bullet,(F_3)_\bullet)$, the choice of one vector $\vv_{abc}\in\ell_{abc}$ determines uniquely all other vectors in the lines $\ell_{a'b'c'}$ for all $(a'b'c')$ (see Fig.~\ref{fi:triSL3v1}).

Thus, the configuration of lines $\ell_{abc}$ determines \emph{projective collection of vectors} $\{\vv_{abc}\}$ modulo scalar scaling. Note that exactly two vectors at vertices of any gray triangle are independent. 

Define a \emph{snake} as an oriented path running from the top black triangle containing the line $\ell_{n00}$ downwards to the bottom black triangle containing
$\ell_{0ab}$ (for example, bold path in Fig~\ref{fi:triSL3v1}). 

	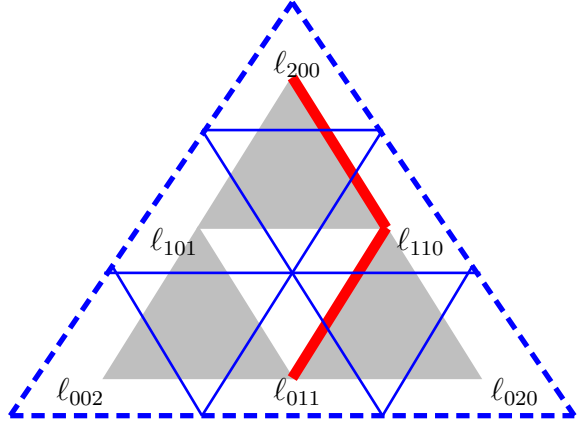
\begin{figure}[h]
\begin{pspicture}(-3,-3)(3,3.5){
%


\pspolygon[linecolor=lightgray,fillstyle=solid,fillcolor=lightgray](0.,2.5)(1.25,0.5)(-1.25,0.5)
\pspolygon[linecolor=lightgray,fillstyle=solid,fillcolor=lightgray](1.25,0.5)(2.5,-1.5)(0,-1.5)
\pspolygon[linecolor=lightgray,fillstyle=solid,fillcolor=lightgray](-1.25,0.5)(-2.5,-1.5)(0,-1.5)
\put(0,0){\psline[linecolor=red,linewidth=4pt]{-}(0,2.5)(1.25,0.5)}
\put(0,0){\psline[linecolor=red,linewidth=4pt]{-}(1.25,0.5)(0,-1.5)}

\put(0,0){\psline[linecolor=blue,linewidth=1pt]{-}(-2.5,-0.1)(2.5,-0.1)}
\put(0,0){\psline[linecolor=blue,linewidth=1pt]{-}(-1.2,-2.)(-2.4,0)}
\put(0,0){\psline[linecolor=blue,linewidth=1pt]{-}(-1.2,1.8)(1.2,1.8)}
\put(0,0){\psline[linecolor=blue,linewidth=1pt]{-}(-1.2,-2)(1.2,1.8)}
\put(0,0){\psline[linecolor=blue,linewidth=1pt]{-}(1.2,-2)(-1.2,1.8)}
\put(0,0){\psline[linecolor=blue,linewidth=1pt]{-}(1.2,-2)(2.4,0)}
\put(0,0){\psline[linecolor=blue,linewidth=2pt,linestyle=dashed]{-}(-3.75,-2.)(0.0,3.5)}
\put(0,0){\psline[linecolor=blue,linewidth=2pt,linestyle=dashed]{-}(0.0,3.5)(3.75,-2)}
\put(0,0){\psline[linecolor=blue,linewidth=2pt,linestyle=dashed]{-}(-3.75,-2.)(3.75,-2)}

\put(0,2.5){\makebox(0,0)[bc]{\hbox{{ $\ell_{200}$}}}}
\put(1.25,0.5){\makebox(0,0)[tl]{\hbox{{ $\ell_{110}$}}}}
\put(-1.25,0.5){\makebox(0,0)[tr]{\hbox{{$\ell_{101}$}}}}
\put(2.5,-1.5){\makebox(0,0)[tl]{\hbox{{ $\ell_{020}$}}}}
\put(0,-1.5){\makebox(0,0)[tc]{\hbox{{ $\ell_{011}$}}}}
\put(-2.5,-1.5){\makebox(0,0)[tr]{\hbox{{ $\ell_{002}$}}}}

}
\end{pspicture}
\caption{\small
Configuration of lines corresponding to triple of flags in ${\mathbb C}^3$. Black triangles are equipped with planes $P_{abc}$.
Plane $P_{100}$ contains lines $\ell_{200},\,\ell_{110},\,\ell_{101}$,  $P_{010}$ contains lines $\ell_{110},\,\ell_{020},\,\ell_{011}$,
$P_{001}$ contains lines $\ell_{101},\,\ell_{011},\,\ell_{002}$. Vectors $\vv_{abc}\in\ell_{abc}$  satisfy relations
$\vv_{101}=\vv_{200}+\vv_{110}, \vv_{020}=\vv_{101}+\vv_{011}, \vv_{011}=\vv_{110}+\vv_{002}$. \tcb{The bold broken line indicates a snake.}
}
\label{fi:triSL3v1}
\end{figure}

Any snake defines a projective basis $\vv_{\alpha_1},\dots,\vv_{\alpha_n}$ of ${\mathbb C}^n$. Note that choosing another corner of triangle as a top one leads to different choice of projective basis. In particular, if the basis defined by the only snake running from $\ell_{n00}$ to $\ell_{00n}$ is 
$\vv_{n00},\vv_{n-1\,00}\dots,\vv_{00n}$ then the basis defined by the only snake in the opposite direction from $\ell_{00n}$ to $\ell_{n00}$ is 
$\vv_{00n},-\vv_{10\,n-1}\dots,(-1)^{n-1}\vv_{n00}$.

Denote by $\bb_{\bf p}$ the basis defined by snake ${\bf p}$.
Let $\bb_{12}$ be the basis defined by the unique snake from $\ell_{n00}$ to $\ell_{0n0}$ in the triangle $\triangle 123$ and by $\bb_{31}$ 
the basis defined by the snake $\ell_{00n}$ to $\ell_{n00}$. The basis $\bb_{12}$ in Fig~\ref{fi:snakes} is $\bb_{12}=(\vv_{200}, \vv_{110},\vv_{020})$,
the basis $\bb_{31} =(\vv_{002},\vv_{101},\vv_{200})$. 

	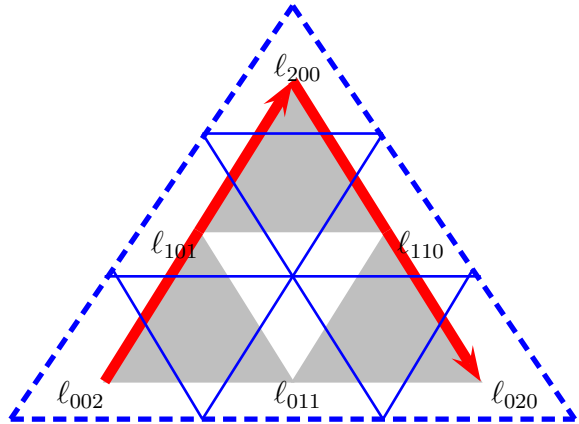
\begin{figure}[h]
\begin{pspicture}(-2,-2.5)(3,3.5){
%


\pspolygon[linecolor=lightgray,fillstyle=solid,fillcolor=lightgray](0.,2.5)(1.25,0.5)(-1.25,0.5)
\pspolygon[linecolor=lightgray,fillstyle=solid,fillcolor=lightgray](1.25,0.5)(2.5,-1.5)(0,-1.5)
\pspolygon[linecolor=lightgray,fillstyle=solid,fillcolor=lightgray](-1.25,0.5)(-2.5,-1.5)(0,-1.5)
\put(0,0){\psline[linecolor=red,linewidth=4pt]{-}(0,2.5)(1.25,0.5)}
\put(0,0){\psline[linecolor=red,linewidth=4pt]{->}(1.25,0.5)(2.5,-1.5)}

\put(0,0){\psline[linecolor=red,linewidth=4pt]{<-}(0,2.5)(-1.25,0.5)}
\put(0,0){\psline[linecolor=red,linewidth=4pt]{-}(-1.25,0.5)(-2.5,-1.5)}

\put(0,0){\psline[linecolor=blue,linewidth=1pt]{-}(-2.5,-0.1)(2.5,-0.1)}
\put(0,0){\psline[linecolor=blue,linewidth=1pt]{-}(-1.2,-2.)(-2.4,0)}
\put(0,0){\psline[linecolor=blue,linewidth=1pt]{-}(-1.2,1.8)(1.2,1.8)}
\put(0,0){\psline[linecolor=blue,linewidth=1pt]{-}(-1.2,-2)(1.2,1.8)}
\put(0,0){\psline[linecolor=blue,linewidth=1pt]{-}(1.2,-2)(-1.2,1.8)}
\put(0,0){\psline[linecolor=blue,linewidth=1pt]{-}(1.2,-2)(2.4,0)}
\put(0,0){\psline[linecolor=blue,linewidth=2pt,linestyle=dashed]{-}(-3.75,-2.)(0.0,3.5)}
\put(0,0){\psline[linecolor=blue,linewidth=2pt,linestyle=dashed]{-}(0.0,3.5)(3.75,-2)}
\put(0,0){\psline[linecolor=blue,linewidth=2pt,linestyle=dashed]{-}(-3.75,-2.)(3.75,-2)}

\put(0,2.5){\makebox(0,0)[bc]{\hbox{{ $\ell_{200}$}}}}
\put(1.25,0.5){\makebox(0,0)[tl]{\hbox{{ $\ell_{110}$}}}}
\put(-1.25,0.5){\makebox(0,0)[tr]{\hbox{{$\ell_{101}$}}}}
\put(2.5,-1.5){\makebox(0,0)[tl]{\hbox{{ $\ell_{020}$}}}}
\put(0,-1.5){\makebox(0,0)[tc]{\hbox{{ $\ell_{011}$}}}}
\put(-2.5,-1.5){\makebox(0,0)[tr]{\hbox{{ $\ell_{002}$}}}}

}
\end{pspicture}
\caption{\small
Configuration of lines corresponding to triple of flags in ${\mathbb C}^3$. Black triangles are equipped with planes $P_{abc}$.
Plane $P_{100}$ contains lines $\ell_{200},\,\ell_{110},\,\ell_{101}$,  $P_{010}$ contains lines $\ell_{110},\,\ell_{020},\,\ell_{011}$,
$P_{001}$ contains lines $\ell_{101},\,\ell_{011},\,\ell_{002}$. Vectors $\vv_{abc}\in\ell_{abc}$  satisfy relations
$\vv_{101}=\vv_{200}+\vv_{110}, \vv_{020}=\vv_{101}+\vv_{011}, \vv_{011}=\vv_{110}+\vv_{002}$.
}
\label{fi:snakes}
\end{figure}

 Define $T_1\in SL_n$ as the transformation matrix from basis $\bb_{31}$ to $\bb_{12}$, namely, $i$-th column of $T_1$ is $[(\bb_{31})_i]_{\bb_{12}}$, i.e. coordinate vector  $(\bb_{31})_i$ with respect to basis $\bb_{12}$ precomposed and postcomposed  with  multiplications by  diagonal matrices defined by the Fock-Goncharov coordinates on the sides $2-1$ and $1-3$ as in Formula~\ref{eq:T1}.  Since $\bb_{31}$ and $\bb_{12}$ are both defined up to multiplicative scalars
 $T_1$ is defined up to scalar too. Hence, condition $T_1\in SL_n$ fixes $T_1$ uniquely.
    
 Similarly, we define the transport matrix $T_2$ as transformation matrix from the side $1-2$ to the side $2-3$ and $T_3$ as transformation
 from $2-3$ to $3-1$.
 
 Finally, let $M_1=T_1$, $M_2=T_2^{-1}$ (see Fig.~\ref{fi:triSL3}).
  
 Matrix $M_1$ is an upper-anti-diagonal matrix and $M_2$ is a lower-anti-diagonal matrix (see Example~\ref{ex:toy}).

 Fock-Goncharov coordinates $Z_\alpha$ parametrize $\X_{SL_n,\Sigma}$. They are associated with vertices of triangular subdivision of $\Sigma$ except vertices $1,2,3$ of triangle and labelled  using barycentric indices $(i,j,k), i+j+k=n$ denoted often below by Greek letters (see Fig.~\ref{fi:triSL3} for $n=3$ and Fig.~\ref{fi:triangle} for $n=6$). 
 The expressions for classical transport matrices were first found in~\cite{FG3}, (see also \cite{Coman-Gabella-Teschner}  Appendix A.2).
 
  Initial diagonal prefactor can be vizualized as carrying the snake from outside to inside over the side $1-2$ of the triangle. The diagonal postfactor is vizualed as carrying over the snake from inside to outside across the side $1-3$.
 
 The middle factor of the transport matrix $T_1$ corresponding to the base change from $\bb_{31}$ to $\bb_{12}$ is factorizable in a product of elementary basis changes 
 corresponoding to the following sequence of snake transformations. 
 
 	\begin{figure}[H]
	\psscalebox{0.45}{
\begin{pspicture}(-12,-3)(13,3){
%

		\newcommand{\TRIANGLE}{%
			{\psset{unit=1}
				\pspolygon[linecolor=lightgray,fillstyle=solid,fillcolor=lightgray](0.,2.5)(1.25,0.5)(-1.25,0.5)
				\pspolygon[linecolor=lightgray,fillstyle=solid,fillcolor=lightgray](1.25,0.5)(2.5,-1.5)(0,-1.5)
				\pspolygon[linecolor=lightgray,fillstyle=solid,fillcolor=lightgray](-1.25,0.5)(-2.5,-1.5)(0,-1.5)

				\put(0,2.5){\makebox(0,0)[bc]{\hbox{{ $\ell_{200}$}}}}
				\put(1.25,0.5){\makebox(0,0)[tl]{\hbox{{ $\ell_{110}$}}}}
				\put(-1.25,0.5){\makebox(0,0)[tr]{\hbox{{$\ell_{101}$}}}}
				\put(2.5,-1.5){\makebox(0,0)[tl]{\hbox{{ $\ell_{020}$}}}}
				\put(0,-1.5){\makebox(0,0)[tc]{\hbox{{ $\ell_{011}$}}}}
				\put(-2.5,-1.5){\makebox(0,0)[tr]{\hbox{{ $\ell_{002}$}}}}
		}}
		\multiput(-15.,0.)(8,0.){5}{\TRIANGLE}
				\put(-15,0){\psline[linecolor=red,linewidth=4pt]{-}(0,2.5)(1.25,0.5)}
				\put(-15,0){\psline[linecolor=red,linewidth=4pt]{->}(1.25,0.5)(2.5,-1.5)}

				\put(-7,0){\psline[linecolor=red,linewidth=4pt]{-}(0,2.5)(1.25,0.5)}
				\put(-7,0){\psline[linecolor=red,linewidth=4pt]{->}(1.25,0.5)(0,-1.5)}

				\put(1,0){\psline[linecolor=red,linewidth=4pt]{-}(0,2.5)(-1.25,0.5)}
				\put(1,0){\psline[linecolor=red,linewidth=4pt]{->}(-1.25,0.5)(0,-1.5)}

				\put(9,0){\psline[linecolor=red,linewidth=4pt]{-}(0,2.5)(-1.25,0.5)}
				\put(9,0){\psline[linecolor=red,linewidth=4pt]{->}(-1.25,0.5)(-2.5,-1.5)}

				\put(17,0){\psline[linecolor=red,linewidth=4pt]{<-}(0,2.5)(-1.25,0.5)}
				\put(17,0){\psline[linecolor=red,linewidth=4pt]{-}(-1.25,0.5)(-2.5,-1.5)}
}
\end{pspicture}
}
\caption{\small
Sequence of snakes factorizing transport matrix $T_1$
}
\label{fi:snakes2}
\end{figure}
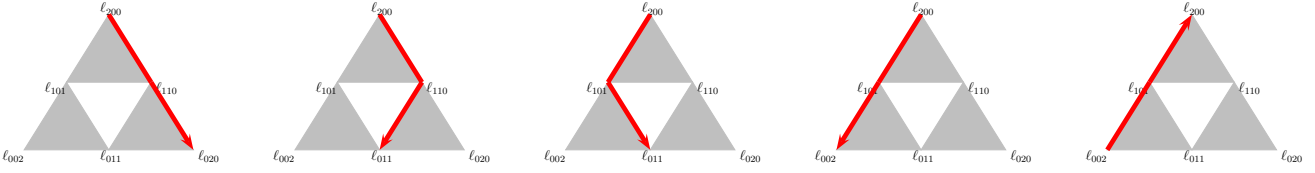

	\begin{figure}[h]
\psscalebox{0.8}{
\begin{pspicture}(-3,-3)(3,3.5){
%


\pspolygon[linecolor=lightgray,fillstyle=solid,fillcolor=lightgray](-1.25,2)(1.25,2)(0,0)
\pspolygon[linecolor=lightgray,fillstyle=solid,fillcolor=lightgray](1.25,-2)(2.5,0)(0,0)
\pspolygon[linecolor=lightgray,fillstyle=solid,fillcolor=lightgray](-1.25,-2)(-2.5,0)(0,0)

\put(0,0){\psline[linecolor=blue,linewidth=1pt]{-}(0.5,0)(2.,0)}
\put(0,0){\psline[linecolor=blue,linewidth=1pt]{-}(0.25,-0.4)(1.,-1.6)}
\put(0,0){\psline[linecolor=blue,linewidth=1pt]{-}(1.5,-1.6)(2.3,-0.4)}
\put(0,0){\psline[linecolor=blue,linewidth=1pt]{-}(-1.5,-1.6)(-2.3,-0.4)}
\put(0,0){\psline[linecolor=blue,linewidth=1pt]{-}(-0.25,-0.4)(-1.,-1.6)}
\put(0,0){\psline[linecolor=blue,linewidth=1pt]{-}(-0.5,0)(-2.,0)}
\put(0,0){\psline[linecolor=blue,linewidth=2pt,linestyle=dashed]{-}(-3.75,-2.)(0.0,3.5)}
\put(0,0){\psline[linecolor=blue,linewidth=2pt,linestyle=dashed]{-}(0.0,3.5)(3.75,-2)}
\put(0,0){\psline[linecolor=blue,linewidth=2pt,linestyle=dashed]{-}(-3.75,-2.)(3.75,-2)}
\put(0.1,3.0){\makebox(0,0)[br]{\hbox{{${\bf 1}$}}}}
\put(3.4,-1.8){\makebox(0,0)[br]{\hbox{{$\bf 2$}}}}
\put(-3.1,-1.8){\makebox(0,0)[br]{\hbox{{$\bf 3$}}}}
\put(0,0){\pscircle[linecolor=black,fillstyle=solid,fillcolor=white]{.5}}
\put(0.3,-0.1){\makebox(0,0)[br]{\hbox{{\tiny $Z_{111}$}}}}
\put(0,0){\psline[linecolor=blue,linewidth=1pt]{-}(0.25,0.4)(1.,1.6)}
\put(1.25,2){\pscircle[linecolor=black,fillstyle=solid,fillcolor=white]{.5}}
\put(1.55,1.9){\makebox(0,0)[br]{\hbox{{\tiny $Z_{012}$}}}}
\put(0,0){\psline[linecolor=blue,linewidth=1pt]{-}(-0.25,0.4)(-1.,1.6)}
\put(-1.25,2){\pscircle[linecolor=black,fillstyle=solid,fillcolor=white]{.5}}
\put(-0.95,1.9){\makebox(0,0)[br]{\hbox{{\tiny $Z_{102}$}}}}
\put(0,0){\psline[linecolor=blue,linewidth=1pt]{-}(0.75,2)(-.75,2)}
\put(2.5,0){\pscircle[linecolor=black,fillstyle=solid,fillcolor=white]{.5}}
\put(2.8,-.1){\makebox(0,0)[br]{\hbox{{\tiny $Z_{021}$}}}}
\put(-2.5,0){\pscircle[linecolor=black,fillstyle=solid,fillcolor=white]{.5}}
\put(-2.2,-.1){\makebox(0,0)[br]{\hbox{{\tiny $Z_{201}$}}}}
\put(1.25,-2){\pscircle[linecolor=black,fillstyle=solid,fillcolor=white]{.5}}
\put(1.55,-2.1){\makebox(0,0)[br]{\hbox{{\tiny $Z_{120}$}}}}
\put(-1.25,-2){\pscircle[linecolor=black,fillstyle=solid,fillcolor=white]{.5}}
\put(-.95,-2.1){\makebox(0,0)[br]{\hbox{{\tiny $Z_{210}$}}}}

\psarc[linewidth=6pt,linecolor=red]{<-}(-3.9,-2.0){3}{-20}{70}
\psarc[linewidth=6pt,linecolor=red]{<-}(0,3.7){3}{220}{320}
\psarc[linewidth=6pt,linecolor=red]{<-}(3.9,-2.0){3}{105}{200}
\put(-1.3,-1.2){\makebox(0,0)[br]{\hbox{{\Large $T_3$}}}}
\put(1.3,-1.2){\makebox(0,0)[bl]{\hbox{{\Large $T_2$}}}}
\put(0,1.){\makebox(0,0)[bc]{\hbox{{\Large $T_1$}}}}

}
\end{pspicture}
}
\caption{\small
Fock-Goncharov parameters for $\X_{SL_3,\Sigma_{0,1,3}}$. Arrows shows the direction of transport matrices.
$M_1=T_1$, $M_2=T_2^{-1}$.
}
\label{fi:triSL3}
\end{figure}
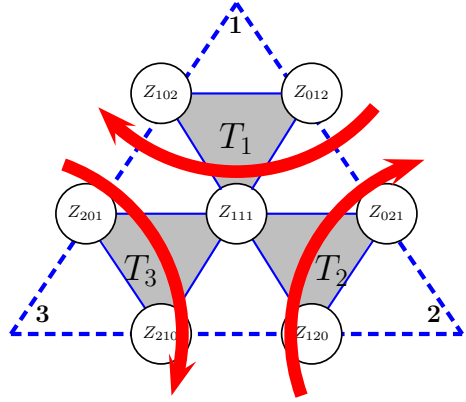
 
 
 Let $\chi(k)$ denote the integer step function, $\chi(k)=\{ 0, \  k<0,\ 1,\  k\ge 0\}$.
 Define $n\times n$ matrices $H_k(t)$, $L_k$, $S$ as follows $H_k(t)=t^{-\frac{n-k}{n}}{\operatorname{diag}}(t^{\chi(1-k-1)},t^{\chi(2-k-1)}, \dots, t^{\chi(n-k-1)}),$ 
 $L_k=\operatorname{Id}_n+E_{k}$ for $k\in [1,n-1]$ where $\operatorname{Id}_n$ is the identity $n\times n $ matrix, $(E_k)_{i,j}=\delta_{k+1,i}\cdot\delta_{k,j}$ is the matrix whose only nonzero element is $1$ at the position $(k+1,k)$, $(S)_{ij}=(-1)^{n-i}\delta_{i,n+1-j}$.
Then 
\begin{equation}\label{eq:T1}
\begin{split}
T_1 =&
S \Bigl[ \prod_{j=1}^{n-1} H_{n-j}(Z_{n-j,0,j})\Bigr]\\
&\cdot L_{n-1}\prod_{p=1}^{n-2} 
\Bigl[ \prod_{q=1}^{p} L_{n-q-1}H_{n-q}(Z_{n-1-p,n-q,p+q+1-n})\Bigr]
 L_{n-1}\\
& \cdot \Bigl[ \prod_{j=1}^{n-1} H_{j}(Z_{0,j,n-j})\Bigr].
\end{split}
 \end{equation}

Let ${\mathbb I}=\{(a,b,c)| a,b,c\in {\mathbb Z}_+, a+b+c=n\}$ be the set of barycentric indices in the triangle with side $n$,  $\tau:{\mathbb I}\to{\mathbb I}$ be the  clockwise rotation by $2\pi/3$, $\tau$ acts naturally on the sequences of barycentric parameters and hence on sequences of Fock-Goncharov parameters: for 
${\bf Z}=\left(Z_{\alpha_1},\dots,Z_{\alpha_k}\right)$ the sequence 
$\tau{\bf Z}=(Z_{\tau(\alpha_1)},\dots,Z_{\tau(\alpha_k)})$,  
if $O({\bf Z})$ is an object depending on the collection ${\bf Z}=(Z_{\alpha_i})_{i=1}^k$ of Fock-Goncharov parameters then $\tau O=O(\tau{\bf Z})$.

Note that $T_2=\tau T_1$, $T_3=\tau^2 T_1$ (see Fig.~\ref{fi:triSL3}).
The transport matrix $M_2=\left(\tau M_1\right)^{-1}$.

\begin{example}  For $n=3$, we have
 	$H_1(t)=\begin{pmatrix}
	t^{-2/3} & 0 & 0 \\
	0 & t^{1/3} & 0 \\
	0 & 0 & t^{1/3}
	\end{pmatrix}
	$,
	$H_2(t)=\begin{pmatrix}
	t^{-1/3} & 0 & 0 \\
	0 & t^{-1/3} & 0 \\
	0 & 0 & t^{2/3}
	\end{pmatrix}
	$,
	$L_1=\begin{pmatrix}
	1 & 0 & 0 \\
	1 & 1 & 0 \\
	0 & 0 & 1
	\end{pmatrix}
	$, 
	$L_2=\begin{pmatrix}
	1 & 0 & 0 \\
	0 & 1 & 0 \\
	0 & 1 & 1
	\end{pmatrix}
	$, $S=\begin{pmatrix}
	0 & 0 & 1 \\
	0 & -1 & 0 \\
	1 & 0 & 0
	\end{pmatrix}
	$.
	
Transport matrices $T_1$ from side $1-2$ to side $1-3$, $T_2$ from side $2-3$ to side $2-1$ and $T_3$ from side $3-1$ to side $3-2$ (see Fig.~\ref{fi:triSL3}) have the following form
\begin{eqnarray*}
M_1=T_1 &=& S H_2(Z_{201})H_1(Z_{102})L_2 L_1 H_2(Z_{111}) L_2 H_1(Z_{012})H_2(Z_{021})\\
          T_2&=& S H_2(Z_{012})H_1(Z_{021})L_2 L_1 H_2(Z_{111}) L_2 H_1(Z_{120})H_2(Z_{210})\\ 
          T_3&=& S H_2(Z_{120})H_1(Z_{210})L_2 L_1 H_2(Z_{111})L_2 H_1(Z_{201})H_2(Z_{102}).	
\end{eqnarray*}

	$M_1={\textstyle\begin{pmatrix}
	Z_{021}^{-1/3}Z_{102}^{1/3} Z_{111}^{-1/3}Z_{012}^{-2/3}Z_{201}^{2/3} & Z_{021}^{-1/3}Z_{102}^{1/3}(Z_{111}^{-1/3}+Z_{111}^{2/3}) Z_{102}^{1/3}  Z_{201}^{2/3} & Z_{021}^{2/3} Z_{102}^{1/3}Z_{111}^{2/3} Z_{012}^{1/3} Z_{201}^{2/3} \\
	Z_{021}^{-1/3}Z_{102}^{-1/3} Z_{111}^{-1/3}Z_{012}^{-2/3}Z_{201}^{-1/3} & Z_{021}^{-1/3}Z_{102}^{-1/3} Z_{111}^{-1/3}Z_{012}^{1/3}Z_{201}^{-1/3} & 0 \\
	Z_{021}^{-1/3}Z_{102}^{-2/3} Z_{111}^{-1/3}Z_{012}^{-2/3}Z_{201}^{-1/3} & 0 & 0
	\end{pmatrix}}
	$,
	\smallskip

Finally, $M_2=T_2^{-1}$. We can easily factorize $M_2$ in the product of elementary matrices noting that $S^{-1}=(-1)^{n-1}S$, 
$H_k(t)^{-1}=H_k(t^{-1})=SH_{n-k}(t)$S, $L_k^{-1}=\operatorname{Id}_n-E_k=SL_{n-k}^{\text{T}} S$, where $L_j^{\text{T}}$ is the transpose of matrix $L_j$.
Then, 
\begin{eqnarray*}
M_2 &=& H_2(Z_{210})^{-1}  H_1(Z_{120})^{-1} L_2^{-1}  H_2(Z_{111})^{-1}  L_1^{-1}  L_2 ^{-1} H_1(Z_{021})^{-1}  H_2(Z_{012})^{-1}   S ^{-1}\\
 &=& S H_1(Z_{210})S S H_2(Z_{120}) S  S L_1^{\text{T}} S S H_1(Z_{111}) S  S L_2^{\text{T}} S S L_1^{\text{T}} S S H_2(Z_{021}) S S H_1(Z_{012}) S S (-1)^{n-1}\\
 &=& (-1)^{n-1} S H_1(Z_{210}) H_2(Z_{120})  L_1^{\text{T}}  H_1(Z_{111})  L_2^{\text{T}}  L_1^{\text{T}}  H_2(Z_{021})  H_1(Z_{012}).
 \end{eqnarray*}

	 $M_2={\textstyle\begin{pmatrix}
	0 & 0 &  Z_{210}^{1/3} Z_{111}^{1/3} Z_{012}^{1/3} Z_{120}^{2/3} Z_{021}^{2/3}\\
	0 &  Z_{210}^{1/3} Z_{111}^{1/3} Z_{012}^{1/3}  Z_{120}^{-1/3} Z_{021}^{-1/3}&   
	Z_{210}^{1/3} Z_{111}^{1/3} Z_{012}^{1/3}  Z_{120}^{-1/3} Z_{021}^{2/3}\\
	Z_{210}^{-2/3} Z_{111}^{-2/3} Z_{012}^{-2/3}  Z_{120}^{-1/3} Z_{021}^{-1/3}&  
	Z_{210}^{-2/3} (Z_{111}^{-2/3}+Z_{111}^{1/3}) Z_{012}^{1/3}  Z_{120}^{-1/3} Z_{021}^{-1/3} &
	Z_{210}^{-2/3} Z_{111}^{1/3} Z_{012}^{1/3}  Z_{120}^{-1/3} Z_{021}^{2/3}
	\end{pmatrix}}
	$.

\end{example}

 To obtain quantum transport matrices we expand all entries of classical transport matrix $M_i$ in the sum of monomials $m_j(Z_\alpha)$ 
and replace all $m_j$ by the corresponding Weyl form $\col{m_j}$ \,. 
For instance, the $(1,2)$-entry of quantum $M_1$ becomes
$$\left(M_1\right)_{12}=\col{Z_{021}^{-1/3}Z_{102}^{1/3}Z_{111}^{-1/3}Z_{102}^{1/3}  Z_{201}^{2/3}} +
\col{Z_{021}^{-1/3}Z_{102}^{1/3}Z_{111}^{2/3} Z_{102}^{1/3}  Z_{201}^{2/3}}$$

In Section~\ref{sec:QFG} we generalize this construction to non-normalized quantum transport matrices defined for more general class of planar quivers.
 

\begin{example} \label{ex:toy}
A toy example is the one in which all $Z_{\tcb \alpha}$ are the units. Matrix entries then just count numbers of monomials entering the corresponding matrix elements $a_{i,j}\in (-1)^{i+1}\mathbb Z_{+} [[ Z_\alpha^{\pm 1}]]$. 
Then, for the $M_1$ matrix, we have the following representation:
\be
M_1=
\left(
  \begin{array}{rrr}
    1 & 2 & 1 \\
    -1 & -1 & 0 \\
    1 & 0 & 0 
  \end{array}
\right), \quad
\left(
  \begin{array}{rrrr}
    1 &3&3&1 \\
    -1 & -2 & -1&0 \\
    1 & 1 & 0&0 \\
    -1 & 0 & 0 &0
  \end{array}
\right), \hbox{etc},
\ee
that is, $[M_1]_{i,j}=(-1)^{i+1}\Bigl[ {n-i\atop j}\Bigr]$ for $SL_n$. We introduce the antidiagonal unit matrix $|S|=\delta_{i,n+1-i}$
(to distinguish it from $S=(-1)^{i+1}\delta_{i,n+1-i}$).

For $M_2$ we have
\be
M_2=M_1^2=
\left(
  \begin{array}{rrr}
     0 & 0 & 1\\
    0 & -1 & -1 \\
    1 & 2 & 1
  \end{array}
\right), \quad
\left(
  \begin{array}{rrrr}
    0 & 0 & 0 &1\\
    0 & 0 & -1&-1 \\
    0 & 1 & 2& 1 \\
    -1 &-3&-3&-1
   \end{array}
\right), \hbox{etc}
\ee
A riddle-thirsty reader can check the following relations between these matrices:
$$
M_1^2=M_2=(-1)^{n+1}|S|\cdot M_1\cdot |S|,\quad M_1^3=(-1)^{n+1}[|S|\cdot M_1]^2=(-1)^{n+1}I
$$
\be\label{eq:An}
M_1^{\text{T}}M_2=\mathbb A= \left(
  \begin{array}{rrr}
     1 & 3 & 3\\
    0 & 1 & 3 \\
    0 & 0 & 1
  \end{array}
\right), \quad
\left(
  \begin{array}{rrrr}
    1 & 4 & 6 &4\\
    0 & 1 & 4& 6 \\
    0 & 0 & 1& 4 \\
    0 & 0& 0& 1
   \end{array}
\right), \quad\hbox{etc}
\ee
that is $[\mathbb A]_{i,j}=\Bigl[ {n\atop j-i}\Bigr]$.
\end{example}

\subsection{Quantum transport matrices and Fock-Goncharov coordinates}\label{sec:QFG} We describe now how quantized transport matrices are expressed in terms of quantized Fock-Goncharov parameters (see also~
\cite{Coman-Gabella-Teschner} and~\cite{FG2}).

In the quantization of $\X_{SL_n,\Sigma}$ the quantized Fock-Goncharov variables form a quantum torus $\Upsilon$ with commutation relation described by the quiver shown on Fig.~\ref{fi:triangle}. Vertices of the quiver label quantum Fock-Goncharov coordinates $Z_\alpha$ (we use Greek letters to indicate barycentric labels) while 
the arrows  encode commutation relations:  if there are $m$ arrows from vertex $\alpha$ to $\beta$ then $Z_{\beta}Z_{\alpha}=q^{-2m} Z_{\alpha}Z_{\beta}$.
Dashed arrow counts as $m=1/2$.
In particular, a solid arrow from $Z_{\alpha}$ to $Z_{\beta}$ implies $Z_{\beta}Z_{\alpha}=q^{-2} Z_{\alpha}Z_{\beta}$,  a dashed arrow from $Z_{\alpha}$ to $Z_{\beta}$ implies $Z_{\beta}Z_{\alpha}=q^{-1}Z_{\alpha}Z_{\beta}$, and, for the future use, a double arrow from $Z_\alpha$ to $Z_\beta$ means 
$Z_{\beta}Z_{\alpha}=q^{-4}Z_{\alpha}Z_{\beta}$.  Vertices not connected by an arrow commute.

 \begin{figure}[H]
 \psscalebox{1.0}{
\begin{pspicture}(-3,-2)(3,3){
\newcommand{\PATGEN}{%
{\psset{unit=1}
\rput(0,0){\psline[linecolor=blue,linewidth=2pt]{->}(0,0)(.45,.765)}
\rput(0,0){\psline[linecolor=blue,linewidth=2pt]{->}(1,0)(0.1,0)}
\rput(0,0){\psline[linecolor=blue,linewidth=2pt]{->}(0,0)(.45,-.765)}
\put(0,0){\pscircle[fillstyle=solid,fillcolor=lightgray]{.1}}
}}
\newcommand{\PATLEFT}{%
{\psset{unit=1}
\rput(0,0){\psline[linecolor=blue,linewidth=2pt,linestyle=dashed]{->}(0,0)(.45,.765)}
\rput(0,0){\psline[linecolor=blue,linewidth=2pt]{->}(1,0)(0.1,0)}
\rput(0,0){\psline[linecolor=blue,linewidth=2pt]{->}(0,0)(.45,-.765)}
\put(0,0){\pscircle[fillstyle=solid,fillcolor=lightgray]{.1}}
}}
\newcommand{\PATRIGHT}{%
{\psset{unit=1}
\rput(0,0){\psline[linecolor=blue,linewidth=2pt,linestyle=dashed]{->}(0,0)(.45,-.765)}
\put(0,0){\pscircle[fillstyle=solid,fillcolor=lightgray]{.1}}
}}
\newcommand{\PATBOTTOM}{%
{\psset{unit=1}
\rput(0,0){\psline[linecolor=blue,linewidth=2pt]{->}(0,0)(.45,.765)}
\rput(0,0){\psline[linecolor=blue,linewidth=2pt,linestyle=dashed]{->}(1,0)(0.1,0)}
\put(0,0){\pscircle[fillstyle=solid,fillcolor=lightgray]{.1}}
}}
\newcommand{\PATTOP}{%
{\psset{unit=1}
\rput(0,0){\psline[linecolor=blue,linewidth=2pt]{->}(1,0)(0.1,0)}
\rput(0,0){\psline[linecolor=blue,linewidth=2pt]{->}(0,0)(.45,-.765)}
\put(0,0){\pscircle[fillstyle=solid,fillcolor=lightgray]{.1}}
}}
\newcommand{\PATBOTRIGHT}{%
{\psset{unit=1}
\rput(0,0){\psline[linecolor=blue,linewidth=2pt]{->}(0,0)(.45,.765)}
\put(0,0){\pscircle[fillstyle=solid,fillcolor=lightgray]{.1}}
\put(.5,0.85){\pscircle[fillstyle=solid,fillcolor=lightgray]{.1}}
}}
\multiput(-2.5,-0.85)(0.5,0.85){4}{\PATLEFT}
\multiput(-2,-1.7)(1,0){4}{\PATBOTTOM}
\put(-0.5,2.55){\PATTOP}
\multiput(-1.5,-0.85)(1,0){4}{\PATGEN}
\multiput(-1,0)(1,0){3}{\PATGEN}
\multiput(-.5,0.85)(1,0){2}{\PATGEN}
\put(0,1.7){\PATGEN}
\multiput(-1.5,-0.85)(1,0){4}{\PATGEN}
\multiput(0.5,2.55)(0.5,-0.85){4}{\PATRIGHT}
\put(2,-1.7){\PATBOTRIGHT}
\put(-2.7,-1.0){\makebox(0,0)[br]{\hbox{{\tiny $(5,0,1)$}}}}
\put(-2.3,-.15){\makebox(0,0)[br]{\hbox{{\tiny (4,0,2)$$}}}}
\put(-.7,2.4){\makebox(0,0)[br]{\hbox{{\tiny $(1,0,5)$}}}}
\put(-2.,-2.1){\makebox(0,0)[br]{\hbox{{\tiny (5,1,0)$$}}}}
\put(-0.6,-2.1){\makebox(0,0)[br]{\hbox{{\tiny $(4,2,0)$}}}}
\put(2.4,-2.1){\makebox(0,0)[br]{\hbox{{\tiny $(1,4,0)$}}}}
\put(-0.6,-2.1){\makebox(0,0)[br]{\hbox{{\tiny $(4,2,0)$}}}}
\put(2.4,-2.1){\makebox(0,0)[br]{\hbox{{\tiny $(1,4,0)$}}}}
}
\end{pspicture}
}
\bigskip
\caption{\small
The quiver of Fock-Goncharov parameters in the triangle $\Sigma_{0,1,3}$  parametrizing $\A_{SL_6,\Sigma}$; note that we vertices 
$(6,0,0)$, $(0,6,0)$, and $(0,0,6)$ are excluded.
}
\label{fi:triangle}
\end{figure}
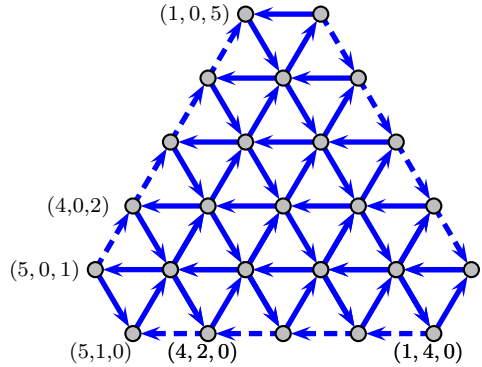


Consider the following planar oriented graph in the disk dual to the quiver above. Label vertices on the left, on the right and on the bottom sides from $1$ to $n$ as shown on Figure~\ref{fi:plab}. Now barycentric indices label the vertices of the quiver which correspond to the faces of the dual oriented graph. Vertices of the new dual graph are colored black and white depending on whether there are two or one incoming arrows.
Faces of $G$ are equipped with $q$-commuting weights $Z_\alpha$.
We add also three face weights $Z_{n,0,0}$, $Z_{0,n,0}$, and $Z_{0,0,n}$ with similar commutation relations. 

Any maximal oriented path in the dual graph connects a vertex on the right side 1--2 of the triangle either with a vertex of the left side 1--3 or with a vertex on the bottom side 2--3.  
We assign to every oriented path $P:j\leadsto i'$ from the right side to the left side or to the bottom side $P:j\leadsto i''$ the \emph{quantum weight} 
$\displaystyle{w(P)=\col{\mathop{\prod_{\text{ face }\alpha\text{ lies to the right}}}_{\text{ of the path } P} Z_\alpha}}$

\begin{figure}[h]
\psscalebox{0.9}{
	\begin{pspicture}(-3,-3)(4,4){
		\newcommand{\LEFTDOWNARROW}{%
			{\psset{unit=1}
				\rput(0,0){\psline[linecolor=black,linewidth=2pt]{<-}(0,0)(.765,.45)}
		}}
		\newcommand{\DOWNARROW}{%
	{\psset{unit=1}
					\rput(0,0){\psline[linecolor=black,linewidth=2pt]{->}(0,0.1)(0,-0.566)}
		\put(0,0){\pscircle[fillstyle=solid,fillcolor=lightgray]{.15}}
}}
		\newcommand{\LEFTUPARROW}{%
	{\psset{unit=1}
		\rput(0,0){\psline[linecolor=black,linewidth=2pt]{->}(0,0)(-.765,.45)}
}}
	\newcommand{\STARUP}{
			{\psset{unit=1}
	\rput(0,0){\psline[linecolor=black,linewidth=2pt]{<-}(0,0)(.5,-.26)}
	\rput(0,0){\psline[linecolor=black,linewidth=2pt]{<-}(0,0.1)(0,.466)}
	\rput(0,0){\psline[linecolor=black,linewidth=2pt]{->}(0,0)(-.5,-.26)}
	\put(0,0){\pscircle[fillstyle=solid,fillcolor=black]{.15}}
	\put(0,.566){\pscircle[fillstyle=solid,fillcolor=lightgray]{.15}}
}}
		\newcommand{\PATGEN}{%
			{\psset{unit=1}
				\rput(0,0){\psline[linecolor=blue,linewidth=2pt]{->}(0,0)(.45,.765)}
				\rput(0,0){\psline[linecolor=blue,linewidth=2pt]{->}(1,0)(0.1,0)}
				\rput(0,0){\psline[linecolor=blue,linewidth=2pt]{->}(0,0)(.45,-.765)}
				\put(0,0){\pscircle[fillstyle=solid,fillcolor=lightgray]{.1}}
		}}
		\newcommand{\PATLEFT}{%
			{\psset{unit=1}
				\rput(0,0){\psline[linecolor=blue,linewidth=2pt,linestyle=dashed]{->}(0,0)(.45,.765)}
				\rput(0,0){\psline[linecolor=blue,linewidth=2pt]{->}(1,0)(0.1,0)}
				\rput(0,0){\psline[linecolor=blue,linewidth=2pt]{->}(0,0)(.45,-.765)}
				\put(0,0){\pscircle[fillstyle=solid,fillcolor=lightgray]{.1}}
		}}
		\newcommand{\PATRIGHT}{%
			{\psset{unit=1}
				\rput(0,0){\psline[linecolor=blue,linewidth=2pt,linestyle=dashed]{->}(0,0)(.45,-.765)}
				\put(0,0){\pscircle[fillstyle=solid,fillcolor=lightgray]{.1}}
		}}
		\newcommand{\PATBOTTOM}{%
			{\psset{unit=1}
				\rput(0,0){\psline[linecolor=blue,linewidth=2pt]{->}(0,0)(.45,.765)}
				\rput(0,0){\psline[linecolor=blue,linewidth=2pt,linestyle=dashed]{->}(1,0)(0.1,0)}
				\put(0,0){\pscircle[fillstyle=solid,fillcolor=lightgray]{.1}}
		}}
		\newcommand{\PATTOP}{%
			{\psset{unit=1}
				\rput(0,0){\psline[linecolor=blue,linewidth=2pt]{->}(1,0)(0.1,0)}
				\rput(0,0){\psline[linecolor=blue,linewidth=2pt]{->}(0,0)(.45,-.765)}
				\put(0,0){\pscircle[fillstyle=solid,fillcolor=lightgray]{.1}}
		}}
		\newcommand{\PATBOTRIGHT}{%
			{\psset{unit=1}
				\rput(0,0){\psline[linecolor=blue,linewidth=2pt]{->}(0,0)(.45,.765)}
				\put(0,0){\pscircle[fillstyle=solid,fillcolor=lightgray]{.1}}
				\put(.5,0.85){\pscircle[fillstyle=solid,fillcolor=lightgray]{.1}}
		}}
		\multiput(-2.5,-0.85)(0.5,0.85){4}{\PATLEFT}
		\multiput(-2,-1.7)(1,0){4}{\PATBOTTOM}
		\multiput(-2,-1.176)(1.0,0){5}{\STARUP}
		\put(-0.5,2.55){\PATTOP}
		\multiput(-1.5,-0.85)(1,0){4}{\PATGEN}
		\multiput(-1.5,-0.335)(1.0,0){4}{\STARUP}
		\multiput(-1,0)(1,0){3}{\PATGEN}
		\multiput(-1.0,0.5)(1.0,0){3}{\STARUP}
		\multiput(-.5,0.85)(1,0){2}{\PATGEN}
		\multiput(-.5,1.4)(1.0,0){2}{\STARUP}
		\put(0,1.7){\PATGEN}
		\put(0,2.3){\STARUP}
		\multiput(-1.5,-0.85)(1,0){4}{\PATGEN}
		\multiput(0.5,2.55)(0.5,-0.85){4}{\PATRIGHT}
		\put(2,-1.7){\PATBOTRIGHT}
		\multiput(2.6,-1.4)(-0.5,.85){6}{\LEFTDOWNARROW}
		\multiput(-2.6,-1.4)(0.5,.85){6}{\LEFTUPARROW}
		\multiput(-2.5,-1.5)(1.0,0){6}{\DOWNARROW}
		\put(1.2,3.2){\makebox(0,0)[br]{\hbox{{$1$}}}}
		\put(1.7,2.4){\makebox(0,0)[br]{\hbox{{$2$}}}}
		\put(2.2,1.6){\makebox(0,0)[br]{\hbox{{$3$}}}}
		\put(2.7,0.8){\makebox(0,0)[br]{\hbox{{$4$}}}}
		\put(3.2,-0.1){\makebox(0,0)[br]{\hbox{{$5$}}}}
		\put(3.7,-1.0){\makebox(0,0)[br]{\hbox{{$6$}}}}

		\put(-1.2,3.2){\makebox(0,0)[br]{\hbox{{$1'$}}}}
		\put(-1.7,2.4){\makebox(0,0)[br]{\hbox{{$2'$}}}}
		\put(-2.2,1.6){\makebox(0,0)[br]{\hbox{{$3'$}}}}
		\put(-2.7,0.8){\makebox(0,0)[br]{\hbox{{$4'$}}}}
		\put(-3.2,-0.1){\makebox(0,0)[br]{\hbox{{$5'$}}}}
		\put(-3.7,-1.0){\makebox(0,0)[br]{\hbox{{$6'$}}}}

		\put(-2.4,-2.6){\makebox(0,0)[br]{\hbox{{$1''$}}}}
		\put(-1.4,-2.6){\makebox(0,0)[br]{\hbox{{$2''$}}}}
		\put(-0.4,-2.6){\makebox(0,0)[br]{\hbox{{$3''$}}}}
		\put(0.6,-2.6){\makebox(0,0)[br]{\hbox{{$4''$}}}}
		\put(1.6,-2.6){\makebox(0,0)[br]{\hbox{{$5''$}}}}
		\put(2.6,-2.6){\makebox(0,0)[br]{\hbox{{$6''$}}}}
\put(-2.7,-1.8){\makebox(0,0)[br]{\hbox{{\tiny{\color{red} $Z_{6,0,0}$}}}}}
\put(2.7,-1.8){\makebox(0,0)[bl]{\hbox{{\tiny{\color{red} $Z_{0,6,0}$}}}}}
\put(0,3.2){\makebox(0,0)[bc]{\hbox{{\tiny{\color{red} $Z_{0,0,6}$}}}}}
	}
	\end{pspicture}
}
	\caption{\small
		The plabic graph $G$ dual to the quiver of Fock-Goncharov parameters for $\X_{SL_6,\Sigma_{0,1,3}}$. Face weights $Z_{6,0,0}, Z_{0,6,0}, Z_{0,0,6}$ are added.
	}
	\label{fi:plab}
\end{figure}
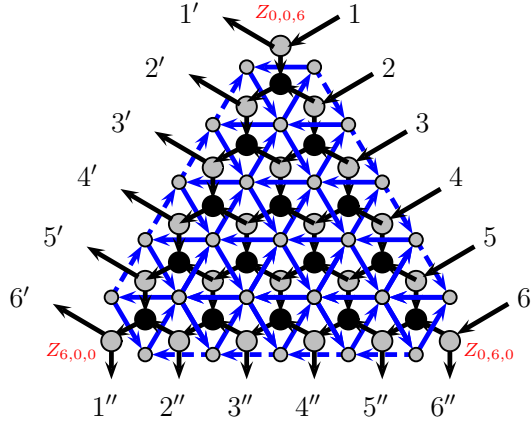


\begin{definition}
We define two $n\times n$ \emph{non-normalized quantum transition matrices} \newline
$\displaystyle{
(\mathcal M_1)_{i,j}=\mathop{\sum_{\text{directed path\,}{P: j\leadsto i'}}}_{\text{ from right to left}} w(P)}$\qquad and\qquad
$\displaystyle{
(\mathcal M_2)_{i,j}=\mathop{\sum_{\text{directed path\,}{P: j\leadsto i''}}}_{\text{ from right to bottom}} w(P).}$\newline 
Note that each ${\mathcal M}_1$ is a lower-triangular matrix and ${\mathcal M}_2$ is an upper-triangular matrix.
\end{definition}

In section \ref{sec:QuantumMeasurements} we generalize this definition. Let $\Gamma$ be a planar oriented graph
in the rectangle with no sources or sinks inside (see Fig~\ref{fig:network}),  $m$ univalent boundary sinks on the left labeled $1$ to $m$ top to bottom and  $n$ univalent boundary sources on the right labelled $1$ to $n$ top to bottom. All arcs of  $\Gamma$ are oriented right to left, in particular, $G$ has no oriented cycles. Note that this condition is in particular satisfied by the plabic graph $G$ (see Fig~\ref{fi:plab}) considered as a graph with $n$ sources and $2n$ sinks. Indeed, we can redraw $G$ in a rectangle such that the right side of the triangle becomes the right vertical side of the rectangle while union of the left and the bottom sides becomes the left side of the triangle. 

Faces of $\Gamma$ are equipped with $q$-commuting weights $Z_\alpha$ whose commutation relations are governed by the plabic graph (see Section~\ref{sec:QuantumMeasurements} for details).
  We define weight of the maximal oriented path $P$ from a source $a$ to a sink $b$ as 
  \begin{equation}\label{eq:pathweight}
  \displaystyle{w(P)\- =\- \col{\mathop{\prod_{\text{ face }\alpha\text{ lies to the right }}}_{\text{ of the path } P} Z_\alpha}}.
  \end{equation} 
 Then, entries of a
$m\times n$ \emph{non-normalized transport matrix} are  $\displaystyle{[\M]_{ij}=\mathop{\sum_{\text{directed path\,}{P: j\leadsto i}}} w(P).}$ 
Lemma~\ref{lem:r-matrix} implies that the matrix $\mathcal M$ satisfies the quantum $R$-matrix relation $\mathcal R_{m}(q) \sheet{1}{\mathcal M}\otimes \sheet{2}{\mathcal M}=\sheet{2}{\mathcal M}\otimes \sheet{1}{\mathcal M}\mathcal R_n(q)$, where $\mathcal R_{k}(q)$ is a $k^2\times k^2$ matrix
\be\label{R-matrix1}
\mathcal R_{k}(q)= \sum_{1\le i,j\le k} \sheet{1}e_{ii}\otimes \sheet{2} e_{jj}+(q-1)\sum_{1\le i\le k} \sheet{1}e_{ii}\otimes \sheet{2} e_{ii}
+(q-q^{-1})\sum_{1\le j<i \le k} \sheet{1}e_{ij}\otimes \sheet{2} e_{ji}
\ee
Note that $\mathcal R_k$ has the following properties:
\be\label{R-rel}
\mathcal R^{-1}_k(q)=\mathcal R_k(q^{-1}),\qquad \mathcal R_k(q)-\mathcal R^{\text{T}}_k(q^{-1})=(q-q^{-1})P_k,
\ee
where $P_k$ is the standard permutation matrix $P_k:=\sum_{1\le i,j\le k}\sheet{1}e_{ij}\otimes \sheet{2} e_{ji}$. Note that the total transposition of $\mathcal R_{k}(q)$ results in interchanging the space labels $1\leftrightarrow 2$. In Sec.~\ref{s:cycles} we show that this algebra remains valid also in the case of a planar directed network with loops.

In Fig.~\ref{fi:plab} we have an example of a directed network with $6$ sources
and $12$ sinks.
Adding face weights $Z_{600},\ Z_{060},\ Z_{006}$ supplied with ``natural'' commutation relations $Z_{600}Z_{501}=qZ_{501}Z_{600}$ and $Z_{510}Z_{600}=qZ_{600}Z_{510}$, etc., we obtain graph $\Gamma$ in the rectangle with $n=6$ sources and $2n=12$ sinks; then  $\mathcal M$ has a block matrix form $\mathcal M=\begin{pmatrix} \mathcal M_1 \\ \mathcal M_2\end{pmatrix}$ in which we let $\mathcal M_1$ is the upper $n\times n$ block  and $\mathcal M_2$ is the lower $n\times n$ block. We want to show that Lemma~\ref{lem:r-matrix} implies the following commutation relations for $\mathcal M_1$ and $\mathcal M_2$:
\be\label{R1}
\mathcal R_n(q)\sheet{1}{\mathcal M}_i \otimes \sheet{2}{\mathcal M}_i=\sheet{2}{\mathcal M}_i\otimes \sheet{1}{\mathcal M}_i \mathcal R_{n}(q),\quad i=1,2,
\ee
and
\be\label{R2}
\sheet{1}{\mathcal M}_1 \otimes \sheet{2}{\mathcal M}_2=\sheet{2}{\mathcal M}_2\otimes \sheet{1}{\mathcal M}_1 \mathcal R_{n}(q).
\ee
Let now indices $i,j$ run from $1$ to $n$. We rewrite the above matrix $\mathcal R_{2n}(q)$ as
\begin{align*}
\mathcal R_{2n}(q)&= \sum_{1\le i,j\le n} \sheet{1}e_{ii}\otimes \sheet{2} e_{jj}+(q-1)\sum_{1\le i\le n} \sheet{1}e_{ii}\otimes \sheet{2} e_{ii}
+(q-q^{-1})\sum_{1\le j<i\le n} \sheet{1}e_{ij}\otimes \sheet{2} e_{ji}\\
&+\sum_{1\le i,j\le n} \sheet{1}e_{n+i,n+i}\otimes \sheet{2} e_{n+j,n+j}+(q-1)\sum_{1\le i\le n} \sheet{1}e_{n+i,n+i}\otimes \sheet{2} e_{n+i,n+i}\\
& \hskip 8cm +(q-q^{-1})\sum_{1\le j<i \le n} \sheet{1}e_{n+i,n+j}\otimes \sheet{2} e_{n+j,n+i}\\
&+\sum_{1\le i,j\le n} \sheet{1}e_{i,i}\otimes \sheet{2} e_{n+j,n+j}\\
&+\sum_{1\le i,j\le n} \sheet{1}e_{n+i,n+i}\otimes \sheet{2} e_{j,j}
+(q-q^{-1})\sum_{1\le i,j\le n} \sheet{1}e_{n+i,j}\otimes \sheet{2} e_{j,n+i}
\end{align*}
In the first two lines we immediately recognize $\mathcal R_n(q)$ in two diagonal $n\times n$ blocks of $\mathcal R_{2n}(q)$: the relations for the pair of first indices $(i,j)$ and $(n+i,n+j)$ generate (\ref{R1}) for $\mathcal M_1$ and $\mathcal M_2$ respectively; setting $(i,n+j)$, which corresponds to the third line, we obtain just a unit matrix in the left-hand side thus producing relation (\ref{R2}), whereas in the case $(n+i,j)$ (the fourth line), we have the equation
$$
\sheet{1}{\mathcal M}_2 \otimes \sheet{2}{\mathcal M}_1+(q-q^{-1})P_n \sheet{1}{\mathcal M}_1 \otimes \sheet{2}{\mathcal M}_2=\sheet{2}{\mathcal M}_1 \otimes \sheet{1}{\mathcal M}_2\mathcal R_n(q).
$$
We first push the permutation operator $P_n$ through the $\mathcal M$-matrix product interchanging the labels of spaces in the direct product and then use
the identity $(q-q^{-1})P_n=\mathcal R_n(q)-\mathcal R^{\text{T}}_n(q^{-1})$ obtaining
$$
\sheet{1}{\mathcal M}_2 \otimes \sheet{2}{\mathcal M}_1+\sheet{2}{\mathcal M}_1 \otimes \sheet{1}{\mathcal M}_2(\mathcal R_n(q)-\mathcal R^{\text{T}}_n(q^{-1})) =\sheet{2}{\mathcal M}_1 \otimes \sheet{1}{\mathcal M}_2\mathcal R_n(q),
$$
or
$$
\sheet{1}{\mathcal M}_2 \otimes \sheet{2}{\mathcal M}_1=\sheet{2}{\mathcal M}_1 \otimes \sheet{1}{\mathcal M}_2 \mathcal R^{\text{T}}_n(q^{-1}),
$$
which is just another form of writing relation (\ref{R2}).
This accomplishes the proof of relations ~\ref{R1} and~\ref{R2}.
\qed

Now, in order to eliminate extra variables $Z_{n,0,0},\ Z_{0,n,0}$ and $Z_{0,0,n}$ we normalize the matrices $\M_i$, $i=1,2$ by multiplying them by corresponding special functions. Namely, we multiply the matrix ${\mathcal M}_1$ by $D_1^{-1}$, where $D_1$ is the function (\ref{D-Casimir}) commuting with all elements of ${\mathcal M}_1$. Clearly, $D_1^{-1} {\mathcal M}_1$ will be independent of $Z_{0,0,n}$. Similarly, we multiply ${\mathcal M}_2$ by $\col{D_1^{-1}D_2^{-1}}$ where $D_2=\tau^2D_1$ is the similar element that starts with the variable $Z_{0,n,0}$. These multiplications preserve the form of relations (\ref{R1}) and their only effect on  (\ref{R1}) is the appearance of the constant factor in the $R$-matrix in the right-hand side (this is because $D_1$ commutes with 
$\col{D_1^{-1}D_2^{-1}}\mathcal M_2$, and $\col{D_1^{-1}D_2^{-1}}$ commutes with $D_1^{-1}\mathcal M_2$; only $D_1$ and $D_2$ do not commute.

We now define the \emph{quantum transport} matrices of the \emph{standard $SL_n$ quiver}:

\begin{definition}\label{def:M}
Transport matrices for the  quantum space $\A_{SL_n,\Sigma}$ are defined as follows 
$$M_1=QS{\mathcal M}_1 D_1^{-1}\text{ and } M_2=QS{\mathcal M}_2 D_1^{-1}D_2^{-1}$$ where $Q=\hbox{diag}\{ q^{\frac{1}{2}-j}\}_{j=[1,n]}$.
\end{definition}

Note that 
$$
\sheet{1}Q\otimes \sheet{2}Q \mathcal R_n(q)=\mathcal R_n(q)\sheet{1}Q\otimes \sheet{2}Q\  \hbox{for any diagonal matrix $Q$} 
$$
and 
$$
\sheet{1}S\otimes \sheet{2}S \mathcal R_n(q)=\mathcal R^{\text{T}}_n(q)\sheet{1}S\otimes \sheet{2}S\  \hbox{for any antidiagonal matrix $S$}. 
$$


We have therefore proved the following theorem.

\begin{theorem}\label{th:MM}
The above $M_1$ and $M_2$ satisfy the relations
\begin{align*}
R^{\text{T}}_{n}(q)\sheet{1}M_i \otimes \sheet{2} M_i &=\sheet{2}M_i\otimes \sheet{1}M_i R_{n}(q),\quad i=1,2, \\
\sheet{1}M_1 \otimes \sheet{2} M_2&=\sheet{2}M_2\otimes \sheet{1}M_1 R_{n}(q)
\end{align*}
where
\be\label{R-matrix}
R_{n}(q)=q^{-1/n}\left[ \sum_{i,j} \sheet{1}e_{ii}\otimes \sheet{2} e_{jj}+(q-1)\sum_i \sheet{1}e_{ii}\otimes \sheet{2} e_{ii}
+(q-q^{-1})\sum_{i>j} \sheet{1}e_{ij}\otimes \sheet{2} e_{ji}\right]
\ee
is the quantum trigonometric $R$-matrix
\end{theorem}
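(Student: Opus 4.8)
The block decomposition of $\mathcal R_{2n}(q)$ carried out just above already reduces the statement to bookkeeping: relations (\ref{R1}) and (\ref{R2}) for the \emph{non-normalized} matrices $\mathcal M_1,\mathcal M_2$ are exactly what Lemma~\ref{lem:r-matrix} produces after splitting $\mathcal R_{2n}(q)$ into its four $n\times n$ blocks. What remains is to propagate these two relations through the two dressings that turn $\mathcal M_i$ into $M_i$: the right normalization by the special functions $D_i$ of (\ref{D-Casimir}), and the left multiplication by $QS$. The plan is to treat these two dressings in turn and to track carefully which factor of $\mathcal R_n(q)$ each one touches.

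For the normalization step I would first verify that $D_1$ commutes with every entry of $\mathcal M_1$ and that $\col{D_1^{-1}D_2^{-1}}$ commutes with every entry of $\mathcal M_2$; this is the defining property of these Casimir-type functions. Consequently, attaching $D_1^{-1}$ to each factor in (\ref{R1}) with $i=1$, and $\col{D_1^{-1}D_2^{-1}}$ to each factor in (\ref{R1}) with $i=2$, leaves those relations unchanged, since the normalizers slide freely past the corresponding $\mathcal M_i$ and commute with $\mathcal R_n(q)$. In the mixed relation (\ref{R2}) the two sides carry \emph{different} normalizers, and the only obstruction to extracting them is the non-commutativity of $D_1$ and $D_2$ (equivalently, the fractional, determinant-fixing powers built into the $H_k$ and hence into the $D_i$); this yields a single overall scalar $q$-power, which is precisely the constant $q^{-1/n}$ multiplying the bracket in the normalized $R$-matrix (\ref{R-matrix}) and encodes the passage from $GL_n$ to $SL_n$.

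For the $QS$ step I would multiply both sides of each relation on the left by $\sheet{1}(QS)\otimes \sheet{2}(QS)$ and use the two identities recorded before the theorem,
$$
\sheet{1}Q\otimes \sheet{2}Q\,\mathcal R_n(q)=\mathcal R_n(q)\,\sheet{1}Q\otimes \sheet{2}Q,
\qquad
\sheet{1}S\otimes \sheet{2}S\,\mathcal R_n(q)=\mathcal R^{\text{T}}_n(q)\,\sheet{1}S\otimes \sheet{2}S,
$$
so that $\sheet{1}(QS)\otimes\sheet{2}(QS)$ commutes through a \emph{left-standing} $\mathcal R_n(q)$ while converting it into $\mathcal R^{\text{T}}_n(q)$. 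In (\ref{R1}) the $\mathcal R_n(q)$ on the left is thereby turned into $\mathcal R^{\text{T}}_n(q)$, whereas the $\mathcal R_n(q)$ on the right is left untouched; absorbing each $\sheet{i}(QS)$ into the adjacent factor (which, together with the normalizer, rebuilds $M_i=QS\mathcal M_i D_i^{-1}$) gives $R^{\text{T}}_n(q)\sheet{1}M_i\otimes\sheet{2}M_i=\sheet{2}M_i\otimes\sheet{1}M_i R_n(q)$. In (\ref{R2}) there is no left-standing $R$-matrix, so the same absorption simply produces $\sheet{1}M_1\otimes\sheet{2}M_2=\sheet{2}M_2\otimes\sheet{1}M_1 R_n(q)$, with the scalar from the previous step promoting $\mathcal R_n(q)$ to $R_n(q)$. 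Collecting the three relations completes the argument.

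The step I expect to demand the most care is the scalar bookkeeping in the normalization: confirming that $D_1,D_2$ are genuinely central with respect to $\mathcal M_1,\mathcal M_2$ respectively, and that the residual non-commutativity contributes \emph{exactly} the factor $q^{-1/n}$ appearing in (\ref{R-matrix}) rather than some other $q$-power. Once the $QS$-covariance of $\mathcal R_n(q)$ is in hand, everything else is a direct substitution, so the delicate point is isolating and pinning down that single constant.
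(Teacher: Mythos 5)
Your proposal is correct and follows essentially the same route as the paper: block-decomposing $\mathcal R_{2n}(q)$ via Lemma~\ref{lem:r-matrix} to get (\ref{R1})--(\ref{R2}) for $\mathcal M_1,\mathcal M_2$, then propagating through the $D_i$-normalization (with the residual $D_1$--$D_2$ non-commutativity supplying the overall $q^{-1/n}$) and the left $QS$-dressing using the stated $Q$- and $S$-covariance identities for $\mathcal R_n(q)$. No substantive difference from the paper's argument.
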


\begin{theorem}\label{th:groupoid}
The quantum transport matrices $T_i$ satisfy the quantum groupoid relation
$$T_1 T_2 T_3=\operatorname{Id}.$$
\end{theorem}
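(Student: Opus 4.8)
The plan is to read the relation as the triviality of the monodromy of a framed $SL_n$-local system around the contractible triangle $\triangle 123$, prove the classical identity geometrically, and then control the operator ordering in its quantization. Classically each $T_i$ is the uniquely $\det=1$ normalized change-of-basis matrix between the snake bases attached to two consecutive sides: $T_1$ from $\bb_{31}$ to $\bb_{12}$, and then $T_2=\tau T_1$, $T_3=\tau^2 T_1$ realize the base changes $\bb_{12}\to\bb_{23}$ and $\bb_{23}\to\bb_{31}$. Composing the three transports around the triangle is the full monodromy, which is trivial because the triangle is simply connected; fixing the $SL_n$-normalization makes the ordered product exactly $\operatorname{Id}$, the signs produced by reversing snakes being absorbed by the antidiagonal factors $S$ (recall $S^{-1}=(-1)^{n-1}S$, so $S^2=(-1)^{n-1}\operatorname{Id}$). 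Alternatively, this can be checked directly from the factorization \eqref{eq:T1} and its $\tau$-images, telescoping the product with the elementary identities $H_k(t)^{-1}=SH_{n-k}(t)S$ and $L_k^{-1}=SL_{n-k}^{\text{T}}S$.

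Next I would lift the identity to the quantum torus $\Upsilon$. Since $\tau$ is an automorphism of the Fock--Goncharov quiver it preserves the form $\langle\cdot,\cdot\rangle$ and hence acts on $\Upsilon$ by an algebra automorphism, so $\tau(XY)=\tau(X)\tau(Y)$ entrywise and $T_1T_2T_3=M_1\,(\tau M_1)\,(\tau^2 M_1)$ is a well-defined product of Weyl-ordered matrices. Expanding $(T_1T_2T_3)_{ik}=\sum_{j,l}(T_1)_{ij}(T_2)_{jl}(T_3)_{lk}$ writes every entry as a sum of products of three Weyl-ordered monomials, and multiplying these out produces $\col{\cdot}$-monomials weighted by explicit powers of $q$ coming from the skew form. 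The statement to prove is then that the Weyl-ordering prefactor is constant along each group of monomials that already cancels in the classical computation, and that the surviving diagonal monomial orders to $\col{1}=1$; granting this, the classical cancellations of the previous paragraph go through verbatim and the quantum product is again $\operatorname{Id}$.

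The computation of these $q$-prefactors is the main obstacle, and is the only genuinely nontrivial step. I would control it using three inputs: the skew-symmetry of $\langle\cdot,\cdot\rangle$, which forces the cross-term $q$-powers within a canceling group to coincide; the $\tau$-covariance of the commutation relations, which makes the three blocks $M_1,\tau M_1,\tau^2 M_1$ enter symmetrically and lets one reduce the bookkeeping to a single block; and the intertwining relations $\sheet{1}Q\otimes\sheet{2}Q\,\mathcal R_n(q)=\mathcal R_n(q)\,\sheet{1}Q\otimes\sheet{2}Q$ and $\sheet{1}S\otimes\sheet{2}S\,\mathcal R_n(q)=\mathcal R^{\text{T}}_n(q)\,\sheet{1}S\otimes\sheet{2}S$ recorded just before Theorem~\ref{th:MM}, which dispose of the diagonal and antidiagonal prefactors $Q,S$ from Definition~\ref{def:M}. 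An attractive alternative, which would bypass most of the $q$-bookkeeping, is to realize the around-the-loop transport as transport in the doubled directed network obtained by cyclically gluing the three sides and to invoke the $R$-matrix machinery of Lemma~\ref{lem:r-matrix}: the glued network transports a side to itself and should reduce, under the local moves preserving quantum transport, to the trivial network, giving $\operatorname{Id}$ directly.
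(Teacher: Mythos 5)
Your classical argument is fine (and essentially immediate from the construction of the $T_i$ as normalized base changes between snake bases), but the theorem is a statement about \emph{quantum} transport matrices, and the entire content lies in the step you explicitly defer: showing that the $q$-prefactors produced by multiplying Weyl-ordered monomials are such that the classical cancellations survive. You state this as ``the statement to prove'' and then offer only a strategy (skew-symmetry, $\tau$-covariance, the $Q$ and $S$ intertwining relations) without carrying it out. None of those three inputs actually yields the needed identity: skew-symmetry of $\langle\cdot,\cdot\rangle$ does not by itself force the reordering factors of the two members of a canceling pair to match, because the two monomials involve \emph{different} partitions of the face set and the relevant $q$-power depends on the geometry of the regions, not just on symmetry of the form. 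So the proposal has a genuine gap at its central step.

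What the paper does to close this gap is a concrete path-pairing argument. Writing $[(QS)^{-1}T_3T_1]_{ij}=\sum_k(-1)^kq^{\frac12-k}\sum_{\text{paths }k\to i}\col{\prod Z_\alpha}\sum_{\text{paths }j\to k}\col{\prod Z_\beta}$, one pairs each configuration of two paths meeting at level $k$ (enclosing regions $A$, $B$, $C$) with the unique configuration meeting at level $k+1$ enclosing the same total region $A\cup B\cup C$; these carry opposite signs, and the precise identity $\col{CB}\,\col{A}=q\,\col{C}\,\col{BA}$ for the Weyl-ordered region weights makes the $q$-powers match so the pair cancels. The unpaired configurations (where $C$ is empty, forcing $k=1$) satisfy $\col{B}\,\col{A}=\col{BA}$ and are in bijection with the paths defining $M_2$, giving $T_3T_1=M_2$, i.e.\ $T_1T_2T_3=\operatorname{Id}$. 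Your alternative suggestion via gluing and Lemma~\ref{lem:r-matrix} is in the spirit of the paper's second proof (via the quantum Grassmannian and reversal of unequivocal paths, Lemma~\ref{lem:rigid}), but there too the work is in an explicit matrix identity $\left(Q^{gr}_q\right)'=Q^{gr}_qC$ rather than in a reduction ``to the trivial network,'' so as written that route is also only a sketch. To complete your proof you would need to supply, for each canceling pair, the exact reordering factor between the two Weyl-ordered products --- which is precisely the lemma $\col{CB}\,\col{A}=q\,\col{C}\,\col{BA}$ you are missing.
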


\begin{remark}
Recalling $M_1=T_1$, $M_2=T_2^{-1}, M_3=T_3$ we have
$$
M_3 M_1=M_2.
$$
\end{remark}

{\bf Proof.} 
The product $T_3T_1$ is given by the following double sum over directed paths:
\be\label{MM}
[(QS)^{-1}T_3T_1]_{ij}=\sum_{k=1}^n (-1)^k q^{\frac{1}{2}-k} \sum_{\text{paths\,}k\to i}:\prod Z_\alpha: \sum_{\text{paths\,}j\to k}:\prod Z_\beta: .
\ee


We now consider the pattern in the figure below. We do not indicate arrows on edges recalling that all paths in 
$T_1$ 
go from right to left and from top to bottom whereas all paths in 
$T_3$ 
go from left to right and from top to bottom. Two paths: $j\to k$ from 
$T_1$ 
and $k\to i$ from 
$T_3$ 
share the common horizontal leg; if we remove this leg then the remaining part of the union of $j\to k$ and $k \to i$ is a path that first goes from right to left, then (in a general Case I) has the leftmost vertical edge, then goes from left to right. In a very special Case II, the path does not have the last part; this happens only for $k=1$ and only if the last horizontal part of the path $j\to k=1$ is strictly longer than the shared horizontal leg
$$
\begin{pspicture}(-2.7,-3)(2.7,3){\psset{unit=0.8}
\newcommand{\PATGEN}{%
{\psset{unit=1}
\rput(0,0){\psline[linecolor=blue,linewidth=2pt](0,0)(.5,0.28)}
\rput(0,0){\psline[linecolor=blue,linewidth=2pt](0,0)(-0.5,0.28)}
\rput(0,0){\psline[linecolor=blue,linewidth=2pt](0,0)(0,-0.56)}
\put(0,0){\pscircle[fillstyle=solid,fillcolor=lightgray]{.1}}
\put(0,-0.56){\pscircle[fillstyle=solid,fillcolor=black]{.1}}
}}
\newcommand{\PATBOT}{%
{\psset{unit=1}
\rput(0,0){\psline[linecolor=blue,linewidth=2pt](0,0)(.5,0.28)}
\rput(0,0){\psline[linecolor=blue,linewidth=2pt](0,0)(-0.5,0.28)}
\rput(0,0){\psline[linecolor=blue,linewidth=2pt](0,0)(0,-0.56)}
\put(0,0){\pscircle[fillstyle=solid,fillcolor=lightgray]{.1}}
}}
\multiput(-2.5,-1.68)(1,0){6}{\PATBOT}
\multiput(-2,-0.84)(1,0){5}{\PATGEN}
\multiput(-1.5,0)(1,0){4}{\PATGEN}
\multiput(-1,0.84)(1,0){3}{\PATGEN}
\multiput(-0.5,1.68)(1,0){2}{\PATGEN}
\put(0,2.53){\PATGEN}
\multiput(-0.5,2.24)(1,0){2}{\pscircle[fillstyle=solid,fillcolor=magenta,linecolor=white]{.35}}
\multiput(-1,1.4)(1,0){2}{\pscircle[fillstyle=solid,fillcolor=magenta,linecolor=white]{.35}}
\multiput(-1.5,0.56)(1,0){2}{\pscircle[fillstyle=solid,fillcolor=magenta,linecolor=white]{.35}}
\multiput(-2,-0.28)(1,0){2}{\pscircle[fillstyle=solid,fillcolor=yellow,linecolor=white]{.35}}
\multiput(-2.5,-1.12)(1,0){4}{\pscircle[fillstyle=solid,fillcolor=red,linecolor=white]{.35}}
\multiput(-2,-1.96)(1,0){4}{\pscircle[fillstyle=solid,fillcolor=red,linecolor=white]{.35}}
\put(-1.1,2){\makebox(0,0)[br]{\hbox{{$A$}}}}
\put(-2,-0.28){\makebox(0,0)[cc]{\hbox{{$B$}}}}
\put(-0.5,-2.5){\makebox(0,0)[tc]{\hbox{{$C$}}}}
\put(1.1,2){\makebox(0,0)[bl]{\hbox{{$j$}}}}
\put(-2,0.3){\makebox(0,0)[br]{\hbox{{$k+1$}}}}
\put(-2.5,-0.54){\makebox(0,0)[br]{\hbox{{$k$}}}}
\put(1.5,-2.5){\makebox(0,0)[tc]{\hbox{{$i$}}}}
\put(0,-3.3){\makebox(0,0)[tc]{\hbox{{Case I}}}}
}
\end{pspicture}
$$
In Case I, given a path $j\to k$ encompassing the regions $A$ and $B$ and a path $k\to i$ encompassing the region $C$ we have the corresponding path $j\to k+1$ encompassing the regions $A$  and the path $k+1\to i$ encompassing the regions $B$ and $C$. These pairs of paths are in bijection being the only two possible combinations of paths having the same union of domains $A\cup B\cup C$. Contributions from these two pairs of paths have opposite signs and since $\col{CB}\,\col{A}=q\col{C}\,\col{BA}\ $ they are mutually canceled in the sum (\ref{MM}).

The only pairs of paths ($j\to k$, $k\to i$)  that do not have counterparts are those for which the region $C$ is absent (Case II):
$$
\begin{pspicture}(-2.7,-3)(2.7,3){\psset{unit=0.8}
\newcommand{\PATGEN}{%
{\psset{unit=1}
\rput(0,0){\psline[linecolor=blue,linewidth=2pt](0,0)(.5,0.28)}
\rput(0,0){\psline[linecolor=blue,linewidth=2pt](0,0)(-0.5,0.28)}
\rput(0,0){\psline[linecolor=blue,linewidth=2pt](0,0)(0,-0.56)}
\put(0,0){\pscircle[fillstyle=solid,fillcolor=lightgray]{.1}}
\put(0,-0.56){\pscircle[fillstyle=solid,fillcolor=black]{.1}}
}}
\newcommand{\PATBOT}{%
{\psset{unit=1}
\rput(0,0){\psline[linecolor=blue,linewidth=2pt](0,0)(.5,0.28)}
\rput(0,0){\psline[linecolor=blue,linewidth=2pt](0,0)(-0.5,0.28)}
\rput(0,0){\psline[linecolor=blue,linewidth=2pt](0,0)(0,-0.56)}
\put(0,0){\pscircle[fillstyle=solid,fillcolor=lightgray]{.1}}
}}
\multiput(-2.5,-1.68)(1,0){6}{\PATBOT}
\multiput(-2,-0.84)(1,0){5}{\PATGEN}
\multiput(-1.5,0)(1,0){4}{\PATGEN}
\multiput(-1,0.84)(1,0){3}{\PATGEN}
\multiput(-0.5,1.68)(1,0){2}{\PATGEN}
\put(0,2.53){\PATGEN}
\multiput(-0.5,2.24)(1,0){2}{\pscircle[fillstyle=solid,fillcolor=magenta,linecolor=white]{.35}}
\multiput(-1,1.4)(1,0){3}{\pscircle[fillstyle=solid,fillcolor=magenta,linecolor=white]{.35}}
\multiput(-1.5,0.56)(1,0){3}{\pscircle[fillstyle=solid,fillcolor=magenta,linecolor=white]{.35}}
\multiput(-2,-0.28)(1,0){3}{\pscircle[fillstyle=solid,fillcolor=magenta,linecolor=white]{.35}}
\multiput(-2.5,-1.12)(1,0){3}{\pscircle[fillstyle=solid,fillcolor=magenta,linecolor=white]{.35}}
\multiput(-2,-1.96)(1,0){1}{\pscircle[fillstyle=solid,fillcolor=yellow,,linecolor=white]{.35}}
\put(-1.9,0.6){\makebox(0,0)[br]{\hbox{{$A$}}}}
\put(-2,-1.96){\makebox(0,0)[cc]{\hbox{{$B$}}}}
\put(1.6,1.16){\makebox(0,0)[bl]{\hbox{{$j$}}}}
\put(-3,-1.4){\makebox(0,0)[br]{\hbox{{$1$}}}}
\put(-1.5,-2.3){\makebox(0,0)[tc]{\hbox{{$i$}}}}
\put(0,-3.2){\makebox(0,0)[tc]{\hbox{{Case II}}}}
}
\end{pspicture}
$$
In this case, $\col{B}\,\col{A}=\col{BA}$, $k$ is necessarily equal $1$, and after removing the common leg, all these pairs of paths are in bijection with single paths going from right to left and encompassing the regions $A$ and $B$; note that these paths are exactly paths constituting the matrix $M_2$! So the sum in (\ref{MM}) just gives the matrix $M_2$ (up to the factor $QS$, which we can now reconstruct). 
We have therefore proved that 
$T_3 T_1=M_2$ 
\ $\square$

Note that we shall present below the second proof of the groupoid property using quantum Grassmannian.

\begin{remark}\label{rem:MM}
The semiclassical limit of Theorem \ref{th:MM} statement reads
$$
\{\sheet{1}M_1 \underset{,}{\otimes} \sheet{2} M_2\}=\sheet{2}M_2\otimes \sheet{1}M_1\Bigl(-\frac 1n \sheet{1}I\otimes \sheet{2}I+ r_{n}\Bigr)
$$
where
\be\label{r-matrix}
r_{n}=\sum_i \sheet{1}e_{ii}\otimes \sheet{2} e_{ii}+2\sum_{i>j} \sheet{1}e_{ij}\otimes \sheet{2} e_{ji}
\ee
is the semiclassical $r$-matrix. Equivalently,
$$
\{[M_1]_{ab},[M_2]_{cd}\}=-\frac1n [M_1]_{ab}[M_2]_{cd}+[M_1]_{ad}[M_2]_{cb}\theta(b-d),\quad \theta(x)=\begin{cases}
2,& \text{ if }x>0,\\ 
1, &\text{ if }x=0,\\
0, &\text{ if }x<0.
\end{cases}
$$
\end{remark}

\begin{remark}\label{rem:M-M}
Note that for the trigonometric $R$-matrix (\ref{R-matrix})  the quantum relation
$$
R^T_{n}(q)\sheet{1}M_i \otimes \sheet{2} M_i=\sheet{2}M_i\otimes \sheet{1}M_i R_{n}(q)
$$
has an equivalent form of writing 
$$
\sheet{1}M_i \otimes \sheet{2} M_iR^T_{n}(q)=R_{n}(q)\sheet{2}M_i\otimes \sheet{1}M_i  \quad \hbox{for $i=1,2$.}
$$
Both these relations generate the same quantum algebra on elements of the matrices $M_1$ and $M_2$ and have the same semiclassical limit
$$
\{[M_i]_{ab},[M_i]_{cd}\}=[M_i]_{ad}[M_i]_{cb}(\theta(b-d)-\theta(a-c))\quad i=1,2,\ 
\theta(x)=\begin{cases}
2,& \text{ if }x>0,\\ 
1, &\text{ if }x=0,\\
0, &\text{ if }x<0.
\end{cases}
$$
\end{remark}

\begin{example}
	For $n=3$ direct computations show
	$$\sheet{1}{M_1}\otimes \sheet{2}{M_2}=\sheet{2}{M_2}\otimes \sheet{1}{M_1} R_{3}(q),$$ where

	$
	R_{3}(q)=q^{-1/3}\left(\begin{array}{ccc|ccc|ccc}
	q & 0 & 0 & 0 & 0 & 0 & 0 & 0 & 0 \\
	0 & 1 & 0 & 0 & 0 & 0 & 0 & 0 & 0 \\
	0 & 0 & 1 & 0 & 0 & 0 & 0 & 0 & 0 \\
	\hline
	0 & q-q^{-1} & 0 & 1 & 0 & 0 & 0 & 0 & 0 \\
	0 & 0 & 0 & 0 & q & 0 & 0 & 0 & 0 \\
	0 & 0 & 0 & 0 & 0 & 1 & 0 & 0 & 0 \\
	\hline
	0 & 0 & q-q^{-1} & 0 & 0 & 0 & 1 & 0 & 0 \\
	0 & 0 & 0 & 0 & 0 & q-q^{-1} & 0 & 1 & 0 \\
	0 & 0 & 0 & 0 & 0 & 0 & 0 & 0 & q \\
	\end{array}\right)
	$
	
	Similarly, for both $i=1,2$, we have
	$$ R_{3}^T(q) \sheet{1}{M_i}\otimes\sheet{2}{M_i}=\sheet{2}{M_i}\otimes\sheet{1}{M_i}R_{3}(q)$$
	
\end{example}

Again, by direct computation one can check $T_2 T_3 T_1=\operatorname{1}$. 



\section{Goldman brackets and commutation relations between transport matrices}\label{s:Goldman}
\setcounter{equation}{0}

To obtain a full-dimensional (without zero entries) form of transport matrices we define transport matrices along more general paths.

Namely, let ${\mathbb G}=(G,\mathbb{M})$ be a disk $G$ with four marked boundary points $\mathbb{M}=\{A,B,D,C\}$ in clockwise order, $(\Delta ABC,\Delta BCD)$ be a triangulation of $G$ and we also assume clockwise orientation of all triangle sides inside every triangle.  The space $\A_{SL_n,{\mathbb G}}$ coincides with the space of quadruple of complete flags; one flag assigned to each marked point.  A snake inside a triangle determines a projective basis as explained above. Each oriented side of triangulation determines such a snake. It  gives a projective basis in $\mathbb{C}^n$. The orientation of $BC$ determines the triangle associated with this side whose orientation is compatible with the orientation of the side. For instance, side $B\to C$ determines the unique snake from $B$ to $C$ inside triangle $\Delta ABC$ the side $C\to B$  is associated with the snake from $C$ to $B$ inside triangle $\Delta BCD$ with corresponding projective basis chosen in each case. Transition matrix from basis associated to $BC$ to the one associated with $CB$ is the product $T_{BC\leftarrow CB}=S\dot H$ where $H$ is the diagonal matrix defined by the Fock-Goncharov parameters on the side $BC$.   This allows to define transport matrix  for  any pair of oriented sides as transition matrix for the pair of corresponding bases; the matrix $S$ acts by changing  the orientation of the corresponding side. Let $T_{BC\leftarrow AB}$ be a transport in $\Delta ABC$ from side $AB$ to $BC$.
Pay attention that the sides are oriented and the order of endpoints in side notation matters.
Similarly, $T_{CB\leftarrow DC}$ is a transport matrix in $\Delta BCD$ from $DC$ to $CB$. Note that  $T_{DC\leftarrow CB}=T_{CB\leftarrow DC}^{-1}$. Then,
we define a transport $T_{DC \leftarrow AB}$ from $AB$ to $DC$ as $T_{DC \leftarrow AB}=T_{DC\leftarrow CB} T_{CB\leftarrow BC} T_{BC\leftarrow AB}=
T_{CB\leftarrow DC}^{-1} T_{CB\leftarrow BC} T_{BC\leftarrow AB}$.  Similarly, $T_{BD \leftarrow AB}=T_{BD\leftarrow CB} T_{CB\leftarrow BC} T_{BC\leftarrow AB}$.

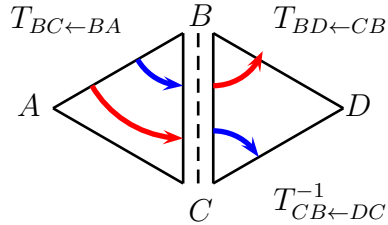
\begin{figure}[H]
{\psset{unit=1}
\begin{pspicture}(-2,-2)(2,2)
\newcommand{\TRI}{%
\psline[linewidth=1pt,linecolor=black](-1,0)(0,1.73)
\psline[linewidth=1pt,linecolor=black](-1,0)(1,0)
\psline[linewidth=1pt,linecolor=black](1,0)(0,1.73)
}
\rput{90}(-0.2,0){\TRI
\psarc[linewidth=2pt,linecolor=blue]{->}(1,0){0.7}{120}{180}
\psarc[linewidth=2pt,linecolor=red]{->}(1,0){1.4}{120}{180}
}
\psline[linewidth=1pt,linecolor=black,linestyle=dashed](0,1)(0,-1)
\rput{-90}(0.2,0){\TRI
\psarc[linewidth=2pt,linecolor=blue]{<-}(1,0){0.7}{120}{180}
\psarc[linewidth=2pt,linecolor=red]{->}(-1,0){0.7}{0}{70}
}
\put(-2.1,-0.1){\makebox(0,0)[rb]{\hbox{{$A$}}}}
\put(0.2,1.1){\makebox(0,0)[rb]{\hbox{{$B$}}}}
\put(0.2,-1.5){\makebox(0,0)[rb]{\hbox{{$C$}}}}
\put(2.3,-0.1){\makebox(0,0)[rb]{\hbox{{$D$}}}}

\put(-1,1){\makebox(0,0)[rb]{\hbox{{$T_{BC\leftarrow BA}$}}}}
\put(1,1){\makebox(0,0)[lb]{\hbox{{$T_{BD\leftarrow CB}$}}}}
\put(1,-1){\makebox(0,0)[lt]{\hbox{{$T_{CB\leftarrow DC}^{-1}$}}}}
\end{pspicture}
}
\caption{Disk with four marked points.}\label{fi:Disk4}
\end{figure}

To describe the quantum case, we split each quantum Fock-Goncharov parameter (or quantum cluster parameter) on the side $BC$ into a product of two, one inside triangle $\Delta ABC$ the other inside triangle $\Delta BCD$. The quantum parameters in triangle $\Delta ABC$ commute with those of triangle $\Delta BCD$, so the product of two Weyl-ordered monomials is itself a Weyl-ordered monomial, in which we perform an amalgamation of variables on the side $BC$. Due to the double action of the matrix $S$, the amalgamation of  boundary (frozen) variables in neighbor triangles respects the surface orientation, so, we amalgamate pairwise variables on the sides $BC$ of the two triangles ordered in the same direction, from $B$ to $C$. After the amalgamation, we unfreeze the obtained new variables. Therefore, in a network on a surface obtained as a union of several triangles, the Weyl ordering of weights of any path that does not go through any given triangle more than once is the  product of Weyl orderings of weights inside each triangle. It was explicitly demonstrated in \cite{Gus-Al} that the weights defined by such Weyl ordering are preserved by quantum mutations which now include mutations of amalgamated variables as well as mutations of variables in the interior of triangles.


We now show that the commutation relations from Theorem~\ref{th:MM} together with the groupoid condition (Theorem~\ref{th:groupoid}) imply the commutativity relations and Goldman brackets.

We begin with the pattern in the figure below. 

$$
{\psset{unit=1}
\begin{pspicture}(-2,-2)(2,2)
\newcommand{\TRI}{%
\psline[linewidth=1pt,linecolor=black](-1,0)(0,1.73)
\psline[linewidth=1pt,linecolor=black](-1,0)(1,0)
\psline[linewidth=1pt,linecolor=black](1,0)(0,1.73)
}
\rput{90}(-0.2,0){\TRI
\psarc[linewidth=2pt,linecolor=blue]{->}(1,0){0.7}{110}{180}
\psarc[linewidth=2pt,linecolor=red]{<-}(-1,0){0.7}{0}{80}
}
\psline[linewidth=1pt,linecolor=black,linestyle=dashed](0,1)(0,-1)
\rput{-90}(0.2,0){\TRI
\psarc[linewidth=2pt,linecolor=red]{<-}(1,0){0.7}{110}{180}
\psarc[linewidth=2pt,linecolor=blue]{->}(-1,0){0.7}{0}{80}
}
\put(-1,1){\makebox(0,0)[rb]{\hbox{{$\sheet{1}{M_l^{-1}}$}}}}
\put(-1,-1){\makebox(0,0)[rt]{\hbox{{$\sheet{2}{M_k^{-1}}$}}}}
\put(1,1){\makebox(0,0)[lb]{\hbox{{$\sheet{1}{M_j}$}}}}
\put(1,-1){\makebox(0,0)[lt]{\hbox{{$\sheet{2}{M_i}$}}}}
\put(0,1.2){\makebox(0,0)[cb]{\hbox{{$\sheet{1}{S}\otimes \sheet{2}{S}$}}}}
\end{pspicture}
}
$$
We use the identities 
\begin{align*}&\sheet{1}{M_l^{-1}}\otimes \sheet{2}{M_k^{-1}}=R_{n}(q) \sheet{2}{M_k^{-1}}\otimes \sheet{1}{M_l^{-1}}\\
&\sheet{1}{M_j}\otimes \sheet{2}{M_i}=\sheet{2}{M_i}\otimes \sheet{1}{M_j}R_{n}^{-\text{T}}(q)\\
&R_{n}(q) \sheet{1}{S}\otimes \sheet{2}{S}=\sheet{1}{S}\otimes \sheet{2}{S} R_{n}(q)^{\text{T}}, 
\end{align*}
where the last identity holds for any antidiagonal matrix $S$ whose elements commutes with all elements of quantum torus.

We then have
\begin{align*}
&[\sheet{1}{M_j}\sheet{1}{S}\sheet{1}{M_l^{-1}}]\otimes [\sheet{2}{M_i}\sheet{2}{S}\sheet{2}{M_k^{-1}}]
=\sheet{2}{M_i} \otimes \sheet{1}{M_j}  R_{n}^{-\text{T}}(q) \sheet{1}{S}\otimes \sheet{2}{S} R_{n}(q) \sheet{2}{M_k^{-1}}\otimes \sheet{1}{M_l^{-1}}\\
=&\sheet{2}{M_i} \otimes \sheet{1}{M_j} \sheet{1}{S}\otimes \sheet{2}{S}  R_{n}^{-1}(q) R_{n}(q) \sheet{2}{M_k^{-1}}\otimes \sheet{1}{M_l^{-1}}
=[\sheet{2}{M_i}\sheet{2}{S}\sheet{2}{M_k^{-1}}]\otimes [\sheet{1}{M_j}\sheet{1}{S}\sheet{1}{M_l^{-1}}],
\end{align*}
so two {transport matrices corresponding to nonintersecting paths}  commute. This is consistent with the quantum mapping class group transformations: flipping $BC$ edge separates the paths $AB\to BD$ and $AC\to CD$ into two adjacent triangles.

Consider now the case of two \emph{intersecting} paths (we consider a single intersection inside a quadrangle).
$$
{\psset{unit=1}
\begin{pspicture}(-2,-2)(2,2)
\newcommand{\TRI}{%
\psline[linewidth=1pt,linecolor=black](-1,0)(0,1.73)
\psline[linewidth=1pt,linecolor=black](-1,0)(1,0)
\psline[linewidth=1pt,linecolor=black](1,0)(0,1.73)
}
\rput{90}(-0.2,0){\TRI
\psarc[linewidth=2pt,linecolor=red]{->}(1,0){0.7}{110}{180}
\psarc[linewidth=2pt,linecolor=blue]{<-}(-1,0){0.7}{0}{80}
}
\psline[linewidth=1pt,linecolor=black,linestyle=dashed](0,1)(0,-1)
\rput{-90}(0.2,0){\TRI
\psarc[linewidth=2pt,linecolor=red]{<-}(1,0){0.7}{110}{180}
\psarc[linewidth=2pt,linecolor=blue]{->}(-1,0){0.7}{0}{80}
}
\put(-1,1){\makebox(0,0)[rb]{\hbox{{$\sheet{2}{M_l^{-1}}$}}}}
\put(-1,-1){\makebox(0,0)[rt]{\hbox{{$\sheet{1}{M_k^{-1}}$}}}}
\put(1,1){\makebox(0,0)[lb]{\hbox{{$\sheet{1}{M_j}$}}}}
\put(1,-1){\makebox(0,0)[lt]{\hbox{{$\sheet{2}{M_i}$}}}}
\put(0,1.2){\makebox(0,0)[cb]{\hbox{{$\sheet{1}{\tcb{S}}\otimes \sheet{2}{\tcb{S}}$}}}}
\end{pspicture}
}
$$
We then have
\begin{align*}
&q^{1/n}[\sheet{1}{M_j}\sheet{1}{S}\sheet{1}{M_k^{-1}}]\otimes [\sheet{2}{M_i}\sheet{2}{S}\sheet{2}{M_l^{-1}}]
-q^{-1/n} [\sheet{2}{M_i}\sheet{2}{S}\sheet{2}{M_l^{-1}}] \otimes [\sheet{1}{M_j}\sheet{1}{S}\sheet{1}{M_k^{-1}}] \\
=&\sheet{2}{M_i} \otimes \sheet{1}{M_j}  \sheet{1}{S}\otimes \sheet{2}{S}  [q^{1/n} R_{12}^{-1}(q)-q^{-1/n} R_{12}^{\text{T}}(q)] \sheet{1}{M_k^{-1}} \otimes \sheet{2}{M_l^{-1}} \\
=&(q^{-1}{-}q) \sheet{2}{M_i} \otimes \sheet{1}{M_j} \sheet{1}{S}\otimes \sheet{2}{S}  P_{12} \sheet{1}{M_l^{-1}} \otimes  \sheet{2}{M_k^{-1}}=
(q^{-1}{-}q) \sheet{2}{M_i} \sheet{2}{S} \sheet{2}{M_l^{-1}} \otimes   \sheet{1}{M_j} \sheet{1}{S}  \sheet{1}{M_k^{-1}} P_{n}  .
\end{align*}
So we have a {\sl quantum Goldman relation}
$$
\begin{pspicture}(-4,-1)(4,0.7){\psset{unit=0.7}
\rput(-5,0){
\psclip{\pscircle[linewidth=1.5pt, linestyle=dashed](0,0){1}}
\rput(0,0){\psline[linewidth=1.5pt,linecolor=red]{->}(-0.7,0.7)(0.7,-0.7)}
\rput(0,0){\psline[linewidth=3pt,linecolor=white](-0.7,-0.7)(0.7,0.7)}
\rput(0,0){\psline[linewidth=1.5pt,linecolor=blue]{->}(-0.7,-0.7)(0.7,0.7)}
\endpsclip
\rput(-1.2,0){\makebox(0,0)[rc]{$q^{1/n}$}}
}
\rput(-1,0){
\psclip{\pscircle[linewidth=1.5pt, linestyle=dashed](0,0){1}}
\rput(0,0){\psline[linewidth=1.5pt,linecolor=blue]{->}(-0.7,-0.7)(0.7,0.7)}
\rput(0,0){\psline[linewidth=3pt,linecolor=white](-0.7,0.7)(0.7,-0.7)}
\rput(0,0){\psline[linewidth=1.5pt,linecolor=red]{->}(-0.7,0.7)(0.7,-0.7)}
\endpsclip
\rput(-1.2,0){\makebox(0,0)[rc]{$-q^{-1/n}$}}
}
\rput(4,0){
\psclip{\pscircle[linewidth=1.5pt, linestyle=dashed](0,0){1}}
\rput(0,-1.4){\psarc[linewidth=1.5pt,linecolor=magenta, linestyle=dashed]{<-}(0,0){1}{45}{135}}
\rput(0,1.4){\psarc[linewidth=1.5pt,linecolor=orange, linestyle=dashed]{->}(0,0){1}{225}{315}}
\endpsclip
\rput(-1.2,0){\makebox(0,0)[rc]{$=(q^{-1}-q)$}}
\rput(1.2,0){\makebox(0,0)[lc]{$P_{n}$}}
}
}
\end{pspicture}
$$
with $P_n$ the permutation matrix. In the semiclassical limit with $q=e^{\hbar/2}$, the term linear in $\hbar$ gives rise to the {\sl Goldman bracket} for $SL_n$ \cite{Gold}.

Let a polygon be triangulated into collection of triangles containing triangle $\Delta ABC$, let $EF$ be another side of triangulation different from sides of $\Delta ABC$, let $\gamma$ be a path connecting $EF$ to $AB$ crossing any side of triangulation at most once and crossing  neither $AC$ nor $BC$
(see Figure~\ref{fig:full_size_matrices}).
Denote by $T_\gamma=T_{BA\leftarrow EF}$ the composition of transport matrices along the path $\gamma$, define transport matrices $M_1=T^{-1}_{AB\leftarrow CA} S T_\gamma$, $M_2=T_{BC\leftarrow AB} S T_\gamma$.

\begin{figure}[htbp]
\begin{center}
$$
{\psset{unit=1}
\begin{pspicture}(-2,-2)(2,2)
\newcommand{\TRI}{%
\psline[linewidth=1pt,linecolor=black](-1,0)(0,1.73)
\psline[linewidth=1pt,linecolor=black](-1,0)(1,0)
\psline[linewidth=1pt,linecolor=black](1,0)(0,1.73)
}
\newcommand{\POLI}{%
\psline[linewidth=1pt,linecolor=black](0,-1)(0,1)
\psline[linewidth=1pt,linecolor=black,linestyle=dashed](0,1)(0.5,1.2)
\psline[linewidth=1pt,linecolor=black,linestyle=dashed](0.5,1.2)(1,1)
\psline[linewidth=1pt,linecolor=black](1,1)(1,-1)
\psline[linewidth=1pt,linecolor=black,linestyle=dashed](0,-1)(0.5,-0.8)
\psline[linewidth=1pt,linecolor=black,linestyle=dashed](0.5,-0.8)(1,-1)
}
\rput{90}(-0.2,0){\TRI
\psarc[linewidth=2pt,linecolor=blue]{<-}(1,0){0.7}{120}{180}
\psarc[linewidth=2pt,linecolor=red]{->}(-1,0){0.7}{0}{70}
}
\psline[linewidth=1pt,linecolor=black,linestyle=dashed](0,1)(0,-1)
\rput{0}(0.2,0){\POLI
\psarc[linewidth=2pt,linecolor=red]{->}(0.5,-0.8){1.2}{65}{120}
\psarc[linewidth=2pt,linecolor=blue]{->}(0.5,-1.4){1.2}{65}{120}
}
\put(-2.1,-0.1){\makebox(0,0)[rb]{\hbox{{$C$}}}}
\put(0.2,1.1){\makebox(0,0)[rb]{\hbox{{$A$}}}}
\put(0.2,-1.5){\makebox(0,0)[rb]{\hbox{{$B$}}}}

\put(-0.4,1){\makebox(0,0)[rb]{\hbox{{$T^{-1}_{AB\leftarrow CA}$}}}}
\put(-0.4,-1.2){\makebox(0,0)[rb]{\hbox{{$T_{BC\leftarrow AB}$}}}}
\put(1,0.3){\makebox(0,0)[rt]{\hbox{{$T_\gamma$}}}}
\put(1.2,1){\makebox(0,0)[lb]{\hbox{{$E$}}}}
\put(1.2,-1){\makebox(0,0)[lt]{\hbox{{$F$}}}}
\end{pspicture}
}
$$
\caption{$M_1=T_{CA\leftarrow AB} S T_{BA\leftarrow EF}$, $M_2=T_{BC\leftarrow AB} S T_{BA\leftarrow EF}$.}
\label{fig:full_size_matrices}
\end{center}
\end{figure}
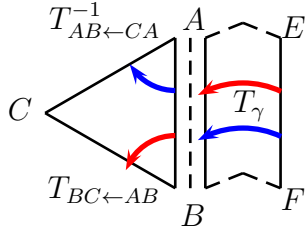


\begin{theorem}\label{th:MMsquare}
The transport matrices $M_1$ and $M_2$ in Fig.~\ref{fig:full_size_matrices} satisfy the commutation relations
$$
\sheet{1}M_1 \otimes \sheet{2} M_2=\sheet{2}M_2\otimes \sheet{1}M_1 R_{n}(q), 
$$
$$
R^T_{n}(q)\sheet{1}M_i \otimes \sheet{2} M_i=\sheet{2}M_i\otimes \sheet{1}M_i R_{n}(q) \quad \hbox{for $i=1,2$.}
$$
and
$$M_2^{-1} M_{BC\leftarrow CA} M_1=\operatorname{1}.$$
\end{theorem}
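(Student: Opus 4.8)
The plan is to exploit the common factorization visible in Fig.~\ref{fig:full_size_matrices}: both matrices carry the same right factor $N:=S\,T_\gamma$, while their left factors $L_1:=T_{CA\leftarrow AB}$ and $L_2:=T_{BC\leftarrow AB}$ are the transport matrices of the single triangle $\triangle ABC$ from the shared side $AB$ to the sides $CA$ and $BC$. These local transports are precisely the matrices $M_1,M_2$ of Theorem~\ref{th:MM} applied to one triangle, so I may assume they already obey the local mixed relation $\sheet{1}{L_1}\otimes\sheet{2}{L_2}=\sheet{2}{L_2}\otimes\sheet{1}{L_1}R_{n}(q)$ and the local self-relations $R_{n}^{\text{T}}(q)\sheet{1}{L_i}\otimes\sheet{2}{L_i}=\sheet{2}{L_i}\otimes\sheet{1}{L_i}R_{n}(q)$. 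The entire theorem then becomes the problem of transporting these local relations through the common factor $N$.

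The heart of the argument is the auxiliary intertwining identity $R_{n}(q)\,\sheet{1}{N}\otimes\sheet{2}{N}=\sheet{2}{N}\otimes\sheet{1}{N}\,R_{n}(q)$, which I abbreviate $(\ddagger)$. To prove $(\ddagger)$ I would combine two inputs. The first is the antidiagonal identity $R_{n}(q)\,\sheet{1}{S}\otimes\sheet{2}{S}=\sheet{1}{S}\otimes\sheet{2}{S}\,R_{n}^{\text{T}}(q)$ already used in the Goldman computation above. The second is that $T_\gamma$, being a transport matrix of the amalgamated planar network swept out by $\gamma$, itself satisfies the Theorem~\ref{th:MM} self-relation $R_{n}^{\text{T}}(q)\,\sheet{1}{T_\gamma}\otimes\sheet{2}{T_\gamma}=\sheet{2}{T_\gamma}\otimes\sheet{1}{T_\gamma}\,R_{n}(q)$; this is where Lemma~\ref{lem:r-matrix}, via the amalgamation of frozen edge variables (cf.~\cite{Gus-Al}), enters. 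Writing $\sheet{1}{N}\otimes\sheet{2}{N}=(\sheet{1}{S}\otimes\sheet{2}{S})(\sheet{1}{T_\gamma}\otimes\sheet{2}{T_\gamma})$, which is legitimate since the central entries of $S$ commute across tensor factors, and pushing $R_{n}$ first past $\sheet{1}{S}\otimes\sheet{2}{S}$ — which converts it to $R_{n}^{\text{T}}$ — and then past $\sheet{1}{T_\gamma}\otimes\sheet{2}{T_\gamma}$, the two $R_{n}\leftrightarrow R_{n}^{\text{T}}$ exchanges compensate and recover $R_{n}$ on the far right, giving $(\ddagger)$.

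Granting $(\ddagger)$, each R-matrix relation follows by a short chain whose only further input is that the variables of $N$, living in the triangles crossed by $\gamma$, commute with those of $L_1,L_2$, living in $\triangle ABC$, so that $\sheet{1}{N}$ commutes with $\sheet{2}{L_i}$ and $\sheet{2}{N}$ with $\sheet{1}{L_i}$. For the mixed relation I would compute $\sheet{1}{M_1}\otimes\sheet{2}{M_2}=(\sheet{1}{L_1}\otimes\sheet{2}{L_2})(\sheet{1}{N}\otimes\sheet{2}{N})=(\sheet{2}{L_2}\otimes\sheet{1}{L_1})R_{n}(q)(\sheet{1}{N}\otimes\sheet{2}{N})=(\sheet{2}{L_2}\otimes\sheet{1}{L_1})(\sheet{2}{N}\otimes\sheet{1}{N})R_{n}(q)=(\sheet{2}{M_2}\otimes\sheet{1}{M_1})R_{n}(q)$, using in turn the commuting of factors, the local mixed relation, $(\ddagger)$, and the commuting again. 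The self-relations for $i=1,2$ are obtained by the identical chain with the local self-relation in place of the mixed one.

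The identity $M_2^{-1}M_{BC\leftarrow CA}M_1=\operatorname{1}$ is the cleanest of the three: substituting the factorizations, the outer factors $T_\gamma^{-1}$ and $T_\gamma$ cancel and the central $S^{-1}$ and $S$ cancel, leaving $T_{BC\leftarrow AB}^{-1}M_{BC\leftarrow CA}T_{CA\leftarrow AB}$, which equals the identity precisely because $M_{BC\leftarrow CA}=T_{BC\leftarrow AB}\,T_{AB\leftarrow CA}$ is the composition (cocycle) law for transports inside $\triangle ABC$ — a restatement of the quantum groupoid condition of Theorem~\ref{th:groupoid}. I expect the main obstacle to be step $(\ddagger)$, and beneath it the input that a transport matrix along a path crossing several amalgamated triangles still satisfies the single-matrix R-matrix relation: verifying that the pairwise amalgamation of frozen boundary variables on shared edges respects orientation and Weyl ordering, so that Lemma~\ref{lem:r-matrix} genuinely applies to $T_\gamma$, is the one structural point that must be handled with care. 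Once it is granted, everything else is bookkeeping of which tensor factors commute, together with the two $R_{n}\leftrightarrow R_{n}^{\text{T}}$ swaps produced by $S$.
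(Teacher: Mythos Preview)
Your proposal is correct and follows essentially the same route as the paper: factor $M_i=L_i\,S\,T_\gamma$, use that the single-triangle transports $L_1,L_2$ obey Theorem~\ref{th:MM}, that $T_\gamma$ (built from transports in triangles disjoint from $\triangle ABC$) obeys the self-relation $R_n^{\text T}\sheet{1}{T_\gamma}\sheet{2}{T_\gamma}=\sheet{2}{T_\gamma}\sheet{1}{T_\gamma}R_n$, and push $R_n$ through via the $S$-identity $R_n\,\sheet{1}{S}\sheet{2}{S}=\sheet{1}{S}\sheet{2}{S}\,R_n^{\text T}$. The only cosmetic difference is that the paper keeps $S$ and $T_\gamma$ separate in the chain and justifies the self-relation for $T_\gamma$ by writing it as an alternating product $M^{(m-1)}S\cdots SM^{(1)}$ of single-triangle transports, whereas you bundle $N=ST_\gamma$ and invoke Lemma~\ref{lem:r-matrix} on the amalgamated network; the underlying computation and the treatment of the groupoid identity via Theorem~\ref{th:groupoid} are the same.
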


{\bf Proof}. Consider a transport matrix corresponding to a path not passing twice through the same triangle. It is given by the matrix product $M^{(m-1)}_{i_{m-1}}S\cdots S M^{(2)}_{i_2} S M^{(1)}_{i_1}=T_{BA\leftarrow EF}$ where $i_k=1,2$ and variables of $M^{(k)}_{i_k}$ and $M^{(p)}_{i_p}$ commute for distinct $k$ and $p$. Every such product satisfies the relation $R^{\text{T}}_{n}(q)\sheet{1}T \otimes \sheet{2} T=\sheet{2}T\otimes \sheet{1}T R_{n}(q)$. Then,
\begin{align*}
&\sheet{1}M{}^{(m)}_1 \sheet{1} S \sheet{1} T \otimes \sheet{2}M{}^{(m)}_2 \sheet{2} S \sheet{2} T= \sheet{1}M{}^{(m)}_1 \otimes \sheet{2}M{}^{(m)}_2  \sheet{1} S\otimes  \sheet{2} S \sheet{1} T \otimes \sheet{2} T =  \sheet{2}M{}^{(m)}_2 \otimes \sheet{1}M{}^{(m)}_1 R_{n}(q)\sheet{1} S\otimes  \sheet{2} S \sheet{1} T \otimes \sheet{2} T\\
=&\sheet{2}M{}^{(m)}_2 \sheet{2} S \otimes \sheet{1}M{}^{(m)}_1 \sheet{1} S R^{\text{T}}_{n}(q) \sheet{1} T \otimes \sheet{2} T
=\sheet{2}M{}^{(m)}_2 \sheet{2} S \sheet{2} T \otimes \sheet{1}M{}^{(m)}_1 \sheet{1} S \sheet{1} T  R_{n}(q).\quad \square
\end{align*}

General algebras of transport matrices in an ideal triangle decomposition of $\Sigma_{g,s,p}$---a genus $g$ Riemann surface with $s$ holes and $p>0$ marked points on the hole boundaries are governed by a quantum version of the Fock--Rosly Poisson algebra \cite{Fock-Rosly} also considered in \cite{CMR}. 


\section{Reflection equation and groupoid of upper triangular matrices}\label{s:reflection}
\setcounter{equation}{0}

\subsection{Representing an upper-triangular $\mathbb A$}

We assume again that both $M_1$ and $M_2$ are triangular matrices.

The main theorem concerns a special combination of $M_1$ and $M_2$:
\be\label{A}
\mathbb A:=M_1^{\text{T}}M_2.
\ee
Note that the transposition in the quantum case if formal: the quantum ordering is preserved, only matrix elements are permuted. Also, since $M_1$ and $M_1^{\text{T}}$ are upper-anti-diagonal matrices and $M_2$ is a lower-anti-diagonal matrix, the matrix $\mathbb A$ is automatically upper-triangular.

\begin{theorem}\label{th:A}
The matrix $\mathbb A=M_1^{\text{T}}M_2$ satisfies the quantum reflection equation
$$
R_{n}(q)\sheet{1}{\mathbb A} R_{n}^{\text{t}_1}(q)\sheet{2} {\mathbb A}=
\sheet{2}{\mathbb A} R_{n}^{\text{t}_1}(q)\sheet{1} {\mathbb A}R_{n}(q)
$$
with the trigonometric $R$-matrix (\ref{R-matrix}), where $R_{n}^{\text{t}_1}(q)$ is a partially transposed (w.r.t. the first space) $R$-matrix.
\end{theorem}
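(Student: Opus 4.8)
The plan is to substitute $\mathbb A=M_1^{\text T}M_2$ into both sides of the reflection equation and reduce everything to the three relations of Theorem~\ref{th:MM}, using only the formal (quantum) transpose, which permutes tensor-matrix indices \emph{without} reordering the non-commuting operator entries. Writing $\sheet{1}{\mathbb A}=\sheet{1}{M_1^{\text T}}\,\sheet{1}{M_2}$ and $\sheet{2}{\mathbb A}=\sheet{2}{M_1^{\text T}}\,\sheet{2}{M_2}$, the left-hand side of the reflection equation becomes $R_{n}(q)\,\sheet{1}{M_1^{\text T}}\,\sheet{1}{M_2}\,R_{n}^{\text{t}_1}(q)\,\sheet{2}{M_1^{\text T}}\,\sheet{2}{M_2}$ and the right-hand side $\sheet{2}{M_1^{\text T}}\,\sheet{2}{M_2}\,R_{n}^{\text{t}_1}(q)\,\sheet{1}{M_1^{\text T}}\,\sheet{1}{M_2}\,R_{n}(q)$.

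First I would record four auxiliary relations obtained from Theorem~\ref{th:MM} by transposition. (a) Applying the full formal transpose to the equivalent form $\sheet{1}{M_1}\sheet{2}{M_1}R_{n}^{\text T}(q)=R_{n}(q)\sheet{2}{M_1}\sheet{1}{M_1}$ of Remark~\ref{rem:M-M} yields $R_{n}(q)\,\sheet{1}{M_1^{\text T}}\,\sheet{2}{M_1^{\text T}}=\sheet{2}{M_1^{\text T}}\,\sheet{1}{M_1^{\text T}}\,R_{n}^{\text T}(q)$; one uses here that the formal transpose reverses the order of a product with a $c$-number $R$-factor while keeping the order of the two operator legs. (b) The $i=2$ relation is kept as is: $R_{n}^{\text T}(q)\,\sheet{1}{M_2}\,\sheet{2}{M_2}=\sheet{2}{M_2}\,\sheet{1}{M_2}\,R_{n}(q)$. (c) The partial transpose $\text{t}_1$ of the cross relation $\sheet{1}{M_1}\sheet{2}{M_2}=\sheet{2}{M_2}\sheet{1}{M_1}R_{n}(q)$ gives $\sheet{1}{M_1^{\text T}}\,\sheet{2}{M_2}=\sheet{2}{M_2}\,R_{n}^{\text{t}_1}(q)\,\sheet{1}{M_1^{\text T}}$, the key step being the index identity $(\sheet{2}{M_2}\sheet{1}{M_1}R_n)^{\text{t}_1}=\sheet{2}{M_2}\,R_{n}^{\text{t}_1}\,\sheet{1}{M_1^{\text T}}$. (d) Conjugating the cross relation by the permutation $P_n$ (so that $P_nR_n(q)P_n=R_{n}^{\text T}(q)$ by the ``total transposition $=$ relabelling'' property noted after~(\ref{R-matrix1})) and then taking the partial transpose $\text{t}_2$ gives $\sheet{1}{M_2}\,R_{n}^{\text{t}_1}(q)\,\sheet{2}{M_1^{\text T}}=\sheet{2}{M_1^{\text T}}\,\sheet{1}{M_2}$.

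Next I would use (d) to collapse the inner triple $\sheet{1}{M_2}\,R_{n}^{\text{t}_1}\,\sheet{2}{M_1^{\text T}}$ on the left-hand side, turning it into $R_{n}(q)\,\sheet{1}{M_1^{\text T}}\,\sheet{2}{M_1^{\text T}}\,\sheet{1}{M_2}\,\sheet{2}{M_2}$, and symmetrically use (c) to collapse $\sheet{2}{M_2}\,R_{n}^{\text{t}_1}\,\sheet{1}{M_1^{\text T}}$ on the right-hand side into $\sheet{2}{M_1^{\text T}}\,\sheet{1}{M_1^{\text T}}\,\sheet{2}{M_2}\,\sheet{1}{M_2}\,R_{n}(q)$. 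Both $R_{n}^{\text{t}_1}$ factors disappear, and it remains to prove $R_{n}(q)\,\sheet{1}{M_1^{\text T}}\sheet{2}{M_1^{\text T}}\,\sheet{1}{M_2}\sheet{2}{M_2}=\sheet{2}{M_1^{\text T}}\sheet{1}{M_1^{\text T}}\,\sheet{2}{M_2}\sheet{1}{M_2}\,R_{n}(q)$. This now factors cleanly: relation (a) moves the leading $R_n(q)$ through the first pair, producing a trailing $R_{n}^{\text T}(q)$, and relation (b) moves that $R_{n}^{\text T}(q)$ through the second pair, producing the trailing $R_n(q)$, which is exactly the right-hand side. This closes the argument.

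The main obstacle, and the only place real care is needed, is the transpose bookkeeping in steps (a), (c), (d): in the quantum setting the formal transpose permutes tensor-matrix indices while leaving the ordering of the operator-valued entries intact, so on a product of two $M$'s it is neither an algebra homomorphism nor an anti-homomorphism, and one must verify the elementary identity $(\sheet{2}{B}\,\sheet{1}{A}\,R_n)^{\text{t}_1}=\sheet{2}{B}\,R_{n}^{\text{t}_1}\,\sheet{1}{A^{\text T}}$ and its $\text{t}_2$ analogue by a direct index computation. Keeping straight the three avatars $R_n(q)$, $R_{n}^{\text T}(q)$ and $R_{n}^{\text{t}_1}(q)$---and in particular checking that the $\text{t}_1$-transpose of the cross relation produces precisely the $R_{n}^{\text{t}_1}(q)$ appearing in the reflection equation---is what makes the matching work; the normalization scalar $q^{-1/n}$ sits inside each $R$ and requires no separate treatment.
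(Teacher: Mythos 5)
Your proof is correct and takes essentially the same route as the paper's: the same four transposed avatars of the Theorem~\ref{th:MM} relations (total transpose of the $i=1$ relation, the $i=2$ relation as is, and the $\text{t}_1$- and swapped-$\text{t}_2$-transposes of the cross relation) are used to collapse the two $R_{n}^{\text{t}_1}(q)$ factors and then push $R_{n}(q)$ through the $M_1^{\text T}$-pair and $M_2$-pair, which is exactly the paper's chain of equalities read from both ends toward the middle. Your explicit verification of the formal-transpose identity $(\sheet{2}{B}\,\sheet{1}{A}\,R_n)^{\text{t}_1}=\sheet{2}{B}\,R_{n}^{\text{t}_1}\,\sheet{1}{A^{\text T}}$ only makes precise a step the paper leaves implicit.
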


The proof is a short direct calculation that uses only $R$-matrix relations. Note that transposing with respect to  the first space the second relation in Theorem~\ref{th:MM} we obtain
$$
\sheet{1}M{}_1^{\text{T}} \otimes \sheet{2} M_2=\sheet{2}M_2  R_{n}^{\text{t}_1}(q) M{}_1^{\text{T}} 
$$
and the total transposition of the first relation gives
$$
R_{n}(q) \sheet{1}M{}_1^{\text{T}} \otimes \sheet{2}M{}_1^{\text{T}}= \sheet{2}M{}_1^{\text{T}} \otimes \sheet{1}M{}_1^{\text{T}}R_{n}^{\text{T}} (q),
$$
so
\begin{align}
&R_{n}(q) \sheet{1}M{}_1^{\text{T}}  \sheet{1}M_2 R_{n}^{\text{t}_1}(q) \sheet{2}M{}_1^{\text{T}}  \sheet{2}M_2=
R_{n}(q) \sheet{1}M{}_1^{\text{T}} \sheet{2}M{}_1^{\text{T}}  \sheet{1}M_2 \sheet{2}M_2
=\sheet{2}M{}_1^{\text{T}} \sheet{1}M{}_1^{\text{T}} R_{n}^{\text{T}}(q)  \sheet{1}M_2 \sheet{2}M_2\nonumber\\
=&\sheet{2}M{}_1^{\text{T}} \sheet{1}M{}_1^{\text{T}}   \sheet{2}M_2 \sheet{1}M_2 R_{n}(q)
=\sheet{2}M{}_1^{\text{T}}  \sheet{2}M_2 R_{n}^{\text{t}_1}(q)  \sheet{1}M{}_1^{\text{T}}  \sheet{1}M_2 R_{n}(q),\nonumber
\end{align}
which completes the proof.

We thus conclude that {\sl we have a Darboux coordinate representation} for operators satisfying the reflection equation. Moreover
{\it all matrix elements of $\mathbb A$ are Laurent polynomials with positive coefficients of $Z_\alpha$ and $q$}. In particular, positive integers in equation (\ref{eq:An}) count numbers of monomials in the corresponding Laurent polynomials. 

By construction of quantum transport matrices in Sec.~\ref{sec:QFG}, all matrix elements of $M_1$ and $M_2$ are Weyl-ordered. For $[\mathbb A]_{i,j}=\sum\limits_{k=i}^j [M_1]_{k,i}[M_2]_{k,j}$ we obtain that for $i<j$, $[M_1]_{k,i}$ commutes with  $[M_2]_{k,j}$ (all paths contributing to $[M_1]_{k,i}$ are disjoint from all paths contributing to $[M_2]_{k,j}$ for $i<j$), so the corresponding products are automatically Weyl-ordered,  
$$
[\mathbb A]_{i,j}=\sum\limits_{k=i}^j \col{[M_1]_{k,i}[M_2]_{k,j}}.
$$ 
For $i=j$, the corresponding two paths share the common starting edge, and then 
$$
[\mathbb A]_{i,i}=q^{-1/2} \col{[M_1]_{i,i}[M_2]_{i,i}}.
$$ 
This explains the appearance of $q^{-1/2}$ factors on the diagonal of the quantum matrix $\mathbb A^\hbar$ (see (\ref{A-quantum}), \cite{ChM}). Note that all Weyl-ordered products of $Z_\alpha$ are self-adjoint and we assume that all Casimirs are also self-adjoint. We have that
$$
 \col{[M_1]_{i,i}[M_2]_{i,i}}=\prod_{j=1}^i K_j,
$$
where $K_j$ are special Casimirs (\ref{Cas-out}) of the $\mathcal A_n$-quiver introduced in the next section. 

To obtain a full-dimensional (not upper-triangular) form of the matrix $\mathbb A_{\text{gen}}$ let us consider adjoint action by any transport matrix:
\begin{theorem}\label{th:A-gen}
Any matrix $\mathbb A_{\text{gen}}:=M_\gamma^{\text{T}} S^{\text{T}}\mathbb A S M_\gamma$, where $M_\gamma$ is a (transport) matrix satisfying commutation relations of Theorem \ref{th:MM} and such that its elements commute with those of $\mathbb A=M_1^{\text{T}} M_2$, satisfies the quantum reflection equation of Theorem \ref{th:A}.
\end{theorem}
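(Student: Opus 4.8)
The plan is to reduce the statement to Theorem~\ref{th:A} by exhibiting $\mathbb A_{\text{gen}}$ as $\tilde M_1^{\text{T}}\tilde M_2$ for a new pair of matrices satisfying the hypotheses of that theorem. Set $\tilde M_1:=M_1 S M_\gamma$ and $\tilde M_2:=M_2 S M_\gamma$. Since $(M_1 S M_\gamma)^{\text{T}}=M_\gamma^{\text{T}}S^{\text{T}}M_1^{\text{T}}$, one has $\tilde M_1^{\text{T}}\tilde M_2=M_\gamma^{\text{T}}S^{\text{T}}M_1^{\text{T}}M_2 S M_\gamma=M_\gamma^{\text{T}}S^{\text{T}}\mathbb A S M_\gamma=\mathbb A_{\text{gen}}$. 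Hence it suffices to verify that $\tilde M_1,\tilde M_2$ satisfy the two relations of Theorem~\ref{th:MM}; applying Theorem~\ref{th:A} verbatim to the pair $(\tilde M_1,\tilde M_2)$ then yields the quantum reflection equation for $\mathbb A_{\text{gen}}$.

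To carry out the verification I would use three inputs: (i) $M_1$, $M_2$ and $M_\gamma$ each satisfy the relations of Theorem~\ref{th:MM}; (ii) the antidiagonal intertwiners $R_n\,\sheet{1}{S}\sheet{2}{S}=\sheet{1}{S}\sheet{2}{S}\,R_n^{\text{T}}$ and $\sheet{1}{S}\sheet{2}{S}\,R_n=R_n^{\text{T}}\,\sheet{1}{S}\sheet{2}{S}$ recorded above; and (iii) that the entries of $M_\gamma$ commute with those of $M_1$ and $M_2$, so that $\sheet{1}{M_\gamma}$ slides freely past $\sheet{2}{M_1}$, $\sheet{2}{M_2}$ and the scalar $\sheet{2}{S}$. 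Using (iii) one first rewrites $\sheet{1}{\tilde M_i}\sheet{2}{\tilde M_i}=\sheet{1}{M_i}\sheet{2}{M_i}\,\sheet{1}{S}\sheet{2}{S}\,\sheet{1}{M_\gamma}\sheet{2}{M_\gamma}$ (and likewise for the mixed product). The self-relation then collapses to the chain
\begin{align*}
R_n^{\text{T}}\,\sheet{1}{M_i}\sheet{2}{M_i}\,\sheet{1}{S}\sheet{2}{S}\,\sheet{1}{M_\gamma}\sheet{2}{M_\gamma}
&=\sheet{2}{M_i}\sheet{1}{M_i}\,R_n\,\sheet{1}{S}\sheet{2}{S}\,\sheet{1}{M_\gamma}\sheet{2}{M_\gamma}\\
&=\sheet{2}{M_i}\sheet{1}{M_i}\,\sheet{1}{S}\sheet{2}{S}\,R_n^{\text{T}}\,\sheet{1}{M_\gamma}\sheet{2}{M_\gamma}
=\sheet{2}{\tilde M_i}\sheet{1}{\tilde M_i}\,R_n,
\end{align*}
where the first equality is the Theorem~\ref{th:MM} relation for $M_i$, the second is the $S$-intertwiner, and the last repackages $R_n^{\text{T}}\,\sheet{1}{M_\gamma}\sheet{2}{M_\gamma}=\sheet{2}{M_\gamma}\sheet{1}{M_\gamma}\,R_n$; the mixed relation for $(\tilde M_1,\tilde M_2)$ follows identically, inserting $\sheet{1}{M_1}\sheet{2}{M_2}=\sheet{2}{M_2}\sheet{1}{M_1}R_n$ at the first step. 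This establishes the hypotheses of Theorem~\ref{th:A}.

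The principal point to pin down is input (iii): the clean reduction uses commutativity of $M_\gamma$ with $M_1$ and $M_2$ \emph{separately}, which is automatic when $\gamma$ runs through triangles disjoint from the one carrying $M_1,M_2$ (disjoint Fock--Goncharov variables) and is what ``its elements commute with those of $\mathbb A$'' means in the intended geometric setting. If one insists on only the literal hypothesis of commutativity with the product $\mathbb A$, the reduction is unavailable and the statement must instead be proved as an instance of covariance of the $R^{\text{t}_1}$-reflection equation under $\mathbb A\mapsto N^{\text{T}}\mathbb A N$ with $N=S M_\gamma$. One checks first that $R_n\,\sheet{1}{N}\sheet{2}{N}=\sheet{2}{N}\sheet{1}{N}\,R_n$ (a one-line consequence of the $M_\gamma$-relation and the $S$-intertwiners), then substitutes into the reflection equation and peels the $N,N^{\text{T}}$ factors outward. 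The hard part there is the crossing identities controlling how $\sheet{1}{N}$ and $\sheet{2}{N^{\text{T}}}$ pass through $R_n^{\text{t}_1}$: since $\text{t}_1$ is neither an algebra homomorphism nor an anti-homomorphism, these are not mere commute-through rules and must be extracted from the Yang--Baxter relation for $R_n$. This index bookkeeping, rather than any conceptual difficulty, is the main obstacle.
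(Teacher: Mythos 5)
Your reduction is exactly the paper's proof: the authors set $M_1'=M_1SM_\gamma$ and $M_2'=M_2SM_\gamma$, observe (via the argument of Theorem~\ref{th:MMsquare}) that this pair satisfies the commutation relations of Theorem~\ref{th:MM}, and conclude that $\mathbb A_{\text{gen}}={M_1'}^{\text{T}}M_2'$ satisfies the reflection equation by rerunning the proof of Theorem~\ref{th:A}. Your caveat that the reduction really uses commutativity of $M_\gamma$ with $M_1$ and $M_2$ separately (as holds for transport matrices built from disjoint triangles) is also how the paper implicitly reads its hypothesis, so no genuinely different route is needed.
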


The proof is again a direct computation; note also that if we represent $\mathbb A=M_1^{\text{T}} M_2$, then $M_1'=M_1SM_\gamma$ and $M_2'=M_2 SM_\gamma$ satisfy commutation relations from Theorem~\ref{th:MMsquare} and $\mathbb A_{\text{gen}}:={M_1'}^{\text{T}} M'_2$ then satisfy the quantum reflection equation.

{\section{The quiver for an upper-triangular $\mathbb A$ and the braid-group action}\label{s:braid}}
\setcounter{equation}{0}

In this section we first construct the quiver corresponding to the Darboux coordinates on the set ${\mathcal A}_n$ of upper-triangular matrices and, second, present the braid-group action on $\mathcal A_n$ via chains of mutations in the newly constructed quiver.
We would like to notice that  the quasi-cluster braid group action on the transport matrices and therefore on the framed moduli space ${\X}_{SL_n}$ was constructed in \cite{GS19}.
However, our construction seems to be different.

\subsection{$\mathcal A_n$-quiver}\label{ss:An-quiver}

Let us have a closer look on the structure of matrix entries of the product $[\mathbb A]_{i,j}:=\bigl[M_1^{\text{T}} M_2\bigr]_{i,j}=\sum_{k} (\mathcal M_1)_{k,i}(\mathcal M_2)_{k,j}$. All monomials contributing to $(\mathcal M_1)_{k,i}$ contain the same factor $\prod_{i=1}^k Z_{i,0,n-i}$ and all monomials contributing to  $(\mathcal M_2)_{k,j}$ contain  the same factor $\prod_{i=1}^k Z_{n-i,i,0}$, so the dependence of all elements of $[\mathbb A]_{i,j}$ on the frozen variables $Z_{i,0,n-i}$ and $Z_{n-i,i,0}$  is via their pairwise products, and we therefore amalgamate these variables pairwise thus obtaining  a single new variable 
$$
\bar Z_{i}:=\col{Z_{i,0,n-i}Z_{n-i,i,0}},\quad i=1,\dots,n-1.
$$ 
This results in a ``twisted'' pattern shown in Fig.~\ref{fi:amalgamation}. 

Another outcome of the above amalgamation procedure is that the resulting quiver admits $n-1$ new Casimirs depicted in Fig.~\ref{fi:diagonal-Casimirs}: 
Note that each of the new Casimirs $K_i$ is a monomial expression 
\be\label{Cas-out}
K_i:= \col{Z_{0,i,n-i}^2 \prod_{j=1}^{i-1} Z_{j,i-j,n-i} \bar Z_{i} \prod_{j=1}^{n-i-1} Z_{j,i,n-i-j}},
\ee
which contains exactly one square of one of the remaining frozen variable $Z_{0,i,n-i}$, and we have exactly one such Casimir per every remaining frozen variable $Z_{0,i,n-i}$. Let us change the system of coordinates in $\mathcal A_n$  replacing $Z_{0,i,n-i}$ by  $(K_i)^{1/2}$. Clearly, it is well defined nondegenerate change of coordinates. Fixing values of $K_i$'s leads to a Poisson submanifold $\mathcal U$. The remaining elements $Z_{a,b,c}$, $a>0$ still form log-canonical coordinate system on $\mathcal U$  whose Poisson bracket is still described by a quiver obtained by forgetting frozen variables $Z_{0,i,n-i}$.

Finally, we unfreeze the variables $\bar Z_{k}$, and then the connected part of the resulting quiver, which we call the {\sl $\mathcal A_n$-quiver}, contains only unfrozen variables. A  most symmetric way of vizualizing this quiver is to ``cut out'' the half-sized triangle located in the left-lower corner, then reflect this small triangle through the diagonal passing through its left-lower corner preserving the incidence relations for arrows in the both parts of the quiver, then glue pairwise the amalgamated variables  $Z_{k,0,n-k}$ and $Z_{n-k,k,0}$. Amalgamation operations become planar in this procedure schematically depicted below for $\mathcal A_5$.
$$
\begin{pspicture}(-3,-3.5)(2,2){
\newcommand{\PATGEN}{%
{\psset{unit=1}
\rput(0,0){\psline[linecolor=blue,linewidth=2pt]{->}(0,0)(.45,.765)}
\rput(0,0){\psline[linecolor=blue,linewidth=2pt]{->}(1,0)(0.1,0)}
\rput(0,0){\psline[linecolor=blue,linewidth=2pt]{->}(0,0)(.45,-.765)}
\put(0,0){\pscircle[fillstyle=solid,fillcolor=lightgray]{.1}}
}}
\newcommand{\PATLEFT}{%
{\psset{unit=1}
\rput(0,0){\psline[linecolor=blue,linewidth=2pt,linestyle=dashed]{->}(0,0)(.45,.765)}
\rput(0,0){\psline[linecolor=blue,linewidth=2pt]{->}(1,0)(0.1,0)}
\rput(0,0){\psline[linecolor=blue,linewidth=2pt]{->}(0,0)(.45,-.765)}
\put(0,0){\pscircle[fillstyle=solid,fillcolor=lightgray]{.1}}
}}
\newcommand{\PATRIGHT}{%
{\psset{unit=1}
\rput(0,0){\psline[linecolor=blue,linewidth=2pt,linestyle=dashed]{->}(0,0)(.45,-.765)}
\put(0,0){\pscircle[fillstyle=solid,fillcolor=lightgray]{.1}}
}}
\newcommand{\PATBOTTOM}{%
{\psset{unit=1}
\rput(0,0){\psline[linecolor=blue,linewidth=2pt]{->}(0,0)(.45,.765)}
\rput(0,0){\psline[linecolor=blue,linewidth=2pt,linestyle=dashed]{->}(1,0)(0.1,0)}
\put(0,0){\pscircle[fillstyle=solid,fillcolor=lightgray]{.1}}
}}
\newcommand{\PATTOP}{%
{\psset{unit=1}
\rput(0,0){\psline[linecolor=blue,linewidth=2pt]{->}(1,0)(0.1,0)}
\rput(0,0){\psline[linecolor=blue,linewidth=2pt]{->}(0,0)(.45,-.765)}
\put(0,0){\pscircle[fillstyle=solid,fillcolor=lightgray]{.1}}
}}
\newcommand{\PATBOTRIGHT}{%
{\psset{unit=1}
\rput(0,0){\psline[linecolor=blue,linewidth=2pt]{->}(0,0)(.45,.765)}
\put(0,0){\pscircle[fillstyle=solid,fillcolor=lightgray]{.1}}
\put(.5,0.85){\pscircle[fillstyle=solid,fillcolor=lightgray]{.1}}
}}
\pspolygon[linewidth=1pt,linecolor=black,linestyle=dashed, fillstyle=solid, fillcolor=gray, opacity=0.3](-0.5,-2.125)(-2,0.85)(-3.5,-2.125)
\psarc[linecolor=black, linewidth=2.5pt]{->}(-3.5,-2.125){0.5}{70}{360}
\multiput(-2.5,-0.85)(0.5,0.85){3}{\PATLEFT}
\multiput(-2,-1.7)(1,0){3}{\PATBOTTOM}
\put(-1,1.7){\PATTOP}
\multiput(-1.5,-0.85)(1,0){3}{\PATGEN}
\multiput(-1,0)(1,0){2}{\PATGEN}
\multiput(-.5,0.85)(1,0){1}{\PATGEN}
\multiput(-1.5,-0.85)(1,0){3}{\PATGEN}
\multiput(0,1.7)(0.5,-0.85){3}{\PATRIGHT}
\put(1,-1.7){\PATBOTRIGHT}
\psarc[linecolor=green, linewidth=1.5pt]{<->}(-1.9,-0.6){1.5}{80}{307}
\rput{27}(0,0){\psarc[linecolor=green, linewidth=1.5pt]{<->}(-1.9,-0.6){1.5}{87}{315}}
\psarc[linecolor=red, linewidth=1.5pt]{<->}(-1.9,0.15){1.85}{62}{265}
\psarc[linecolor=red, linewidth=1.5pt]{<->}(-0.85,-1.6){1.85}{157}{356}
\multiput(-1,1.7)(0.5,-0.85){5}{\pscircle[fillstyle=solid,fillcolor=red](0,0){.1}}
\multiput(-2.5,-0.85)(0.5,-0.85){2}{\pscircle[fillstyle=solid,fillcolor=red](0,0){.1}}
\multiput(-1.5,0.85)(0.5,-0.85){4}{\pscircle[fillstyle=solid,fillcolor=green](0,0){.1}}
\multiput(-2,0)(0.5,-0.85){3}{\pscircle[fillstyle=solid,fillcolor=green](0,0){.1}}
\psline[linecolor=black,linewidth=6pt]{->}(1.5,0)(2.5,0)
\psline[linecolor=white,linewidth=4pt]{->}(1.45,0)(2.4,0)
}
\end{pspicture}
\begin{pspicture}(-2.5,-3)(2,2)
\newcommand{\PATGEN}{%
{\psset{unit=1}
\psline[linecolor=blue,linewidth=2pt]{->}(-0.53,-0.73)(-0.07,0.9)
\psline[linecolor=blue,linewidth=2pt]{<-}(0.81,0.5)(0.1,0.93)
\psline[linecolor=blue,linewidth=2pt]{<-}(0.1,1.07)(1.1,1.56)
\psline[linecolor=blue,linewidth=2pt]{->}(-0.1,1.07)(-1.1,1.56)
\psline[linecolor=blue,linewidth=2pt]{<-}(1.1,1.62)(-1.1,1.62)
\pscircle[fillstyle=solid,fillcolor=green](0,1){.1}
\pscircle[fillstyle=solid,fillcolor=red](1.175,1.62){.1}
}}
\rput(0,0){\PATGEN}
\rput{72}(0,0){\PATGEN}
\rput{144}(0,0){\PATGEN}
\rput{216}(0,0){\PATGEN}
\rput{288}(0,0){\PATGEN}
\end{pspicture}
$$

Note each of $[n/2]$ original Casimirs of the $SL_n$-quiver gives rise to the corresponding Casimir element of the $\mathcal A_n$-quiver
\be\label{Cas-in}
C_k=\col{\bar Z_k \prod_{i=1}^{n-k-1}Z_{k,i,n-k-i}\bar Z_{n-k}  \prod_{j=1}^{k-1}Z_{n-k,j,k-j}},
\ee 
and in figures representing $\mathcal A_n$-quivers, cluster variables of sites of the same color contribute (all in power one) to the same Casimir. We present the $\mathcal A_n$-quivers for $n=3,4,5,6$ in Fig.~\ref{fi:An-quivers} where we indicate $[n/2]$ independent Casimir elements depicted in Fig.~\ref{fi:Casimirs-A}.

\begin{figure}[tb]
\begin{pspicture}(-1.2,-1)(1.2,1)
\newcommand{\PATGEN}{%
{\psset{unit=1}
\psline[linecolor=blue,linewidth=1.5pt]{->}(0.76,-0.57)(-0.76,-0.57)
\psline[linecolor=white,linewidth=2.5pt]{->}(0.76,-0.43)(-0.76,-0.43)
\psline[linecolor=blue,linewidth=1.5pt]{->}(0.76,-0.43)(-0.76,-0.43)
\pscircle[fillstyle=solid,fillcolor=red](0.866,-0.5){.1}
}}
\rput(0,0){\PATGEN}
\rput{120}(0,0){\PATGEN}
\rput{240}(0,0){\PATGEN}
\put(0,-0.8){\makebox(0,0)[tc]{\hbox{{$n=3$}}}}
\end{pspicture}
\begin{pspicture}(-1.3,-1.5)(1.3,1.5)
\newcommand{\PATGEN}{%
{\psset{unit=1}
\psline[linecolor=blue,linewidth=2pt]{->}(-0.9,1)(0.9,1)
\pscircle[fillstyle=solid,fillcolor=red](1,1){.1}
}}
\rput(0,0){\PATGEN}
\rput{90}(0,0){\PATGEN}
\rput{180}(0,0){\PATGEN}
\rput{270}(0,0){\PATGEN}
\psline[linecolor=white,linewidth=2.5pt]{->}(-0.91,0.94)(0.41,0.06)
\psline[linecolor=blue,linewidth=1.5pt]{->}(-0.91,0.94)(0.41,0.06)
\psline[linecolor=white,linewidth=2.5pt]{<-}(-0.958,0.916)(-0.542,0.084)
\psline[linecolor=blue,linewidth=1.5pt]{<-}(-0.958,0.916)(-0.542,0.084)
\psline[linecolor=white,linewidth=2.5pt]{->}(0.91,0.94)(-0.41,0.06)
\psline[linecolor=blue,linewidth=1.5pt]{->}(0.91,0.94)(-0.41,0.06)
\psline[linecolor=white,linewidth=2.5pt]{<-}(0.958,0.916)(0.542,0.084)
\psline[linecolor=blue,linewidth=1.5pt]{<-}(0.958,0.916)(0.542,0.084)
\psline[linecolor=white,linewidth=2.5pt]{<-}(-0.91,-0.94)(0.41,-0.06)
\psline[linecolor=blue,linewidth=1.5pt]{<-}(-0.91,-0.94)(0.41,-0.06)
\psline[linecolor=white,linewidth=2.5pt]{->}(-0.958,-0.916)(-0.542,-0.084)
\psline[linecolor=blue,linewidth=1.5pt]{->}(-0.958,-0.916)(-0.542,-0.084)
\psline[linecolor=white,linewidth=2.5pt]{<-}(0.91,-0.94)(-0.41,-0.06)
\psline[linecolor=blue,linewidth=1.5pt]{<-}(0.91,-0.94)(-0.41,-0.06)
\psline[linecolor=white,linewidth=2.5pt]{->}(0.958,-0.916)(0.542,-0.084)
\psline[linecolor=blue,linewidth=1.5pt]{->}(0.958,-0.916)(0.542,-0.084)
\pscircle[fillstyle=solid,fillcolor=green](-0.5,0){.1}
\pscircle[fillstyle=solid,fillcolor=green](0.5,0){.1}
\put(0,-1.2){\makebox(0,0)[tc]{\hbox{{$n=4$}}}}
\end{pspicture}
\begin{pspicture}(-2.5,-2.5)(2.5,2.5)
\newcommand{\PATGEN}{%
{\psset{unit=1}
\psline[linecolor=white,linewidth=2.5pt]{->}(-0.55,-0.7)(-0.003,0.9)
\psline[linecolor=blue,linewidth=1.5pt]{->}(-0.55,-0.7)(-0.003,0.9)
\psline[linecolor=blue,linewidth=2pt]{<-}(0.9,0.4)(0.1,0.93)
\psline[linecolor=blue,linewidth=2pt]{<-}(0.1,1.07)(1.1,1.56)
\psline[linecolor=blue,linewidth=2pt]{->}(-0.1,1.07)(-1.1,1.56)
\psline[linecolor=blue,linewidth=2pt]{<-}(1.1,1.62)(-1.1,1.62)
\pscircle[fillstyle=solid,fillcolor=green](0,1){.1}
\pscircle[fillstyle=solid,fillcolor=red](1.175,1.62){.1}
}}
\rput(0,0){\PATGEN}
\rput{72}(0,0){\PATGEN}
\rput{144}(0,0){\PATGEN}
\rput{216}(0,0){\PATGEN}
\rput{288}(0,0){\PATGEN}
\put(0,-2.2){\makebox(0,0)[tc]{\hbox{{$n=5$}}}}
\end{pspicture}
\begin{pspicture}(-3,-3.5)(3,3)
{\psset{unit=1}
\newcommand{\PATGEN}{%
{\psset{unit=1}
\psline[linecolor=blue,linewidth=2pt]{<-}(1,1.5)(2.5,1.5)
\psline[linecolor=blue,linewidth=2pt]{->}(-0.75,1.5)(0.75,1.5)
\psline[linecolor=blue,linewidth=2pt]{->}(-1,1.5)(-2.5,1.5)
\psline[linecolor=blue,linewidth=2pt]{->}(0,3)(2.5,1.56)
\pscircle[fillstyle=solid,fillcolor=green](0.866,1.5){.1}
\pscircle[fillstyle=solid,fillcolor=red](2.6,1.5){.1}
}}
\newcommand{\PATIN}{%
{\psset{unit=1}
\psline[linecolor=blue,linewidth=2pt]{<-}(-0.75,1.45)(-0.1,1.05)
\psline[linecolor=blue,linewidth=2pt]{->}(0.75,1.45)(0.1,1.05)
%
\psbezier[linecolor=white,linewidth=2.5pt]{->}(-0.84,-1.4)(-0.34,-0.8)(-0.1,0.5)(-0.04,0.9)
\psbezier[linecolor=blue,linewidth=1.5pt]{->}(-0.84,-1.4)(-0.34,-0.8)(-0.1,0.5)(-0.04,0.9)
\psbezier[linecolor=white,linewidth=2.5pt]{->}(-0.05,0.9)(-0.692,0.4)(-0.692,0.4)(-0.81,-0.42)
\psbezier[linecolor=blue,linewidth=1.5pt]{->}(-0.05,0.9)(-0.692,0.4)(-0.692,0.4)(-0.81,-0.42)
\psbezier[linecolor=white,linewidth=2.5pt]{<-}(0.84,-1.4)(0.34,-0.8)(0.1,0.5)(0.04,0.9)
\psbezier[linecolor=blue,linewidth=1.5pt]{<-}(0.84,-1.4)(0.34,-0.8)(0.1,0.5)(0.04,0.9)
\pscircle[fillstyle=solid,fillcolor=yellow](0,1){.1}
}}
\rput(0,0){\PATIN}
\rput{120}(0,0){\PATIN}
\rput{240}(0,0){\PATIN}
\rput(0,0){\PATGEN}
\rput{60}(0,0){\PATGEN}
\rput{120}(0,0){\PATGEN}
\rput{180}(0,0){\PATGEN}
\rput{240}(0,0){\PATGEN}
\rput{300}(0,0){\PATGEN}
\put(0,-3.3){\makebox(0,0)[tc]{\hbox{{$n=6$}}}}
}
\end{pspicture}
\caption{$\mathcal A_n$ -quivers for $n=3,4,5,6$.}
\label{fi:An-quivers}
\end{figure}
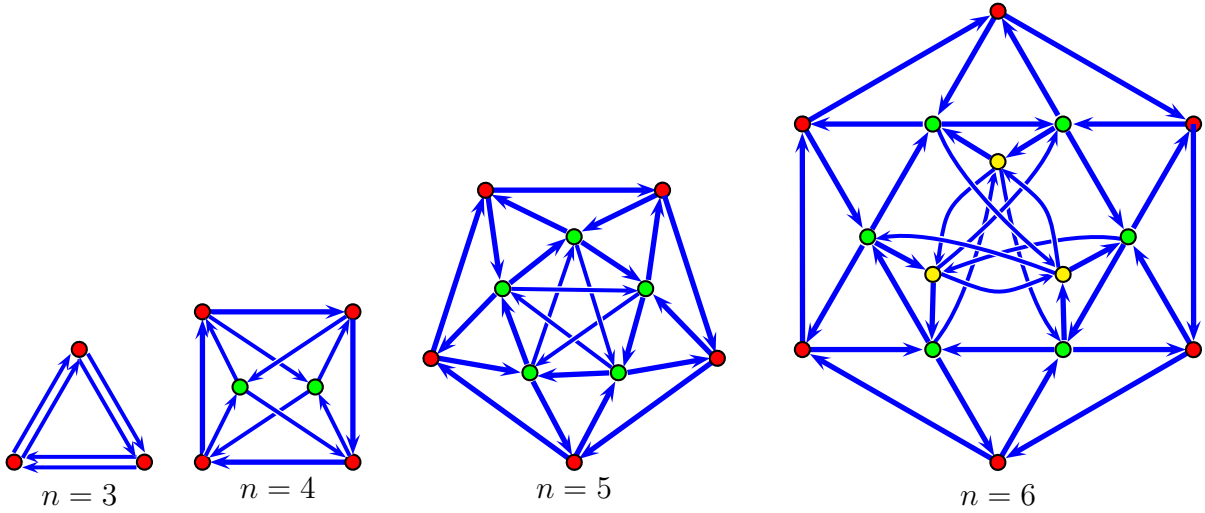

Since all Casimirs of $\mathbb A\in {\mathcal A}_n$ are generated by $\lambda$-power expansion terms for $\det(\mathbb A+\lambda \mathbb A^{\text{T}})$, we automatically obtain the following lemma
\begin{lemma}
$\det(\mathbb A+\lambda \mathbb A^{\text{T}})=P(C_1,\dots,C_{[n/2]})$, where $C_i$ are Casimirs of the $\mathcal A_n$-quiver.\footnote{The corresponding polynomials were found in \cite{ChShSh}.}
\end{lemma}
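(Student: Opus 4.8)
The plan is to combine three ingredients: that the $\lambda$-coefficients of $\det(\mathbb A+\lambda\mathbb A^{\text{T}})$ are central (Casimir) elements of the reflection-equation algebra; that this polynomial is palindromic in $\lambda$, leaving exactly $[n/2]$ nontrivial coefficients; and that the $\mathcal A_n$-quiver possesses exactly $[n/2]$ independent Casimirs $C_1,\dots,C_{[n/2]}$, which therefore generate its entire center.

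First I would reduce and exploit symmetry. Since $\mathbb A$ is unipotent upper-triangular we have $\det\mathbb A=\det\mathbb A^{\text{T}}=1$, and the factorization $\mathbb A+\lambda\mathbb A^{\text{T}}=\mathbb A^{\text{T}}(\mathbb A^{-\text{T}}\mathbb A+\lambda\,\operatorname{Id})$ identifies $\det(\mathbb A+\lambda\mathbb A^{\text{T}})$ with the characteristic polynomial $\det(\lambda\,\operatorname{Id}+\mathbb A^{-\text{T}}\mathbb A)$ of the monodromy matrix $\mathbb A^{-\text{T}}\mathbb A$; thus its coefficients are the spectral invariants of $\mathbb A^{-\text{T}}\mathbb A$. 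Writing $\det(\mathbb A+\lambda\mathbb A^{\text{T}})=\sum_{k=0}^{n}c_k\lambda^k$, transposing the matrix inside the determinant gives $\det(\mathbb A+\lambda\mathbb A^{\text{T}})=\det(\mathbb A^{\text{T}}+\lambda\mathbb A)=\lambda^{n}\det(\mathbb A+\lambda^{-1}\mathbb A^{\text{T}})$, which forces the palindromy $c_k=c_{n-k}$ with $c_0=c_n=1$. Hence only $c_1,\dots,c_{[n/2]}$ are nontrivial, matching the number of quiver Casimirs.

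Next I would establish that each $c_k$ is a Casimir. In the semiclassical limit this is the assertion that the $c_k$ Poisson-commute with every entry $a_{i,j}$, which is a direct computation from the bracket (\ref{eq:du}); at the quantum level it is the standard production of central elements of a reflection-equation algebra from the relation of Theorem~\ref{th:A}, the explicit polynomials being those of \cite{ChShSh}. This is corroborated by braid invariance: under $\mathbb A\mapsto B_{i,i+1}\mathbb A B_{i,i+1}^{\text{T}}$ one has $\mathbb A+\lambda\mathbb A^{\text{T}}\mapsto B_{i,i+1}(\mathbb A+\lambda\mathbb A^{\text{T}})B_{i,i+1}^{\text{T}}$ with $\det B_{i,i+1}=1$, so each $c_k$ is invariant under the mutation sequences of Theorem~\ref{th:braid}. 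Finally, the center of the quantum torus of the $\mathcal A_n$-quiver is the Laurent algebra on the sublattice annihilated by the exchange form, whose rank equals the corank $[n/2]$ of the quiver, with $C_1,\dots,C_{[n/2]}$ from (\ref{Cas-in}) a generating set; any central element is supported on this sublattice and hence is a (Laurent) polynomial in the $C_i$. Applying this to the central coefficients $c_k$ expresses each of them, and therefore $\det(\mathbb A+\lambda\mathbb A^{\text{T}})=\sum_k c_k\lambda^k$, as a polynomial $P$ in $C_1,\dots,C_{[n/2]}$ with $\lambda$ entering as a parameter.

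The main obstacle is the centrality step in the genuinely quantum setting: since the transposition defining $\mathbb A^{\text{T}}$ is only formal and the entries do not commute, one must first fix a consistent (column- or Weyl-) ordering for $\det(\mathbb A+\lambda\mathbb A^{\text{T}})$ before the antisymmetrizer computation from Theorem~\ref{th:A} applies. I would handle this by proving the Poisson statement first, where the determinant is the ordinary one and centrality reduces to (\ref{eq:du}), and then lifting to the quantum case via Weyl-ordering of the monomials, using precisely the fact that the nontrivial coefficients are supported on the central sublattice identified in the last step.
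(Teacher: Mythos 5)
Your argument is correct and follows essentially the same route as the paper, whose entire justification is the one-line remark preceding the lemma that all Casimirs of $\mathcal A_n$ are generated by the $\lambda$-power expansion of $\det(\mathbb A+\lambda\mathbb A^{\text{T}})$, together with the identification of the quiver Casimirs $C_i$ and the reference to \cite{ChShSh} for the explicit polynomials; your palindromy observation and the kernel-sublattice description of the torus center are exactly the details the paper leaves implicit. The one step you should make explicit is the passage from ``$c_k$ commutes with all $a_{i,j}$'' (centrality in the reflection-equation algebra) to ``$c_k$ is central in the quantum torus of the $\mathcal A_n$-quiver,'' which your sublattice argument requires; it does not follow formally from functoriality of Poisson maps, but it does follow from the dimension count used in Sec.~\ref{s:Casimirs}: the $\mathcal A_n$-quiver has $n(n-1)/2$ vertices and exactly $[n/2]$ independent Casimirs, matching the Poisson dimension of $\mathcal A_n$, so the $a_{i,j}$ functionally generate the torus and centrality transfers.
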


\begin{figure}[tb]
{\psset{unit=1}
\begin{pspicture}(-2,-2)(2,2)
\newcommand{\SIX}{%
\psbezier[linecolor=blue,linewidth=12pt](0,1.5)(-0.866,2)(-2,1.6)(-0.5,-1)
\psbezier[linecolor=white,linewidth=10pt](0,1.5)(-0.866,2)(-2,1.6)(-0.5,-1)
\psbezier[linecolor=blue,linewidth=12pt](0,1.5)(0.866,2)(2,1.6)(0.5,-1)
\psbezier[linecolor=white,linewidth=10pt](0,1.5)(0.866,2)(2,1.6)(0.5,-1)
\psbezier[linecolor=blue,linewidth=12pt](0,0.5)(-2.6,-1)(-1,-1.866)(-0.5,-1)
\psbezier[linecolor=white,linewidth=10pt](0,0.5)(-2.6,-1)(-1,-1.866)(-0.5,-1)
\psbezier[linecolor=blue,linewidth=12pt](0,0.5)(2.6,-1)(1,-1.866)(0.5,-1)
\psbezier[linecolor=white,linewidth=10pt](0,0.5)(2.6,-1)(1,-1.866)(0.5,-1)
\psline[linecolor=blue,linewidth=12pt](0,0.5)(0,1.5)
\psline[linecolor=white,linewidth=10pt](0,0.5)(0,1.5)
\psline[linecolor=blue,linewidth=12pt](-0.5,-1)(0.5,-1)
\psline[linecolor=white,linewidth=10pt](-0.5,-1)(0.5,-1)
\pscircle[fillstyle=solid,fillcolor=white,linecolor=white](0,1.5){.22}
\pscircle[fillstyle=solid,fillcolor=white,linecolor=white](0,0.5){.22}
\pscircle[fillstyle=solid,fillcolor=white,linecolor=white](-0.5,-1){.22}
\pscircle[fillstyle=solid,fillcolor=white,linecolor=white](0.5,-1){.22}
\rput(0,0.1){
\psbezier[linecolor=blue,linewidth=1.5pt,linestyle=dashed](0,1.5)(0.866,2)(2,1.6)(0.5,-1)
\psline[linecolor=blue,linewidth=1.5pt,linestyle=dashed](0,0.5)(0,1.5)
\psbezier[linecolor=blue,linewidth=1.5pt,linestyle=dashed](0,0.5)(-2.6,-1)(-1,-1.866)(-0.5,-1)
\psline[linecolor=blue,linewidth=1.5pt,linestyle=dashed](-0.5,-1)(0.5,-1)}
\psbezier[linecolor=white,linewidth=10pt](0,0.5)(2.6,-1)(1,-1.866)(0.5,-1)
\rput(0,-0.1){\psbezier[linecolor=red,linewidth=1.5pt,linestyle=dashed](0,0.5)(2.6,-1)(1,-1.866)(0.5,-1)
\psbezier[linecolor=red,linewidth=1.5pt,linestyle=dashed](0,0.5)(-2.6,-1)(-1,-1.866)(-0.5,-1)
\psline[linecolor=red,linewidth=1.5pt,linestyle=dashed](-0.5,-1)(0.5,-1)}
}
\newcommand{\TRI}{%
\psbezier[linecolor=blue,linewidth=12pt](-1,0)(-1,-1.5)(1,-1.5)(1,0)
\psbezier[linecolor=white,linewidth=10pt](-1,0)(-1,-1.5)(1,-1.5)(1,0)
\psbezier[linecolor=blue,linewidth=12pt](-1,0)(1.6,1.5)(1.866,0.5)(1,0)
\psbezier[linecolor=white,linewidth=10pt](-1,0)(1.6,1.5)(1.866,0.5)(1,0)
\psbezier[linecolor=blue,linewidth=12pt](-1,0)(-1.866,0.5)(-1.6,1.5)(1,0)
\psbezier[linecolor=white,linewidth=10pt](-1,0)(-1.866,0.5)(-1.6,1.5)(1,0)
\pscircle[fillstyle=solid,fillcolor=white,linecolor=white](-1,0){.22}
\pscircle[fillstyle=solid,fillcolor=white,linecolor=white](1,0){.22}
\psbezier[linecolor=red,linewidth=1.5pt,linestyle=dashed](-1,0)(-1.866,0.5)(-1.6,1.5)(1,0)
\psbezier[linecolor=red,linewidth=1.5pt,linestyle=dashed](-1,0)(-1,-1.5)(1,-1.5)(1,0)
}
\rput(-2,0){\TRI
\put(0,-1.5){\makebox(0,0)[tc]{\hbox{{$\Gamma_{1,1}$}}}}
\put(-1,1.1){\makebox(0,0)[bc]{\hbox{{\tcr{$a_{1,2}$}}}}}
}
\rput{0}(2,0){\SIX
\put(0,-1.5){\makebox(0,0)[tc]{\hbox{{$\Gamma_{1,2}$}}}}
\put(1.5,-1.5){\makebox(0,0)[cl]{\hbox{{\tcr{$a_{1,2}$}}}}}
\put(1.5,1.5){\makebox(0,0)[cl]{\hbox{{\tcb{$a_{1,3}$}}}}}
}
\end{pspicture}
}
\caption{Fat graphs $\Gamma_{1,1}$ and $\Gamma_{1,2}$ corresponding to the respective quivers for $\mathcal A_3$ and $\mathcal A_4$. We indicate closed paths that produce elements $a_{1,2}\in {\mathbb A}\in\mathcal A_3$ and $a_{1,2}$ and $a_{1,3}$ of ${\mathbb A}\in\mathcal A_4$. For example, setting all $Z_\alpha=1$ for simplicity and using formulas from \cite{ChF2} for the corresponding geodesic functions we obtain $a_{1,2}=\tr (LR)=3$ in $\mathcal A_3$ and $a_{1,2}=\tr (LLR)=4$ and $a_{1,3}=\tr (LLRR)=6$ in $\mathcal A_4$, where $L=\bigl[ {1\ 0\atop 1\ 1} \bigr]$ and $R=\bigl[ {1\ 1\atop 0\ 1} \bigr]$ 
are matrices of the respective left and right turns of paths occuring at three-valent vertices of a spine. It is easy to see that the above $a_{i,j}$ coincide with those in Example~\ref{ex:toy}.}
\label{fi:spines}
\end{figure}
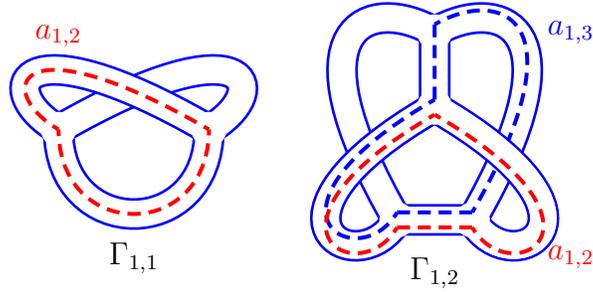

\begin{remark}
In the cases $n=3$ and $n=4$, the constructed quivers are those of geometric systems: these cases admit three-valent fat-graph representations in which Y-cluster variables are identified with (exponentiated) Thurston shear coordinates $z_\alpha$ enumerated by edges of the corresponding graphs, and nontrivial commutation relations are between variables on adjacent edges; for $n=3$ and $n=4$ these graphs are the relative spines of Riemann surfaces $\Sigma_{1,1}$ and $\Sigma_{1,2}$ depicted in Fig.~\ref{fi:spines}; the Laurent polynomials for entries of $\mathbb A$ coincide up to a linear change of log-canonical variables with the expressions obtained by identifying these entries with {\sl geodesic functions} corresponding to closed paths on these graphs; for more details and for the explicit construction of geodesic functions, see \cite{ChF2}, \cite{ChM}. Note here that, likewise all $a_{i,j}$ constructed in this paper, all geodesic functions for all surfaces $\Sigma_{g,s}$ are positive Laurent polynomials of $e^{z_\alpha/2}$.
\end{remark}

\subsection{Braid-group action through mutations}\label{ss:braid-mutation}

Our second major goal in this paper is to find a representation of the braid-group action from Sec.~\ref{ss:braid} in terms of cluster mutations of the  $\mathcal A_n$-quiver.  It is well-known that  for $\mathbb A$ belonging to a specific symplectic leaf in $\mathcal A_n$ its matrix elements $a_{i,j}$ are identified with the geodesics functions. In this leaf, braid-group transformations correspond to Dehn twists along geodesics corresponding to geodesic functions $a_{i,i+1}$  on $\Sigma_{g,s}$ ($n=2g+s$ and $s=1,2$). We know that every Dehn twist on a Riemann surface $\Sigma_{g,s}$ is a sequence of cluster mutations since it can be presented as a chain of mutations of shear variables on edges of the corresponding spine $\Gamma_{g,s}$. Whereas, for $n=3$ and $n=4$, generic symplectic leaves in $\mathcal A_n$ are geometric and the corresponding mutation sequences are identical, for larger $n$ the generic symplectic leaves become essentially different and we have to reinvent a braid group action. Knowing the answer for $n=3$ and $4$ helps in guessing the answer for a general $n$. We begin with the example of $\mathcal A_5$-quiver:

\begin{center}
\begin{pspicture}(-2.5,-2.5)(2.5,2.5)
\newcommand{\PATGEN}{%
{\psset{unit=1}
\psline[linecolor=blue,linewidth=2pt]{->}(-0.53,-0.73)(-0.07,0.9)
\psline[linecolor=blue,linewidth=2pt]{<-}(0.81,0.5)(0.1,0.93)
\psline[linecolor=blue,linewidth=2pt]{<-}(0.1,1.07)(1.1,1.56)
\psline[linecolor=blue,linewidth=2pt]{->}(-0.1,1.07)(-1.1,1.56)
\psline[linecolor=blue,linewidth=2pt]{<-}(1.1,1.62)(-1.1,1.62)
\pscircle[fillstyle=solid,fillcolor=green](0,1){.1}
\pscircle[fillstyle=solid,fillcolor=red](1.175,1.62){.1}
}}
\rput(0,0){\PATGEN}
\rput{72}(0,0){\PATGEN}
\rput{144}(0,0){\PATGEN}
\rput{216}(0,0){\PATGEN}
\rput{288}(0,0){\PATGEN}
\put(-1.1,0.58){\makebox(0,0)[tr]{\hbox{{$1$}}}}
\put(1.1,0.58){\makebox(0,0)[tl]{\hbox{{$2$}}}}
\put(-1.4,1.6){\makebox(0,0)[cr]{\hbox{{$4$}}}}
\put(1.4,1.6){\makebox(0,0)[cl]{\hbox{{$3$}}}}
\end{pspicture}
\end{center}

The following chain of  mutations $\beta_{3,4}=\mu_1\mu_2\mu_3\mu_2\mu_1=S_{3,4}$ preserves the form of the original quiver with the interchanged vertices $3$ and $4$, where $\mu_i$ is a mutation at vertex $i$.

A convenient way to represent a set of elementary braid-group transformations for a general $\mathcal A_n$ quiver is the process schematically depicted in Fig.~\ref{fi:covering} below: we take another copy of the triangle representing the quiver, reflect it and glue the resulting triangle to the original one along the bottom side of the latter in a way that amalgamated variables on the sides of two triangles match and the colored vertices representing Casimir elements are stretched along SE diagonals. The sequences of mutations corresponding to elementary generating elements $\beta_{i,i+1}$ of the braid groups are indicated in the figure.

\begin{figure}[tb]
{\psset{unit=1}
\begin{pspicture}(-5,-4)(5,4)
\newcommand{\PATGEN}{%
{\psset{unit=1}
\rput(0,0){\psline[linecolor=blue,linewidth=2pt]{->}(0,0)(.45,.765)}
\rput(0,0){\psline[linecolor=blue,linewidth=2pt]{->}(0.9,0)(0.1,0)}
\rput(0,0){\psline[linecolor=blue,linewidth=2pt]{->}(0,0)(.45,-.765)}
\put(0,0){\pscircle[fillstyle=solid,fillcolor=lightgray]{.1}}
}}
\newcommand{\PATLEFT}{%
{\psset{unit=1}
\rput(0,0){\psline[linecolor=blue,linewidth=2pt,linestyle=dashed]{->}(0,0)(.45,.765)}
\rput(0,0){\psline[linecolor=blue,linewidth=2pt]{->}(0.9,0)(0.1,0)}
\rput(0,0){\psline[linecolor=blue,linewidth=2pt]{->}(0,0)(.45,-.765)}
\put(0,0){\pscircle[fillstyle=solid,fillcolor=lightgray]{.1}}
}}
\newcommand{\PATRIGHT}{%
{\psset{unit=1}
\rput(0,0){\psline[linecolor=blue,linewidth=2pt]{->}(0,0)(.45,-.765)}
\put(0,0){\pscircle[fillstyle=solid,fillcolor=lightgray]{.1}}
}}
\newcommand{\PATBOTTOM}{%
{\psset{unit=1}
\rput(0,0){\psline[linecolor=blue,linewidth=2pt,linestyle=dashed]{->}(0,0)(.45,.765)}
\put(0,0){\pscircle[fillstyle=solid,fillcolor=lightgray]{.1}}
}}
\newcommand{\DASHEDBOX}{%
{\psset{unit=1}
\psline[linecolor=magenta,linewidth=2pt,linestyle=dashed](-1.3,0.3)(1.3,0.3)
\psline[linecolor=magenta,linewidth=2pt,linestyle=dashed](-1.3,-0.3)(1.3,-0.3)
\psline[linecolor=magenta,linewidth=2pt,linestyle=dashed](-1.3,0.3)(-1.3,-0.3)
\psline[linecolor=magenta,linewidth=2pt,linestyle=dashed](1.3,0.3)(1.3,-0.3)
}}
\newcommand{\DASHEDBOXLONG}{%
{\psset{unit=1}
\psline[linecolor=magenta,linewidth=2pt,linestyle=dashed](-1.8,0.3)(1.8,0.3)
\psline[linecolor=magenta,linewidth=2pt,linestyle=dashed](-1.8,-0.3)(1.8,-0.3)
\psline[linecolor=magenta,linewidth=2pt,linestyle=dashed](-1.8,0.3)(-1.8,-0.3)
\psline[linecolor=magenta,linewidth=2pt,linestyle=dashed](1.8,0.3)(1.8,-0.3)
}}
\rput(-3,0){
\multiput(-2,0.85)(0.5,0.85){3}{\PATLEFT}
\multiput(0.5,-3.4)(0.5,0.85){3}{\PATBOTTOM}
\multiput(-1.5,0)(0.5,0.85){3}{\PATGEN}
\multiput(-1,-0.85)(0.5,0.85){3}{\PATGEN}
\multiput(-0.5,-1.7)(0.5,0.85){3}{\PATGEN}
\multiput(0,-2.55)(0.5,0.85){3}{\PATGEN}
\multiput(-0.5,3.4)(0.5,-0.85){5}{\PATRIGHT}
\multiput(-0.5,3.4)(0.5,-0.85){6}{\pscircle[fillstyle=solid,fillcolor=red](0,0){.1}}
\multiput(-2,0.85)(0.5,-0.85){6}{\pscircle[fillstyle=solid,fillcolor=red](0,0){.1}}
\multiput(-1,2.55)(0.5,-0.85){6}{\pscircle[fillstyle=solid,fillcolor=green](0,0){.1}}
\multiput(-1.5,1.7)(0.5,-0.85){6}{\pscircle[fillstyle=solid,fillcolor=green](0,0){.1}}
\multiput(-0.5,1.7)(0.5,-0.85){4}{
\rput{60}(0,0){\DASHEDBOX}}
\put(0.3,3.05){\makebox(0,0)[bl]{\hbox{{$\beta_{1,2}$}}}}
\put(0.8,2.2){\makebox(0,0)[bl]{\hbox{{$\beta_{2,3}$}}}}
\put(1.3,1.35){\makebox(0,0)[bl]{\hbox{{$\beta_{3,4}$}}}}
\put(1.8,0.5){\makebox(0,0)[bl]{\hbox{{$\beta_{4,5}$}}}}
}
\rput(3.5,0){
\multiput(-2.5,0.85)(0.5,0.85){4}{\PATLEFT}
\multiput(0.5,-4.25)(0.5,0.85){4}{\PATBOTTOM}
\multiput(-2,0)(0.5,0.85){4}{\PATGEN}
\multiput(-1.5,-0.85)(0.5,0.85){4}{\PATGEN}
\multiput(-1,-1.7)(0.5,0.85){4}{\PATGEN}
\multiput(-0.5,-2.55)(0.5,0.85){4}{\PATGEN}
\multiput(0,-3.4)(0.5,0.85){4}{\PATGEN}
\multiput(-0.5,4.25)(0.5,-0.85){6}{\PATRIGHT}
\multiput(-0.5,4.25)(0.5,-0.85){7}{\pscircle[fillstyle=solid,fillcolor=red](0,0){.1}}
\multiput(-2.5,0.85)(0.5,-0.85){7}{\pscircle[fillstyle=solid,fillcolor=red](0,0){.1}}
\multiput(-1,3.4)(0.5,-0.85){7}{\pscircle[fillstyle=solid,fillcolor=green](0,0){.1}}
\multiput(-2,1.7)(0.5,-0.85){7}{\pscircle[fillstyle=solid,fillcolor=green](0,0){.1}}
\multiput(-1.5,2.55)(0.5,-0.85){7}{\pscircle[fillstyle=solid,fillcolor=yellow](0,0){.1}}
\multiput(-0.75,2.125)(0.5,-0.85){5}{
\rput{60}(0,0){\DASHEDBOXLONG}}
\put(0.3,3.9){\makebox(0,0)[bl]{\hbox{{$\beta_{1,2}$}}}}
\put(0.8,3.05){\makebox(0,0)[bl]{\hbox{{$\beta_{2,3}$}}}}
\put(1.3,2.2){\makebox(0,0)[bl]{\hbox{{$\beta_{3,4}$}}}}
\put(1.8,1.35){\makebox(0,0)[bl]{\hbox{{$\beta_{4,5}$}}}}
\put(2.3,0.5){\makebox(0,0)[bl]{\hbox{{$\beta_{5,6}$}}}}
}
\psline[linewidth=3pt,linestyle=dotted](-5.7,0)(-4.7,0)
\psline[linewidth=3pt,linestyle=dotted](-1,0)(1.2,0)
\psline[linewidth=3pt,linestyle=dotted](6,0)(6.8,0)
\end{pspicture}
}
\caption{The braid-group action represented on the union of two triangles, each of which is the copy of the $\mathcal A_n$-quiver, for $n=5$ and $6$. 
The dotted line indicates the line of the triangle gluing. 
Dashed blocks enclose cluster variables sequences of mutations at which produce elementary braid-group transformation $\beta_{i,i+1}$: every such sequence commences with mutating the lowest element inside a box (corresponding to a six-valent vertex), then its upper-right neighbor and so on until we reach the upper element, mutate it, and repeat mutations at all inner elements in the reverse order. So, every such braid-group transformation is produced by a sequence of $2n-5$ mutations in the corresponding $\mathcal A_n$-quiver.} 
\label{fi:covering}
\end{figure}
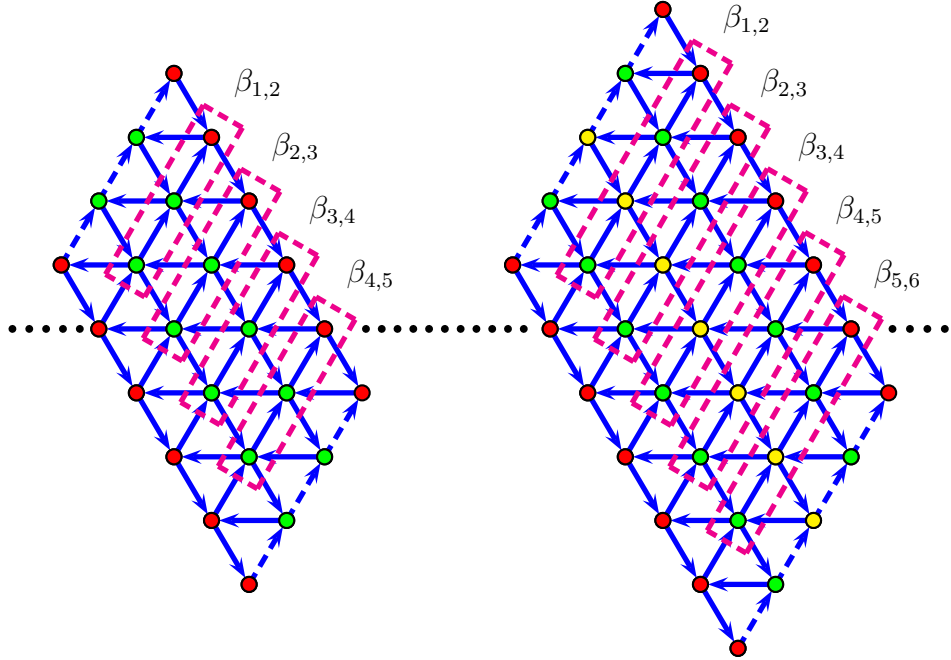


Before presenting the result of the braid-group transformation, we describe contributions of cluster variables located at sites of the  $\mathcal A_n$-quiver to a normalized element $a_{i,j}\in {\mathbb A}$ with $i<j$. Before normalization this element is homogeneous in frozen cluster variables $\rho_k\equiv Z_{0,k,n-k}$ and is proportional to the product $\prod_{k=0}^i \rho^2_k\prod_{l=i+1}^j \rho_l$. We normalize this element by dividing it by the product $\prod_{k=0}^i K_k\prod_{l=i+1}^j K^{1/2}_l$ of Casimirs (\ref{Cas-out}) thus  eliminating the dependence of ${\mathbb A}$ on $\rho_i$. This normalization changes the powers in which cluster variables enter sums over paths. We have eight domains in total in the leftmost part of Fig.~\ref{fi:gluing}: let us describe two of them. In the domain labeled ``$a$,'' nonnormalized variables enter with power $1$ and each of them enters exactly one Casimir $K_k$ with $k\le i$ and one Casimir $K_l$ with $i+1\le l\le j$, so the normalization decreases the power by $3/2$ and the total power with which these variables enter the normalized element is $-1/2$. In the uppermost domain labeled ``b,'' every element enters with power two into a nonnormalized sum over paths and it enters two Casimirs $K_k$ with $k\le i$, so the normalization add power $-2$ and the total power is zero.  We indicate powers by different hatchings, which overlap in the figure on the left; the resulting powers $-1/2$, $0$, and $1/2$ are indicated in the middle figure.

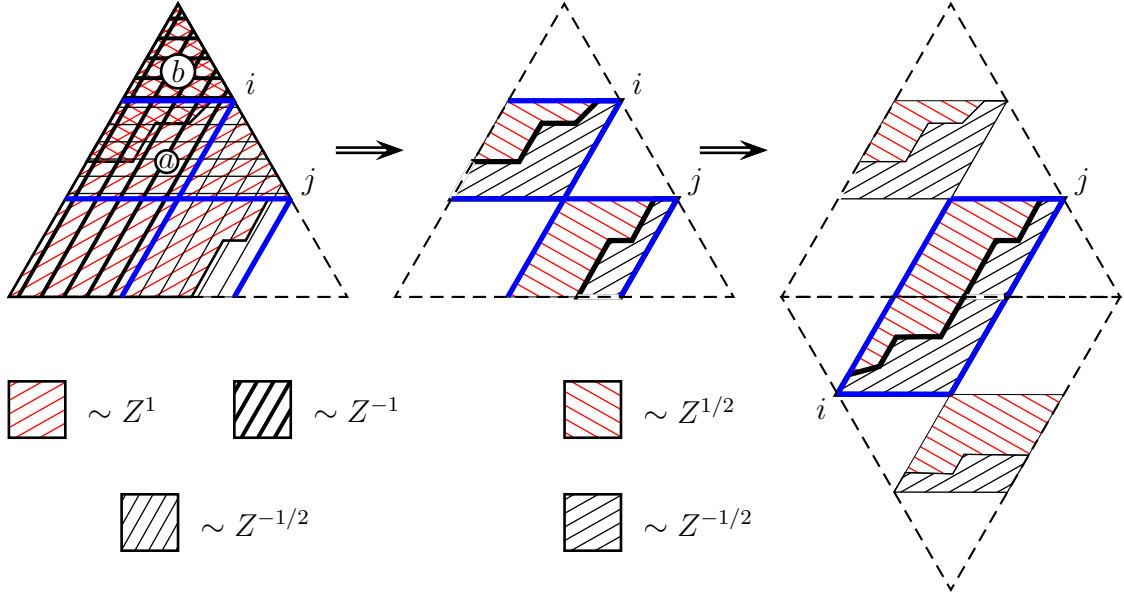
\begin{figure}[tb]
\begin{pspicture}(-2.5,-4)(2.5,4)
{\psset{unit=1.5}
\pspolygon[linestyle=dashed](-1.5,0)(0,2.6)(1.5,0)
\pspolygon[linestyle=solid, linewidth=1pt, fillstyle=vlines, hatchcolor=red, hatchwidth=0.5pt, hatchsep=4pt, hatchangle=60](0,2.6)(0.5,1.74)(0.3,1.74)(0.1,1.54)(-0.2,1.54)(-0.32,1.34)(-0.4,1.2)(-0.81,1.2)
\pspolygon[linestyle=solid, linewidth=1pt, fillstyle=vlines, hatchcolor=red, hatchwidth=0.5pt, hatchsep=4pt, hatchangle=-60](0,2.6)(1,0.87)(0.8,0.87)(0.6,0.5)(0.4,0.5)(0.12,0)(-1.5,0)
\pspolygon[linestyle=solid, linewidth=0pt, fillstyle=vlines, hatchcolor=black, hatchwidth=1.5pt, hatchsep=6pt, hatchangle=-30](0,2.6)(0.5,1.74)(-0.5,0)(-1.5,0)
\pspolygon[linestyle=solid, linewidth=0pt, fillstyle=vlines, hatchcolor=black, hatchwidth=1.5pt, hatchsep=6pt,hatchangle=90](0,2.6)(0.5,1.74)(-0.5,1.74)
\pspolygon[linestyle=solid, linewidth=0pt, fillstyle=vlines, hatchcolor=black, hatchwidth=0.5pt, hatchsep=6pt, hatchangle=-30](0.5,1.74)(1,0.87)(0.5,0)(-0.5,0)
\pspolygon[linestyle=solid, linewidth=0pt, fillstyle=vlines, hatchcolor=black, hatchwidth=0.5pt, hatchsep=6pt, hatchangle=90](0.5,1.74)(1,0.87)(-1,0.87)(-0.5,1.74)
\psline[linewidth=2pt,linestyle=solid,linecolor=blue](1,0.87)(-1,0.87)
\psline[linewidth=2pt,linestyle=solid,linecolor=blue](0.5,1.74)(-0.5,1.74)
\psline[linewidth=2pt,linestyle=solid,linecolor=blue](1,0.87)(0.5,0)
\psline[linewidth=2pt,linestyle=solid,linecolor=blue](0.5,1.74)(-0.5,0)
\put(1.7,1.3){\psline[doubleline=true,linewidth=1pt, doublesep=1pt, linecolor=black]{->}(-0.3,0)(0.3,0)}
\put(0.6,1.8){\makebox(0,0)[bl]{\hbox{{$i$}}}}
\put(1.1,.9){\makebox(0,0)[bl]{\hbox{{$j$}}}}
\pscircle[linestyle=solid, linewidth=1pt, fillstyle=solid, fillcolor=white](0,2){0.15}
\put(0,2){\makebox(0,0)[cc]{\hbox{{$b$}}}}
\pscircle[linestyle=solid, linewidth=1pt, fillstyle=solid, fillcolor=white](-0.1,1.2){0.1}
\put(-0.1,1.2){\makebox(0,0)[cc]{\hbox{{$a$}}}}
\put(-1,-1){
\pspolygon[linestyle=solid, linewidth=1pt, fillstyle=vlines, hatchcolor=red, hatchwidth=0.5pt, hatchsep=4pt, hatchangle=-60](-0.5,-0.25)(0,-0.25)(0,0.25)(-0.5,0.25)
\put(0.2,0){\makebox(0,0)[cl]{\hbox{{$\sim Z^{1}$}}}}
 }
\put(1,-1){
\pspolygon[linestyle=solid, linewidth=1pt, fillstyle=vlines, hatchcolor=black, hatchwidth=1.5pt, hatchsep=4pt, hatchangle=-30](-0.5,-0.25)(0,-0.25)(0,0.25)(-0.5,0.25)
\put(0.2,0){\makebox(0,0)[cl]{\hbox{{$\sim Z^{-1}$}}}}
 }
\put(0,-2){
\pspolygon[linestyle=solid, linewidth=1pt, fillstyle=vlines, hatchcolor=black, hatchwidth=0.5pt, hatchsep=4pt, hatchangle=-30](-0.5,-0.25)(0,-0.25)(0,0.25)(-0.5,0.25)
\put(0.2,0){\makebox(0,0)[cl]{\hbox{{$\sim Z^{-1/2}$}}}}
 }
}
\end{pspicture}
\begin{pspicture}(-2.5,-4)(2.5,4)
{\psset{unit=1.5}
\pspolygon[linestyle=solid, linewidth=0pt, fillstyle=vlines, hatchcolor=red, hatchwidth=0.5pt, hatchsep=4pt, hatchangle=60](-0.5,1.74)(0.5,1.74)(0.3,1.74)(0.1,1.54)(-0.2,1.54)(-0.32,1.34)(-0.4,1.2)(-0.81,1.2)
\pspolygon[linestyle=solid, linewidth=2pt, fillstyle=vlines, hatchcolor=black, hatchwidth=0.5pt, hatchsep=4pt, hatchangle=-60](-1,0.87)(0,0.87)(0.5,1.74)(0.3,1.74)(0.1,1.54)(-0.2,1.54)(-0.32,1.34)(-0.4,1.2)(-0.81,1.2)
\psline[linewidth=2pt,linestyle=solid,linecolor=white](-1,0.87)(-0.81,1.2)
\pspolygon[linestyle=solid, linewidth=0pt, fillstyle=vlines, hatchcolor=red, hatchwidth=0.5pt, hatchsep=4pt, hatchangle=60](-0.5,0)(0,0.87)(1,0.87)(0.8,0.87)(0.6,0.5)(0.4,0.5)(0.12,0)
\pspolygon[linestyle=solid, linewidth=2pt, fillstyle=vlines, hatchcolor=black, hatchwidth=0.5pt, hatchsep=4pt, hatchangle=-60](0.5,0)(1,0.87)(0.8,0.87)(0.6,0.5)(0.4,0.5)(0.12,0)
\psline[linewidth=2pt,linestyle=solid,linecolor=white](0.5,0)(0.12,0)
\psline[linewidth=2pt,linestyle=solid,linecolor=blue](1,0.87)(-1,0.87)
\psline[linewidth=2pt,linestyle=solid,linecolor=blue](0.5,1.74)(-0.5,1.74)
\psline[linewidth=2pt,linestyle=solid,linecolor=blue](1,0.87)(0.5,0)
\psline[linewidth=2pt,linestyle=solid,linecolor=blue](0.5,1.74)(-0.5,0)
\pspolygon[linestyle=dashed](-1.5,0)(0,2.6)(1.5,0)
\put(1.5,1.3){\psline[doubleline=true,linewidth=1pt, doublesep=1pt, linecolor=black]{->}(-0.3,0)(0.3,0)}
\put(0.6,1.8){\makebox(0,0)[bl]{\hbox{{$i$}}}}
\put(1.1,.9){\makebox(0,0)[bl]{\hbox{{$j$}}}}
\put(0.5,-1){
\pspolygon[linestyle=solid, linewidth=1pt, fillstyle=vlines, hatchcolor=red, hatchwidth=0.5pt, hatchsep=4pt, hatchangle=60](-0.5,-0.25)(0,-0.25)(0,0.25)(-0.5,0.25)
\put(0.2,0){\makebox(0,0)[cl]{\hbox{{$\sim Z^{1/2}$}}}}
 }
\put(0.5,-2){
\pspolygon[linestyle=solid, linewidth=1pt, fillstyle=vlines, hatchcolor=black, hatchwidth=0.5pt, hatchsep=4pt, hatchangle=-60](-0.5,-0.25)(0,-0.25)(0,0.25)(-0.5,0.25)
\put(0.2,0){\makebox(0,0)[cl]{\hbox{{$\sim Z^{-1/2}$}}}}
 }
}
\end{pspicture}
\begin{pspicture}(-2.5,-4)(2.5,4)
{\psset{unit=1.5}
\rput{60}(0.75,-1.3){
\pspolygon[linestyle=solid, linewidth=0pt, fillstyle=vlines, hatchcolor=red, hatchwidth=0.5pt, hatchsep=4pt, hatchangle=0](0.5,1.74)(-0.5,1.74)(-0.3,1.74)(-0.1,1.54)(0.2,1.54)(0.32,1.34)(0.4,1.2)(0.81,1.2)
\pspolygon[linestyle=solid, linewidth=2pt, fillstyle=vlines, hatchcolor=black, hatchwidth=0.5pt, hatchsep=4pt, hatchangle=60](1,0.87)(0,0.87)(-0.5,1.74)(-0.3,1.74)(-0.1,1.54)(0.2,1.54)(0.32,1.34)(0.4,1.2)(0.81,1.2)
\psline[linewidth=2pt,linestyle=solid,linecolor=white](1,0.87)(0.81,1.2)
\psline[linewidth=2pt,linestyle=solid,linecolor=blue](0,0.87)(1,0.87)
\psline[linewidth=2pt,linestyle=solid,linecolor=blue](-0.5,1.74)(0.5,1.74)
\psline[linewidth=2pt,linestyle=solid,linecolor=blue](-0.5,1.74)(0,0.87)
\pspolygon[linestyle=solid, linewidth=0pt, fillstyle=vlines, hatchcolor=red, hatchwidth=0.5pt, hatchsep=4pt, hatchangle=0](0.5,0)(0,0.87)(-1,0.87)(-0.8,0.87)(-0.6,0.5)(-0.4,0.5)(-0.12,0)
\pspolygon[linestyle=solid, linewidth=0.5pt, fillstyle=vlines, hatchcolor=black, hatchwidth=0.5pt, hatchsep=4pt, hatchangle=60](-0.5,0)(-1,0.87)(-0.8,0.87)(-0.6,0.5)(-0.4,0.5)(-0.12,0)
\pspolygon[linestyle=dashed](-1.5,0)(0,2.6)(1.5,0)
}
\pspolygon[linestyle=solid, linewidth=0pt, fillstyle=vlines, hatchcolor=red, hatchwidth=0.5pt, hatchsep=4pt, hatchangle=60](-0.5,1.74)(0.5,1.74)(0.3,1.74)(0.1,1.54)(-0.2,1.54)(-0.32,1.34)(-0.4,1.2)(-0.81,1.2)
\pspolygon[linestyle=solid, linewidth=0.5pt, fillstyle=vlines, hatchcolor=black, hatchwidth=0.5pt, hatchsep=4pt, hatchangle=-60](-1,0.87)(0,0.87)(0.5,1.74)(0.3,1.74)(0.1,1.54)(-0.2,1.54)(-0.32,1.34)(-0.4,1.2)(-0.81,1.2)
\psline[linewidth=2pt,linestyle=solid,linecolor=white](-1,0.87)(-0.81,1.2)
\pspolygon[linestyle=solid, linewidth=0pt, fillstyle=vlines, hatchcolor=red, hatchwidth=0.5pt, hatchsep=4pt, hatchangle=60](-0.5,0)(0,0.87)(1,0.87)(0.8,0.87)(0.6,0.5)(0.4,0.5)(0.12,0)
\pspolygon[linestyle=solid, linewidth=2pt, fillstyle=vlines, hatchcolor=black, hatchwidth=0.5pt, hatchsep=4pt, hatchangle=-60](0.5,0)(1,0.87)(0.8,0.87)(0.6,0.5)(0.4,0.5)(0.12,0)
\psline[linewidth=2pt,linestyle=solid,linecolor=white](0.47,0)(0.14,0)
\psline[linewidth=2pt,linestyle=solid,linecolor=blue](1,0.87)(0,0.87)
\psline[linewidth=2pt,linestyle=solid,linecolor=blue](1,0.87)(0.5,0)
\psline[linewidth=2pt,linestyle=solid,linecolor=blue](0,0.87)(-0.5,0)
\pspolygon[linestyle=dashed](-1.5,0)(0,2.6)(1.5,0)
\put(-1.1,-0.9){\makebox(0,0)[tr]{\hbox{{$i$}}}}
\put(1.1,.9){\makebox(0,0)[bl]{\hbox{{$j$}}}}
}
\end{pspicture}
\caption{Paths contributing to $a_{i,j}$ in the glued  $\mathcal A_n$-quivers. In the left picture we indicate contributions of cluster variables of $SL_n$ quiver into a normalized element $a_{i,j}$: we schematically draw two paths: the upper path corresponds to $[M_1]_{k,i}$ and the lower path corresponds to $[M_2]_{k,j}$. All cluster variables above the first path enter in power two and all variables between two path enter with power one into a nonnormalized expression; we then normalize it by the products of cluster variables entering the product of Casimirs $\prod_{k=0}^i K_k\prod_{l=i+1}^j K^{1/2}_l$. The resulting pattern is presented in the middle picture: cluster variables enter with powers $1/2$ or $-1/2$ and variables from empty areas do not contribute. In the rightmost figure we take another copy of the $\mathcal A_n$-quiver and attach it as in Fig.~\ref{fi:covering}; two domains in the original triangle then constitute a parallelogram with a continuous path joining its opposite vertices.}
\label{fi:gluing}
\end{figure}

We then glue two copies of the $SL_n$ triangle in the rightmost part of Fig.~\ref{fi:gluing}: the union of two domains containing cluster variables contributing into the normalized element $a_{i,j}$ is then a parallelogram with sides of positive lengths $j-i$ and $n+i-j$, and in order to obtain the element $a_{i,j}$ we have to take a sum over all paths inside this parallelogram starting at the vertex of the dual lattice located ``beyond'' NE vertex $j$ and terminating at the vertex of the dual lattice located ``beyond'' SW vertex $i$ (a standard exercise in combinatorics is that we have exactly $\Bigl[ {n\atop j-i}\Bigr]$ such paths, cf. toy Example~\ref{ex:toy}).

{We present a more detailed picture below. In this picture, we indicate a part of the directed network inside which we take a sum over paths from $j$ to $i$  contributing to $a_{i,j}$ {(an example of such path is shown in light color in the Figure)}; all contributing cluster variables are confined inside the corresponding parallelogram. All cluster variables inside the parallelogram and  above a path enter with the power $1/2$ and all cluster variables inside the parallelogram and below the path enter with the power $-1/2$. All variables outside the parallelogram do not contribute.}
$$
{\psset{unit=1}
\begin{pspicture}(-5,-3)(5,3)
\newcommand{\PATGEN}{%
{\psset{unit=1}
\rput(0,0){\psline[linecolor=blue,linewidth=1pt]{->}(0,0)(.45,.765)}
\rput(0,0){\psline[linecolor=blue,linewidth=1pt]{->}(0.9,0)(0.1,0)}
\rput(0,0){\psline[linecolor=blue,linewidth=1pt]{->}(0,0)(.45,-.765)}
\put(0,0){\pscircle[fillstyle=solid,fillcolor=red]{.1}}
\psline[doubleline=true,linewidth=1pt, doublesep=1pt, linecolor=black]{->}(0.5,0.23)(0.5,-0.23)
\psline[doubleline=true,linewidth=1pt, doublesep=1pt, linecolor=black]{->}(0.415,0.285)(0.085,0.52)
\psline[doubleline=true,linewidth=1pt, doublesep=1pt, linecolor=black]{->}(0.415,-0.285)(0.085,-0.52)
\put(0.5,0.28){\pscircle[fillstyle=solid,fillcolor=white]{.1}}
\put(0.5,-0.28){\pscircle[fillstyle=solid,fillcolor=black]{.1}}
}}
\newcommand{\PATBOT}{%
{\psset{unit=1}
\rput(0,0){\psline[linecolor=blue,linewidth=1pt]{->}(0,0)(.45,.765)}
\rput(0,0){\psline[linecolor=blue,linewidth=1pt]{->}(0.9,0)(0.1,0)}
\rput(0,0){\psline[linecolor=blue,linewidth=1pt]{->}(0,0)(.45,-.765)}
\put(0,0){\pscircle[fillstyle=solid,linecolor=lightgray,fillcolor=lightgray]{.1}}
\psline[doubleline=true,linewidth=1pt, doublesep=1pt, linecolor=black]{->}(0.415,0.285)(0.085,0.52)
\put(0.5,0.28){\pscircle[fillstyle=solid,fillcolor=white]{.1}}
}}
\newcommand{\PATUP}{%
{\psset{unit=1}
\rput(0,0){\psline[linecolor=blue,linewidth=1pt]{->}(0,0)(.45,.765)}
\rput(0,0){\psline[linecolor=blue,linewidth=1pt]{->}(0.9,0)(0.1,0)}
\rput(0,0){\psline[linecolor=blue,linewidth=1pt]{->}(0,0)(.45,-.765)}
\put(0,0){\pscircle[fillstyle=solid,linecolor=lightgray,fillcolor=lightgray]{.1}}
\psline[doubleline=true,linewidth=1pt, doublesep=1pt, linecolor=black]{->}(0.415,-0.285)(0.085,-0.52)
\put(0.5,-0.28){\pscircle[fillstyle=solid,fillcolor=black]{.1}}
}}
\newcommand{\PATCENTER}{%
{\psset{unit=1}
\rput(0,0){\psline[linecolor=blue,linewidth=1pt]{->}(0,0)(.45,.765)}
\rput(0,0){\psline[linecolor=blue,linewidth=1pt]{->}(0.9,0)(0.1,0)}
\put(0,0){\pscircle[fillstyle=solid,fillcolor=red]{.1}}
\psline[doubleline=true,linewidth=1pt, doublesep=1pt, linecolor=black]{->}(0.5,0.23)(0.5,-0.23)
\psline[doubleline=true,linewidth=1pt, doublesep=1pt, linecolor=black]{->}(0.415,0.285)(0.085,0.52)
\put(0.5,0.28){\pscircle[fillstyle=solid,fillcolor=white]{.1}}
}}
\newcommand{\PATRIGHT}{%
{\psset{unit=1}
\rput(0,0){\psline[linecolor=blue,linewidth=1pt]{->}(0,0)(.45,-.765)}
\put(0,0){\pscircle[fillstyle=solid,fillcolor=red]{.1}}
\psline[doubleline=true,linewidth=1pt, doublesep=1pt, linecolor=black]{<-}(-0.415,0.285)(-0.085,0.52)
\psline[doubleline=true,linewidth=1pt, doublesep=1pt, linecolor=black]{->}(0.415,-0.285)(0.085,-0.52)
}}
\newcommand{\PATBOTTOM}{%
{\psset{unit=1}
\rput(0,0){\psline[linecolor=blue,linewidth=1pt]{->}(0,0)(.45,.765)}
\rput(0,0){\psline[linecolor=blue,linewidth=1pt]{->}(0.9,0)(0.1,0)}
\rput(0,0){\psline[linecolor=blue,linewidth=1pt]{->}(0,0)(.45,-.765)}
\put(0,0){\pscircle[fillstyle=solid,fillcolor=red]{.1}}
\psline[doubleline=true,linewidth=1pt, doublesep=1pt, linecolor=black]{->}(0.5,0.23)(0.5,-0.23)
\psline[doubleline=true,linewidth=1pt, doublesep=1pt, linecolor=black]{->}(0.415,0.285)(0.085,0.52)
\psline[doubleline=true,linewidth=1pt, doublesep=1pt, linecolor=black]{->}(0.415,-0.285)(0.085,-0.52)
\put(0.5,0.28){\pscircle[fillstyle=solid,fillcolor=white]{.1}}
\put(0.5,-0.28){\pscircle[fillstyle=solid,fillcolor=black]{.1}}
}}
\newcommand{\PATBOTBOT}{%
{\psset{unit=1}
\rput(0,0){\psline[linecolor=blue,linewidth=1pt]{->}(0,0)(.45,.765)}
\put(0,0){\pscircle[fillstyle=solid,linecolor=lightgray,fillcolor=lightgray]{.1}}
\psline[doubleline=true,linewidth=1pt, doublesep=1pt, linecolor=black]{->}(0.415,0.285)(0.085,0.52)
\put(0.5,0.28){\pscircle[fillstyle=solid,fillcolor=white]{.1}}
}}
\newcommand{\NEANGLE}{%
{\psset{unit=1}
\rput(0,0){\psline[linecolor=blue,linewidth=1pt]{->}(0,0)(.45,-.765)}
\rput(0,0){\psline[linecolor=blue,linewidth=1pt]{->}(-0.45,0.765)(-0.05,0.085)}
\put(0,0){\pscircle[fillstyle=solid,fillcolor=red]{.1}}
\put(-0.5,0.85){\pscircle[fillstyle=solid,linecolor=lightgray,fillcolor=lightgray]{.1}}
\psline[doubleline=true,linewidth=1pt, doublesep=1pt, linecolor=black]{->}(0.5,0.23)(0.5,-0.23)
\psline[doubleline=true,linewidth=1pt, doublesep=1pt, linecolor=black]{->}(0.415,0.285)(0.085,0.52)
\psline[doubleline=true,linewidth=1pt, doublesep=1pt, linecolor=black]{->}(0.415,-0.285)(0.085,-0.52)
\psline[doubleline=true,linewidth=1pt, doublesep=1pt, linecolor=black]{<-}(-0.415,0.285)(-0.085,0.52)
\put(0.5,0.28){\pscircle[fillstyle=solid,fillcolor=white]{.1}}
\put(0.5,-0.28){\pscircle[fillstyle=solid,fillcolor=black]{.1}}
\put(0,0.57){\pscircle[fillstyle=solid,fillcolor=black]{.1}}
}}
\newcommand{\ARRD}{%
{\psset{unit=1}
\psline[doubleline=true,linewidth=1pt, doublesep=1pt, linecolor=green]{->}(-0.5,0.23)(-0.5,-0.23)
}}
\newcommand{\ARRLD}{%
{\psset{unit=1}
\psline[doubleline=true,linewidth=1pt, doublesep=1pt, linecolor=green]{<-}(-0.415,-0.285)(-0.085,-0.52)
}}
\newcommand{\ARRLU}{%
{\psset{unit=1}
\psline[doubleline=true,linewidth=1pt, doublesep=1pt, linecolor=green]{<-}(-0.415,0.285)(-0.085,0.52)
}}
\newcommand{\SWANGLE}{%
{\psset{unit=1}
\rput(0,0){\psline[linecolor=blue,linewidth=1pt]{->}(-0.45,0.765)(-0.05,0.085)}
\put(0,0){\pscircle[fillstyle=solid,fillcolor=red]{.1}}
\psline[doubleline=true,linewidth=1pt, doublesep=1pt, linecolor=black]{->}(-0.5,0.23)(-0.5,-0.23)
\psline[doubleline=true,linewidth=1pt, doublesep=1pt, linecolor=black]{<-}(-0.415,-0.285)(-0.085,-0.52)
\psline[doubleline=true,linewidth=1pt, doublesep=1pt, linecolor=black]{<-}(-0.415,0.285)(-0.085,0.52)
\put(-0.5,0.28){\pscircle[fillstyle=solid,fillcolor=white]{.1}}
\put(-0.5,-0.28){\pscircle[fillstyle=solid,fillcolor=black]{.1}}
\put(0,0.57){\pscircle[fillstyle=solid,fillcolor=black]{.1}}
\put(0,-0.57){\pscircle[fillstyle=solid,fillcolor=white]{.1}}
}}
\newcommand{\PATUPPER}{%
{\psset{unit=1}
\rput(0,0){\psline[linecolor=blue,linewidth=1pt]{->}(-0.1,0)(-0.8,0)}
\rput(0,0){\psline[linecolor=blue,linewidth=1pt]{->}(-0.45,0.765)(-0.05,0.085)}
\psline[doubleline=true,linewidth=1pt, doublesep=1pt, linecolor=black]{->}(-0.5,0.23)(-0.5,-0.23)
\psline[doubleline=true,linewidth=1pt, doublesep=1pt, linecolor=black]{<-}(-0.415,0.285)(-0.085,0.52)
\put(-0.5,0.28){\pscircle[fillstyle=solid,fillcolor=white]{.1}}
\put(0,0.57){\pscircle[fillstyle=solid,fillcolor=black]{.1}}
}}
\newcommand{\DOTS}{%
{\psset{unit=1}
\put(0,0){\pscircle[fillstyle=solid,fillcolor=black]{0.05}}
\put(-0.25,0){\pscircle[fillstyle=solid,fillcolor=black]{0.05}}
\put(0.25,0){\pscircle[fillstyle=solid,fillcolor=black]{0.05}}
}}
%
\multiput(0,0)(0.5,-0.85){1}{\PATRIGHT}
\multiput(0,-1.7)(0.5,0.85){2}{\PATBOTTOM}
\multiput(-0.5,-2.55)(0.5,0.85){1}{\PATBOTBOT}
\multiput(1.5,0.85)(0.5,0.85){1}{\PATBOTTOM}
\multiput(-2,-1.7)(0.5,0.85){3}{\PATGEN}
\multiput(0,1.7)(0.5,0.85){1}{\PATGEN}
\multiput(0.5,2.55)(0.5,0.85){1}{\PATUP}
\multiput(-1,-1.7)(0.5,0.85){2}{\PATGEN}
\multiput(-1.5,-2.55)(0.5,0.85){1}{\PATBOT}
\multiput(1,1.7)(0.5,0.85){1}{\PATGEN}
\multiput(0.5,0.85)(0.5,0.85){1}{\PATCENTER}
\multiput(-0.25,1.275)(0.5,-0.85){2}{\rput{60}(0,0){\DOTS}}
\multiput(2,1.7)(0.5,0.85){1}{\NEANGLE}
\multiput(-2,-1.7)(0.5,0.85){1}{\SWANGLE}
\multiput(-1.5,-0.85)(0.5,0.85){2}{\PATUPPER}
\multiput(0,1.7)(0.5,0.85){1}{\PATUPPER}
\rput{60}(1.25,0.425){\DOTS}
\put(3.0,1.7){\ARRD}
\put(2.5,0.9){\ARRLU}
\put(2.,1.7){\ARRLD}
\put(1.5,0.9){\ARRLU}
\put(1.5,0.9){\ARRD}
\put(0.,0.){\ARRLU}
\put(0.,0.){\ARRD}
\put(-0.5,-0.8){\ARRLU}
\put(-0.5,-0.8){\ARRD}
\put(-1.0,-1.65){\ARRLU}
\put(-1.0,-1.65){\ARRD}
\put(-1.5,-2.5){\ARRLU}
\put(-2.,-1.7){\ARRLD}
%
%
\put(-2.6,-2){\makebox(0,0)[tr]{\hbox{{$i$}}}}
\put(2.6,2){\makebox(0,0)[bl]{\hbox{{$j$}}}}
\end{pspicture}
}
$$

We now explore how cluster variables transform under chains of mutations $\beta_{i,i+1}$. Recall that mutation $\mu_Z$ transforms any variable  $Y$ at the head of  an outgoing arrow   $Z\rightarrow Y$ as $Y\mapsto Y(1+Z)$, whereas a variable $X$ joined to $Z$ by an incoming arrow, $Z\leftarrow X$ transforms as $X\to X(1+Z^{-1})^{-1}$. Finally, $\mu_Z(Z)=Z^{-1}$ and quiver mutation is standard (\cite{FZ}). 

Till the end of this section we let $r=n-3$ for brevity.
Consider the following sequence of mutations $\beta=\mu_{B_1}\dots\mu_{B_r}\mu_{S_2}\mu_{B_r}\dots\mu_{B_2}\mu_{B_1}$ (see Fig.~\ref{fi:mutation}). 
The net result of this chain of mutations is shown on the right hand side of Fig.~\ref{fi:mutation}. Note that the resulting quiver is isomorphic to the original one when all the mutated variables except $S_1$ and $S_2$ retain their positions, while the boundary variables $S_1$ and $S_2$ are permuted. 

The following lemma is proved by a direct calculation.

\begin{lemma}\label{lm:mutation}
In the notation of Fig.~\ref{fi:mutation}, cluster variables transform as follows (recall that for brevity we set $r:=n-3$): 
\begin{align*}
&B'_k=B_k\frac{\eta_{k+2}}{\eta_k},\ k=1,\dots,r;\quad A'_k=A_k\frac{\eta_{k+1}}{\eta_{k+2}},\ k=0,\dots,r-1;\quad C'_k=C_k\frac{\eta_{k+1}}{\eta_{k+2}},\ k=1,\dots,r;\\
&A'_r=C'_0=A_r \frac{\eta_{n+1}\eta_1}{\eta_2}; \quad S'_1=\frac{S_1S^2_2 B_1\cdots B_r}{\eta_{r+1}\eta_1};\quad S'_2=\frac{\eta_2}{S_2B_1\cdots B_r},
\end{align*}
where
\begin{align}
&\eta_{r+2}=1,\quad \eta_{r+1}=1+S_2,\quad \eta_r=1+S_2+S_2B_r,\nonumber \\ 
&\eta_{r-1}=1+S_2+S_2B_r+S_2B_rB_{r-1},\ \dots, \ 
\eta_1=1+S_2+\dots+S_2B_r\cdots B_1. 
\label{eta}
\end{align}
\end{lemma}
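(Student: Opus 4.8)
The plan is to establish the lemma by directly composing the $2n-5=2r+1$ elementary mutations of $\beta=\mu_{B_1}\cdots\mu_{B_r}\mu_{S_2}\mu_{B_r}\cdots\mu_{B_1}$ in the order in which they act on the seed --- first $\mu_{B_1}$, then $\mu_{B_2},\dots,\mu_{B_r}$ (the ``upward sweep''), then the ``peak'' mutation $\mu_{S_2}$, and finally $\mu_{B_r},\dots,\mu_{B_1}$ (the ``downward sweep'') --- applying at each step the $\mathcal{X}$-cluster mutation rules recalled just before the lemma: $\mu_Z(Z)=Z^{-1}$, $Y\mapsto Y(1+Z)^{m}$ for an outgoing arrow $Z\to Y$ of multiplicity $m$, and $X\mapsto X(1+Z^{-1})^{-m}$ for an incoming arrow $X\to Z$ of multiplicity $m$. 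First I would read off the local sub-quiver of Fig.~\ref{fi:mutation}, recording every arrow and its multiplicity (in particular the double arrows incident to the six-valent interior vertices $B_k$), so that the correct power of $(1+Z^{\pm1})$ is applied at each step.

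\textbf{Key steps.} The computation is organized around the auxiliary polynomials $\eta_k$, which I would show arise as the telescoping products produced by the sweeps. Concretely, I would: (i) prove by induction on $k$ a closed form for the intermediate values of $B_k$, $A_k$, $C_k$ and of the neighbours of $S_2$ after the upward sweep $\mu_{B_1}\cdots\mu_{B_k}$; (ii) apply $\mu_{S_2}$, which sends $S_2\mapsto S_2^{-1}$ and dresses its neighbours by factors $(1+S_2^{\pm1})$, thereby seeding the constant and linear terms $\eta_{r+2}=1$, $\eta_{r+1}=1+S_2$; (iii) carry out the downward sweep, where each $\mu_{B_j}$ multiplies the accumulated expression by a factor that upgrades $\eta_{j+1}$ to $\eta_{j}$, so that the recursion $\eta_k=\eta_{k+1}+S_2\prod_{i=k}^{r}B_i$ (equivalently the nested form in~(\ref{eta})) falls out step by step. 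Collecting the factors picked up by each variable along the whole sequence then yields the stated formulas for $B_k'$, $A_k'$, $C_k'$, $A_r'=C_0'$, and for the boundary variables $S_1'$, $S_2'$.

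\textbf{Main obstacle and checks.} The principal difficulty is \emph{not} any single mutation but the bookkeeping of the \emph{evolving} quiver: mutating $B_k$ creates, reverses, or cancels arrows among its neighbours via the matrix-mutation rule, so the multiplicities seen by a later mutation differ from those in the original $\mathcal A_n$-quiver, and the double arrows at the six-valent vertices make these changes genuinely nontrivial. I would therefore track the full exchange matrix alongside the cluster variables through all $2r+1$ steps and verify two things at the end: that all spurious arrows cancel so that the final quiver is isomorphic to the initial one with only $S_1$ and $S_2$ interchanged (as asserted in the text preceding the lemma), and that the claimed substitutions are self-consistent. Useful independent checks are the invariance of the Casimirs~(\ref{Cas-out}) and~(\ref{Cas-in}) (which are monomials in the cluster variables and must be preserved, constraining the $\eta$-factors), and a direct match against the worked case $n=5$, where $\beta_{3,4}=\mu_1\mu_2\mu_3\mu_2\mu_1$ restores the quiver with vertices $3$ and $4$ swapped.
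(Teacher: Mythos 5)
Your proposal is correct and takes essentially the same route as the paper: the paper's entire proof of Lemma~\ref{lm:mutation} is the single sentence that it ``is proved by a direct calculation,'' i.e.\ exactly the step-by-step composition of the $2r+1$ mutations $\mu_{B_1},\dots,\mu_{B_r},\mu_{S_2},\mu_{B_r},\dots,\mu_{B_1}$ using the stated $\mathcal X$-mutation rules that you outline. Your additional consistency checks (invariance of the Casimirs, the worked $n=5$ case, and the final quiver being isomorphic to the original with $S_1$ and $S_2$ interchanged) are all consistent with what the surrounding text asserts.
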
 

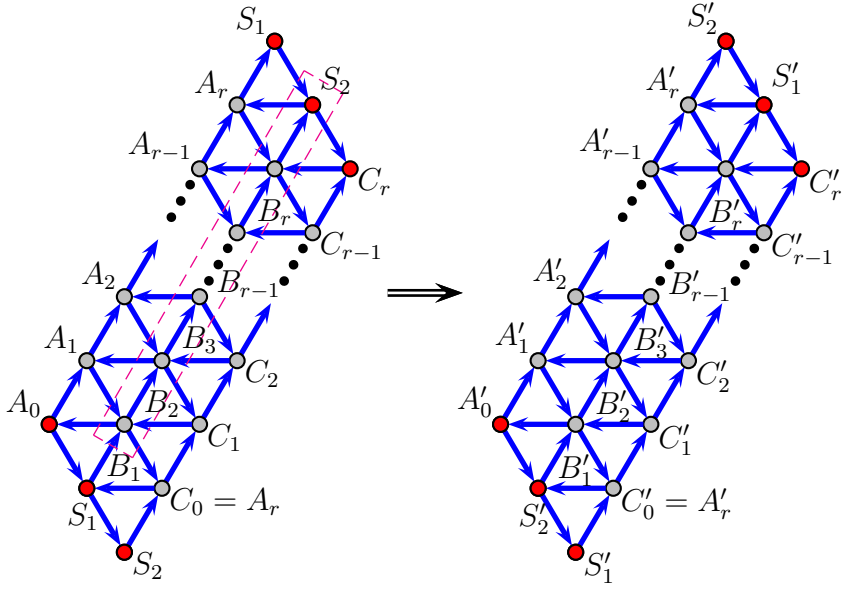
\begin{figure}[tb]
{\psset{unit=1}
\begin{pspicture}(-5,-4)(5,4)
\newcommand{\PATGEN}{%
{\psset{unit=1}
\rput(0,0){\psline[linecolor=blue,linewidth=2pt]{->}(0,0)(.45,.765)}
\rput(0,0){\psline[linecolor=blue,linewidth=2pt]{->}(0.9,0)(0.1,0)}
\rput(0,0){\psline[linecolor=blue,linewidth=2pt]{->}(0,0)(.45,-.765)}
\put(0,0){\pscircle[fillstyle=solid,fillcolor=lightgray]{.1}}
}}
\newcommand{\PATCENTER}{%
{\psset{unit=1}
\rput(0,0){\psline[linecolor=blue,linewidth=2pt]{->}(0,0)(.45,.765)}
\rput(0,0){\psline[linecolor=blue,linewidth=2pt]{->}(0.9,0)(0.1,0)}
\put(0,0){\pscircle[fillstyle=solid,fillcolor=lightgray]{.1}}
}}
\newcommand{\PATRIGHT}{%
{\psset{unit=1}
\rput(0,0){\psline[linecolor=blue,linewidth=2pt]{->}(0,0)(.45,-.765)}
\put(0,0){\pscircle[fillstyle=solid,fillcolor=lightgray]{.1}}
}}
\newcommand{\PATBOTTOM}{%
{\psset{unit=1}
\rput(0,0){\psline[linecolor=blue,linewidth=2pt]{->}(0,0)(.45,.765)}
\put(0,0){\pscircle[fillstyle=solid,fillcolor=lightgray]{.1}}
}}
\newcommand{\DASHEDBOX}{%
{\psset{unit=1}
\psline[linecolor=magenta,linewidth=2pt,linestyle=dashed](-1.3,0.3)(1.3,0.3)
\psline[linecolor=magenta,linewidth=2pt,linestyle=dashed](-1.3,-0.3)(1.3,-0.3)
\psline[linecolor=magenta,linewidth=2pt,linestyle=dashed](-1.3,0.3)(-1.3,-0.3)
\psline[linecolor=magenta,linewidth=2pt,linestyle=dashed](1.3,0.3)(1.3,-0.3)
}}
\newcommand{\DASHEDBOXLONG}{%
{\psset{unit=1}
\psline[linecolor=magenta,linewidth=0.5pt,linestyle=dashed](-2.3,0.3)(3.3,0.3)
\psline[linecolor=magenta,linewidth=0.5pt,linestyle=dashed](-2.3,-0.3)(3.3,-0.3)
\psline[linecolor=magenta,linewidth=0.5pt,linestyle=dashed](-2.3,0.3)(-2.3,-0.3)
\psline[linecolor=magenta,linewidth=0.5pt,linestyle=dashed](3.3,0.3)(3.3,-0.3)
}}
\newcommand{\DOTS}{%
{\psset{unit=1}
\put(0,0){\pscircle[fillstyle=solid,fillcolor=black]{0.05}}
\put(-0.25,0){\pscircle[fillstyle=solid,fillcolor=black]{0.05}}
\put(0.25,0){\pscircle[fillstyle=solid,fillcolor=black]{0.05}}
}}
\rput(-3,0){
\multiput(1,3.4)(0.5,-0.85){2}{\PATRIGHT}
\multiput(0,0)(0.5,-0.85){1}{\PATRIGHT}
\multiput(-1,-3.4)(0.5,0.85){4}{\PATBOTTOM}
\multiput(1.5,0.85)(0.5,0.85){1}{\PATBOTTOM}
\multiput(-2,-1.7)(0.5,0.85){3}{\PATGEN}
\multiput(0,1.7)(0.5,0.85){2}{\PATGEN}
\multiput(-1.5,-2.55)(0.5,0.85){3}{\PATGEN}
\multiput(1,1.7)(0.5,0.85){1}{\PATGEN}
\multiput(0.5,0.85)(0.5,0.85){1}{\PATCENTER}
\multiput(-0.25,1.275)(0.5,-0.85){2}{\rput{60}(0,0){\DOTS}}
\rput{60}(1.25,0.425){\DOTS}
\multiput(-2,-1.7)(0.5,-0.85){3}{\pscircle[fillstyle=solid,fillcolor=red](0,0){.1}}
\multiput(1,3.4)(0.5,-0.85){3}{\pscircle[fillstyle=solid,fillcolor=red](0,0){.1}}
\rput{60}(0,0){\DASHEDBOXLONG}
\put(-2.1,-1.6){\makebox(0,0)[br]{\hbox{{$A_0$}}}}
\put(-1.6,-0.75){\makebox(0,0)[br]{\hbox{{$A_1$}}}}
\put(-1.1,0.1){\makebox(0,0)[br]{\hbox{{$A_2$}}}}
\put(-0.1,1.8){\makebox(0,0)[br]{\hbox{{$A_{r-1}$}}}}
\put(0.4,2.65){\makebox(0,0)[br]{\hbox{{$A_r$}}}}
\put(0.9,3.5){\makebox(0,0)[br]{\hbox{{$S_1$}}}}
\put(-1,-2.1){\makebox(0,0)[tc]{\hbox{{$B_1$}}}}
\put(-0.5,-1.25){\makebox(0,0)[tc]{\hbox{{$B_2$}}}}
\put(0,-0.4){\makebox(0,0)[tc]{\hbox{{$B_3$}}}}
\put(0.65,0.35){\makebox(0,0)[tc]{\hbox{{$B_{r{-}1}$}}}}
\put(1,1.3){\makebox(0,0)[tc]{\hbox{{$B_r$}}}}
\put(1.6,2.7){\makebox(0,0)[bl]{\hbox{{$S_2$}}}}
\put(-1.55,-2.75){\makebox(0,0)[tc]{\hbox{{$S_1$}}}}
\put(-0.9,-3.4){\makebox(0,0)[tl]{\hbox{{$S_2$}}}}
\put(-0.4,-2.55){\makebox(0,0)[tl]{\hbox{{$C_0=A_r$}}}}
\put(0.1,-1.7){\makebox(0,0)[tl]{\hbox{{$C_1$}}}}
\put(0.6,-0.85){\makebox(0,0)[tl]{\hbox{{$C_2$}}}}
\put(1.6,0.85){\makebox(0,0)[tl]{\hbox{{$C_{r-1}$}}}}
\put(2.1,1.7){\makebox(0,0)[tl]{\hbox{{$C_r$}}}}
}
\put(0,0){\psline[doubleline=true,linewidth=1pt, doublesep=1pt, linecolor=black]{->}(-0.5,0)(0.5,0)}
\rput(3,0){
\multiput(1,3.4)(0.5,-0.85){2}{\PATRIGHT}
\multiput(0,0)(0.5,-0.85){1}{\PATRIGHT}
\multiput(-1,-3.4)(0.5,0.85){4}{\PATBOTTOM}
\multiput(1.5,0.85)(0.5,0.85){1}{\PATBOTTOM}
\multiput(-2,-1.7)(0.5,0.85){3}{\PATGEN}
\multiput(0,1.7)(0.5,0.85){2}{\PATGEN}
\multiput(-1.5,-2.55)(0.5,0.85){3}{\PATGEN}
\multiput(1,1.7)(0.5,0.85){1}{\PATGEN}
\multiput(0.5,0.85)(0.5,0.85){1}{\PATCENTER}
\multiput(-0.25,1.275)(0.5,-0.85){2}{\rput{60}(0,0){\DOTS}}
\rput{60}(1.25,0.425){\DOTS}
\multiput(-2,-1.7)(0.5,-0.85){3}{\pscircle[fillstyle=solid,fillcolor=red](0,0){.1}}
\multiput(1,3.4)(0.5,-0.85){3}{\pscircle[fillstyle=solid,fillcolor=red](0,0){.1}}
\put(-2.1,-1.6){\makebox(0,0)[br]{\hbox{{$A'_0$}}}}
\put(-1.6,-0.75){\makebox(0,0)[br]{\hbox{{$A'_1$}}}}
\put(-1.1,0.1){\makebox(0,0)[br]{\hbox{{$A'_2$}}}}
\put(-0.1,1.8){\makebox(0,0)[br]{\hbox{{$A'_{r-1}$}}}}
\put(0.4,2.65){\makebox(0,0)[br]{\hbox{{$A'_r$}}}}
\put(0.9,3.5){\makebox(0,0)[br]{\hbox{{$S'_2$}}}}
\put(-1,-2.1){\makebox(0,0)[tc]{\hbox{{$B'_1$}}}}
\put(-0.5,-1.25){\makebox(0,0)[tc]{\hbox{{$B'_2$}}}}
\put(0,-0.4){\makebox(0,0)[tc]{\hbox{{$B'_3$}}}}
\put(0.65,0.35){\makebox(0,0)[tc]{\hbox{{$B'_{r{-}1}$}}}}
\put(1,1.3){\makebox(0,0)[tc]{\hbox{{$B'_r$}}}}
\put(1.6,2.7){\makebox(0,0)[bl]{\hbox{{$S'_1$}}}}
\put(-1.55,-2.75){\makebox(0,0)[tc]{\hbox{{$S'_2$}}}}
\put(-0.9,-3.4){\makebox(0,0)[tl]{\hbox{{$S'_1$}}}}
\put(-0.4,-2.55){\makebox(0,0)[tl]{\hbox{{$C'_0=A'_r$}}}}
\put(0.1,-1.7){\makebox(0,0)[tl]{\hbox{{$C'_1$}}}}
\put(0.6,-0.85){\makebox(0,0)[tl]{\hbox{{$C'_2$}}}}
\put(1.6,0.85){\makebox(0,0)[tl]{\hbox{{$C'_{r-1}$}}}}
\put(2.1,1.7){\makebox(0,0)[tl]{\hbox{{$C'_r$}}}}
}
%
\end{pspicture}
}
\caption{The transformation of variables under a braid group transformation $\beta_{i,i+1}$. We indicate only variables that are transformed under the corresponding chain of mutations. Note that the cluster variables $C_0$ and $A_r$ (and therefore $C'_0$ and $A'_r$) are identified.} 
\label{fi:mutation}
\end{figure}

Note first that Casimirs (\ref{Cas-in}) of the $\mathcal A_n$-quiver are invariant under the transformation in Lemma~\ref{lm:mutation}. This immediately follows from the equalities
\begin{align}
&A'_kB'_kC'_{k-1}=A_kB_kC_{k-1},\ k=2,\dots,r-1,\nonumber\\
&A'_1B'_1A'_rB'_rC'_r=A_1B_1A_rB_rC_r,\nonumber \\
 &\qquad\qquad\hbox{and}\\
&S'_1S'_2A'_0C'_r=S_1S_2A_0C_r.\nonumber
\end{align}

We now formulate the main statement
\begin{theorem}\label{th:braid}
The cluster transformations in Lemma~\ref{lm:mutation} generate the braid-group transformations for entries $a_{i,j}$ of the (classical) matrix $\mathbb A$. 
\end{theorem}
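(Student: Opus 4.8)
The plan is to reduce the theorem to the single elementary generator $\beta_{i,i+1}$ and then to verify, by a direct computation on the path-sum model, that the mutation sequence of Lemma~\ref{lm:mutation} reproduces the conjugation $\mathbb A\mapsto B_{i,i+1}\mathbb A B_{i,i+1}^{\text T}$ of~(\ref{beta}). The reduction is legitimate because the braid group is generated by the $\beta_{i,i+1}$, and composing mutation sequences composes the induced cluster automorphisms; moreover the text preceding Lemma~\ref{lm:mutation} shows that the sequence returns a quiver isomorphic to the original one with the two frozen sites $S_1,S_2$ interchanged, which is exactly the permutation of matrix indices effected by $B_{i,i+1}$, so iteration and composition are well defined. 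Thus it suffices to show that substituting the mutated variables of Lemma~\ref{lm:mutation} into the Laurent-polynomial expressions for the entries $a_{k,l}$ yields the entries $\tilde a_{k,l}$ of $\wtd{\mathbb A}=B_{i,i+1}\mathbb A B_{i,i+1}^{\text T}$.

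First I would write out the target transformation explicitly. Since $B_{i,i+1}$ is the identity outside the $2\times2$ block $\bigl(\begin{smallmatrix} a_{i,i+1} & -1 \\ 1 & 0\end{smallmatrix}\bigr)$, left multiplication replaces row $i$ by $a_{i,i+1}(\text{row }i)-(\text{row }i+1)$ and row $i+1$ by the old row $i$, while right multiplication by $B_{i,i+1}^{\text T}$ performs the analogous operation on columns $i,i+1$. This gives closed formulas expressing every $\tilde a_{k,l}$ as a polynomial in the old entries $a_{k,l}$ and in $a_{i,i+1}$; only entries lying in rows or columns $i,i+1$ are affected nontrivially, and all of the essential nonlinearity is carried by the single entry $a_{i,i+1}$.

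Next I would use the path-sum model of Fig.~\ref{fi:gluing}: each classical (hence commutative) entry $a_{k,l}$ is the sum over lattice paths in the associated parallelogram of the monomials $\prod Z_\alpha^{\pm1/2}$. The key step is to locate the mutation box of Fig.~\ref{fi:mutation} inside this parallelogram and to recognize the polynomials $\eta_k$ of~(\ref{eta}) as the partial sums over the path segments that cross the box, so that $a_{i,i+1}$ and the quantity it controls in the conjugation are built from $\eta_1$ and the local variables $S_2,B_1,\dots,B_r$. Substituting the transformation rules $B'_k=B_k\eta_{k+2}/\eta_k$, $A'_k=A_k\eta_{k+1}/\eta_{k+2}$, $C'_k=C_k\eta_{k+1}/\eta_{k+2}$ together with the rules for $S'_1,S'_2$ into the path sums, the $\eta$-factors attached to consecutive sites should telescope, regrouping the monomials of $a_{k,l}(Z)$ into exactly those of the target $\tilde a_{k,l}$ computed in the previous step.

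The hard part will be this final recombination: proving that after substitution the Laurent polynomials $a_{k,l}(Z')$ collapse onto the braid-transformed entries. I expect this to follow from a path-splitting bijection in the parallelogram — sorting the paths contributing to a given $\tilde a_{k,l}$ according to whether they pass above or below the mutation box — combined with the telescoping identities for the $\eta_k$ in~(\ref{eta}). The invariance of the Casimirs~(\ref{Cas-in}) already recorded after Lemma~\ref{lm:mutation} guarantees that the normalization by $\prod K_k\prod K_l^{1/2}$ is respected throughout, so only the entries in rows and columns $i,i+1$ need to be tracked, all others being manifestly unchanged. Finally, the cases $n=3,4$, where the $a_{i,j}$ are the geodesic functions of Fig.~\ref{fi:spines} and the braid generators are Dehn twists realized by the same mutation chains, provide an independent consistency check of the general computation.
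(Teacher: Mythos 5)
Your proposal follows essentially the same route as the paper's proof: the paper likewise reduces to the generator $\beta_{i,i+1}$, computes how the partial sums $\eta_k$ of~(\ref{eta}) transform (obtaining $\eta'_k=G_{i,i+1}/\eta_1-S_1S_2B_r\cdots B_1/(\eta_1\eta_k)$), uses the invariance of the triple products $A'_{k-1}B'_kC'_k=A_{k-1}B_kC_k$ and of the Casimirs to control the normalization and to show $\Pi'_{k,p}=\Pi_{k,p}$ for entries away from rows/columns $i,i+1$, and then verifies directly that $a'_{i,i+1}=a_{i,i+1}$, $a'_{i,i+2}=a_{i,i+2}a_{i,i+1}-a_{i+1,i+2}$, $a'_{i-1,i+1}=a_{i-1,i}$, matching the conjugation by $B_{i,i+1}$. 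The only caveat is that the invariance of the entries $a_{l,m}$ with $l,m\ne i,i+1$ is not quite "manifest" — it requires the explicit cancellation $\Pi'_{k,p}=\Pi_{k,p}$ that the paper carries out — but this is exactly the telescoping you anticipate.
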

{\bf Proof}. The pivotal calculation is the transformation of quantities $\eta_k$ defined in (\ref{eta}) under transformations in Lemma~\ref{lm:mutation}. 
Let us denote by ${\mathcal G}$ the non-normalized transport matrix corresponding to matrix $\mathbb A$.
First, let us compute the (non-normalized) element $G_{i,i+1}$ of $\mathcal G$:
\be\label{GII+1}
G_{i,i+1}:=\eta_1+S_2B_r\cdots B_1S_1=1+S_2+S_2B_r+\cdots + S_2B_r\cdots B_1+S_2B_r\cdots B_1S_1.
\ee
Then, after mutation sequence $\beta_{i,i+1}$, we obtain
\begin{align*}
\eta'_k&=1+S'_1+S'_1B'_r+S'_1B'_rB'_{r-1}+\cdots+S'_1B'_rB'_{r-1}\cdots B'_{k}\\
&=1+\frac{S_1S_2^2B_r\cdots B_1}{\eta_{r+1}\eta_1}\Bigl(1+\frac{B_r}{\eta_r}+B_rB_{r-1}\frac{\eta_{r+1}}{\eta_r \eta_{r-1}}+\cdots+B_rB_{r-1}\cdots B_k\frac{\eta_{r+1}}{\eta_{k+1}\eta_k} \Bigr)\\
&(\hbox{note that } 1+{B_r}/{\eta_r}=(1+S_2)(1+B_r)/\eta_r=\eta_{r+1}(1+B_r)/\eta_r)\\
&=1+\frac{S_1S_2^2B_r\cdots B_1}{\eta_1}\Bigl(\frac{1+B_r}{\eta_r}+\frac{B_rB_{r-1}}{\eta_r \eta_{r-1}}+\cdots+\frac{B_rB_{r-1}\cdots B_k}{\eta_{k+1}\eta_k} \Bigr)\\
&\left(\hbox{note that } \frac{1+B_r}{\eta_r}+\frac{B_rB_{r-1}}{\eta_r \eta_{r-1}}=\frac{1}{\eta_r\eta_{r-1}}\bigl((1+B_r)(1+S_2+S_2B_r+S_2B_rB_{r-1})+B_rB_{r-1} \bigr)\right.\\
&=\left.\frac{1}{\eta_r\eta_{r-1}}(1+B_r+B_rB_{r-1})\eta_r=\frac{1+B_r+B_rB_{r-1}}{\eta_{r-1}}\right)\\
&=\cdots =1+\frac{S_1S_2^2B_r\cdots B_1}{\eta_1}\frac{1+B_r+B_rB_{r-1}+\dots+B_rB_{r-1}\cdots B_k}{\eta_k}\\
&=1+\frac{S_1S_2B_r\cdots B_1(\eta_k-1)}{\eta_1\eta_k}=\frac{G_{i,i+1}}{\eta_1}-\frac{S_1S_2B_r\cdots B_1}{\eta_1\eta_k}.
\end{align*} 
We therefore obtain that
\be
\eta'_k=\frac{G_{i,i+1}}{\eta_1}-\frac{S_1S_2B_r\cdots B_1}{\eta_1\eta_k},\ k=1,\dots,r+2,
\ee
and
\be
G'_{i,i+1}=\eta'_r+S'_1B'_rB'_{r-1}\cdots B'_{1}S'_2=\frac{G_{i,i+1}}{\eta_1}.
\ee
We consider several cases of matrix entries $a_{ij}$; the rest we leave for the reader. Note, first, the relations for triples of the cluster variables:
\be\label{triples}
A'_{k-1}B'_kC'_k=A_{k-1}B_kC_k,\ k=1,\dots,r,\ \hbox{and}\ S'_1S'_2A'_r=S_1S_2A_r.
\ee
In particular, these relations imply that the total product of all cluster variables is conserved.

For all elements $a_{i,j}$ we take into account their normalization by taking the sum over paths weighted by products of cluster variables (in power one) inside the corresponding parallelogram and above the path and dividing this sum by the product of all cluster variables inside the parallelogram taken with power $1/2$.

We begin with the element 
$$
a_{i,i+1}=\bigl(S_2B_r\cdots B_1S_1 \bigr)^{-1/2}G_{i,i+1}.
$$
Since $S'_1B'_r\cdots B'_1S'_2=S_2B_r\cdots B_1S_1 \eta_1^{-2}$ we have that
$$
a'_{i,i+1}=\bigl(S'_1B'_r\cdots B'_1S'_2 \bigr)^{-1/2}G'_{i,i+1}=a_{i,i+1},
$$
so, as expected, this element is preserved by the braid-group transformation $\beta_{i,i+1}$.

We next consider an arbitrary element $a_{l,m}$ with $l,m\ne i,i+1$. Note first that the normalizing factor for any such element is a product of triples of cluster variables (\ref{triples}) taken either in powers $1/2$ or zero; since all these triples are preserved by the transformation, all such factors are invariant under the transformation. It suffices therefore to consider a nonnormalized sum over paths contributing to $a_{l,m}$.  Consider a contribution of cluster variables to  paths that enter the pattern in Fig.~\ref{fi:mutation} from the right between elements $C_{p-1}$ and $C_p$ and exit from the left between elements $A_{k-1}$ and $A_{k-2}$ (with $k\le p$). This path may cross the ``$B$-line'' anywhere between $B_{k-1}$ and $B_p$ and we have to take a sum over all possible variants. The corresponding contribution therefore has the form
\begin{align*}
\Pi'_{k,p}&=C'_pC'_{p+1}\cdots C'_m\times  \bigl[\eta'_{k}-\eta'_{p+1} \bigr]\times S'_2A'_mA'_{m-1}\cdots A'_{k-1}\\
&=\eta_{p+1}C_pC_{p+1}\cdots C_m\times \frac{S_1S_2B_r\cdots B_1}{\eta_1\eta_k\eta_{p+1}}(\eta_k-\eta_{p+1})\times \frac{\eta_1\eta_k}{S_2B_r\cdots B_1}A_m\cdots A_{k-1}\\
&=C_pC_{p+1}\cdots C_m(\eta_k-\eta_{p+1})S_1A_m\cdots A_{k-1}=\Pi_{k,p},
\end{align*}
so all these elements are preserved, as well as all normalizing factors, and $a'_{l,m}=a_{l,m}$.

Consider now
\begin{align*}
a'_{i,i+2}=&\bigl(A'_r\cdots A'_0S'_1B'_r\cdots B'_1 \bigr)^{-1/2}\bigl[ \eta'_{r+2}+A'_r\eta'_{r+1}+A'_rA'_{r-1}\eta'_{r}+\dots +A'_rA'_{r-1}\cdots A'_0\eta'_1\bigr]\\
=&\bigl(S_1A_r\cdots A_0S^2_2B^2_r\cdots B^2_1 \eta_2^{-2} \bigr)^{-1/2}\Bigl[ 1+A_r\frac{\eta_{r+1}\eta_1}{\eta_2}\Bigl(\frac{G_{i,i+1}}{\eta_1}-\frac{S_1S_2B_r\cdots B_1}{\eta_1\eta_{r+1}}\Bigr)+\dots\Bigr.\\
&+\Bigl. (A_r\cdots A_k)\frac{\eta_{k+1}\eta_1}{\eta_2}\Bigl(\frac{G_{i,i+1}}{\eta_1}-\frac{S_1S_2B_r\cdots B_1}{\eta_1\eta_{k+1}}\Bigr)+\dots + (A_r\cdots A_0)\frac{\eta_{1}\eta_1}{\eta_2}\Bigl(\frac{G_{i,i+1}}{\eta_1}-\frac{S_1S_2B_r\cdots B_1}{\eta_1\eta_{1}}\Bigr)\Bigr] \\
=&\Bigl[ \frac{1+A_r\eta_{r+1}+\dots+A_rA_{r-1}\cdots A_0\eta_1}{(A_r\cdots A_0S_2B_r\cdots B_1)^{1/2}}\cdot \frac{G_{i,i+1}}{(S_2B_r\cdots B_1 S_1)^{1/2}}-
\frac{1+S_1+S_1A_r+\dots+S_1A_r\cdots A_0}{(S_1A_r\cdots A_0)^{1/2}}\Bigr]\\
=&a_{i,i+2}a_{i,i+1}-a_{i+1,i+2}.
\end{align*}
Next,
\begin{align*}
a'_{i-1,i+1}=& \bigl(B'_r\cdots B'_1S'_2C'_r\cdots C'_0 \bigr)^{-1/2}\bigl(1\cdot(G'_{i,i+1}-\eta'_{r+2})+C'_r(G'_{i,i+1}-\eta'_{r+1})\bigr.\\
&+C'_rC'_{r-1}(G'_{i,i+1}-\eta'_{r}) +\dots+\bigl. C'_r\cdots C'_0(G'_{i,i+1}-\eta'_{1})\bigr)\\
=&\frac{1}{(C_r\cdots C_0S_2)^{1/2}}\bigl(1+C_r+C_rC_{r-1}+\dots+C_rC_{r-1}\cdots C_1+ C_rC_{r-1}\cdots C_1C_0\eta_{r+1}\bigr)\\
=&\frac{1}{(C_r\cdots C_0S_2)^{1/2}}\bigl(1+C_r+C_rC_{r-1}+\dots+C_rC_{r-1}\cdots C_1+ C_rC_{r-1}\cdots C_1C_0(1+S_2)\bigr)=a_{i-1,i}.
\end{align*}
Proving the rest of relations we leave to the reader.$\square$

%

\section{Casimirs}\label{s:Casimirs}

In this section, we derive complete sets of Casimirs for all relevant (sub)varieties of cluster variables related to regular quivers associated to $SL_n$ systems. All proofs are direct calculations: {it is easy to check that indicated products of cluster variables are Casimirs, whereas their completeness follows from the known answers for dimensions of symplectic leaves}.

\subsection{The full-rank $SL_n$-quiver}

\begin{lemma}\label{lem:Casimir-slN}
The complete set of Casimir operators for the full-rank $SL_n$-quiver are $\bigl[\frac{n}{2}\bigr]$ products of cluster variables depicted in the figure below for the example of $SL_6$: numbers at vertices indicate the power with which the corresponding variable comes into the product; all nonnumbered variables have power zero. All Casimirs correspond to closed broken-line paths in the $SL_n$-quiver with reflections at the boundaries (the ``frozen'' variables at boundaries enter the product with powers two, powers of non-frozen variables can be 0,1,2, and 3, and they count how many times the path goes through the corresponding variable. The total Poisson dimension of the full-rank quiver is therefore $\frac{(n+2)(n+1)}{2}-3-\bigl[\frac{n}{2}\bigr]$.
\end{lemma}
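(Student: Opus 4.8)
The plan is to recast the assertion as a statement about the radical of a skew-symmetric lattice form and then settle the two genuinely different parts — that the displayed monomials are Casimirs, and that there are no others — by separate arguments. First I would fix the linear algebra. Let $B=(b_{\alpha\beta})$ be the skew-symmetric matrix whose entries record the signed arrow-multiplicities of the $SL_n$-quiver of Fig.~\ref{fi:triangle}, with a solid arrow counting $1$, a dashed arrow $1/2$ and a double arrow $2$. From the commutation rule $Z_\beta Z_\alpha=q^{-2b_{\alpha\beta}}Z_\alpha Z_\beta$ one computes $\col{\prod_\alpha Z_\alpha^{m_\alpha}}\,Z_\beta=q^{2\sum_\alpha m_\alpha b_{\alpha\beta}}\,Z_\beta\,\col{\prod_\alpha Z_\alpha^{m_\alpha}}$, so a Weyl-ordered monomial is a Casimir exactly when its exponent vector $\mathbf m$ lies in $\ker B$. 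Hence the number of independent Casimirs equals $\dim\ker B$, and the symplectic-leaf (Poisson) dimension equals $\operatorname{rank}B=|V|-\dim\ker B$, where $V$ is the vertex set. Counting barycentric labels $(a,b,c)$ with $a+b+c=n$ and deleting the three corners gives $|V|=\binom{n+2}{2}-3=\tfrac{(n+2)(n+1)}{2}-3$, so the stated Poisson-dimension formula will drop out of the single identity $\dim\ker B=[n/2]$ that the bulk of the proof establishes.

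Second, I would verify that the $[n/2]$ monomials drawn in the figure are Casimirs, i.e.\ that their exponent vectors satisfy $\sum_\alpha m_\alpha b_{\alpha\beta}=0$ at every vertex $\beta$; this is precisely the local balance the statement encodes geometrically. Each monomial is supported on a closed broken-line path reflecting off the triangle's boundary, with $m_\alpha\in\{0,1,2,3\}$ counting the visits of the path to $\alpha$ and the boundary (frozen) vertices counted twice because of the reflection. At an interior vertex the balance condition involves only the elementary oriented three-cycles meeting that vertex; since the path enters and exits the same number of times, the contributions of the incident cycles cancel in pairs, and the reflection rule produces the identical cancellation at boundary vertices. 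Thus the check is finite and local, carried out once per vertex type. Linear independence of the resulting $[n/2]$ exponent vectors is then immediate: the supporting paths form nested shells of strictly increasing size, so each carries a vertex untouched by the previous ones, giving a triangular incidence pattern. This yields $\dim\ker B\ge[n/2]$.

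The one substantive step is the reverse inequality $\dim\ker B\le[n/2]$ (completeness), and I expect this to be the main obstacle, since it is the single point where no local arrow count suffices. I would close it by invoking the known dimension of a generic symplectic leaf of $\X_{SL_n,\Sigma_{0,1,3}}$ in the Fock--Goncharov Poisson structure: this dimension is $\operatorname{rank}B$, it is known to equal $\tfrac{(n+2)(n+1)}{2}-3-[n/2]$, and combined with $|V|=\tfrac{(n+2)(n+1)}{2}-3$ it forces $\dim\ker B=[n/2]$, which simultaneously gives completeness of the exhibited Casimirs and the Poisson-dimension formula. Should a self-contained argument be preferred, the alternative is to bound $\operatorname{rank}B$ from below directly, by exhibiting a nonvanishing minor of size $|V|-[n/2]$ — for instance ordering the non-frozen variables shell by shell so that $B$ acquires a block form with invertible diagonal blocks — which pins $\operatorname{rank}B$, and hence $\dim\ker B$, without external input.
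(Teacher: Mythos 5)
Your proposal is correct and follows essentially the same route as the paper, which states that the indicated products are verified to be Casimirs by direct (local) calculation and that completeness follows from the known dimensions of the symplectic leaves of $\X_{SL_n,\Sigma_{0,1,3}}$. Your reformulation via the kernel of the skew-symmetric lattice form and the local balance check is just a more explicit rendering of that same two-step argument.
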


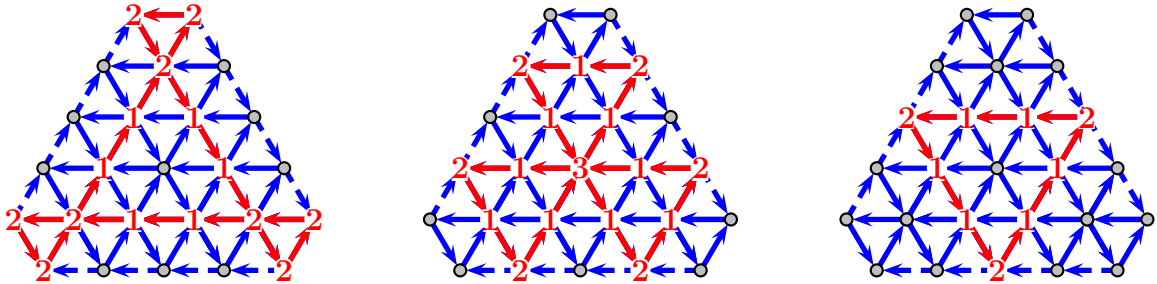
\begin{figure}[H]
\begin{pspicture}(-2.7,-2)(2.7,3){\psset{unit=0.8}
\newcommand{\PATGEN}{%
{\psset{unit=1}
\rput(0,0){\psline[linecolor=blue,linewidth=2pt]{->}(0,0)(.45,.765)}
\rput(0,0){\psline[linecolor=blue,linewidth=2pt]{->}(1,0)(0.1,0)}
\rput(0,0){\psline[linecolor=blue,linewidth=2pt]{->}(0,0)(.45,-.765)}
\put(0,0){\pscircle[fillstyle=solid,fillcolor=lightgray]{.1}}
}}
\newcommand{\PATLEFT}{%
{\psset{unit=1}
\rput(0,0){\psline[linecolor=blue,linewidth=2pt,linestyle=dashed]{->}(0,0)(.45,.765)}
\rput(0,0){\psline[linecolor=blue,linewidth=2pt]{->}(1,0)(0.1,0)}
\rput(0,0){\psline[linecolor=blue,linewidth=2pt]{->}(0,0)(.45,-.765)}
\put(0,0){\pscircle[fillstyle=solid,fillcolor=lightgray]{.1}}
}}
\newcommand{\PATRIGHT}{%
{\psset{unit=1}
\rput(0,0){\psline[linecolor=blue,linewidth=2pt,linestyle=dashed]{->}(0,0)(.45,-.765)}
\put(0,0){\pscircle[fillstyle=solid,fillcolor=lightgray]{.1}}
}}
\newcommand{\PATBOTTOM}{%
{\psset{unit=1}
\rput(0,0){\psline[linecolor=blue,linewidth=2pt]{->}(0,0)(.45,.765)}
\rput(0,0){\psline[linecolor=blue,linewidth=2pt,linestyle=dashed]{->}(1,0)(0.1,0)}
\put(0,0){\pscircle[fillstyle=solid,fillcolor=lightgray]{.1}}
}}
\newcommand{\PATTOP}{%
{\psset{unit=1}
\rput(0,0){\psline[linecolor=blue,linewidth=2pt]{->}(1,0)(0.1,0)}
\rput(0,0){\psline[linecolor=blue,linewidth=2pt]{->}(0,0)(.45,-.765)}
\put(0,0){\pscircle[fillstyle=solid,fillcolor=lightgray]{.1}}
}}
\newcommand{\PATBOTRIGHT}{%
{\psset{unit=1}
\rput(0,0){\psline[linecolor=blue,linewidth=2pt]{->}(0,0)(.45,.765)}
\put(0,0){\pscircle[fillstyle=solid,fillcolor=lightgray]{.1}}
\put(.5,0.85){\pscircle[fillstyle=solid,fillcolor=lightgray]{.1}}
}}
\newcommand{\ODIN}{%
{\psset{unit=1}
\put(0,0){\pscircle[fillstyle=solid,fillcolor=white,linecolor=white]{.15}}
\put(0,0){\makebox(0,0)[cc]{\hbox{\tcw{\large$\mathbf 1$}}}}
\put(0,0){\makebox(0,0)[cc]{\hbox{\tcr{$\mathbf 1$}}}}
}}
\newcommand{\DVA}{%
{\psset{unit=1}
\put(0,0){\pscircle[fillstyle=solid,fillcolor=white,linecolor=white]{.15}}
\put(0,0){\makebox(0,0)[cc]{\hbox{\tcw{\large$\mathbf 2$}}}}
\put(0,0){\makebox(0,0)[cc]{\hbox{\tcr{$\mathbf 2$}}}}
}}
\newcommand{\TRI}{%
{\psset{unit=1}
\put(0,0){\pscircle[fillstyle=solid,fillcolor=white,linecolor=white]{.15}}
\put(0,0){\makebox(0,0)[cc]{\hbox{\tcw{\large$\mathbf 3$}}}}
\put(0,0){\makebox(0,0)[cc]{\hbox{\tcr{$\mathbf 3$}}}}
}}
\multiput(-2.5,-0.85)(0.5,0.85){4}{\PATLEFT}
\multiput(-2,-1.7)(1,0){4}{\PATBOTTOM}
\put(-0.5,2.55){\PATTOP}
\multiput(-1.5,-0.85)(1,0){4}{\PATGEN}
\multiput(-1,0)(1,0){3}{\PATGEN}
\multiput(-.5,0.85)(1,0){2}{\PATGEN}
\put(0,1.7){\PATGEN}
\multiput(-1.5,-0.85)(1,0){4}{\PATGEN}
\multiput(0.5,2.55)(0.5,-0.85){4}{\PATRIGHT}
\put(2,-1.7){\PATBOTRIGHT}
\multiput(-2,-1.7)(0.5,0.85){5}{\psline[linecolor=red,linewidth=2pt]{->}(0,0)(.45,.765)}
\multiput(2,-1.7)(0.5,0.85){1}{\psline[linecolor=red,linewidth=2pt]{->}(0,0)(.45,.765)}
\multiput(-0.5,2.55)(0.5,-0.85){5}{\psline[linecolor=red,linewidth=2pt]{->}(0,0)(.45,-.765)}
\multiput(-2.5,-0.85)(0.5,-0.85){1}{\psline[linecolor=red,linewidth=2pt]{->}(0,0)(.45,-.765)}
\multiput(-2.5,-0.85)(1,0){5}{\psline[linecolor=red,linewidth=2pt]{->}(1,0)(.1,0)}
\multiput(-0.5,2.55)(1,0){1}{\psline[linecolor=red,linewidth=2pt]{->}(1,0)(.1,0)}
\multiput(-.5,-0.85)(1,0){2}{\ODIN}
\multiput(-1,0)(2,0){2}{\ODIN}
\multiput(-.5,0.85)(1,0){2}{\ODIN}
\multiput(-2.5,-0.85)(1,0){2}{\DVA}
\multiput(2.5,-0.85)(-1,0){2}{\DVA}
\multiput(-2,-1.7)(4,0){2}{\DVA}
\multiput(-0.5,2.55)(1,0){2}{\DVA}
\put(0,1.7){\DVA}
%
}
\end{pspicture}
\begin{pspicture}(-2.7,-2)(2.7,3){\psset{unit=0.8}
\newcommand{\PATGEN}{%
{\psset{unit=1}
\rput(0,0){\psline[linecolor=blue,linewidth=2pt]{->}(0,0)(.45,.765)}
\rput(0,0){\psline[linecolor=blue,linewidth=2pt]{->}(1,0)(0.1,0)}
\rput(0,0){\psline[linecolor=blue,linewidth=2pt]{->}(0,0)(.45,-.765)}
\put(0,0){\pscircle[fillstyle=solid,fillcolor=lightgray]{.1}}
}}
\newcommand{\PATLEFT}{%
{\psset{unit=1}
\rput(0,0){\psline[linecolor=blue,linewidth=2pt,linestyle=dashed]{->}(0,0)(.45,.765)}
\rput(0,0){\psline[linecolor=blue,linewidth=2pt]{->}(1,0)(0.1,0)}
\rput(0,0){\psline[linecolor=blue,linewidth=2pt]{->}(0,0)(.45,-.765)}
\put(0,0){\pscircle[fillstyle=solid,fillcolor=lightgray]{.1}}
}}
\newcommand{\PATRIGHT}{%
{\psset{unit=1}
\rput(0,0){\psline[linecolor=blue,linewidth=2pt,linestyle=dashed]{->}(0,0)(.45,-.765)}
\put(0,0){\pscircle[fillstyle=solid,fillcolor=lightgray]{.1}}
}}
\newcommand{\PATBOTTOM}{%
{\psset{unit=1}
\rput(0,0){\psline[linecolor=blue,linewidth=2pt]{->}(0,0)(.45,.765)}
\rput(0,0){\psline[linecolor=blue,linewidth=2pt,linestyle=dashed]{->}(1,0)(0.1,0)}
\put(0,0){\pscircle[fillstyle=solid,fillcolor=lightgray]{.1}}
}}
\newcommand{\PATTOP}{%
{\psset{unit=1}
\rput(0,0){\psline[linecolor=blue,linewidth=2pt]{->}(1,0)(0.1,0)}
\rput(0,0){\psline[linecolor=blue,linewidth=2pt]{->}(0,0)(.45,-.765)}
\put(0,0){\pscircle[fillstyle=solid,fillcolor=lightgray]{.1}}
}}
\newcommand{\PATBOTRIGHT}{%
{\psset{unit=1}
\rput(0,0){\psline[linecolor=blue,linewidth=2pt]{->}(0,0)(.45,.765)}
\put(0,0){\pscircle[fillstyle=solid,fillcolor=lightgray]{.1}}
\put(.5,0.85){\pscircle[fillstyle=solid,fillcolor=lightgray]{.1}}
}}
\newcommand{\ODIN}{%
{\psset{unit=1}
\put(0,0){\pscircle[fillstyle=solid,fillcolor=white,linecolor=white]{.15}}
\put(0,0){\makebox(0,0)[cc]{\hbox{\tcw{\large$\mathbf 1$}}}}
\put(0,0){\makebox(0,0)[cc]{\hbox{\tcr{$\mathbf 1$}}}}
}}
\newcommand{\DVA}{%
{\psset{unit=1}
\put(0,0){\pscircle[fillstyle=solid,fillcolor=white,linecolor=white]{.15}}
\put(0,0){\makebox(0,0)[cc]{\hbox{\tcw{\large$\mathbf 2$}}}}
\put(0,0){\makebox(0,0)[cc]{\hbox{\tcr{$\mathbf 2$}}}}
}}
\newcommand{\TRI}{%
{\psset{unit=1}
\put(0,0){\pscircle[fillstyle=solid,fillcolor=white,linecolor=white]{.15}}
\put(0,0){\makebox(0,0)[cc]{\hbox{\tcw{\large$\mathbf 3$}}}}
\put(0,0){\makebox(0,0)[cc]{\hbox{\tcr{$\mathbf 3$}}}}
}}
\multiput(-2.5,-0.85)(0.5,0.85){4}{\PATLEFT}
\multiput(-2,-1.7)(1,0){4}{\PATBOTTOM}
\put(-0.5,2.55){\PATTOP}
\multiput(-1.5,-0.85)(1,0){4}{\PATGEN}
\multiput(-1,0)(1,0){3}{\PATGEN}
\multiput(-.5,0.85)(1,0){2}{\PATGEN}
\put(0,1.7){\PATGEN}
\multiput(-1.5,-0.85)(1,0){4}{\PATGEN}
\multiput(0.5,2.55)(0.5,-0.85){4}{\PATRIGHT}
\put(2,-1.7){\PATBOTRIGHT}
\multiput(-1,-1.7)(0.5,0.85){4}{\psline[linecolor=red,linewidth=2pt]{->}(0,0)(.45,.765)}
\multiput(1,-1.7)(0.5,0.85){2}{\psline[linecolor=red,linewidth=2pt]{->}(0,0)(.45,.765)}
\multiput(-1,1.7)(0.5,-0.85){4}{\psline[linecolor=red,linewidth=2pt]{->}(0,0)(.45,-.765)}
\multiput(-2,0)(0.5,-0.85){2}{\psline[linecolor=red,linewidth=2pt]{->}(0,0)(.45,-.765)}
\multiput(-2,0)(1,0){4}{\psline[linecolor=red,linewidth=2pt]{->}(1,0)(.1,0)}
\multiput(-1,1.7)(1,0){2}{\psline[linecolor=red,linewidth=2pt]{->}(1,0)(.1,0)}
\put(0,0){\TRI}
\multiput(-1.5,-0.85)(1,0){3}{\ODIN}
\multiput(-1,0)(0.5,0.85){3}{\ODIN}
\multiput(.5,0.85)(.5,-0.85){3}{\ODIN}
\multiput(-2,0)(1,1.7){2}{\DVA}
\multiput(2,0)(-1,1.7){2}{\DVA}
\multiput(-1,-1.7)(2,0){2}{\DVA}
%
}
\end{pspicture}
\begin{pspicture}(-2.7,-2)(2.7,3){\psset{unit=0.8}
\newcommand{\PATGEN}{%
{\psset{unit=1}
\rput(0,0){\psline[linecolor=blue,linewidth=2pt]{->}(0,0)(.45,.765)}
\rput(0,0){\psline[linecolor=blue,linewidth=2pt]{->}(1,0)(0.1,0)}
\rput(0,0){\psline[linecolor=blue,linewidth=2pt]{->}(0,0)(.45,-.765)}
\put(0,0){\pscircle[fillstyle=solid,fillcolor=lightgray]{.1}}
}}
\newcommand{\PATLEFT}{%
{\psset{unit=1}
\rput(0,0){\psline[linecolor=blue,linewidth=2pt,linestyle=dashed]{->}(0,0)(.45,.765)}
\rput(0,0){\psline[linecolor=blue,linewidth=2pt]{->}(1,0)(0.1,0)}
\rput(0,0){\psline[linecolor=blue,linewidth=2pt]{->}(0,0)(.45,-.765)}
\put(0,0){\pscircle[fillstyle=solid,fillcolor=lightgray]{.1}}
}}
\newcommand{\PATRIGHT}{%
{\psset{unit=1}
\rput(0,0){\psline[linecolor=blue,linewidth=2pt,linestyle=dashed]{->}(0,0)(.45,-.765)}
\put(0,0){\pscircle[fillstyle=solid,fillcolor=lightgray]{.1}}
}}
\newcommand{\PATBOTTOM}{%
{\psset{unit=1}
\rput(0,0){\psline[linecolor=blue,linewidth=2pt]{->}(0,0)(.45,.765)}
\rput(0,0){\psline[linecolor=blue,linewidth=2pt,linestyle=dashed]{->}(1,0)(0.1,0)}
\put(0,0){\pscircle[fillstyle=solid,fillcolor=lightgray]{.1}}
}}
\newcommand{\PATTOP}{%
{\psset{unit=1}
\rput(0,0){\psline[linecolor=blue,linewidth=2pt]{->}(1,0)(0.1,0)}
\rput(0,0){\psline[linecolor=blue,linewidth=2pt]{->}(0,0)(.45,-.765)}
\put(0,0){\pscircle[fillstyle=solid,fillcolor=lightgray]{.1}}
}}
\newcommand{\PATBOTRIGHT}{%
{\psset{unit=1}
\rput(0,0){\psline[linecolor=blue,linewidth=2pt]{->}(0,0)(.45,.765)}
\put(0,0){\pscircle[fillstyle=solid,fillcolor=lightgray]{.1}}
\put(.5,0.85){\pscircle[fillstyle=solid,fillcolor=lightgray]{.1}}
}}
\newcommand{\ODIN}{%
{\psset{unit=1}
\put(0,0){\pscircle[fillstyle=solid,fillcolor=white,linecolor=white]{.15}}
\put(0,0){\makebox(0,0)[cc]{\hbox{\tcw{\large$\mathbf 1$}}}}
\put(0,0){\makebox(0,0)[cc]{\hbox{\tcr{$\mathbf 1$}}}}
}}
\newcommand{\DVA}{%
{\psset{unit=1}
\put(0,0){\pscircle[fillstyle=solid,fillcolor=white,linecolor=white]{.15}}
\put(0,0){\makebox(0,0)[cc]{\hbox{\tcw{\large$\mathbf 2$}}}}
\put(0,0){\makebox(0,0)[cc]{\hbox{\tcr{$\mathbf 2$}}}}
}}
\newcommand{\TRI}{%
{\psset{unit=1}
\put(0,0){\pscircle[fillstyle=solid,fillcolor=white,linecolor=white]{.15}}
\put(0,0){\makebox(0,0)[cc]{\hbox{\tcw{\large$\mathbf 3$}}}}
\put(0,0){\makebox(0,0)[cc]{\hbox{\tcr{$\mathbf 3$}}}}
}}
\multiput(-2.5,-0.85)(0.5,0.85){4}{\PATLEFT}
\multiput(-2,-1.7)(1,0){4}{\PATBOTTOM}
\put(-0.5,2.55){\PATTOP}
\multiput(-1.5,-0.85)(1,0){4}{\PATGEN}
\multiput(-1,0)(1,0){3}{\PATGEN}
\multiput(-.5,0.85)(1,0){2}{\PATGEN}
\put(0,1.7){\PATGEN}
\multiput(-1.5,-0.85)(1,0){4}{\PATGEN}
\multiput(0.5,2.55)(0.5,-0.85){4}{\PATRIGHT}
\put(2,-1.7){\PATBOTRIGHT}
\multiput(0,-1.7)(0.5,0.85){3}{\psline[linecolor=red,linewidth=2pt]{->}(0,0)(.45,.765)}
\multiput(-1.5,0.85)(0.5,-0.85){3}{\psline[linecolor=red,linewidth=2pt]{->}(0,0)(.45,-.765)}
\multiput(-1.5,0.85)(1,0){3}{\psline[linecolor=red,linewidth=2pt]{->}(1,0)(.1,0)}
\multiput(-.5,-0.85)(1,0){2}{\ODIN}
\multiput(-1,0)(2,0){2}{\ODIN}
\multiput(-.5,0.85)(1,0){2}{\ODIN}
\multiput(-1.5,0.85)(3,0){2}{\DVA}
\put(0,-1.7){\DVA}
%
}
\end{pspicture}
\caption{\small
Three central elements of the full-rank quiver for $SL_6$.
}
\label{fi:Casimirs}
\end{figure}

\begin{remark}
{All Casimir operators from Lemma \ref{lem:Casimir-slN} remain Casimirs for the full-rank $GL_n$-quiver obtained by adding three more cluster variables at the corners of the triangle (the variables $Z_{6,0,0}$, $Z_{0,6,0}$, and $Z_{0,0,6}$ in Fig.~\ref{fi:plabic_weights}). If we include these three corner variables into the quiver, we have to add one more Casimir operator which is the product of all frozen (non-corner) variables along all three boundaries of the $SL_n$-quiver taken in power one and the product of three corner variables taken in power three. }
\end{remark}

For completeness, we also present Casimirs for a reduced quiver in which we eliminate one of the three sets of frozen variables. The remaining $n(n+1)/2-1$ variables are those parameterizing, say, the transport matrix $M_1$ (for $M_2$ we have to remove another set of frozen variables). In this case, every Casimir of the full-rank quiver has its counterpart in the reduced quiver except the element that is represented by a triangle-shaped path in the full-rank quiver (such an element exists only for even $n$), which has no counterpart.

\begin{lemma}\label{lem:Casimir-slN-reduced}
The complete set of Casimir operators for the reduced $SL_n$-quiver are $\bigl[\frac{n-1}{2}\bigr]$ products of cluster variables depicted in Fig.~\ref{fi:Cas-incomplete} for the example of $SL_6$: numbers at vertices indicate the power with which the corresponding variable comes into the product; all nonnumbered variables have power zero. All Casimirs correspond, as in Fig.~\ref{fi:Casimirs}, to closed broken-line paths in the corresponding full-rank quiver with reflections at the boundaries (the ``frozen'' variables at boundaries enter the product with powers two), but now the path is split into two parts separated  by two reflections at the side of the triangle that corresponds to the erased frozen variables; these two parts enter with opposite signs; the corresponding Casimir therefore contains cluster variables in both positive and negative powers. As in the case of full-rank quiver, these powers count (with signs) how many times the path goes through the corresponding variable). The total Poisson dimension of the reduced $SL_n$-quiver is therefore $\frac{n(n+1)}{2}-1-\bigl[\frac{n-1}{2}\bigr]$.
\end{lemma}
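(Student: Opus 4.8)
The plan is to split the statement into its two natural halves --- that the displayed monomials are central, and that they are complete --- and to dispatch them by the two methods announced at the start of the section: a local balance computation for centrality and a dimension count for completeness.

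First I would recall the centrality criterion in the quantum torus $\Upsilon$ of the reduced quiver: a Weyl-ordered monomial $\col{\prod_\alpha Z_\alpha^{p_\alpha}}$ is a Casimir precisely when its exponent vector $(p_\alpha)$ lies in the kernel of the skew exchange matrix $B=(b_{\alpha\beta})$, i.e. $\sum_\alpha p_\alpha b_{\alpha\beta}=0$ for every vertex $\beta$; in the semiclassical limit the same condition characterizes Casimirs of the log-canonical bracket. Centrality of the listed elements is then the ``easy to check'' part: for each of the $\bigl[\frac{n-1}{2}\bigr]$ monomials I would verify the kernel condition vertex by vertex along the broken-line path defining it. At an interior vertex the path enters and leaves once, so the two incident arrows cancel in $\sum_\alpha p_\alpha b_{\alpha\beta}$; at a frozen boundary vertex a reflection makes that variable occur with power two, which again cancels the two arrows meeting the boundary, exactly as in the full-rank case of Lemma~\ref{lem:Casimir-slN}. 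The genuinely new feature occurs at the two reflection points lying on the \emph{erased} side: there is no frozen variable there to enter with power two and absorb the turn, so the two arcs meeting at each such point must carry \emph{opposite} signs in order to restore the kernel condition at the adjacent vertices. This is precisely the origin of the negative exponents, and verifying the balance at those few vertices is the only nonroutine part of the centrality computation.

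Next I would establish independence and completeness. Independence is immediate: the $\bigl[\frac{n-1}{2}\bigr]$ monomials are built around distinct nested levels of the quiver, so their supports differ and their exponent vectors span a sublattice of $\ker B$ of full rank $\bigl[\frac{n-1}{2}\bigr]$. For completeness I would invoke the known dimension of the generic symplectic leaf: the number of independent Casimirs equals $\operatorname{corank} B$, the difference between the number $\frac{n(n+1)}{2}-1$ of cluster variables of the reduced quiver and the rank of the Poisson bivector, and that rank is the dimension of the generic symplectic leaf of the log-canonical $Z$-bracket carried by $M_1$ (cf. the induced quadratic bracket of Remark~\ref{rem:M-M}), a quantity fixed by the Fock--Goncharov geometry. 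Substituting this known dimension forces $\operatorname{corank} B=\bigl[\frac{n-1}{2}\bigr]$, so the listed Casimirs are complete. As an independent check I would compare with the full-rank count of Lemma~\ref{lem:Casimir-slN}: for odd $n$ one has $\bigl[\frac{n}{2}\bigr]=\bigl[\frac{n-1}{2}\bigr]$ and every full-rank Casimir survives the erasure of one frozen set, while for even $n$ exactly the triangle-shaped Casimir loses its reflection at the removed side and has no counterpart, dropping the count by one to $\frac{n}{2}-1=\bigl[\frac{n-1}{2}\bigr]$; both cases match the claim.

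The Poisson-dimension formula is then pure arithmetic: with $\frac{n(n+1)}{2}-1$ variables and $\bigl[\frac{n-1}{2}\bigr]$ independent Casimirs the generic symplectic leaf has dimension $\frac{n(n+1)}{2}-1-\bigl[\frac{n-1}{2}\bigr]$. I expect the single real obstacle to be the completeness step --- pinning down the exact value of $\operatorname{corank} B$ --- and the cleanest route is to import the known symplectic-leaf dimension rather than to diagonalize the exchange matrix directly; the even/odd comparison with Lemma~\ref{lem:Casimir-slN} then serves as an independent confirmation that no Casimir has been overlooked.
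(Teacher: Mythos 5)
Your proposal is correct and follows essentially the same route as the paper, which for all statements of this section simply asserts that centrality is a direct (kernel-of-the-exchange-matrix) calculation and that completeness follows from the known dimensions of the symplectic leaves; your vertex-by-vertex balance check, the sign flip at the erased side, and the even/odd comparison with Lemma~\ref{lem:Casimir-slN} are exactly the details the paper leaves implicit. No gap.
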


\begin{figure}[tb]
\begin{pspicture}(-2.7,-1.5)(2.7,2.5){\psset{unit=0.8}
\newcommand{\PATGEN}{%
{\psset{unit=1}
\rput(0,0){\psline[linecolor=blue,linewidth=2pt]{->}(0,0)(.45,.765)}
\rput(0,0){\psline[linecolor=blue,linewidth=2pt]{->}(1,0)(0.1,0)}
\rput(0,0){\psline[linecolor=blue,linewidth=2pt]{->}(0,0)(.45,-.765)}
\put(0,0){\pscircle[fillstyle=solid,fillcolor=lightgray]{.1}}
}}
\newcommand{\PATLEFT}{%
{\psset{unit=1}
\rput(0,0){\psline[linecolor=blue,linewidth=2pt,linestyle=dashed]{->}(0,0)(.45,.765)}
\rput(0,0){\psline[linecolor=blue,linewidth=2pt]{->}(1,0)(0.1,0)}
\rput(0,0){\psline[linecolor=blue,linewidth=2pt]{->}(0,0)(.45,-.765)}
\put(0,0){\pscircle[fillstyle=solid,fillcolor=lightgray]{.1}}
}}
\newcommand{\PATRIGHT}{%
{\psset{unit=1}
\rput(0,0){\psline[linecolor=blue,linewidth=2pt,linestyle=dashed]{->}(0,0)(.45,-.765)}
\put(0,0){\pscircle[fillstyle=solid,fillcolor=lightgray]{.1}}
}}
\newcommand{\PATBOTTOM}{%
{\psset{unit=1}
\rput(0,0){\psline[linecolor=blue,linewidth=2pt]{->}(0,0)(.45,.765)}
\rput(0,0){\psline[linecolor=blue,linewidth=2pt,linestyle=dashed]{->}(1,0)(0.1,0)}
\put(0,0){\pscircle[fillstyle=solid,fillcolor=lightgray]{.1}}
}}
\newcommand{\PATTOP}{%
{\psset{unit=1}
\rput(0,0){\psline[linecolor=blue,linewidth=2pt]{->}(1,0)(0.1,0)}
\rput(0,0){\psline[linecolor=blue,linewidth=2pt]{->}(0,0)(.45,-.765)}
\put(0,0){\pscircle[fillstyle=solid,fillcolor=lightgray]{.1}}
}}
\newcommand{\PATBOTRIGHT}{%
{\psset{unit=1}
\rput(0,0){\psline[linecolor=blue,linewidth=2pt]{->}(0,0)(.45,.765)}
\put(0,0){\pscircle[fillstyle=solid,fillcolor=lightgray]{.1}}
\put(.5,0.85){\pscircle[fillstyle=solid,fillcolor=lightgray]{.1}}
}}
\newcommand{\ODIN}{%
{\psset{unit=1}
\put(0,0){\pscircle[fillstyle=solid,fillcolor=white,linecolor=white]{.15}}
\put(0,0){\makebox(0,0)[cc]{\hbox{\tcw{\large$\mathbf 1$}}}}
\put(0,0){\makebox(0,0)[cc]{\hbox{\tcr{$\mathbf 1$}}}}
}}
\newcommand{\MINUSODIN}{%
{\psset{unit=1}
\put(-0.05,-0.03){\pscircle[fillstyle=solid,fillcolor=white,linecolor=white]{.2}}
\put(0,0){\makebox(0,0)[cc]{\hbox{\tcw{\large-$\mathbf 1$}}}}
\put(0,0){\makebox(0,0)[cc]{\hbox{\textcolor{black}{-$\mathbf 1$}}}}
}}
\newcommand{\DVA}{%
{\psset{unit=1}
\put(0,0){\pscircle[fillstyle=solid,fillcolor=white,linecolor=white]{.15}}
\put(0,0){\makebox(0,0)[cc]{\hbox{\tcw{\large$\mathbf 2$}}}}
\put(0,0){\makebox(0,0)[cc]{\hbox{\tcr{$\mathbf 2$}}}}
}}
\newcommand{\MINUSDVA}{%
{\psset{unit=1}
\put(-0.05,-0.03){\pscircle[fillstyle=solid,fillcolor=white,linecolor=white]{.2}}
\put(0,0){\makebox(0,0)[cc]{\hbox{\tcw{\large -$\mathbf 2$}}}}
\put(0,0){\makebox(0,0)[cc]{\hbox{\textcolor{black}{-$\mathbf 2$}}}}
}}
\newcommand{\TRI}{%
{\psset{unit=1}
\put(0,0){\pscircle[fillstyle=solid,fillcolor=white,linecolor=white]{.15}}
\put(0,0){\makebox(0,0)[cc]{\hbox{\tcw{\large$\mathbf 3$}}}}
\put(0,0){\makebox(0,0)[cc]{\hbox{\tcr{$\mathbf 3$}}}}
}}
\newcommand{\ZERO}{%
{\psset{unit=1}
\put(0,0){\pscircle[fillstyle=solid,fillcolor=white,linecolor=white]{.15}}
\put(0,0){\makebox(0,0)[cc]{\hbox{\tcw{\large$\mathbf 0$}}}}
\put(0,0){\makebox(0,0)[cc]{\hbox{{$\mathbf 0$}}}}
}}
\multiput(-2.5,-0.85)(0.5,0.85){4}{\PATLEFT}
\multiput(-2,-1.7)(1,0){4}{\PATBOTTOM}
\put(-0.5,2.55){\PATTOP}
\multiput(-1.5,-0.85)(1,0){4}{\PATGEN}
\multiput(-1,0)(1,0){3}{\PATGEN}
\multiput(-.5,0.85)(1,0){2}{\PATGEN}
\put(0,1.7){\PATGEN}
\multiput(-1.5,-0.85)(1,0){4}{\PATGEN}
\multiput(0.5,2.55)(0.5,-0.85){4}{\PATRIGHT}
\put(2,-1.7){\PATBOTRIGHT}
\psframe[linecolor=white,fillstyle=solid, fillcolor=white](-3,-2.2)(3,-.9)
\multiput(-1.5,-0.85)(0.5,0.85){4}{\psline[linecolor=red,linewidth=2pt]{->}(0,0)(.45,.765)}
\multiput(2,-1.7)(-4,0){2}{\psline[linecolor=black,linewidth=3pt]{->}(0,0)(.45,.765)}
\multiput(1.95,-1.785)(-4,0){2}{\psline[linecolor=white,linewidth=2pt]{->}(0,0)(.45,.765)}
\multiput(-0.5,2.55)(0.5,-0.85){4}{\psline[linecolor=red,linewidth=2pt]{->}(0,0)(.45,-.765)}
\multiput(-2.45,-0.935)(4,0){2}{\psline[linecolor=black,linewidth=3pt]{->}(0,0)(.45,-.765)}
\multiput(-2.5,-0.85)(4,0){2}{\psline[linecolor=white,linewidth=2pt]{->}(0,0)(.45,-.765)}
\multiput(-2.5,-0.85)(1,0){5}{\psline[linecolor=magenta,linewidth=2pt]{->}(1,0)(.1,0)}
\multiput(-0.5,2.55)(1,0){1}{\psline[linecolor=red,linewidth=2pt]{->}(1,0)(.1,0)}
\multiput(-.5,0.85)(1,0){2}{\ODIN}
\multiput(-1,0)(2,0){2}{\ODIN}
\multiput(-.5,-0.85)(1,0){2}{\MINUSODIN}
\multiput(-2.5,-0.85)(5,0){2}{\MINUSDVA}
\multiput(-1.5,-0.85)(3,0){2}{\ZERO}
\multiput(-0.5,2.55)(1,0){2}{\DVA}
\put(0,1.7){\DVA}
%
}
\end{pspicture}
\begin{pspicture}(-2.7,-1.5)(2.7,2.5){\psset{unit=0.8}
\newcommand{\PATGEN}{%
{\psset{unit=1}
\rput(0,0){\psline[linecolor=blue,linewidth=2pt]{->}(0,0)(.45,.765)}
\rput(0,0){\psline[linecolor=blue,linewidth=2pt]{->}(1,0)(0.1,0)}
\rput(0,0){\psline[linecolor=blue,linewidth=2pt]{->}(0,0)(.45,-.765)}
\put(0,0){\pscircle[fillstyle=solid,fillcolor=lightgray]{.1}}
}}
\newcommand{\PATLEFT}{%
{\psset{unit=1}
\rput(0,0){\psline[linecolor=blue,linewidth=2pt,linestyle=dashed]{->}(0,0)(.45,.765)}
\rput(0,0){\psline[linecolor=blue,linewidth=2pt]{->}(1,0)(0.1,0)}
\rput(0,0){\psline[linecolor=blue,linewidth=2pt]{->}(0,0)(.45,-.765)}
\put(0,0){\pscircle[fillstyle=solid,fillcolor=lightgray]{.1}}
}}
\newcommand{\PATRIGHT}{%
{\psset{unit=1}
\rput(0,0){\psline[linecolor=blue,linewidth=2pt,linestyle=dashed]{->}(0,0)(.45,-.765)}
\put(0,0){\pscircle[fillstyle=solid,fillcolor=lightgray]{.1}}
}}
\newcommand{\PATBOTTOM}{%
{\psset{unit=1}
\rput(0,0){\psline[linecolor=blue,linewidth=2pt]{->}(0,0)(.45,.765)}
\rput(0,0){\psline[linecolor=blue,linewidth=2pt,linestyle=dashed]{->}(1,0)(0.1,0)}
\put(0,0){\pscircle[fillstyle=solid,fillcolor=lightgray]{.1}}
}}
\newcommand{\PATTOP}{%
{\psset{unit=1}
\rput(0,0){\psline[linecolor=blue,linewidth=2pt]{->}(1,0)(0.1,0)}
\rput(0,0){\psline[linecolor=blue,linewidth=2pt]{->}(0,0)(.45,-.765)}
\put(0,0){\pscircle[fillstyle=solid,fillcolor=lightgray]{.1}}
}}
\newcommand{\PATBOTRIGHT}{%
{\psset{unit=1}
\rput(0,0){\psline[linecolor=blue,linewidth=2pt]{->}(0,0)(.45,.765)}
\put(0,0){\pscircle[fillstyle=solid,fillcolor=lightgray]{.1}}
\put(.5,0.85){\pscircle[fillstyle=solid,fillcolor=lightgray]{.1}}
}}
\newcommand{\ODIN}{%
{\psset{unit=1}
\put(0,0){\pscircle[fillstyle=solid,fillcolor=white,linecolor=white]{.15}}
\put(0,0){\makebox(0,0)[cc]{\hbox{\tcw{\large$\mathbf 1$}}}}
\put(0,0){\makebox(0,0)[cc]{\hbox{\tcr{$\mathbf 1$}}}}
}}
\newcommand{\MINUSODIN}{%
{\psset{unit=1}
\put(-0.05,-0.03){\pscircle[fillstyle=solid,fillcolor=white,linecolor=white]{.2}}
\put(0,0){\makebox(0,0)[cc]{\hbox{\tcw{\large-$\mathbf 1$}}}}
\put(0,0){\makebox(0,0)[cc]{\hbox{\textcolor{black}{-$\mathbf 1$}}}}
}}
\newcommand{\DVA}{%
{\psset{unit=1}
\put(0,0){\pscircle[fillstyle=solid,fillcolor=white,linecolor=white]{.15}}
\put(0,0){\makebox(0,0)[cc]{\hbox{\tcw{\large$\mathbf 2$}}}}
\put(0,0){\makebox(0,0)[cc]{\hbox{\tcr{$\mathbf 2$}}}}
}}
\newcommand{\MINUSDVA}{%
{\psset{unit=1}
\put(-0.05,-0.03){\pscircle[fillstyle=solid,fillcolor=white,linecolor=white]{.2}}
\put(0,0){\makebox(0,0)[cc]{\hbox{\tcw{\large -$\mathbf 2$}}}}
\put(0,0){\makebox(0,0)[cc]{\hbox{\textcolor{black}{-$\mathbf 2$}}}}
}}
\newcommand{\TRI}{%
{\psset{unit=1}
\put(0,0){\pscircle[fillstyle=solid,fillcolor=white,linecolor=white]{.15}}
\put(0,0){\makebox(0,0)[cc]{\hbox{\tcw{\large$\mathbf 3$}}}}
\put(0,0){\makebox(0,0)[cc]{\hbox{\tcr{$\mathbf 3$}}}}
}}
\newcommand{\ZERO}{%
{\psset{unit=1}
\put(0,0){\pscircle[fillstyle=solid,fillcolor=white,linecolor=white]{.15}}
\put(0,0){\makebox(0,0)[cc]{\hbox{\tcw{\large$\mathbf 0$}}}}
\put(0,0){\makebox(0,0)[cc]{\hbox{{$\mathbf 0$}}}}
}}
\multiput(-2.5,-0.85)(0.5,0.85){4}{\PATLEFT}
\multiput(-2,-1.7)(1,0){4}{\PATBOTTOM}
\put(-0.5,2.55){\PATTOP}
\multiput(-1.5,-0.85)(1,0){4}{\PATGEN}
\multiput(-1,0)(1,0){3}{\PATGEN}
\multiput(-.5,0.85)(1,0){2}{\PATGEN}
\put(0,1.7){\PATGEN}
\multiput(-1.5,-0.85)(1,0){4}{\PATGEN}
\multiput(0.5,2.55)(0.5,-0.85){4}{\PATRIGHT}
\put(2,-1.7){\PATBOTRIGHT}
\psframe[linecolor=white,fillstyle=solid, fillcolor=white](-3,-2.2)(3,-.95)
\multiput(-0.5,-.85)(0.5,0.85){3}{\psline[linecolor=red,linewidth=2pt]{->}(0,0)(.45,.765)}
\multiput(1.5,-.85)(0.5,0.85){1}{\psline[linecolor=magenta,linewidth=2pt]{->}(0,0)(.45,.765)}
\multiput(-1,1.7)(0.5,-0.85){3}{\psline[linecolor=red,linewidth=2pt]{->}(0,0)(.45,-.765)}
\multiput(-2,0)(0.5,-0.85){1}{\psline[linecolor=magenta,linewidth=2pt]{->}(0,0)(.45,-.765)}
\multiput(-2,0)(1,0){4}{\psline[linecolor=magenta,linewidth=2pt]{->}(1,0)(.1,0)}
\multiput(-1,1.7)(1,0){2}{\psline[linecolor=red,linewidth=2pt]{->}(1,0)(.1,0)}
\multiput(1,-1.7)(-2,0){2}{\psline[linecolor=black,linewidth=3pt]{->}(0,0)(.45,.765)}
\multiput(0.95,-1.785)(-2,0){2}{\psline[linecolor=white,linewidth=2pt]{->}(0,0)(.45,.765)}
\multiput(-1.45,-0.935)(2,0){2}{\psline[linecolor=black,linewidth=3pt]{->}(0,0)(.45,-.765)}
\multiput(-1.5,-0.85)(2,0){2}{\psline[linecolor=white,linewidth=2pt]{->}(0,0)(.45,-.765)}
%
\multiput(0,1.7)(0,-1.7){2}{\ODIN}
\multiput(-0.5,0.85)(0,-1.7){2}{\ODIN}
\multiput(0.5,0.85)(0,-1.7){2}{\ODIN}
\multiput(-1.5,-0.85)(3,0){2}{\MINUSODIN}
\multiput(-1,0)(2,0){2}{\MINUSODIN}
\multiput(-2,0)(4,0){2}{\MINUSDVA}
\multiput(-1,1.7)(2,0){2}{\DVA}
%
}
\end{pspicture}
\caption{\small
Two central elements of the reduced quiver for $SL_6$. Every element of the complete quiver in Fig.~\ref{fi:Casimirs} has its counterpart in the reduced quiver except the third element.
}
\label{fi:Cas-incomplete}
\end{figure}
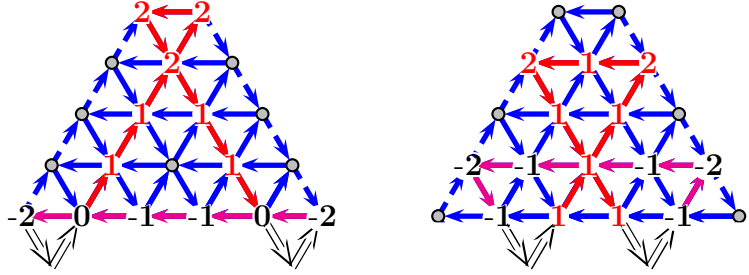

In our construction below, an important role is played by the additional Casimir that appears if we add the variable $(0,0,6)$ at the summit of the triangle corresponding to a reduced quiver. In this case, besides the Casimirs in Lemma~\ref{lem:Casimir-slN-reduced}, we have one more central element $D$ described in the following statement.

\begin{lemma}\label{lem:Casimir-slN-reduced-D}
The complete set of Casimir operators for the reduced $SL_n$-quiver with the (``frozen'') cluster variable $(0,0,n)$ added comprises all Casimirs described in Lemma~\ref{lem:Casimir-slN-reduced} plus the element $D_1$ given by the following formula. Let us enumerate the plabic weights $Z_{i,j,k}$ as in Fig.~\ref{fi:plabic_weights} by three nonnegative integers $(i,j,k)$ with $i+j+k=n$. Then the element
\begin{equation}\label{D-Casimir}
D_1=\col{\prod_{k=1}^n \Bigl[\prod_{i+j=N-k}\bigl[Z_{i,j,k}\bigr]^{k/n}\Bigr]}
\end{equation}
is central for the subset of $Z_{i,j,k}$ with $k>0$. Moreover, the only elements that have nonzero homogeneous commutation relations with $D_1$ are $Z_{n,0,0}$ and $Z_{0,n,0}$.
\end{lemma}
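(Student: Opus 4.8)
The plan is to reduce the statement to an explicit linear computation on the exchange matrix of the $SL_n$-quiver, and then to invoke the known dimension of the Fock--Goncharov symplectic leaf for completeness. Write $D_1=Z_\lambda$ for the Weyl-ordered monomial with weight vector $\lambda=\sum_{(i,j,k)}\tfrac kn\,e_{(i,j,k)}$, so that the exponent of each $Z_{i,j,k}$ is the normalized height $c_{(i,j,k)}=k/n$ (this is (\ref{D-Casimir}), with the evident correction $N=n$). For any generator $Z_\beta=Z_{e_\beta}$ one has $Z_\beta D_1=q^{2\langle e_\beta,\lambda\rangle}D_1 Z_\beta$, and since $\langle e_\beta,e_\alpha\rangle$ is the signed number of arrows between $\beta$ and $\alpha$ (each solid arrow contributing $\pm1$, each dashed boundary arrow $\pm\tfrac12$), centrality with respect to $Z_\beta$ is equivalent to
\begin{equation*}
\langle e_\beta,\lambda\rangle=\sum_{\beta\to\alpha}\mathrm{wt}(\beta\!\to\!\alpha)\,c_\alpha-\sum_{\alpha\to\beta}\mathrm{wt}(\alpha\!\to\!\beta)\,c_\alpha=0 .
\end{equation*}
Thus the whole lemma asserts that the height function $\beta\mapsto k_\beta/n$ lies in the kernel of the (transposed) exchange matrix away from the two bottom corners.

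First I would treat a generic interior vertex $\beta=(i,j,k)$. Reading the arrow pattern off the generic interior cell of Fig.~\ref{fi:triangle}, its six neighbours form an alternating hexagon: the out-neighbours are the lattice translates in the directions NE, SE, W, with third coordinates $k{+}1,\,k{-}1,\,k$, and the in-neighbours are E, SW, NW, with third coordinates $k,\,k{-}1,\,k{+}1$, all arrows being solid of weight $1$. Hence
\begin{equation*}
\langle e_\beta,\lambda\rangle=\tfrac1n\big[(k{+}1)+(k{-}1)+k\big]-\tfrac1n\big[k+(k{-}1)+(k{+}1)\big]=0 .
\end{equation*}
This telescoping is the heart of the proof: it is precisely the statement that an affine-linear function on the triangular lattice has vanishing discrete ``curl'' against the rotationally symmetric in/out pattern of the Fock--Goncharov quiver, and it is what forces the exponents $k/n$.

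Next I would run the same computation on the three sides, where one or two neighbours are absent and the boundary-parallel arrows become dashed (weight $\tfrac12$). On the bottom side $k=0$ every surviving contribution with $c_\alpha\neq0$ comes from the two $k=1$ neighbours NE and NW, which enter with equal weight $1$ and opposite sign, so $\langle e_\beta,\lambda\rangle=0$; note the deleted bottom-edge frozen variables are irrelevant since they carry $c=0$. On the side $j=0$ the surviving neighbours are NE, SE (weights $\tfrac12,1$) and E, SW (weights $1,\tfrac12$), giving $\tfrac1n\big[\tfrac12(k{+}1)+(k{-}1)-k-\tfrac12(k{-}1)\big]=0$; the mirror computation on $i=0$ likewise vanishes, and the top corner $(0,0,n)$ has its two down-neighbours at equal height and cancels. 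The two bottom corners are the only exceptions: for $\beta=(n,0,0)$ the sole neighbour of positive height is $(n{-}1,0,1)$, joined by a single dashed arrow, so from $Z_{n,0,0}Z_{n-1,0,1}=q\,Z_{n-1,0,1}Z_{n,0,0}$ we get $\langle e_\beta,\lambda\rangle=\tfrac1n\cdot\tfrac12\neq0$, i.e. $Z_{n,0,0}D_1=q^{1/n}D_1Z_{n,0,0}$, and symmetrically for $(0,n,0)$. This establishes both the centrality on $\{k>0\}$ and the ``moreover'' clause.

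Finally, for completeness I would argue by corank. Adjoining the single frozen generator $Z_{0,0,n}$ borders the exchange matrix by one row and column, so the number of independent Casimirs can increase by at most one; the element $D_1$ realizes this increase because it is the unique central monomial whose exponent of $Z_{0,0,n}$ is nonzero ($c_{(0,0,n)}=1$), hence it is independent of the $\lfloor (n-1)/2\rfloor$ Casimirs of Lemma~\ref{lem:Casimir-slN-reduced}, none of which involve $Z_{0,0,n}$. Exactness of this count follows from the known dimension of the symplectic leaf of $\X_{SL_n,\Sigma_{0,1,3}}$: the corank of the Poisson form is pinned by the Fock--Goncharov dimension, and we have exhibited exactly that many independent central elements. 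The main obstacle is bookkeeping rather than conceptual, namely fixing the orientations and the half-integer weights of the dashed boundary arrows consistently with the commutation conventions of Fig.~\ref{fi:triangle} so that the boundary and corner sums carry the signs claimed; once these are fixed the interior telescoping does all the real work.
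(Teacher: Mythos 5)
Your proposal is correct and follows essentially the same route as the paper, which for this whole section only says that centrality is a direct calculation and completeness follows from the known dimension of the symplectic leaf; you have simply written out that calculation (the telescoping of the height function $k/n$ against the alternating in/out hexagon at interior vertices, the half-weight dashed arrows on the sides, and the two bottom corners as the sole exceptions) together with the corank count. The only point worth flagging is that your completeness step tacitly uses that the Casimirs of Lemma~\ref{lem:Casimir-slN-reduced} remain central after adjoining $Z_{0,0,n}$ (their exponents at the two neighbours $(1,0,n-1)$ and $(0,1,n-1)$ are equal, so the two opposite dashed arrows cancel), which is another instance of the same direct check.
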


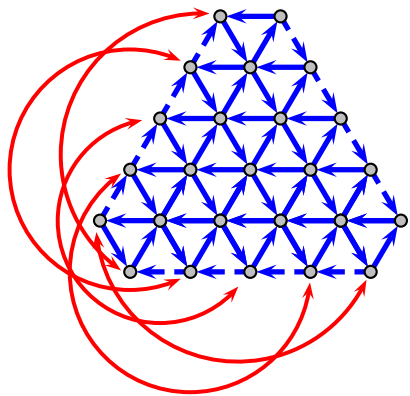
\begin{figure}[tb]
\begin{pspicture}(-3,-3)(3,2){\psset{unit=0.8}
\newcommand{\PATGEN}{%
{\psset{unit=1}
\rput(0,0){\psline[linecolor=blue,linewidth=2pt]{->}(0,0)(.45,.765)}
\rput(0,0){\psline[linecolor=blue,linewidth=2pt]{->}(1,0)(0.1,0)}
\rput(0,0){\psline[linecolor=blue,linewidth=2pt]{->}(0,0)(.45,-.765)}
\put(0,0){\pscircle[fillstyle=solid,fillcolor=lightgray]{.1}}
}}
\newcommand{\PATLEFT}{%
{\psset{unit=1}
\rput(0,0){\psline[linecolor=blue,linewidth=2pt,linestyle=dashed]{->}(0,0)(.45,.765)}
\rput(0,0){\psline[linecolor=blue,linewidth=2pt]{->}(1,0)(0.1,0)}
\rput(0,0){\psline[linecolor=blue,linewidth=2pt]{->}(0,0)(.45,-.765)}
\put(0,0){\pscircle[fillstyle=solid,fillcolor=lightgray]{.1}}
}}
\newcommand{\PATRIGHT}{%
{\psset{unit=1}
\rput(0,0){\psline[linecolor=blue,linewidth=2pt,linestyle=dashed]{->}(0,0)(.45,-.765)}
\put(0,0){\pscircle[fillstyle=solid,fillcolor=lightgray]{.1}}
}}
\newcommand{\PATBOTTOM}{%
{\psset{unit=1}
\rput(0,0){\psline[linecolor=blue,linewidth=2pt]{->}(0,0)(.45,.765)}
\rput(0,0){\psline[linecolor=blue,linewidth=2pt,linestyle=dashed]{->}(1,0)(0.1,0)}
\put(0,0){\pscircle[fillstyle=solid,fillcolor=lightgray]{.1}}
}}
\newcommand{\PATTOP}{%
{\psset{unit=1}
\rput(0,0){\psline[linecolor=blue,linewidth=2pt]{->}(1,0)(0.1,0)}
\rput(0,0){\psline[linecolor=blue,linewidth=2pt]{->}(0,0)(.45,-.765)}
\put(0,0){\pscircle[fillstyle=solid,fillcolor=lightgray]{.1}}
}}
\newcommand{\PATBOTRIGHT}{%
{\psset{unit=1}
\rput(0,0){\psline[linecolor=blue,linewidth=2pt]{->}(0,0)(.45,.765)}
\put(0,0){\pscircle[fillstyle=solid,fillcolor=lightgray]{.1}}
\put(.5,0.85){\pscircle[fillstyle=solid,fillcolor=lightgray]{.1}}
}}
\multiput(-2.5,-0.85)(0.5,0.85){4}{\PATLEFT}
\multiput(-2,-1.7)(1,0){4}{\PATBOTTOM}
\put(-0.5,2.55){\PATTOP}
\multiput(-1.5,-0.85)(1,0){4}{\PATGEN}
\multiput(-1,0)(1,0){3}{\PATGEN}
\multiput(-.5,0.85)(1,0){2}{\PATGEN}
\put(0,1.7){\PATGEN}
\multiput(-1.5,-0.85)(1,0){4}{\PATGEN}
\multiput(0.5,2.55)(0.5,-0.85){4}{\PATRIGHT}
\put(2,-1.7){\PATBOTRIGHT}
\psarc[linecolor=red, linewidth=1.5pt]{<->}(-1.5,-0.85){1.7}{100}{320}
\psarc[linecolor=red, linewidth=1.5pt]{<->}(-2,0){2}{65}{295}
\psarc[linecolor=red, linewidth=1.5pt]{<->}(-1,-1.7){2}{125}{355}
\psarc[linecolor=red, linewidth=1.5pt]{<->}(-0.8,0.25){2.35}{87}{235}
\rput{100}(-0.1,0){\psarc[linecolor=red, linewidth=1.5pt]{<->}(-0.8,0.25){2.36}{85}{235}}
%
}
\end{pspicture}
\caption{\small
The amalgamation of the quiver corresponding to the triangle $\Sigma_{0,1,3}$ (The example in the figure corresponds to $SL_6$).
}
\label{fi:amalgamation}
\end{figure}

\subsection{Casimirs for the upper-triangular matrices}

Entries of the matrix $\mathbb A:= M_1^{\text{T}}M_2$ depend on all variables of the $SL_n$-quiver, but due to the transposition, two sets of the frozen variables become amalgamated, that is, only their products appear in the entries of the matrix  $\mathbb A$. We explicitly show this amalgamation in Fig.~\ref{fi:amalgamation}.

\begin{figure}[tb]
\begin{pspicture}(-2.7,-2.5)(2.7,2.5){\psset{unit=0.7}
\newcommand{\PATGEN}{%
{\psset{unit=1}
\rput(0,0){\psline[linecolor=blue,linewidth=2pt]{->}(0,0)(.45,.765)}
\rput(0,0){\psline[linecolor=blue,linewidth=2pt]{->}(1,0)(0.1,0)}
\rput(0,0){\psline[linecolor=blue,linewidth=2pt]{->}(0,0)(.45,-.765)}
\put(0,0){\pscircle[fillstyle=solid,fillcolor=lightgray]{.1}}
}}
\newcommand{\PATLEFT}{%
{\psset{unit=1}
\rput(0,0){\psline[linecolor=blue,linewidth=2pt,linestyle=dashed]{->}(0,0)(.45,.765)}
\rput(0,0){\psline[linecolor=blue,linewidth=2pt]{->}(1,0)(0.1,0)}
\rput(0,0){\psline[linecolor=blue,linewidth=2pt]{->}(0,0)(.45,-.765)}
\put(0,0){\pscircle[fillstyle=solid,fillcolor=lightgray]{.1}}
}}
\newcommand{\PATRIGHT}{%
{\psset{unit=1}
\rput(0,0){\psline[linecolor=blue,linewidth=2pt,linestyle=dashed]{->}(0,0)(.45,-.765)}
\put(0,0){\pscircle[fillstyle=solid,fillcolor=lightgray]{.1}}
}}
\newcommand{\PATBOTTOM}{%
{\psset{unit=1}
\rput(0,0){\psline[linecolor=blue,linewidth=2pt]{->}(0,0)(.45,.765)}
\rput(0,0){\psline[linecolor=blue,linewidth=2pt,linestyle=dashed]{->}(1,0)(0.1,0)}
\put(0,0){\pscircle[fillstyle=solid,fillcolor=lightgray]{.1}}
}}
\newcommand{\PATTOP}{%
{\psset{unit=1}
\rput(0,0){\psline[linecolor=blue,linewidth=2pt]{->}(1,0)(0.1,0)}
\rput(0,0){\psline[linecolor=blue,linewidth=2pt]{->}(0,0)(.45,-.765)}
\put(0,0){\pscircle[fillstyle=solid,fillcolor=lightgray]{.1}}
}}
\newcommand{\PATBOTRIGHT}{%
{\psset{unit=1}
\rput(0,0){\psline[linecolor=blue,linewidth=2pt]{->}(0,0)(.45,.765)}
\put(0,0){\pscircle[fillstyle=solid,fillcolor=lightgray]{.1}}
\put(.5,0.85){\pscircle[fillstyle=solid,fillcolor=lightgray]{.1}}
}}
\newcommand{\ODIN}{%
{\psset{unit=1}
\put(0,0){\pscircle[fillstyle=solid,fillcolor=white,linecolor=white]{.15}}
\put(0,0){\makebox(0,0)[cc]{\hbox{\tcw{\large$\mathbf 1$}}}}
\put(0,0){\makebox(0,0)[cc]{\hbox{\tcr{$\mathbf 1$}}}}
}}
\newcommand{\DVA}{%
{\psset{unit=1}
\put(0,0){\pscircle[fillstyle=solid,fillcolor=white,linecolor=white]{.15}}
\put(0,0){\makebox(0,0)[cc]{\hbox{\tcw{\large$\mathbf 2$}}}}
\put(0,0){\makebox(0,0)[cc]{\hbox{\tcr{$\mathbf 2$}}}}
}}
\newcommand{\TRI}{%
{\psset{unit=1}
\put(0,0){\pscircle[fillstyle=solid,fillcolor=white,linecolor=white]{.15}}
\put(0,0){\makebox(0,0)[cc]{\hbox{\tcw{\large$\mathbf 3$}}}}
\put(0,0){\makebox(0,0)[cc]{\hbox{\tcr{$\mathbf 3$}}}}
}}
\multiput(-2.5,-0.85)(0.5,0.85){4}{\PATLEFT}
\multiput(-2,-1.7)(1,0){4}{\PATBOTTOM}
\put(-0.5,2.55){\PATTOP}
\multiput(-1.5,-0.85)(1,0){4}{\PATGEN}
\multiput(-1,0)(1,0){3}{\PATGEN}
\multiput(-.5,0.85)(1,0){2}{\PATGEN}
\put(0,1.7){\PATGEN}
\multiput(-1.5,-0.85)(1,0){4}{\PATGEN}
\multiput(0.5,2.55)(0.5,-0.85){4}{\PATRIGHT}
\put(2,-1.7){\PATBOTRIGHT}
\psarc[linecolor=red, linewidth=1.5pt]{<->}(-1.5,-0.85){1.7}{100}{320}
\psarc[linecolor=red, linewidth=1.5pt]{<->}(-2,0){2}{65}{295}
\psarc[linecolor=red, linewidth=1.5pt]{<->}(-1,-1.7){2}{125}{355}
\psarc[linecolor=red, linewidth=1.5pt]{<->}(-0.8,0.25){2.35}{87}{235}
\rput{100}(-0.1,0){\psarc[linecolor=red, linewidth=1.5pt]{<->}(-0.8,0.25){2.36}{85}{235}}
\multiput(-2,-1.7)(0.5,0.85){5}{\psline[linecolor=red,linewidth=2pt]{->}(0,0)(.45,.765)}
\multiput(-0.5,2.55)(1,0){1}{\psline[linecolor=red,linewidth=2pt]{->}(1,0)(.1,0)}
\multiput(-2,-1.7)(0.5,0.85){5}{\ODIN}
\put(0.5,2.55){\DVA}
\put(-0.5,2.55){\ODIN}
%
}
\end{pspicture}
\begin{pspicture}(-2.7,-2.5)(2.7,2.5){\psset{unit=0.7}
\newcommand{\PATGEN}{%
{\psset{unit=1}
\rput(0,0){\psline[linecolor=blue,linewidth=2pt]{->}(0,0)(.45,.765)}
\rput(0,0){\psline[linecolor=blue,linewidth=2pt]{->}(1,0)(0.1,0)}
\rput(0,0){\psline[linecolor=blue,linewidth=2pt]{->}(0,0)(.45,-.765)}
\put(0,0){\pscircle[fillstyle=solid,fillcolor=lightgray]{.1}}
}}
\newcommand{\PATLEFT}{%
{\psset{unit=1}
\rput(0,0){\psline[linecolor=blue,linewidth=2pt,linestyle=dashed]{->}(0,0)(.45,.765)}
\rput(0,0){\psline[linecolor=blue,linewidth=2pt]{->}(1,0)(0.1,0)}
\rput(0,0){\psline[linecolor=blue,linewidth=2pt]{->}(0,0)(.45,-.765)}
\put(0,0){\pscircle[fillstyle=solid,fillcolor=lightgray]{.1}}
}}
\newcommand{\PATRIGHT}{%
{\psset{unit=1}
\rput(0,0){\psline[linecolor=blue,linewidth=2pt,linestyle=dashed]{->}(0,0)(.45,-.765)}
\put(0,0){\pscircle[fillstyle=solid,fillcolor=lightgray]{.1}}
}}
\newcommand{\PATBOTTOM}{%
{\psset{unit=1}
\rput(0,0){\psline[linecolor=blue,linewidth=2pt]{->}(0,0)(.45,.765)}
\rput(0,0){\psline[linecolor=blue,linewidth=2pt,linestyle=dashed]{->}(1,0)(0.1,0)}
\put(0,0){\pscircle[fillstyle=solid,fillcolor=lightgray]{.1}}
}}
\newcommand{\PATTOP}{%
{\psset{unit=1}
\rput(0,0){\psline[linecolor=blue,linewidth=2pt]{->}(1,0)(0.1,0)}
\rput(0,0){\psline[linecolor=blue,linewidth=2pt]{->}(0,0)(.45,-.765)}
\put(0,0){\pscircle[fillstyle=solid,fillcolor=lightgray]{.1}}
}}
\newcommand{\PATBOTRIGHT}{%
{\psset{unit=1}
\rput(0,0){\psline[linecolor=blue,linewidth=2pt]{->}(0,0)(.45,.765)}
\put(0,0){\pscircle[fillstyle=solid,fillcolor=lightgray]{.1}}
\put(.5,0.85){\pscircle[fillstyle=solid,fillcolor=lightgray]{.1}}
}}
\newcommand{\ODIN}{%
{\psset{unit=1}
\put(0,0){\pscircle[fillstyle=solid,fillcolor=white,linecolor=white]{.15}}
\put(0,0){\makebox(0,0)[cc]{\hbox{\tcw{\large$\mathbf 1$}}}}
\put(0,0){\makebox(0,0)[cc]{\hbox{\tcr{$\mathbf 1$}}}}
}}
\newcommand{\DVA}{%
{\psset{unit=1}
\put(0,0){\pscircle[fillstyle=solid,fillcolor=white,linecolor=white]{.15}}
\put(0,0){\makebox(0,0)[cc]{\hbox{\tcw{\large$\mathbf 2$}}}}
\put(0,0){\makebox(0,0)[cc]{\hbox{\tcr{$\mathbf 2$}}}}
}}
\newcommand{\TRI}{%
{\psset{unit=1}
\put(0,0){\pscircle[fillstyle=solid,fillcolor=white,linecolor=white]{.15}}
\put(0,0){\makebox(0,0)[cc]{\hbox{\tcw{\large$\mathbf 3$}}}}
\put(0,0){\makebox(0,0)[cc]{\hbox{\tcr{$\mathbf 3$}}}}
}}
\multiput(-2.5,-0.85)(0.5,0.85){4}{\PATLEFT}
\multiput(-2,-1.7)(1,0){4}{\PATBOTTOM}
\put(-0.5,2.55){\PATTOP}
\multiput(-1.5,-0.85)(1,0){4}{\PATGEN}
\multiput(-1,0)(1,0){3}{\PATGEN}
\multiput(-.5,0.85)(1,0){2}{\PATGEN}
\put(0,1.7){\PATGEN}
\multiput(-1.5,-0.85)(1,0){4}{\PATGEN}
\multiput(0.5,2.55)(0.5,-0.85){4}{\PATRIGHT}
\put(2,-1.7){\PATBOTRIGHT}
\psarc[linecolor=red, linewidth=1.5pt]{<->}(-1.5,-0.85){1.7}{100}{320}
\psarc[linecolor=red, linewidth=1.5pt]{<->}(-2,0){2}{65}{295}
\psarc[linecolor=red, linewidth=1.5pt]{<->}(-1,-1.7){2}{125}{355}
\psarc[linecolor=red, linewidth=1.5pt]{<->}(-0.8,0.25){2.35}{87}{235}
\rput{100}(-0.1,0){\psarc[linecolor=red, linewidth=1.5pt]{<->}(-0.8,0.25){2.36}{85}{235}}
\multiput(-1,-1.7)(0.5,0.85){4}{\psline[linecolor=red,linewidth=2pt]{->}(0,0)(.45,.765)}
\multiput(-1,1.7)(1,0){2}{\psline[linecolor=red,linewidth=2pt]{->}(1,0)(.1,0)}
\multiput(-1,-1.7)(0.5,0.85){4}{\ODIN}
\put(1,1.7){\DVA}
\multiput(-1,1.7)(1,0){2}{\ODIN}
%
}
\end{pspicture}
\begin{pspicture}(-2.7,-2.5)(2.7,2.5){\psset{unit=0.7}
\newcommand{\PATGEN}{%
{\psset{unit=1}
\rput(0,0){\psline[linecolor=blue,linewidth=2pt]{->}(0,0)(.45,.765)}
\rput(0,0){\psline[linecolor=blue,linewidth=2pt]{->}(1,0)(0.1,0)}
\rput(0,0){\psline[linecolor=blue,linewidth=2pt]{->}(0,0)(.45,-.765)}
\put(0,0){\pscircle[fillstyle=solid,fillcolor=lightgray]{.1}}
}}
\newcommand{\PATLEFT}{%
{\psset{unit=1}
\rput(0,0){\psline[linecolor=blue,linewidth=2pt,linestyle=dashed]{->}(0,0)(.45,.765)}
\rput(0,0){\psline[linecolor=blue,linewidth=2pt]{->}(1,0)(0.1,0)}
\rput(0,0){\psline[linecolor=blue,linewidth=2pt]{->}(0,0)(.45,-.765)}
\put(0,0){\pscircle[fillstyle=solid,fillcolor=lightgray]{.1}}
}}
\newcommand{\PATRIGHT}{%
{\psset{unit=1}
\rput(0,0){\psline[linecolor=blue,linewidth=2pt,linestyle=dashed]{->}(0,0)(.45,-.765)}
\put(0,0){\pscircle[fillstyle=solid,fillcolor=lightgray]{.1}}
}}
\newcommand{\PATBOTTOM}{%
{\psset{unit=1}
\rput(0,0){\psline[linecolor=blue,linewidth=2pt]{->}(0,0)(.45,.765)}
\rput(0,0){\psline[linecolor=blue,linewidth=2pt,linestyle=dashed]{->}(1,0)(0.1,0)}
\put(0,0){\pscircle[fillstyle=solid,fillcolor=lightgray]{.1}}
}}
\newcommand{\PATTOP}{%
{\psset{unit=1}
\rput(0,0){\psline[linecolor=blue,linewidth=2pt]{->}(1,0)(0.1,0)}
\rput(0,0){\psline[linecolor=blue,linewidth=2pt]{->}(0,0)(.45,-.765)}
\put(0,0){\pscircle[fillstyle=solid,fillcolor=lightgray]{.1}}
}}
\newcommand{\PATBOTRIGHT}{%
{\psset{unit=1}
\rput(0,0){\psline[linecolor=blue,linewidth=2pt]{->}(0,0)(.45,.765)}
\put(0,0){\pscircle[fillstyle=solid,fillcolor=lightgray]{.1}}
\put(.5,0.85){\pscircle[fillstyle=solid,fillcolor=lightgray]{.1}}
}}
\newcommand{\ODIN}{%
{\psset{unit=1}
\put(0,0){\pscircle[fillstyle=solid,fillcolor=white,linecolor=white]{.15}}
\put(0,0){\makebox(0,0)[cc]{\hbox{\tcw{\large$\mathbf 1$}}}}
\put(0,0){\makebox(0,0)[cc]{\hbox{\tcr{$\mathbf 1$}}}}
}}
\newcommand{\DVA}{%
{\psset{unit=1}
\put(0,0){\pscircle[fillstyle=solid,fillcolor=white,linecolor=white]{.15}}
\put(0,0){\makebox(0,0)[cc]{\hbox{\tcw{\large$\mathbf 2$}}}}
\put(0,0){\makebox(0,0)[cc]{\hbox{\tcr{$\mathbf 2$}}}}
}}
\newcommand{\TRI}{%
{\psset{unit=1}
\put(0,0){\pscircle[fillstyle=solid,fillcolor=white,linecolor=white]{.15}}
\put(0,0){\makebox(0,0)[cc]{\hbox{\tcw{\large$\mathbf 3$}}}}
\put(0,0){\makebox(0,0)[cc]{\hbox{\tcr{$\mathbf 3$}}}}
}}
\multiput(-2.5,-0.85)(0.5,0.85){4}{\PATLEFT}
\multiput(-2,-1.7)(1,0){4}{\PATBOTTOM}
\put(-0.5,2.55){\PATTOP}
\multiput(-1.5,-0.85)(1,0){4}{\PATGEN}
\multiput(-1,0)(1,0){3}{\PATGEN}
\multiput(-.5,0.85)(1,0){2}{\PATGEN}
\put(0,1.7){\PATGEN}
\multiput(-1.5,-0.85)(1,0){4}{\PATGEN}
\multiput(0.5,2.55)(0.5,-0.85){4}{\PATRIGHT}
\put(2,-1.7){\PATBOTRIGHT}
\psarc[linecolor=red, linewidth=1.5pt]{<->}(-1.5,-0.85){1.7}{100}{320}
\psarc[linecolor=red, linewidth=1.5pt]{<->}(-2,0){2}{65}{295}
\psarc[linecolor=red, linewidth=1.5pt]{<->}(-1,-1.7){2}{125}{355}
\psarc[linecolor=red, linewidth=1.5pt]{<->}(-0.8,0.25){2.35}{87}{235}
\rput{100}(-0.1,0){\psarc[linecolor=red, linewidth=1.5pt]{<->}(-0.8,0.25){2.36}{85}{235}}
\multiput(0,-1.7)(0.5,0.85){3}{\psline[linecolor=red,linewidth=2pt]{->}(0,0)(.45,.765)}
\multiput(-1.5,0.85)(1,0){3}{\psline[linecolor=red,linewidth=2pt]{->}(1,0)(.1,0)}
\multiput(0,-1.7)(0.5,0.85){3}{\ODIN}
\put(1.5,.85){\DVA}
\multiput(-1.5,.85)(1,0){3}{\ODIN}
}
\end{pspicture}
\begin{pspicture}(-2.7,-2.5)(2.7,2.5){\psset{unit=0.7}
%
\newcommand{\PATGEN}{%
{\psset{unit=1}
\rput(0,0){\psline[linecolor=blue,linewidth=2pt]{->}(0,0)(.45,.765)}
\rput(0,0){\psline[linecolor=blue,linewidth=2pt]{->}(1,0)(0.1,0)}
\rput(0,0){\psline[linecolor=blue,linewidth=2pt]{->}(0,0)(.45,-.765)}
\put(0,0){\pscircle[fillstyle=solid,fillcolor=lightgray]{.1}}
}}
\newcommand{\PATLEFT}{%
{\psset{unit=1}
\rput(0,0){\psline[linecolor=blue,linewidth=2pt,linestyle=dashed]{->}(0,0)(.45,.765)}
\rput(0,0){\psline[linecolor=blue,linewidth=2pt]{->}(1,0)(0.1,0)}
\rput(0,0){\psline[linecolor=blue,linewidth=2pt]{->}(0,0)(.45,-.765)}
\put(0,0){\pscircle[fillstyle=solid,fillcolor=lightgray]{.1}}
}}
\newcommand{\PATRIGHT}{%
{\psset{unit=1}
\rput(0,0){\psline[linecolor=blue,linewidth=2pt,linestyle=dashed]{->}(0,0)(.45,-.765)}
\put(0,0){\pscircle[fillstyle=solid,fillcolor=lightgray]{.1}}
}}
\newcommand{\PATBOTTOM}{%
{\psset{unit=1}
\rput(0,0){\psline[linecolor=blue,linewidth=2pt]{->}(0,0)(.45,.765)}
\rput(0,0){\psline[linecolor=blue,linewidth=2pt,linestyle=dashed]{->}(1,0)(0.1,0)}
\put(0,0){\pscircle[fillstyle=solid,fillcolor=lightgray]{.1}}
}}
\newcommand{\PATTOP}{%
{\psset{unit=1}
\rput(0,0){\psline[linecolor=blue,linewidth=2pt]{->}(1,0)(0.1,0)}
\rput(0,0){\psline[linecolor=blue,linewidth=2pt]{->}(0,0)(.45,-.765)}
\put(0,0){\pscircle[fillstyle=solid,fillcolor=lightgray]{.1}}
}}
\newcommand{\PATBOTRIGHT}{%
{\psset{unit=1}
\rput(0,0){\psline[linecolor=blue,linewidth=2pt]{->}(0,0)(.45,.765)}
\put(0,0){\pscircle[fillstyle=solid,fillcolor=lightgray]{.1}}
\put(.5,0.85){\pscircle[fillstyle=solid,fillcolor=lightgray]{.1}}
}}
\newcommand{\ODIN}{%
{\psset{unit=1}
\put(0,0){\pscircle[fillstyle=solid,fillcolor=white,linecolor=white]{.15}}
\put(0,0){\makebox(0,0)[cc]{\hbox{\tcw{\large$\mathbf 1$}}}}
\put(0,0){\makebox(0,0)[cc]{\hbox{\tcr{$\mathbf 1$}}}}
}}
\newcommand{\DVA}{%
{\psset{unit=1}
\put(0,0){\pscircle[fillstyle=solid,fillcolor=white,linecolor=white]{.15}}
\put(0,0){\makebox(0,0)[cc]{\hbox{\tcw{\large$\mathbf 2$}}}}
\put(0,0){\makebox(0,0)[cc]{\hbox{\tcr{$\mathbf 2$}}}}
}}
\newcommand{\TRI}{%
{\psset{unit=1}
\put(0,0){\pscircle[fillstyle=solid,fillcolor=white,linecolor=white]{.15}}
\put(0,0){\makebox(0,0)[cc]{\hbox{\tcw{\large$\mathbf 3$}}}}
\put(0,0){\makebox(0,0)[cc]{\hbox{\tcr{$\mathbf 3$}}}}
}}
\multiput(-2.5,-0.85)(0.5,0.85){4}{\PATLEFT}
\multiput(-2,-1.7)(1,0){4}{\PATBOTTOM}
\put(-0.5,2.55){\PATTOP}
\multiput(-1.5,-0.85)(1,0){4}{\PATGEN}
\multiput(-1,0)(1,0){3}{\PATGEN}
\multiput(-.5,0.85)(1,0){2}{\PATGEN}
\put(0,1.7){\PATGEN}
\multiput(-1.5,-0.85)(1,0){4}{\PATGEN}
\multiput(0.5,2.55)(0.5,-0.85){4}{\PATRIGHT}
\put(2,-1.7){\PATBOTRIGHT}
\multiput(1,-1.7)(0.5,0.85){2}{\psline[linecolor=red,linewidth=2pt]{->}(0,0)(.45,.765)}
\multiput(-2,0)(1,0){4}{\psline[linecolor=red,linewidth=2pt]{->}(1,0)(.1,0)}
\psarc[linecolor=red, linewidth=1.5pt]{<->}(-1.5,-0.85){1.7}{100}{320}
\psarc[linecolor=red, linewidth=1.5pt]{<->}(-2,0){2}{65}{295}
\psarc[linecolor=red, linewidth=1.5pt]{<->}(-1,-1.7){2}{125}{355}
\psarc[linecolor=red, linewidth=1.5pt]{<->}(-0.8,0.25){2.35}{87}{235}
\rput{100}(-0.1,0){\psarc[linecolor=red, linewidth=1.5pt]{<->}(-0.8,0.25){2.36}{85}{235}}
\multiput(1,-1.7)(0.5,0.85){2}{\ODIN}
\put(2,0){\DVA}
\multiput(-2,0)(1,0){4}{\ODIN}
%
}
\end{pspicture}
\begin{pspicture}(-2.7,-2.5)(2.7,2.5){\psset{unit=0.7}
%
\newcommand{\PATGEN}{%
{\psset{unit=1}
\rput(0,0){\psline[linecolor=blue,linewidth=2pt]{->}(0,0)(.45,.765)}
\rput(0,0){\psline[linecolor=blue,linewidth=2pt]{->}(1,0)(0.1,0)}
\rput(0,0){\psline[linecolor=blue,linewidth=2pt]{->}(0,0)(.45,-.765)}
\put(0,0){\pscircle[fillstyle=solid,fillcolor=lightgray]{.1}}
}}
\newcommand{\PATLEFT}{%
{\psset{unit=1}
\rput(0,0){\psline[linecolor=blue,linewidth=2pt,linestyle=dashed]{->}(0,0)(.45,.765)}
\rput(0,0){\psline[linecolor=blue,linewidth=2pt]{->}(1,0)(0.1,0)}
\rput(0,0){\psline[linecolor=blue,linewidth=2pt]{->}(0,0)(.45,-.765)}
\put(0,0){\pscircle[fillstyle=solid,fillcolor=lightgray]{.1}}
}}
\newcommand{\PATRIGHT}{%
{\psset{unit=1}
\rput(0,0){\psline[linecolor=blue,linewidth=2pt,linestyle=dashed]{->}(0,0)(.45,-.765)}
\put(0,0){\pscircle[fillstyle=solid,fillcolor=lightgray]{.1}}
}}
\newcommand{\PATBOTTOM}{%
{\psset{unit=1}
\rput(0,0){\psline[linecolor=blue,linewidth=2pt]{->}(0,0)(.45,.765)}
\rput(0,0){\psline[linecolor=blue,linewidth=2pt,linestyle=dashed]{->}(1,0)(0.1,0)}
\put(0,0){\pscircle[fillstyle=solid,fillcolor=lightgray]{.1}}
}}
\newcommand{\PATTOP}{%
{\psset{unit=1}
\rput(0,0){\psline[linecolor=blue,linewidth=2pt]{->}(1,0)(0.1,0)}
\rput(0,0){\psline[linecolor=blue,linewidth=2pt]{->}(0,0)(.45,-.765)}
\put(0,0){\pscircle[fillstyle=solid,fillcolor=lightgray]{.1}}
}}
\newcommand{\PATBOTRIGHT}{%
{\psset{unit=1}
\rput(0,0){\psline[linecolor=blue,linewidth=2pt]{->}(0,0)(.45,.765)}
\put(0,0){\pscircle[fillstyle=solid,fillcolor=lightgray]{.1}}
\put(.5,0.85){\pscircle[fillstyle=solid,fillcolor=lightgray]{.1}}
}}
\newcommand{\ODIN}{%
{\psset{unit=1}
\put(0,0){\pscircle[fillstyle=solid,fillcolor=white,linecolor=white]{.15}}
\put(0,0){\makebox(0,0)[cc]{\hbox{\tcw{\large$\mathbf 1$}}}}
\put(0,0){\makebox(0,0)[cc]{\hbox{\tcr{$\mathbf 1$}}}}
}}
\newcommand{\DVA}{%
{\psset{unit=1}
\put(0,0){\pscircle[fillstyle=solid,fillcolor=white,linecolor=white]{.15}}
\put(0,0){\makebox(0,0)[cc]{\hbox{\tcw{\large$\mathbf 2$}}}}
\put(0,0){\makebox(0,0)[cc]{\hbox{\tcr{$\mathbf 2$}}}}
}}
\newcommand{\TRI}{%
{\psset{unit=1}
\put(0,0){\pscircle[fillstyle=solid,fillcolor=white,linecolor=white]{.15}}
\put(0,0){\makebox(0,0)[cc]{\hbox{\tcw{\large$\mathbf 3$}}}}
\put(0,0){\makebox(0,0)[cc]{\hbox{\tcr{$\mathbf 3$}}}}
}}
\multiput(-2.5,-0.85)(0.5,0.85){4}{\PATLEFT}
\multiput(-2,-1.7)(1,0){4}{\PATBOTTOM}
\put(-0.5,2.55){\PATTOP}
\multiput(-1.5,-0.85)(1,0){4}{\PATGEN}
\multiput(-1,0)(1,0){3}{\PATGEN}
\multiput(-.5,0.85)(1,0){2}{\PATGEN}
\put(0,1.7){\PATGEN}
\multiput(-1.5,-0.85)(1,0){4}{\PATGEN}
\multiput(0.5,2.55)(0.5,-0.85){4}{\PATRIGHT}
\put(2,-1.7){\PATBOTRIGHT}
\psarc[linecolor=red, linewidth=1.5pt]{<->}(-1.5,-0.85){1.7}{100}{320}
\psarc[linecolor=red, linewidth=1.5pt]{<->}(-2,0){2}{65}{295}
\psarc[linecolor=red, linewidth=1.5pt]{<->}(-1,-1.7){2}{125}{355}
\psarc[linecolor=red, linewidth=1.5pt]{<->}(-0.8,0.25){2.35}{87}{235}
\rput{100}(-0.1,0){\psarc[linecolor=red, linewidth=1.5pt]{<->}(-0.8,0.25){2.36}{85}{235}}
\multiput(2,-1.7)(0.5,0.85){1}{\psline[linecolor=red,linewidth=2pt]{->}(0,0)(.45,.765)}
\multiput(-2.5,-0.85)(1,0){5}{\psline[linecolor=red,linewidth=2pt]{->}(1,0)(.1,0)}
\put(2,-1.7){\ODIN}
\put(2.5,-0.85){\DVA}
\multiput(-2.5,-0.85)(1,0){5}{\ODIN}
%
}
\end{pspicture}
\caption{\small
Five new central elements of the main quiver for $SL_6$ due to amalgamation. (We use these central elements to set all diagonal elements of the upper-triangular matrix $\mathbb A=M_1^{\text{T}}M_2$ to be the unities.)
}
\label{fi:diagonal-Casimirs}
\end{figure}
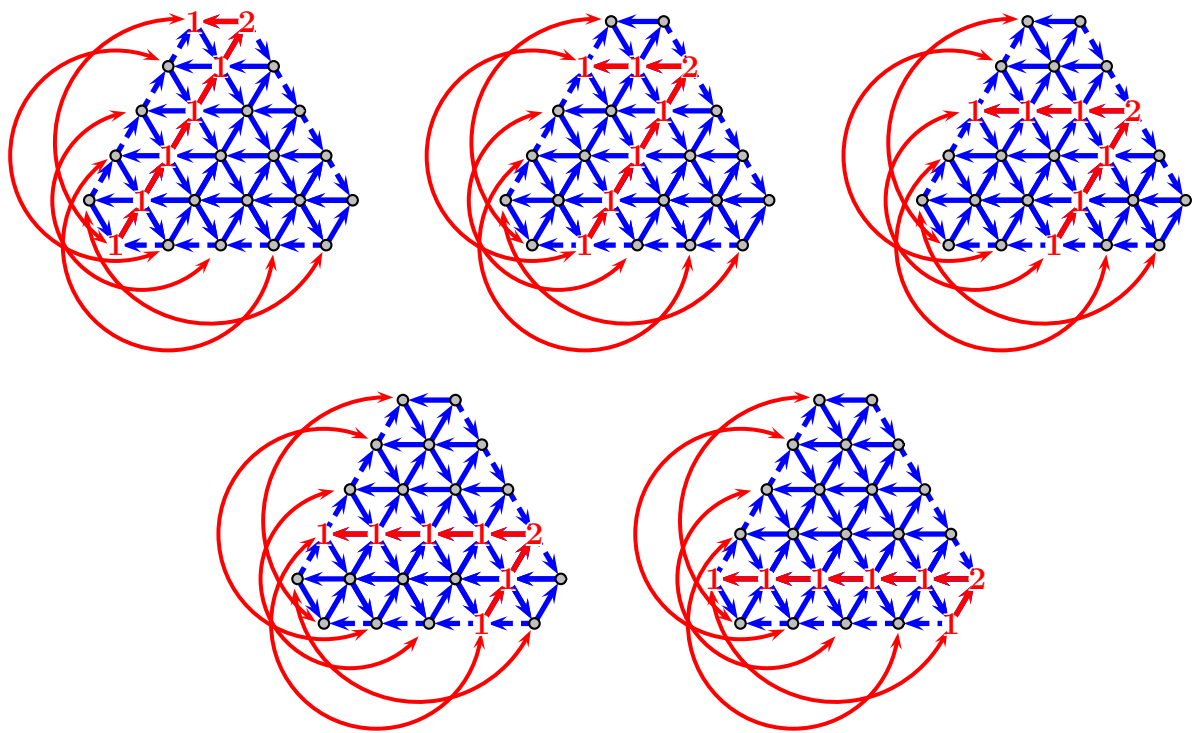

\begin{figure}[H]
\begin{pspicture}(-2.7,-2.5)(2.7,2.5){\psset{unit=0.7}
\newcommand{\PATGEN}{%
{\psset{unit=1}
\rput(0,0){\psline[linecolor=blue,linewidth=2pt]{->}(0,0)(.45,.765)}
\rput(0,0){\psline[linecolor=blue,linewidth=2pt]{->}(1,0)(0.1,0)}
\rput(0,0){\psline[linecolor=blue,linewidth=2pt]{->}(0,0)(.45,-.765)}
\put(0,0){\pscircle[fillstyle=solid,fillcolor=lightgray]{.1}}
}}
\newcommand{\PATLEFT}{%
{\psset{unit=1}
\rput(0,0){\psline[linecolor=blue,linewidth=2pt,linestyle=dashed]{->}(0,0)(.45,.765)}
\rput(0,0){\psline[linecolor=blue,linewidth=2pt]{->}(1,0)(0.1,0)}
\rput(0,0){\psline[linecolor=blue,linewidth=2pt]{->}(0,0)(.45,-.765)}
\put(0,0){\pscircle[fillstyle=solid,fillcolor=lightgray]{.1}}
}}
\newcommand{\PATRIGHT}{%
{\psset{unit=1}
\rput(0,0){\psline[linecolor=blue,linewidth=2pt,linestyle=dashed]{->}(0,0)(.45,-.765)}
\put(0,0){\pscircle[fillstyle=solid,fillcolor=lightgray]{.1}}
}}
\newcommand{\PATBOTTOM}{%
{\psset{unit=1}
\rput(0,0){\psline[linecolor=blue,linewidth=2pt]{->}(0,0)(.45,.765)}
\rput(0,0){\psline[linecolor=blue,linewidth=2pt,linestyle=dashed]{->}(1,0)(0.1,0)}
\put(0,0){\pscircle[fillstyle=solid,fillcolor=lightgray]{.1}}
}}
\newcommand{\PATTOP}{%
{\psset{unit=1}
\rput(0,0){\psline[linecolor=blue,linewidth=2pt]{->}(1,0)(0.1,0)}
\rput(0,0){\psline[linecolor=blue,linewidth=2pt]{->}(0,0)(.45,-.765)}
\put(0,0){\pscircle[fillstyle=solid,fillcolor=lightgray]{.1}}
}}
\newcommand{\PATBOTRIGHT}{%
{\psset{unit=1}
\rput(0,0){\psline[linecolor=blue,linewidth=2pt]{->}(0,0)(.45,.765)}
\put(0,0){\pscircle[fillstyle=solid,fillcolor=lightgray]{.1}}
\put(.5,0.85){\pscircle[fillstyle=solid,fillcolor=lightgray]{.1}}
}}
\newcommand{\ODIN}{%
{\psset{unit=1}
\put(0,0){\pscircle[fillstyle=solid,fillcolor=white,linecolor=white]{.15}}
\put(0,0){\makebox(0,0)[cc]{\hbox{\tcw{\large$\mathbf 1$}}}}
\put(0,0){\makebox(0,0)[cc]{\hbox{\tcr{$\mathbf 1$}}}}
}}
\multiput(-2.5,-0.85)(0.5,0.85){4}{\PATLEFT}
\multiput(-2,-1.7)(1,0){4}{\PATBOTTOM}
\put(-0.5,2.55){\PATTOP}
\multiput(-1.5,-0.85)(1,0){4}{\PATGEN}
\multiput(-1,0)(1,0){3}{\PATGEN}
\multiput(-.5,0.85)(1,0){2}{\PATGEN}
\put(0,1.7){\PATGEN}
\multiput(-1.5,-0.85)(1,0){4}{\PATGEN}
\multiput(0.5,2.55)(0.5,-0.85){4}{\PATRIGHT}
\put(2,-1.7){\PATBOTRIGHT}
\psarc[linecolor=red, linewidth=1.5pt]{<->}(-1.5,-0.85){1.7}{100}{320}
\psarc[linecolor=red, linewidth=1.5pt]{<->}(-2,0){2}{65}{295}
\psarc[linecolor=red, linewidth=1.5pt]{<->}(-1,-1.7){2}{125}{355}
\psarc[linecolor=red, linewidth=1.5pt]{<->}(-0.8,0.25){2.35}{87}{235}
\rput{100}(-0.1,0){\psarc[linecolor=red, linewidth=1.5pt]{<->}(-0.8,0.25){2.36}{85}{235}}
\multiput(-0.5,2.55)(0.5,-0.85){5}{\psline[linecolor=red,linewidth=2pt]{->}(0,0)(.45,-.765)}
\multiput(-2.5,-0.85)(0.5,-0.85){1}{\psline[linecolor=red,linewidth=2pt]{->}(0,0)(.45,-.765)}
\multiput(-.5,2.55)(0.5,-0.85){6}{\ODIN}
\multiput(-2.5,-0.85)(0.5,-0.85){2}{\ODIN}
%
}
\end{pspicture}
\begin{pspicture}(-2.7,-2.5)(2.7,2.5){\psset{unit=0.7}
\newcommand{\PATGEN}{%
{\psset{unit=1}
\rput(0,0){\psline[linecolor=blue,linewidth=2pt]{->}(0,0)(.45,.765)}
\rput(0,0){\psline[linecolor=blue,linewidth=2pt]{->}(1,0)(0.1,0)}
\rput(0,0){\psline[linecolor=blue,linewidth=2pt]{->}(0,0)(.45,-.765)}
\put(0,0){\pscircle[fillstyle=solid,fillcolor=lightgray]{.1}}
}}
\newcommand{\PATLEFT}{%
{\psset{unit=1}
\rput(0,0){\psline[linecolor=blue,linewidth=2pt,linestyle=dashed]{->}(0,0)(.45,.765)}
\rput(0,0){\psline[linecolor=blue,linewidth=2pt]{->}(1,0)(0.1,0)}
\rput(0,0){\psline[linecolor=blue,linewidth=2pt]{->}(0,0)(.45,-.765)}
\put(0,0){\pscircle[fillstyle=solid,fillcolor=lightgray]{.1}}
}}
\newcommand{\PATRIGHT}{%
{\psset{unit=1}
\rput(0,0){\psline[linecolor=blue,linewidth=2pt,linestyle=dashed]{->}(0,0)(.45,-.765)}
\put(0,0){\pscircle[fillstyle=solid,fillcolor=lightgray]{.1}}
}}
\newcommand{\PATBOTTOM}{%
{\psset{unit=1}
\rput(0,0){\psline[linecolor=blue,linewidth=2pt]{->}(0,0)(.45,.765)}
\rput(0,0){\psline[linecolor=blue,linewidth=2pt,linestyle=dashed]{->}(1,0)(0.1,0)}
\put(0,0){\pscircle[fillstyle=solid,fillcolor=lightgray]{.1}}
}}
\newcommand{\PATTOP}{%
{\psset{unit=1}
\rput(0,0){\psline[linecolor=blue,linewidth=2pt]{->}(1,0)(0.1,0)}
\rput(0,0){\psline[linecolor=blue,linewidth=2pt]{->}(0,0)(.45,-.765)}
\put(0,0){\pscircle[fillstyle=solid,fillcolor=lightgray]{.1}}
}}
\newcommand{\PATBOTRIGHT}{%
{\psset{unit=1}
\rput(0,0){\psline[linecolor=blue,linewidth=2pt]{->}(0,0)(.45,.765)}
\put(0,0){\pscircle[fillstyle=solid,fillcolor=lightgray]{.1}}
\put(.5,0.85){\pscircle[fillstyle=solid,fillcolor=lightgray]{.1}}
}}
\newcommand{\ODIN}{%
{\psset{unit=1}
\put(0,0){\pscircle[fillstyle=solid,fillcolor=white,linecolor=white]{.15}}
\put(0,0){\makebox(0,0)[cc]{\hbox{\tcw{\large$\mathbf 1$}}}}
\put(0,0){\makebox(0,0)[cc]{\hbox{\tcr{$\mathbf 1$}}}}
}}
\multiput(-2.5,-0.85)(0.5,0.85){4}{\PATLEFT}
\multiput(-2,-1.7)(1,0){4}{\PATBOTTOM}
\put(-0.5,2.55){\PATTOP}
\multiput(-1.5,-0.85)(1,0){4}{\PATGEN}
\multiput(-1,0)(1,0){3}{\PATGEN}
\multiput(-.5,0.85)(1,0){2}{\PATGEN}
\put(0,1.7){\PATGEN}
\multiput(-1.5,-0.85)(1,0){4}{\PATGEN}
\multiput(0.5,2.55)(0.5,-0.85){4}{\PATRIGHT}
\put(2,-1.7){\PATBOTRIGHT}
\psarc[linecolor=red, linewidth=1.5pt]{<->}(-1.5,-0.85){1.7}{100}{320}
\psarc[linecolor=red, linewidth=1.5pt]{<->}(-2,0){2}{65}{295}
\psarc[linecolor=red, linewidth=1.5pt]{<->}(-1,-1.7){2}{125}{355}
\psarc[linecolor=red, linewidth=1.5pt]{<->}(-0.8,0.25){2.35}{87}{235}
\rput{100}(-0.1,0){\psarc[linecolor=red, linewidth=1.5pt]{<->}(-0.8,0.25){2.36}{85}{235}}
\multiput(-1,1.7)(0.5,-0.85){4}{\psline[linecolor=red,linewidth=2pt]{->}(0,0)(.45,-.765)}
\multiput(-2,0)(0.5,-0.85){2}{\psline[linecolor=red,linewidth=2pt]{->}(0,0)(.45,-.765)}
\multiput(-1,1.7)(0.5,-0.85){5}{\ODIN}
\multiput(-2,0)(0.5,-0.85){3}{\ODIN}
%
}
\end{pspicture}
\begin{pspicture}(-2.3,-2.5)(2.7,2.5){\psset{unit=0.7}
\newcommand{\PATGEN}{%
{\psset{unit=1}
\rput(0,0){\psline[linecolor=blue,linewidth=2pt]{->}(0,0)(.45,.765)}
\rput(0,0){\psline[linecolor=blue,linewidth=2pt]{->}(1,0)(0.1,0)}
\rput(0,0){\psline[linecolor=blue,linewidth=2pt]{->}(0,0)(.45,-.765)}
\put(0,0){\pscircle[fillstyle=solid,fillcolor=lightgray]{.1}}
}}
\newcommand{\PATLEFT}{%
{\psset{unit=1}
\rput(0,0){\psline[linecolor=blue,linewidth=2pt,linestyle=dashed]{->}(0,0)(.45,.765)}
\rput(0,0){\psline[linecolor=blue,linewidth=2pt]{->}(1,0)(0.1,0)}
\rput(0,0){\psline[linecolor=blue,linewidth=2pt]{->}(0,0)(.45,-.765)}
\put(0,0){\pscircle[fillstyle=solid,fillcolor=lightgray]{.1}}
}}
\newcommand{\PATRIGHT}{%
{\psset{unit=1}
\rput(0,0){\psline[linecolor=blue,linewidth=2pt,linestyle=dashed]{->}(0,0)(.45,-.765)}
\put(0,0){\pscircle[fillstyle=solid,fillcolor=lightgray]{.1}}
}}
\newcommand{\PATBOTTOM}{%
{\psset{unit=1}
\rput(0,0){\psline[linecolor=blue,linewidth=2pt]{->}(0,0)(.45,.765)}
\rput(0,0){\psline[linecolor=blue,linewidth=2pt,linestyle=dashed]{->}(1,0)(0.1,0)}
\put(0,0){\pscircle[fillstyle=solid,fillcolor=lightgray]{.1}}
}}
\newcommand{\PATTOP}{%
{\psset{unit=1}
\rput(0,0){\psline[linecolor=blue,linewidth=2pt]{->}(1,0)(0.1,0)}
\rput(0,0){\psline[linecolor=blue,linewidth=2pt]{->}(0,0)(.45,-.765)}
\put(0,0){\pscircle[fillstyle=solid,fillcolor=lightgray]{.1}}
}}
\newcommand{\PATBOTRIGHT}{%
{\psset{unit=1}
\rput(0,0){\psline[linecolor=blue,linewidth=2pt]{->}(0,0)(.45,.765)}
\put(0,0){\pscircle[fillstyle=solid,fillcolor=lightgray]{.1}}
\put(.5,0.85){\pscircle[fillstyle=solid,fillcolor=lightgray]{.1}}
}}
\newcommand{\ODIN}{%
{\psset{unit=1}
\put(0,0){\pscircle[fillstyle=solid,fillcolor=white,linecolor=white]{.15}}
\put(0,0){\makebox(0,0)[cc]{\hbox{\tcw{\large$\mathbf 1$}}}}
\put(0,0){\makebox(0,0)[cc]{\hbox{\tcr{$\mathbf 1$}}}}
}}
\multiput(-2.5,-0.85)(0.5,0.85){4}{\PATLEFT}
\multiput(-2,-1.7)(1,0){4}{\PATBOTTOM}
\put(-0.5,2.55){\PATTOP}
\multiput(-1.5,-0.85)(1,0){4}{\PATGEN}
\multiput(-1,0)(1,0){3}{\PATGEN}
\multiput(-.5,0.85)(1,0){2}{\PATGEN}
\put(0,1.7){\PATGEN}
\multiput(-1.5,-0.85)(1,0){4}{\PATGEN}
\multiput(0.5,2.55)(0.5,-0.85){4}{\PATRIGHT}
\put(2,-1.7){\PATBOTRIGHT}
\psarc[linecolor=red, linewidth=1.5pt]{<->}(-1.5,-0.85){1.7}{100}{320}
\psarc[linecolor=red, linewidth=1.5pt]{<->}(-2,0){2}{65}{295}
\psarc[linecolor=red, linewidth=1.5pt]{<->}(-1,-1.7){2}{125}{355}
\psarc[linecolor=red, linewidth=1.5pt]{<->}(-0.8,0.25){2.35}{87}{235}
\rput{100}(-0.1,0){\psarc[linecolor=red, linewidth=1.5pt]{<->}(-0.8,0.25){2.36}{85}{235}}
\multiput(-1.5,0.85)(0.5,-0.85){3}{\psline[linecolor=red,linewidth=2pt]{->}(0,0)(.45,-.765)}
\multiput(-1.5,0.85)(0.5,-0.85){4}{\ODIN}
%
}
\end{pspicture}
\caption{\small
Three remaining central elements of the full-rank quiver for $SL_6$ after amalgamation and setting the diagonal elements of $\mathbb A$ equal to unities.
}
\label{fi:Casimirs-A}
\end{figure}
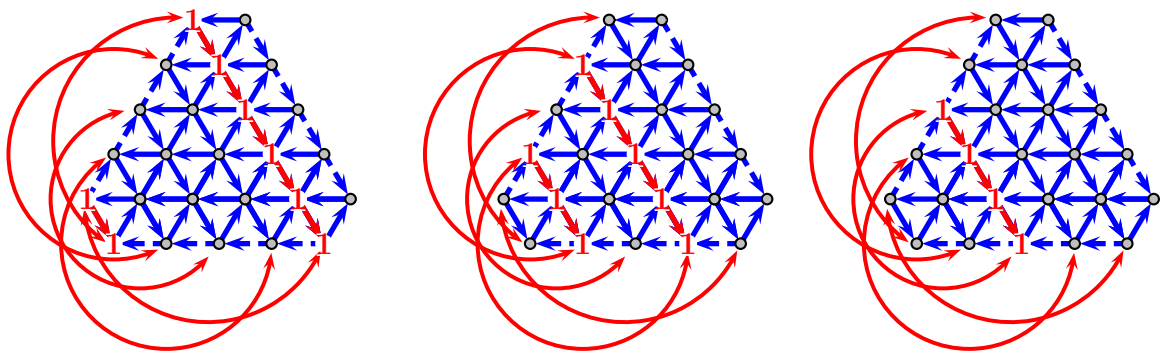

It is easy to see that all Casimirs from Lemma~\ref{lem:Casimir-slN} remain Casimirs in the amalgamated quiver (just four, or two, depending on the Casimir element, frozen variables become pairwise amalgamated). More, this amalgamation results in the appearance of $n-1$ new Casimirs; in Sec.~\ref{s:braid} we have used  these new Casimirs to eliminate the dependence of $\mathbb A$ on remaining $n-1$ frozen variables: diagonal entries of $\mathbb A=M_1^{\text{T}}M_2$ are particular products of these Casimirs, and we adjust the values of these Casimir operators to make all diagonal elements of $\mathbb A$ equal to the unities in the classsical case and $q^{-1/2}$ in the quantum case (recall that all Casimirs are assumed to be self-adjoint operators).

\begin{lemma}
The complete set of central elements for the amalgamated quiver in Fig.~\ref{fi:amalgamation} comprises $n-1$ new Casimirs depicted in Fig.~\ref{fi:diagonal-Casimirs} for the case of $SL_6$ and $\bigl[\frac{n}{2}\bigr]$ central elements (products of old Casimirs with the new ones) depicted in Fig.~\ref{fi:Casimirs-A}.
\end{lemma}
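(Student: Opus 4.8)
The plan is to prove the statement in two independent parts, following the strategy announced at the start of Section~\ref{s:Casimirs}: first to exhibit the indicated products of cluster variables as central elements by a direct computation, and then to establish that there are no further independent Casimirs by a dimension count against the known size of the symplectic leaves of $\mathcal A_n$.

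For the centrality part I would work with the exchange (adjacency) matrix $\varepsilon$ of the amalgamated quiver of Fig.~\ref{fi:amalgamation}, for which a monomial $\prod_\alpha Z_\alpha^{m_\alpha}$ is central precisely when the integer vector $(m_\alpha)$ lies in $\ker\varepsilon$. The $n-1$ new elements $K_i$ of~(\ref{Cas-out}) are checked one at a time: each is read off Fig.~\ref{fi:diagonal-Casimirs} as a closed broken-line path that reflects at the boundary of the triangle, and at every vertex the weighted count of incoming and outgoing arrows cancels, so the corresponding exponent vector is annihilated by $\varepsilon$. The crucial point is that these closed paths exist only after amalgamation: gluing $Z_{i,0,n-i}$ to $Z_{n-i,i,0}$ into $\bar Z_i$ joins the two formerly separate boundary sides and closes up a path that was open in the unamalgamated $SL_n$-quiver. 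For the $\bigl[\frac{n}{2}\bigr]$ elements of Fig.~\ref{fi:Casimirs-A} I would start from the old Casimirs of Lemma~\ref{lem:Casimir-slN}: since such an old Casimir commutes with $Z_{i,0,n-i}$ and with $Z_{n-i,i,0}$ separately, and the row of $\bar Z_i$ in the amalgamated $\varepsilon$ is the sum of the two old rows, it automatically commutes with $\bar Z_i$; multiplying by suitable powers of the $K_i$ makes the exponents on the two glued variables agree, so that the result descends to a genuine monomial in the amalgamated variables (this is the meaning of ``products of old Casimirs with the new ones'') and realizes the $C_k$ of~(\ref{Cas-in}).

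Completeness is then a matter of counting. The number of independent central monomials equals $\operatorname{corank}\varepsilon=N'-\operatorname{rank}\varepsilon$, where $N'=\frac{(n+2)(n-1)}{2}$ is the number of vertices of the amalgamated quiver (the $\frac{(n+2)(n+1)}{2}-3$ vertices of the full $SL_n$-quiver with $n-1$ pairs of frozen boundary vertices identified). By Theorem~\ref{th:A} the cluster variables map, as a Poisson morphism, onto the entries of $\mathbb A=M_1^{\mathrm T}M_2\in\mathcal A_n$; the $N'-\dim\mathcal A_n=n-1$ surplus coordinates are exactly the remaining frozen variables $Z_{0,i,n-i}$, i.e.\ the $K_i^{1/2}$, which lie in the kernel. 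On $\mathcal A_n$ the induced bracket is~(\ref{eq:du}), whose centre is generated by the $\bigl[\frac{n}{2}\bigr]$ coefficients of $\det(\mathbb A+\lambda\mathbb A^{\mathrm T})$ (the lemma expressing this determinant through the $C_i$), so a generic symplectic leaf has dimension $\frac{n(n-1)}{2}-\bigl[\frac{n}{2}\bigr]$. Since this bracket descends from the quiver, $\operatorname{rank}\varepsilon\ge\frac{n(n-1)}{2}-\bigl[\frac{n}{2}\bigr]$; on the other hand the central elements already exhibited force $\operatorname{rank}\varepsilon\le N'-(n-1)-\bigl[\frac{n}{2}\bigr]=\frac{n(n-1)}{2}-\bigl[\frac{n}{2}\bigr]$. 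The two bounds coincide, whence $\operatorname{corank}\varepsilon=(n-1)+\bigl[\frac{n}{2}\bigr]$ and the listed elements form a complete set.

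The step I expect to be the genuine obstacle is the lower bound $\operatorname{rank}\varepsilon\ge\frac{n(n-1)}{2}-\bigl[\frac{n}{2}\bigr]$, equivalently the assertion that the Poisson map from the amalgamated quiver onto $\mathcal A_n$ is generically submersive with full-rank leaves: one must know that the $a_{ij}$ furnish enough functionally independent, non-degenerately paired coordinates on the generic leaf, which is exactly where ``the known answers for dimensions of symplectic leaves'' are invoked. By contrast, verifying centrality is only bookkeeping on the arrow pattern, and the upper bound on the rank is immediate once the central monomials are in hand.
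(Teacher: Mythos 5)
Your proposal is correct and follows essentially the same route as the paper, which disposes of all the Casimir lemmas in Section~\ref{s:Casimirs} by the single remark that centrality is a direct check on the arrow pattern while completeness follows from the known dimensions of the symplectic leaves of $\mathcal A_n$. Your write-up merely makes explicit the bookkeeping the paper leaves implicit (the vertex count $N'=\frac{(n+2)(n-1)}{2}$, the surplus $n-1$ matching the new Casimirs $K_i$, and the two-sided bound on $\operatorname{rank}\varepsilon$ via the leaf dimension $\frac{n(n-1)}{2}-\bigl[\frac{n}{2}\bigr]$), correctly identifying the functional independence of the $a_{i,j}$ on a generic leaf as the one input imported from outside.
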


\section{Quantum Grassmannian and measurement maps}\label{sec:QuantumMeasurements}

\subsection{Non-normalized quantum transport matrices}\label{sect:proof:th:MM}

We add additional vertices labelled $(n,0,0), (0,n,0)$ and $(0,0,n)$ to the quiver of Fock-Goncharov parameters $Z_{abc}$  and construct dual planar bicolored (plabic) graph $G$ ( Figure~\ref{fi:plab}).
Then,  we define non-normalized quantum transport matrices $\M_1$ and $\M_2$ as quantization of 
boundary measurement matrices of graph $G$ introduced by Postnikov in \cite{Po}. 
Namely, we assign to every path $P$ connecting a source of $G$ to a sink a quantum weight $w(P)$  that is element of the quantum torus $\Upsilon$.
We define the boundary measurement between source $p$ and sink $q$ as $\M_{pq}=\displaystyle{\sum_{\text{path }P:p\leadsto q} w(P)}$.
Finally, note that $G$ has $n$ sources and $2n$ sinks, we organize boundary measurements $\M_{pq}$ into $2n\times n$ matrix that we divide into two $n\times n$ matrices $\M_1$ and $\M_2$.

Vertices of $G$ are colored into black and white color as follows: a black vertex has two incoming arrows and one outgoing, while a white vertex has two outgoing and one incoming arrows.

We equip faces of Figure~\ref{fi:plabic_weights} with weights $Z_\alpha$ associated with the corresponding vertices of graph Figure~\ref{fi:triangle}. We define the quantum weight of a maximal oriented path in $G$ by formula~\ref{eq:pathweight} (see Fig.\ref{fi:plabic_weights}).

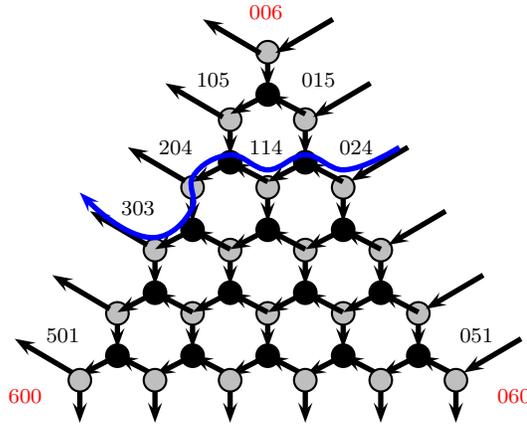
\begin{figure}[H]
	\begin{pspicture}(-3,-2)(4,2.8){
		\newcommand{\LEFTDOWNARROW}{%
			{\psset{unit=1}
				\rput(0,0){\psline[linecolor=black,linewidth=2pt]{<-}(0,0)(.765,.45)}
		}}
		\newcommand{\DOWNARROW}{%
			{\psset{unit=1}
				\rput(0,0){\psline[linecolor=black,linewidth=2pt]{->}(0,0.1)(0,-0.566)}
				\put(0,0){\pscircle[fillstyle=solid,fillcolor=lightgray]{.15}}
		}}
		\newcommand{\LEFTUPARROW}{%
			{\psset{unit=1}
				\rput(0,0){\psline[linecolor=black,linewidth=2pt]{->}(0,0)(-.765,.45)}
		}}
		\newcommand{\STARUP}{
			{\psset{unit=1}
				\rput(0,0){\psline[linecolor=black,linewidth=2pt]{<-}(0,0)(.5,-.26)}
				\rput(0,0){\psline[linecolor=black,linewidth=2pt]{<-}(0,0.1)(0,.466)}
				\rput(0,0){\psline[linecolor=black,linewidth=2pt]{->}(0,0)(-.5,-.26)}
				\put(0,0){\pscircle[fillstyle=solid,fillcolor=black]{.15}}
				\put(0,.566){\pscircle[fillstyle=solid,fillcolor=lightgray]{.15}}
		}}
		\newcommand{\PATGEN}{%
			{\psset{unit=1}
				\rput(0,0){\psline[linecolor=blue,linewidth=2pt]{->}(0,0)(.45,.765)}
				\rput(0,0){\psline[linecolor=blue,linewidth=2pt]{->}(1,0)(0.1,0)}
				\rput(0,0){\psline[linecolor=blue,linewidth=2pt]{->}(0,0)(.45,-.765)}
				\put(0,0){\pscircle[fillstyle=solid,fillcolor=lightgray]{.1}}
		}}
		\newcommand{\PATLEFT}{%
			{\psset{unit=1}
				\rput(0,0){\psline[linecolor=blue,linewidth=2pt,linestyle=dashed]{->}(0,0)(.45,.765)}
				\rput(0,0){\psline[linecolor=blue,linewidth=2pt]{->}(1,0)(0.1,0)}
				\rput(0,0){\psline[linecolor=blue,linewidth=2pt]{->}(0,0)(.45,-.765)}
				\put(0,0){\pscircle[fillstyle=solid,fillcolor=lightgray]{.1}}
		}}
		\newcommand{\PATRIGHT}{%
			{\psset{unit=1}
				\rput(0,0){\psline[linecolor=blue,linewidth=2pt,linestyle=dashed]{->}(0,0)(.45,-.765)}
				\put(0,0){\pscircle[fillstyle=solid,fillcolor=lightgray]{.1}}
		}}
		\newcommand{\PATBOTTOM}{%
			{\psset{unit=1}
				\rput(0,0){\psline[linecolor=blue,linewidth=2pt]{->}(0,0)(.45,.765)}
				\rput(0,0){\psline[linecolor=blue,linewidth=2pt,linestyle=dashed]{->}(1,0)(0.1,0)}
				\put(0,0){\pscircle[fillstyle=solid,fillcolor=lightgray]{.1}}
		}}
		\newcommand{\PATTOP}{%
			{\psset{unit=1}
				\rput(0,0){\psline[linecolor=blue,linewidth=2pt]{->}(1,0)(0.1,0)}
				\rput(0,0){\psline[linecolor=blue,linewidth=2pt]{->}(0,0)(.45,-.765)}
				\put(0,0){\pscircle[fillstyle=solid,fillcolor=lightgray]{.1}}
		}}
		\newcommand{\PATBOTRIGHT}{%
			{\psset{unit=1}
				\rput(0,0){\psline[linecolor=blue,linewidth=2pt]{->}(0,0)(.45,.765)}
				\put(0,0){\pscircle[fillstyle=solid,fillcolor=lightgray]{.1}}
				\put(.5,0.85){\pscircle[fillstyle=solid,fillcolor=lightgray]{.1}}
		}}
		\multiput(-2,-1.176)(1.0,0){5}{\STARUP}
		\multiput(-1.5,-0.335)(1.0,0){4}{\STARUP}
		\multiput(-1.0,0.5)(1.0,0){3}{\STARUP}
		\multiput(-.5,1.4)(1.0,0){2}{\STARUP}
		\put(0,2.3){\STARUP}
		\multiput(2.6,-1.4)(-0.5,.85){6}{\LEFTDOWNARROW}
		\multiput(-2.6,-1.4)(0.5,.85){6}{\LEFTUPARROW}
		\multiput(-2.5,-1.5)(1.0,0){6}{\DOWNARROW}
		\put(-3,-1.8){\makebox(0,0)[br]{\hbox{{\color{red}{\tiny $600$}}}}}
		\put(0.2,3.3){\makebox(0,0)[br]{\hbox{{\color{red}{\tiny $006$}}}}}
		\put(3.5,-1.8){\makebox(0,0)[br]{\hbox{{\color{red}{\tiny $060$}}}}}
		\put(3.,-1){\makebox(0,0)[br]{\hbox{{\tiny $051$}}}}
		\put(-2.5,-1){\makebox(0,0)[br]{\hbox{{\tiny $501$}}}}
		\put(0.9,2.4){\makebox(0,0)[br]{\hbox{{\tiny $015$}}}}
		\put(-0.5,2.4){\makebox(0,0)[br]{\hbox{{\tiny $105$}}}}
		\put(1.4,1.5){\makebox(0,0)[br]{\hbox{{\tiny $024$}}}}
		\put(.2,1.5){\makebox(0,0)[br]{\hbox{{\tiny $114$}}}}
		\put(-1,1.5){\makebox(0,0)[br]{\hbox{{\tiny $204$}}}}
		\put(-1.5,0.7){\makebox(0,0)[br]{\hbox{{\tiny $303$}}}}
       \pscurve[linecolor=blue,linewidth=2pt]{->}(1.75,1.6)(1.,1.3)(0.5,1.5)(0.,1.3)(-0.5,1.5)(-1,1.2)(-1,0.8)(-1.5,0.4)(-2.5,1)
	}
	\end{pspicture}
	\caption{\small Face and path weights of $G$. Faces are labeled by indices $i,j,k\in\mathbb Z$, $i+j+k=6$, the corresponding Fock-Goncharov face weight is denoted by $Z_{ijk}$. The weight $w(P)$ of the blue path $P$ is 
	$w(P)=\col{Z_{024} Z_{015}  Z_{114} {\color{red} Z_{006}} Z_{105} Z_{204} Z_{303}}$ Note that corner faces do not carry Fock-Goncharov variables and don't contribute to the normalized transport matrices $M_1$ and $M_2$. However, they do contribute toward non-normalized transport matrices $\M_1$ and $\M_2$.}
	\label{fi:plabic_weights}
\end{figure}

\begin{example} Consider the triangular network of $SL_3$ (Fig.~\ref{fi:plabic_weightsSL3}) \\

\hskip 6cm

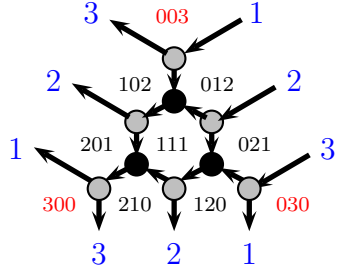
\begin{figure}[h]
	\begin{pspicture}(-3,-3)(2,1){
		\newcommand{\LEFTDOWNARROW}{%
			{\psset{unit=1}
				\rput(0,0){\psline[linecolor=black,linewidth=2pt]{<-}(0,0)(.765,.45)}
		}}
		\newcommand{\DOWNARROW}{%
			{\psset{unit=1}
				\rput(0,0){\psline[linecolor=black,linewidth=2pt]{->}(0,0.1)(0,-0.566)}
				\put(0,0){\pscircle[fillstyle=solid,fillcolor=lightgray]{.15}}
		}}
		\newcommand{\LEFTUPARROW}{%
			{\psset{unit=1}
				\rput(0,0){\psline[linecolor=black,linewidth=2pt]{->}(0,0)(-.765,.45)}
		}}
		\newcommand{\STARUP}{
			{\psset{unit=1}
				\rput(0,0){\psline[linecolor=black,linewidth=2pt]{<-}(0.1,-0.05)(.43,-.22)}
				\rput(0,0){\psline[linecolor=black,linewidth=2pt]{<-}(0,0.1)(0,.466)}
				\rput(0,0){\psline[linecolor=black,linewidth=2pt]{->}(-0.1,-0.05)(-.43,-.22)}
				\put(0,0){\pscircle[fillstyle=solid,fillcolor=black]{.15}}
				\put(0,.566){\pscircle[fillstyle=solid,fillcolor=lightgray]{.15}}
		}}
		\newcommand{\PATGEN}{%
			{\psset{unit=1}
				\rput(0,0){\psline[linecolor=blue,linewidth=2pt]{->}(0,0)(.45,.765)}
				\rput(0,0){\psline[linecolor=blue,linewidth=2pt]{->}(1,0)(0.1,0)}
				\rput(0,0){\psline[linecolor=blue,linewidth=2pt]{->}(0,0)(.45,-.765)}
				\put(0,0){\pscircle[fillstyle=solid,fillcolor=lightgray]{.1}}
		}}
		\newcommand{\PATLEFT}{%
			{\psset{unit=1}
				\rput(0,0){\psline[linecolor=blue,linewidth=2pt,linestyle=dashed]{->}(0,0)(.45,.765)}
				\rput(0,0){\psline[linecolor=blue,linewidth=2pt]{->}(1,0)(0.1,0)}
				\rput(0,0){\psline[linecolor=blue,linewidth=2pt]{->}(0,0)(.45,-.765)}
				\put(0,0){\pscircle[fillstyle=solid,fillcolor=lightgray]{.1}}
		}}
		\newcommand{\PATRIGHT}{%
			{\psset{unit=1}
				\rput(0,0){\psline[linecolor=blue,linewidth=2pt,linestyle=dashed]{->}(0,0)(.45,-.765)}
				\put(0,0){\pscircle[fillstyle=solid,fillcolor=lightgray]{.1}}
		}}
		\newcommand{\PATBOTTOM}{%
			{\psset{unit=1}
				\rput(0,0){\psline[linecolor=blue,linewidth=2pt]{->}(0,0)(.45,.765)}
				\rput(0,0){\psline[linecolor=blue,linewidth=2pt,linestyle=dashed]{->}(1,0)(0.1,0)}
				\put(0,0){\pscircle[fillstyle=solid,fillcolor=lightgray]{.1}}
		}}
		\newcommand{\PATTOP}{%
			{\psset{unit=1}
				\rput(0,0){\psline[linecolor=blue,linewidth=2pt]{->}(1,0)(0.1,0)}
				\rput(0,0){\psline[linecolor=blue,linewidth=2pt]{->}(0,0)(.45,-.765)}
				\put(0,0){\pscircle[fillstyle=solid,fillcolor=lightgray]{.1}}
		}}
		\newcommand{\PATBOTRIGHT}{%
			{\psset{unit=1}
				\rput(0,0){\psline[linecolor=blue,linewidth=2pt]{->}(0,0)(.45,.765)}
				\put(0,0){\pscircle[fillstyle=solid,fillcolor=lightgray]{.1}}
				\put(.5,0.85){\pscircle[fillstyle=solid,fillcolor=lightgray]{.1}}
		}}
		\multiput(-2,-1.176)(1.0,0){2}{\STARUP}
		\multiput(-1.5,-0.335)(1.0,0){1}{\STARUP}
		\multiput(-0.47,-1.5)(-0.45,.9){3}{\LEFTDOWNARROW}
		\multiput(-2.6,-1.4)(0.5,.85){3}{\LEFTUPARROW}
		\multiput(-2.5,-1.5)(1.0,0){3}{\DOWNARROW}
		\put(-2.8,-1.8){\makebox(0,0)[br]{\hbox{{\color{red}{\tiny $300$}}}}}
		\put(-1.8,-1.8){\makebox(0,0)[br]{\hbox{{{\tiny $210$}}}}}
		\put(-0.8,-1.8){\makebox(0,0)[br]{\hbox{{{\tiny $120$}}}}}
		\put(0.3,-1.8){\makebox(0,0)[br]{\hbox{{\color{red}{\tiny $030$}}}}}
		\put(-2.3,-1){\makebox(0,0)[br]{\hbox{{\tiny $201$}}}}
		\put(-1.3,-1){\makebox(0,0)[br]{\hbox{{\tiny $111$}}}}
		\put(-0.2,-1){\makebox(0,0)[br]{\hbox{{\tiny $021$}}}}
		\put(-1.8,-0.2){\makebox(0,0)[br]{\hbox{{\tiny $102$}}}}
		\put(-0.7,-0.2){\makebox(0,0)[br]{\hbox{{\tiny $012$}}}}
		\put(-1.3,0.7){\makebox(0,0)[br]{\hbox{\color{red}{\tiny $003$}}}}
		\put(-.3,0.7){\makebox(0,0)[br]{\hbox{\color{blue}{$1$}}}}
		\put(0.2,-0.2){\makebox(0,0)[br]{\hbox{\color{blue}{$2$}}}}
		\put(0.65,-1.1){\makebox(0,0)[br]{\hbox{\color{blue}{$3$}}}}
		\put(-2.5,0.7){\makebox(0,0)[br]{\hbox{\color{blue}{$3$}}}}
		\put(-3.0,-0.2){\makebox(0,0)[br]{\hbox{\color{blue}{$2$}}}}
		\put(-3.5,-1.1){\makebox(0,0)[br]{\hbox{\color{blue}{$1$}}}}
		\put(-2.4,-2.5){\makebox(0,0)[br]{\hbox{\color{blue}{$3$}}}}
		\put(-1.4,-2.5){\makebox(0,0)[br]{\hbox{\color{blue}{$2$}}}}
		\put(-0.4,-2.5){\makebox(0,0)[br]{\hbox{\color{blue}{$1$}}}}
	}
	\end{pspicture}
	\caption{\small Face and path weigths of $G_{SL_3}$. Triples $i,j,k\in\ZZ$, $i+j+k=3$ label faces.}
	\label{fi:plabic_weightsSL3}
\end{figure}

Quantum transport matrices have the following form:

$$M_1=\begin{pmatrix}
\col{Z_{021}^{-1/3}Z_{102}^{1/3} Z_{111}^{-1/3}Z_{012}^{-2/3}Z_{201}^{2/3}} & \col{Z_{021}^{-1/3}Z_{102}^{1/3}(Z_{111}^{-1/3}+Z_{111}^{2/3}) Z_{102}^{1/3}  Z_{201}^{2/3}} & \col{Z_{021}^{2/3} Z_{102}^{1/3}Z_{111}^{2/3} Z_{012}^{1/3} Z_{201}^{2/3}} \\
 \col{Z_{021}^{-1/3}Z_{102}^{-1/3} Z_{111}^{-1/3}Z_{012}^{-2/3}Z_{201}^{-1/3}} & \col{Z_{021}^{-1/3}Z_{102}^{-1/3} Z_{111}^{-1/3}Z_{012}^{1/3}Z_{201}^{-1/3}} & 0 \\
\col{Z_{021}^{-1/3}Z_{102}^{-2/3} Z_{111}^{-1/3}Z_{012}^{-2/3}Z_{201}^{-1/3}} & 0 & 0
\end{pmatrix}.
$$

Then, $M_1=QSD_1^{-1} \M_1$, where  $D_1=\col{Z_{021}^{\frac{1}{3}} Z_{102}^{\frac{2}{3}} Z_{111}^{\frac{1}{3}} {\color{red}Z_{003}} Z_{012}^{\frac{2}{3}} Z_{201}^{\frac{1}{3}}}$ 
and
$$\M_1=\begin{pmatrix}
\col{{\color{red}Z_{003}}}  & 0 & 0 \\
\col{{\color{red}Z_{003}} Z_{102}} & \col{Z_{012}{\color{red}Z_{003}}Z_{102}} & 0 \\
\col{{\color{red}Z_{003}} Z_{102} Z_{201}} & \col{Z_{012} (1+Z_{111}) {\color{red}Z_{003}} Z_{102}  Z_{201}} & \col{Z_{021} Z_{012} Z_{111}{\color{red}Z_{003}} Z_{102} Z_{201}} 
\end{pmatrix}.
$$

Similarly, 
$$M_2=\begin{pmatrix}
0 & 0 &  \col{Z_{210}^{1/3} Z_{111}^{1/3} Z_{012}^{1/3} Z_{120}^{2/3} Z_{021}^{2/3}}\\
0 &  \col{Z_{210}^{1/3} Z_{111}^{1/3} Z_{012}^{1/3}  Z_{120}^{-1/3} Z_{021}^{-1/3}}&   \col{Z_{210}^{1/3} Z_{111}^{1/3} Z_{012}^{1/3}  Z_{120}^{-1/3} Z_{021}^{2/3}}\\
 \col{Z_{210}^{-2/3} Z_{111}^{-2/3} Z_{012}^{-2/3}  Z_{120}^{-1/3} Z_{021}^{-1/3}} &  \col{Z_{210}^{-2/3} (Z_{111}^{-2/3}+Z_{111}^{1/3}) Z_{012}^{1/3}  Z_{120}^{-1/3} Z_{021}^{-1/3}} &  
 \col{Z_{210}^{-2/3} Z_{111}^{1/3} Z_{012}^{1/3}  Z_{120}^{-1/3} Z_{021}^{2/3}}\\
\end{pmatrix}
$$

Hence, 
$M_2=QS\col{D_1^{-1}D_2^{-1}} \M_2$, where  
$D_2=\col{Z_{300}^{-1}Z_{003}^{-1} Z_{201}^{-1}Z_{102}^{-1}Z_{210}^{-2/3} Z_{111}^{-2/3} Z_{012}^{-2/3}  
Z_{120}^{-1/3} Z_{021}^{-1/3}}$ and 
$$\M_2=\begin{pmatrix}
 \col{{\color{red}Z_{003}Z_{300}}Z_{201}Z_{102}} &  \col{{\color{red}Z_{003}Z_{300}}(1+Z_{111})Z_{012}Z_{201}Z_{102}} &  \col{{\color{red}Z_{003}Z_{300}}Z_{021}Z_{111}Z_{012}Z_{201}Z_{102}} \\
0 &  \col{{\color{red}Z_{003}Z_{300}}Z_{210}Z_{111}Z_{012}Z_{201}Z_{102}} &  \col{{\color{red}Z_{003}Z_{300}} Z_{210}Z_{021}Z_{111}Z_{012}Z_{201}Z_{102}} \\
0 & 0 & \col{{\color{red}Z_{003}Z_{300}}Z_{120}Z_{210}Z_{021}Z_{111}Z_{012}Z_{201}Z_{102}}  \\
\end{pmatrix}
$$

Notice that both $\M_1$ and $\M_2$ are non-normalized quantum transport matrices of network shown on Figure~\ref{fi:plabic_weightsSL3}.

\end{example}

\subsection{Quantum Grassmannian and proofs of Theorems ~\ref{th:MM} and ~\ref{th:MMsquare}}

We now prove Theorems~\ref{th:MM} and ~\ref{th:MMsquare} utilizing the notion of plabic graphs introduced by Postnikov in ~\cite{Po}.

\begin{remark}
The semiclassical statement of Theorem~\ref{th:MM} (see Remark!\ref{rem:MM}) was proved in \cite{GSV}.
\end{remark}

Following Postnikov we call a planar network an oriented graph with faces equipped with weights.
Let $N$ be a network in the disk with neither sources nor sinks inside and separated sources and sinks on the boundary. 
An example of such network $N$ is drawn on Figure~\ref{fig:netw} in rectangle $R$ with sources on the right side and sinks on the left side. 

\begin{figure}[H]
\begin{pspicture}(-3.5,0.)(3.5,3){\psset{unit=0.8}
\newcommand{\NET}{%
{\psset{unit=1}
\rput(0,0){\psline[linecolor=blue,linestyle=dashed,linewidth=2pt]{-}(0,0)(3.,0.)}
\rput(0,0){\psline[linecolor=blue,linestyle=dashed,linewidth=2pt]{-}(3,0)(3.,3.)}
\rput(0,0){\psline[linecolor=blue,linestyle=dashed,linewidth=2pt]{-}(3,3)(0,3)}
\rput(0,0){\psline[linecolor=blue,linestyle=dashed,linewidth=2pt]{-}(0,3)(0,0)}
\rput(0,0){\psline[linecolor=black,linewidth=2pt]{<-}(2.5,1)(3,1)}
\rput(0,0){\psline[linecolor=black,linewidth=2pt]{<-}(0,1)(2.5,1)}
\rput(0,0){\psline[linecolor=black,linewidth=2pt]{<-}(2,2)(3,2)}
\rput(0,0){\psline[linecolor=black,linewidth=2pt]{<-}(1,2)(2,2)}
\rput(0,0){\psline[linecolor=black,linewidth=2pt]{<-}(0,1.5)(1,2)}
\rput(0,0){\psline[linecolor=black,linewidth=2pt]{<-}(0,2.5)(1,2)}
\rput(0,0){\psline[linecolor=black,linewidth=2pt]{->}(2.5,1)(2,2)}
\rput(2,2.5){\makebox(0,0)[cc]{\hbox{\tcr{$\mathbf \alpha$}}}}
\rput(2.7,1.5){\makebox(0,0)[cc]{\hbox{\tcr{$\mathbf \beta$}}}}
\rput(0.3,2.){\makebox(0,0)[cc]{\hbox{\tcr{$\mathbf \gamma$}}}}
\rput(1.2,1.5){\makebox(0,0)[cc]{\hbox{\tcr{$\mathbf \delta$}}}}
\rput(1.7,0.6){\makebox(0,0)[cc]{\hbox{\tcr{$\mathbf \epsilon$}}}}

\rput(3.3,2.){\makebox(0,0)[cc]{\hbox{\tcr{$\mathbf  2$}}}}
\rput(3.3,1){\makebox(0,0)[cc]{\hbox{\tcr{$\mathbf 1$}}}}
\rput(-0.3,1.){\makebox(0,0)[cc]{\hbox{\tcr{$\mathbf 5$}}}}
\rput(-0.3,1.5){\makebox(0,0)[cc]{\hbox{\tcr{$\mathbf 4$}}}}
\rput(-0.3,2.5){\makebox(0,0)[cc]{\hbox{\tcr{$\mathbf 3$}}}}

}
}
\put(0.1,0.1){\NET}
}
\end{pspicture}
\caption{\small
Network $N$ in rectangle $R$.
}
\label{fig:netw}
\end{figure}
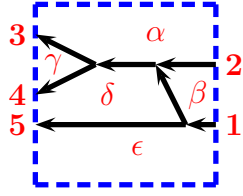

Denote by $Faces(N)$ the set of faces of network $N$.   Figure~\ref{fig:netw} has $Faces(N)=\{\alpha,\beta,\gamma,\delta,\epsilon\}$.
 Consider the integer lattice $\tilde \Lambda$ generated by $Faces(N)$ and vector space $\tilde V={\mathbb Q}\otimes \tilde\Lambda$.
We  equip it with the integer skew-symmetric form 
 $\langle,\rangle$ as follows.  
 
 \begin{definition}\label{def:plabic} \cite{Po} A \emph{planar bicolored} graph, or simply a \emph{plabic} graph is a planar (undirected) graph $G$,  without orientations of edges, such that each boundary vertex $b_i$ is incident to a single edge and all internal vertices are colored either black or white. 
A \emph{perfect orientation} of a plabic graph 
is a choice of orientation of its edges such that each \emph{black} internal vertex $v$ 
is incident to exactly one edge directed \emph{away} from $v$; and each \emph{white} $v$ 
is incident to exactly one edge directed \emph{towards} $v$. A plabic graph 
is called \emph{perfectly orientable} if it has a perfect orientation.
\end{definition}

 Let us transform the oriented graph $G$ of network into plabic graph $G^{pl}$ by coloring inner vertices of $G$ into black and white colors according to the rule: black vertex has two incoming arcs and one outgoing; white vertex has one incoming and two outgoing.  We forget boundary sources and sinks so that any arcs connecting inner vertex to the boundary one becomes a half-arc (see Fig.~\ref{fig:netw1}).
 For a plabic graph $G^{pl}$ we define an oriented dual graph $(G^{pl})^*$ as follows. Vertices of $(G^{pl})^*$ are faces of $G^{pl}$.
 For every black and white vertex $x$ of $G^{pl}$ we define 3 arcs of $(G^{pl})^*$ that cross half-edges attached to $x$ in counterclockwise direction if $x$ is black and clockwise direction if $x$ is white (see Fig.~\ref{fig:form}).
  \begin{figure}[H]
\begin{pspicture}(-3.5,-1)(3.5,1){\psset{unit=1}
\newcommand{\CIW}{%
{\psset{unit=1}
\rput(0,0){\psline[linecolor=black,linewidth=1pt]{-}(-1,0)(0,0)}
\rput(0,0){\psline[linecolor=black,linewidth=1pt]{-}(0,0)(0.5,0.7)}
\rput(0,0){\psline[linecolor=black,linewidth=1pt]{-}(0,0)(0.5,-0.7)}
\rput(0,0){\pscircle[fillstyle=solid,fillcolor=white]{.2}}
\psarc[linewidth=1pt,linecolor=blue,linestyle=dashed]{<-}(0.0,0.){0.7}{20}{100}
\psarc[linewidth=1pt,linecolor=blue,linestyle=dashed]{<-}(0.0,0.){0.7}{-100}{-20}
\psarc[linewidth=1pt,linecolor=blue,linestyle=dashed]{<-}(0.0,0.){0.7}{140}{220}
}
}
\newcommand{\CIB}{%
{\psset{unit=1}
\rput(0,0){\psline[linecolor=black,linewidth=1pt]{-}(-1,0)(0,0)}
\rput(0,0){\psline[linecolor=black,linewidth=1pt]{-}(0,0)(0.5,0.7)}
\rput(0,0){\psline[linecolor=black,linewidth=1pt]{-}(0,0)(0.5,-0.7)}
\rput(0,0){\pscircle[fillstyle=solid,fillcolor=black]{.2}}
\psarc[linewidth=1pt,linecolor=blue,linestyle=dashed]{->}(0.0,0.){0.7}{20}{100}
\psarc[linewidth=1pt,linecolor=blue,linestyle=dashed]{->}(0.0,0.){0.7}{-100}{-20}
\psarc[linewidth=1pt,linecolor=blue,linestyle=dashed]{->}(0.0,0.){0.7}{140}{220}
}
}
\put(-3,0){\CIW}
\put(3,0){\CIB}
}
\end{pspicture}
\caption{\small
Dashed blue arcs are edges of the  dual graph $(G^{pl})^*$ around  black and white vertex of $G^{pl}$.}
\label{fig:form}
\end{figure}
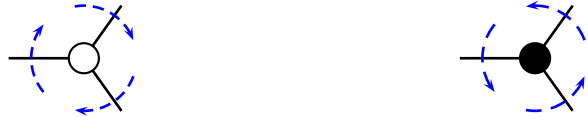

For  $\theta,\phi\in Faces(N)$  let $\#(\theta\to\phi)$ denote the number of arcs from $\theta$ to $\phi$ in $(G^{pl})^*$.

Define the skew-symmetric form $\langle,\rangle$ on $\tilde \Lambda$ by the formula
\begin{equation}\label{eq:Omega}
\langle\theta,\phi\rangle=\frac{1}{2}\left(\#( \theta \to\phi) -\#(\phi\to\theta)\right).
\end{equation}
 
 \begin{example} The plabic graph and its dual for the network Fig.~\ref{fig:netw} are shown on the Fig.~\ref{fig:netw1}.
 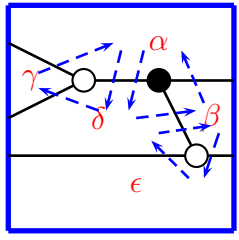
\begin{figure}[H]
\begin{pspicture}(-3.5,0.)(3.5,3){\psset{unit=1}
\newcommand{\NET}{%
{\psset{unit=1}
\rput(0,0){\psline[linecolor=blue,linewidth=2pt]{-}(0,0)(3.,0.)}
\rput(0,0){\psline[linecolor=blue,linewidth=2pt]{-}(3,0)(3.,3.)}
\rput(0,0){\psline[linecolor=blue,linewidth=2pt]{-}(3,3)(0,3)}
\rput(0,0){\psline[linecolor=blue,linewidth=2pt]{-}(0,3)(0,0)}

\rput(0,0){\psline[linecolor=black,linewidth=1pt]{-}(0,1)(3,1)}
\rput(0,0){\psline[linecolor=black,linewidth=1pt]{-}(1,2)(3,2)}
\rput(0,0){\psline[linecolor=black,linewidth=1pt]{-}(0,1.5)(1,2)}
\rput(0,0){\psline[linecolor=black,linewidth=1pt]{-}(0,2.5)(1,2)}
\rput(0,0){\psline[linecolor=black,linewidth=1pt]{-}(2.5,1)(2,2)}

\put(1,2){\pscircle[fillstyle=solid,fillcolor=white]{.15}}
\put(2.5,1){\pscircle[fillstyle=solid,fillcolor=white]{.15}}
\put(2,2){\pscircle[fillstyle=solid,fillcolor=black]{.15}}

\rput(2,2.5){\makebox(0,0)[cc]{\hbox{\tcr{$\mathbf \alpha$}}}}
\rput(2.7,1.5){\makebox(0,0)[cc]{\hbox{\tcr{$\mathbf \beta$}}}}
\rput(0.3,2.){\makebox(0,0)[cc]{\hbox{\tcr{$\mathbf \gamma$}}}}
\rput(1.2,1.5){\makebox(0,0)[cc]{\hbox{\tcr{$\mathbf \delta$}}}}
\rput(1.7,0.6){\makebox(0,0)[cc]{\hbox{\tcr{$\mathbf \epsilon$}}}}

\rput(0,0){\psline[linecolor=blue,linewidth=1pt,linestyle=dashed]{<-}(2.6,0.7)(2.8,1.3)}
\rput(0,0){\psline[linecolor=blue,linewidth=1pt,linestyle=dashed]{->}(2.0,1.3)(2.7,1.4)}
\rput(0,0){\psline[linecolor=blue,linewidth=1pt,linestyle=dashed]{<-}(1.9,1.2)(2.4,0.7)}

\rput(0,0){\psline[linecolor=blue,linewidth=1pt,linestyle=dashed]{->}(2.6,1.7)(2.3,2.4)}
\rput(0,0){\psline[linecolor=blue,linewidth=1pt,linestyle=dashed]{<-}(2.5,1.6)(1.7,1.5)}
\rput(0,0){\psline[linecolor=blue,linewidth=1pt,linestyle=dashed]{<-}(1.6,1.6)(1.8,2.4)}

\rput(0,0){\psline[linecolor=blue,linewidth=1pt,linestyle=dashed]{<-}(0.4,1.9)(1.2,1.6)}
\rput(0,0){\psline[linecolor=blue,linewidth=1pt,linestyle=dashed]{->}(0.4,2.1)(1.4,2.5)}
\rput(0,0){\psline[linecolor=blue,linewidth=1pt,linestyle=dashed]{<-}(1.3,1.6)(1.5,2.4)}


}
}
\put(0.,0.){\NET}
}
\end{pspicture}
\caption{\small
Arcs of plabic graph $G^{pl}$ corresponding to network $N$  on Fig.~\ref{fig:netw} are black solid lines;
arcs of its dual $(G^{pl})^*$ are dashed blue arrows. Then, 
$\langle\alpha,\beta\rangle=-1/2, \langle\alpha,\delta\rangle=1, \langle\alpha,\gamma\rangle=-1/2,
\langle\beta, \epsilon\rangle=1/2, \langle\beta,\delta\rangle=-1, 
\langle\gamma,\delta\rangle=-1/2, \langle\delta,\epsilon\rangle=-1/2$.}
\label{fig:netw1}
\end{figure}
\end{example}

 Let $V$ be the quotient space $V=\tilde V/ \sum_{\theta\in Faces(N)} \theta$ and $\Lambda$ be the induced integer lattice in $V$. Note that $\sum_{\theta\in Faces(N)}\theta$ lies in the kernel of the skew-symmetric form and its push forward to $V$ is well-defined. Abusing notation we will use $\langle,\rangle$ for the induced skew-symmetric form on $V$. Dual lattice $\Lambda^*=\operatorname{Hom}(\Lambda,\mathbb Z)$.

For $\alpha\in Faces(N)$, we denote by $Z_\alpha$ the corresponding quantum face weight. The quantum torus $\Upsilon_N$ is generated by weights $Z_\alpha,\ \alpha\in Faces(N)$ satisfying commutation relations $Z_\alpha Z_\beta=q^{-2\langle\beta,\alpha\rangle} Z_b Z_a$.
For $\aa=\sum_{\alpha\in Faces(N)} c_\alpha \alpha\in \Lambda$, we define $Z_\aa=\col{\prod_{\alpha\in Faces(N)} Z_\alpha^{c_\alpha}}\in\Upsilon_N$.
Note that for $\aa,\bb\in\Lambda$, $Z_{\aa+\bb}=q^{-\langle\aa,\bb\rangle} Z_\aa Z_\bb=q^{-\langle\bb,\aa\rangle} Z_b Z_a$.
As above, for any $\aa\in\Lambda$, $Z_\aa$ is called the normal (Weyl) ordering.
We define weight of any vector in $\Lambda\otimes {\mathbb Q}$ by
 $Z_{\aa+\bb}=\col{Z_\aa Z_\bb}=\col{Z_\bb Z_\aa}$ and $Z_{r \aa}=\col{Z_\aa}^{r}=\col{Z_a^r}$ for any $r\in{\mathbb Q}$. Since every $Z_\alpha$ is a positively defined self-adjoint operator in $\ell^2(\mathbb R)$ having a continuous spectrum $(0,\infty)$, any rational power of it is itself a positively defined self-adjoint operator.  

Let $p$ be the maximal oriented path from a source $i$ on the right to the sink $j$ on the left of a network. Complete $p$ to an oriented loop $\tilde p$  by following path $p$ from $i$ to $j$ first and then closing the loop following  the piece of boundary of the rectangle in the clockwise direction from $j$ to $i$.

The oriented loop $\tilde p$ defines a  covector $\tilde p\in \Lambda^*$ as follows. (We use the same notation for the loop and induced covector.) 
Let  $\rr$  be a half infinite  ray with starting point inside face $\alpha$ and directed towards infinity and $q_1,\dots,q_s$ be intersection points of $\rr$ and loop $\tilde p$, $T_\rr$ be the unit direction vector of $\rr$, $T_{q_j} {\tilde p}$ is the unit tangent vector to $\tilde p$ at $q_j$. We assume that $\rr$ is chosen generic, i.e., for all $q_j$ vectors $T_\rr$ and $T_{q_j} \tilde p$ are linearly independent.

We define the intersection index $\mathop{ind}_{q_j}(\tilde p,\rr)$ of $\tilde p$ and $\rr$ at $q_j$ to be $1$ if orientation of basis 
$(T_{q_j} \tilde p, T_\rr)$ coincides with counterclockwise orientation of the plane and $-1$ otherwise and define $\tilde p(\alpha)=\sum_{j=1}^s ind_{q_j}(\tilde p, \rr)$. Note that $\tilde(\alpha)$  depends neither on exact position of starting point of $\rr$ provided that the starting point varies inside the same connected component of complement to $\tilde p$ nor on the  particular choice of ray $\rr$ with the same starting point. Since any face $\alpha$ lies entirely in some connected  component of $\tilde p$ we conclude that $\tilde p(\alpha)$ is well defined.
Clearly, $\tilde p\in \Lambda^*$.

Assign to any path  $p$  a vector ${\bf v}_p=\sum_{\alpha\in Faces(N)} \tilde p(\alpha) \alpha \in \Lambda$. In the example in Section~\ref{sec:QFG} where any maximal oriented path $p$ is  \emph{non-selfintersecting}  the vector ${\bf v}_p$ is the sum of all faces  to the right from the path. 

Set the weight $w_p$ of the path $p$  as $w_p=Z_{{\bf v}_p}$.

Let $S$ be the set of all sources of $N$, $F$ be the set of all sinks.
Define for any source $a\in S$ and sink $b\in F$
a \emph{quantum boundary measurement} $\mathop{Meas}_q(a,b)=\sum_{p:a\leadsto b} (-1)^{\mathop{cross}(p)}w_p$, where the sum is taken over all oriented paths $p$ from $a$ to $b$ where the \emph{crossing index} $\mathop{cross}(p)$ is the number of self-crossings of the path $p$. Classical boundary measurement is defined by Postnikov in~\cite{Po}, 
 For the network in Section~\ref{sec:QFG},  no path is selfcrossing and $\mathop{Meas}_q(a,b)=\sum_{p:a\leadsto b} w_p$.

Let $n=|S|,\ m=|F|$. Define an $m\times n$ matrix $Q_q$ of quantum boundary measurements as $(Q_q)_{ba}=\left(\mathop{Meas}_q(a, b)\right)_{a\in S,b\in F}$. 
Note that we label rows of $Q_q$ by sinks and columns by sources of $N$.

In Example~\ref{fig:netw}, the matrix 
$Q_q=\begin{pmatrix}
Z_{\alpha+\beta} & Z_\alpha\\
Z_{\alpha+\beta+\gamma} & Z_{\alpha+\gamma}\\
Z_{\alpha+\beta+\gamma+\delta} & 0
\end{pmatrix}
$.

Define $(m+n)\times n$ \emph{quantum grassmannian boundary measurement matrix}  $Q^{gr}_q$ of network $N$. Columns of $Q^{gr}_q$ are labelled by boundary sources of network; rows  are labelled by all boundary vertices. To describe matrix elements of $\tilde Q_q$  we introduce the \emph{order $\mathop{ord}_N(b)$ of boundary vertex $b$}. Enumerate all boundary vertices of $N$ from $1$ to $m+n$ in counterclockwise direction. Let $b\in [1,m+n]$ be the index of boundary vertex. Let $\sigma(b)$ be the number of sources among boundary vertices with indices from $1$ to $b-1$. The order  is defined by the formula $ord_N(b)=\begin{cases} \sigma(b), & \text{ if } b \text{ is not a source}; \\
\sigma(b)+\frac{1}{2}, &\text{ if } b \text{ is a source}.
\end{cases}$. Let  $\mathbb{J}(i)\in [1,m+n]$ be the index of $i$th source, $i\in [1,n]$;  $\mathbb{J}:[1,n]\to [1,m+n]$ is an increasing function.

We define 
$
(Q^{gr}_q)_{ji}=\begin{cases} 
(-1)^{i+\mathop{ord}_N(j)} q^{-\mathop{ord}_N(j)} \mathop{Meas}_q(i,j), & \text{ if } j \text{ is not a source}; \\
q^{-\mathop{ord}_N(j)} \delta(\mathbb{J}(i),j), & \text{ otherwise.}
\end{cases}
$

\begin{example} In Example~\ref{fig:netw} ,
 the matrix 
$Q^{gr}_q=\begin{pmatrix}
q^{-1/2}   & 0 \\
0 &  q^{-3/2} \\
-q^{-2} Z_{\alpha+\beta} & q^{-2} Z_\alpha \\
-q^{-2} Z_{\alpha+\beta+\gamma} & q^{-2} Z_{\alpha+\gamma}\\
-q^{-2} Z_{\alpha+\beta+\gamma+\delta} & 0 
\end{pmatrix}$.
\end{example}

\begin{remark}  In \cite{Po}  a  boundary measurement map is defined as a map $\operatorname{Meas}$ from the space $\operatorname{Net}_{m\times n}$ of networks with $n$ sources, $m$ sinks and commutative weights to $Gr(n,m+n)$. For each $X\in\operatorname{Net}_{m\times n}$, boundary measurements $\operatorname{Meas}(i, j)$ form  an $(m+n) \times n$ matrix $Q^{gr}$ which represents 
$\operatorname{Meas}(X)$. 
The space of $(m+n)\times n$ matrices with elements in $\Upsilon_N$ we denote by $\operatorname{Mat}_{(m+n)\times n}(\Upsilon_N)$.
The group $GL_n(\Upsilon_N)$ of invertible $n\times n$ matrices with entries from $\Upsilon_N$ acts on $\operatorname{Mat}_{(m+n)\times n}(\Upsilon_N)$ by the right multiplication. We define the homogeneous space $Gr_q(n,m+n)$ as the right quotient $Gr_q(n,m+n)=\operatorname{Mat}_{(m+n)\times n}(\Upsilon_N)/GL_n(\Upsilon_N)$.  We denote by $QNet_{m\times n}$ the space of quantum networks with $n$ sources, $m$ sinks and quantum weights from $\Upsilon_N$. We define a quantization $\operatorname{Meas}_q:QNet_{m\times n}\to Gr_q(m,m+n)$ as the composition $QNet_{m\times n}\to \operatorname{Mat}_{(m+n)\times n}(\Upsilon_N)\to Gr_q(n,m+n)$.
\end{remark}

\begin{definition} Two networks are \emph{equivalent}  if they have the same  boundary measurements.
\end{definition}


Simple equivalence relations (M1-M3,R1-R3) on the space of networks (~\cite{Po}) are simple local network transformations preserving boundary measurements. Please, note that in the figures below we draw the plabic graph assuming that it is equipped with a perfect orientation. Different choices of compatible perfect orientation give the same result.

The following claims generalize similar statements for commuting weights (cf~\cite{Po}).

Let ${\bf e}=\{e_i\}$ denote a standard basis in the lattice $\Lambda=\mathbb{Z}^n$, equipped with the standard dot product $\bf\cdot$ with respect to ${\bf e}$,
$Z_i=Z_{e_i}$ be the generators of a quantum torus $\Upsilon$. We say that an infinite linear combination $\sum _{\lambda\in \Lambda} \alpha_\lambda Z_\lambda$ is a Laurent series if there exist integers $b_1,\dots b_n\in \mathbb{Z}$ such that $\alpha_\lambda=0$ unless $\lambda\cdot e_j\ge b_j$ $\forall j\in[1,n]$.
Any Laurent series $u\in \hat R$ where $\hat R= \mathbb{Q}[q,q^{-1}][Z_1^{-1},\dots,Z_N^{-1}][[Z_1,\dots,Z_N]]$. Let $\mathfrak{m}\subset \mathbb{Q}[q,q^{-1}[Z_1,\dots,Z_N]$ be the maximal ideal generated by $Z_i$. Note that for any $x\in \mathfrak{m}$ the expression $(1+x)^{-1}\in \hat R$ and $(1+x^{-1})^{-1}=x(1+x)^{-1}\in \hat R$. 

We say that $x,y\in\hat R$ \emph{$q$-commute} if $xy=q^k yx$ for some $k\in {\mathbb Q}$.

We call a plabic network with weights in  $\hat R$ a \emph{quantum} network.

Define 6 elementary moves (M1-M3), (R1-R3) as shown below.

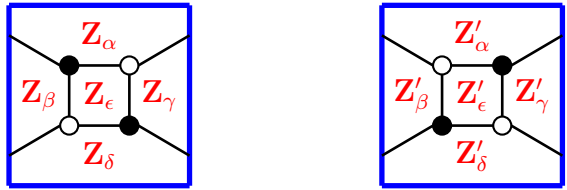
\begin{figure}[H]
\begin{pspicture}(-3.5,0.)(3.5,3){\psset{unit=0.8}
\newcommand{\NET}{%
{\psset{unit=1}
\rput(0,0){\psline[linecolor=blue,linewidth=2pt]{-}(0,0)(3.,0.)}
\rput(0,0){\psline[linecolor=blue,linewidth=2pt]{-}(3,0)(3.,3.)}
\rput(0,0){\psline[linecolor=blue,linewidth=2pt]{-}(3,3)(0,3)}
\rput(0,0){\psline[linecolor=blue,linewidth=2pt]{-}(0,3)(0,0)}
\rput(0,0){\psline[linecolor=black,linewidth=1pt]{-}(0,2.5)(0.85,2)}
\rput(0,0){\psline[linecolor=black,linewidth=1pt]{-}(1.15,2)(1.85,2)}
\rput(0,0){\psline[linecolor=black,linewidth=1pt]{-}(2.15,2)(3,2.5)}
\rput(0,0){\psline[linecolor=black,linewidth=1pt]{-}(0,.5)(0.85,1)}
\rput(0,0){\psline[linecolor=black,linewidth=1pt]{-}(1.15,1)(1.85,1)}
\rput(0,0){\psline[linecolor=black,linewidth=1pt]{-}(2.15,1)(3,.5)}


%
}
}
\newcommand{\NN}{
{\psset{unit=1}
\rput(0,0){\psline[linecolor=black,linewidth=1pt]{-}(1,1.15)(1,1.85)}
\rput(0,0){\psline[linecolor=black,linewidth=1pt]{-}(2.,1.15)(2,1.85)}
\put(1,2){\pscircle[fillstyle=solid,fillcolor=black,linecolor=black]{.15}}
\put(2,2){\pscircle[fillstyle=solid,fillcolor=white,linecolor=black]{.15}}
\put(1,1){\pscircle[fillstyle=solid,fillcolor=white,linecolor=black]{.15}}
\put(2,1){\pscircle[fillstyle=solid,fillcolor=black,linecolor=black]{.15}}
\rput(1.5,2.5){\makebox(0,0)[cc]{\hbox{\tcr{$\mathbf Z_\alpha$}}}}
\rput(0.5,1.5){\makebox(0,0)[cc]{\hbox{\tcr{$\mathbf Z_\beta$}}}}
\rput(2.5,1.5){\makebox(0,0)[cc]{\hbox{\tcr{$\mathbf Z_\gamma$}}}}
\rput(1.5,0.5){\makebox(0,0)[cc]{\hbox{\tcr{$\mathbf Z_\delta$}}}}
\rput(1.5,1.5){\makebox(0,0)[cc]{\hbox{\tcr{$\mathbf Z_\epsilon$}}}}
}
}
\newcommand{\NNN}{
{\psset{unit=1}
\rput(0,0){\psline[linecolor=black,linewidth=1pt]{-}(1,1.15)(1,1.85)}
\rput(0,0){\psline[linecolor=black,linewidth=1pt]{-}(2.,1.15)(2,1.85)}
\put(1,2){\pscircle[fillstyle=solid,fillcolor=white,linecolor=black]{.15}}
\put(2,2){\pscircle[fillstyle=solid,fillcolor=black,linecolor=black]{.15}}
\put(1,1){\pscircle[fillstyle=solid,fillcolor=black,linecolor=black]{.15}}
\put(2,1){\pscircle[fillstyle=solid,fillcolor=white,linecolor=black]{.15}}
\rput(1.5,2.5){\makebox(0,0)[cc]{\hbox{\tcr{$\mathbf Z'_\alpha$}}}}
\rput(0.5,1.5){\makebox(0,0)[cc]{\hbox{\tcr{$\mathbf Z'_\beta$}}}}
\rput(2.5,1.5){\makebox(0,0)[cc]{\hbox{\tcr{$\mathbf Z'_\gamma$}}}}
\rput(1.5,0.5){\makebox(0,0)[cc]{\hbox{\tcr{$\mathbf Z'_\delta$}}}}
\rput(1.5,1.5){\makebox(0,0)[cc]{\hbox{\tcr{$\mathbf Z'_\epsilon$}}}}
}
}

\put(-3.1,0.1){\NET}
\put(-3.1,0.1){\NN}

\put(3.1,0.1){\NET}
\put(3.1,0.1){\NNN}
}
\end{pspicture}
\caption{\small
Elementary move M1: $Z'_\epsilon=Z_{-\epsilon}$, $Z'_\delta=Z_\delta+Z_{\delta+\epsilon}$, $Z'_\alpha=Z_\alpha+Z_{\alpha+\epsilon}$,
$Z'_\beta=\sum_{j=1}^\infty (-1)^{j-1}Z_{\beta+j\epsilon}$,
$Z'_\gamma=\sum_{j=1}^\infty (-1)^{j-1}Z_{\gamma+j\epsilon}$.}
\label{fig:M1}
\end{figure}

\begin{figure}[H]
\begin{pspicture}(-3.5,0.)(3.5,3){\psset{unit=0.8}
\newcommand{\NET}{%
{\psset{unit=1}
\rput(0,0){\psline[linecolor=blue,linewidth=2pt]{-}(0,0)(3.,0.)}
\rput(0,0){\psline[linecolor=blue,linewidth=2pt]{-}(3,0)(3.,3.)}
\rput(0,0){\psline[linecolor=blue,linewidth=2pt]{-}(3,3)(0,3)}
\rput(0,0){\psline[linecolor=blue,linewidth=2pt]{-}(0,3)(0,0)}
\rput(0,0){\psline[linecolor=black,linewidth=1pt]{-}(1,3)(1,1.65)}
\rput(0,0){\psline[linecolor=black,linewidth=1pt]{-}(0,1.5)(0.85,1.5)}
\rput(0,0){\psline[linecolor=black,linewidth=1pt]{-}(1,0.)(1,1.35)}
\rput(0,0){\psline[linecolor=black,linewidth=1pt]{-}(1.15,1.5)(1.85,1.5)}
\rput(0,0){\psline[linecolor=black,linewidth=1pt]{-}(2.15,1.5)(3,3)}
\rput(0,0){\psline[linecolor=black,linewidth=1pt]{-}(2.15,1.5)(3,0)}
\put(1,1.5){\pscircle[fillstyle=solid,fillcolor=white,linecolor=black]{.15}}
\put(2,1.5){\pscircle[fillstyle=solid,fillcolor=white,linecolor=black]{.15}}
\rput(.5,2.){\makebox(0,0)[cc]{\hbox{\tcr{$\mathbf Z_\alpha$}}}}
\rput(0.5,.5){\makebox(0,0)[cc]{\hbox{\tcr{$\mathbf Z_\beta$}}}}
\rput(1.5,2){\makebox(0,0)[cc]{\hbox{\tcr{$\mathbf Z_\gamma$}}}}
\rput(1.5,0.5){\makebox(0,0)[cc]{\hbox{\tcr{$\mathbf Z_\delta$}}}}
\rput(2.7,1.5){\makebox(0,0)[cc]{\hbox{\tcr{$\mathbf Z_\epsilon$}}}}
}
}
\newcommand{\NN}{
{\psset{unit=1}
\rput(0,0){\psline[linecolor=blue,linewidth=2pt]{-}(0,0)(3.,0.)}
\rput(0,0){\psline[linecolor=blue,linewidth=2pt]{-}(3,0)(3.,3.)}
\rput(0,0){\psline[linecolor=blue,linewidth=2pt]{-}(3,3)(0,3)}
\rput(0,0){\psline[linecolor=blue,linewidth=2pt]{-}(0,3)(0,0)}
\rput(0,0){\psline[linecolor=black,linewidth=1pt]{-}(1,3)(1.5,1.65)}
\rput(0,0){\psline[linecolor=black,linewidth=1pt]{-}(0,1.5)(1.35,1.5)}
\rput(0,0){\psline[linecolor=black,linewidth=1pt]{-}(1,0.)(1.5,1.35)}
\rput(0,0){\psline[linecolor=black,linewidth=1pt]{-}(1.65,1.5)(3,3)}
\rput(0,0){\psline[linecolor=black,linewidth=1pt]{-}(1.65,1.5)(3,0)}
\put(1.5,1.5){\pscircle[fillstyle=solid,fillcolor=white,linecolor=black]{.15}}
\rput(.5,2.){\makebox(0,0)[cc]{\hbox{\tcr{$\mathbf Z_\alpha$}}}}
\rput(0.5,.5){\makebox(0,0)[cc]{\hbox{\tcr{$\mathbf Z_\beta$}}}}
\rput(1.8,2.2){\makebox(0,0)[cc]{\hbox{\tcr{$\mathbf Z_\gamma$}}}}
\rput(1.8,0.5){\makebox(0,0)[cc]{\hbox{\tcr{$\mathbf Z_\delta$}}}}
\rput(2.5,1.5){\makebox(0,0)[cc]{\hbox{\tcr{$\mathbf Z_\epsilon$}}}}

}
}

\put(-3.1,0.1){\NET}

\put(3.1,0.1){\NN}
}
\end{pspicture}
\caption{\small
Elementary move M2.}
\label{fig:M2}
\end{figure}
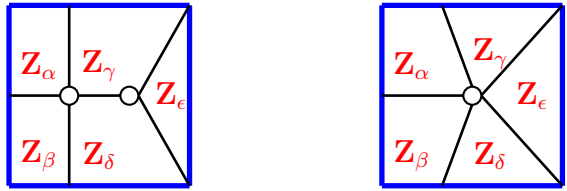

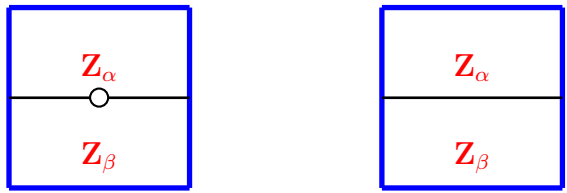
\begin{figure}[H]
\begin{pspicture}(-3.5,0.)(3.5,3){\psset{unit=0.8}
\newcommand{\NET}{%
{\psset{unit=1}
\rput(0,0){\psline[linecolor=blue,linewidth=2pt]{-}(0,0)(3.,0.)}
\rput(0,0){\psline[linecolor=blue,linewidth=2pt]{-}(3,0)(3.,3.)}
\rput(0,0){\psline[linecolor=blue,linewidth=2pt]{-}(3,3)(0,3)}
\rput(0,0){\psline[linecolor=blue,linewidth=2pt]{-}(0,3)(0,0)}
\rput(0,0){\psline[linecolor=black,linewidth=1pt]{-}(0,1.5)(1.35,1.5)}
\rput(0,0){\psline[linecolor=black,linewidth=1pt]{-}(1.65,1.5)(3,1.5)}
\put(1.5,1.5){\pscircle[fillstyle=solid,fillcolor=white,linecolor=black]{.15}}
\rput(1.5,2.){\makebox(0,0)[cc]{\hbox{\tcr{$\mathbf Z_\alpha$}}}}
\rput(1.5,.5){\makebox(0,0)[cc]{\hbox{\tcr{$\mathbf Z_\beta$}}}}
}
}
\newcommand{\NN}{
{\psset{unit=1}
\rput(0,0){\psline[linecolor=blue,linewidth=2pt]{-}(0,0)(3.,0.)}
\rput(0,0){\psline[linecolor=blue,linewidth=2pt]{-}(3,0)(3.,3.)}
\rput(0,0){\psline[linecolor=blue,linewidth=2pt]{-}(3,3)(0,3)}
\rput(0,0){\psline[linecolor=blue,linewidth=2pt]{-}(0,3)(0,0)}
\rput(0,0){\psline[linecolor=black,linewidth=1pt]{-}(0,1.5)(3.,1.5)}
\rput(1.5,2.){\makebox(0,0)[cc]{\hbox{\tcr{$\mathbf Z_\alpha$}}}}
\rput(1.5,.5){\makebox(0,0)[cc]{\hbox{\tcr{$\mathbf Z_\beta$}}}}
}
}

\put(-3.1,0.1){\NET}

\put(3.1,0.1){\NN}
}
\end{pspicture}
\caption{\small
Elementary move M3.}
\label{fig:M3}
\end{figure}

\begin{figure}[H]
\begin{pspicture}(-3.5,0.)(3.5,3){\psset{unit=0.8}
\newcommand{\NET}{%
{\psset{unit=1}
\rput(0,0){\psline[linecolor=blue,linewidth=2pt]{-}(0,0)(3.,0.)}
\rput(0,0){\psline[linecolor=blue,linewidth=2pt]{-}(3,0)(3.,3.)}
\rput(0,0){\psline[linecolor=blue,linewidth=2pt]{-}(3,3)(0,3)}
\rput(0,0){\psline[linecolor=blue,linewidth=2pt]{-}(0,3)(0,0)}
\rput(0,0){\psline[linecolor=black,linewidth=1pt]{-}(0,1.5)(0.85,1.5)}
\psarc[linecolor=black, linewidth=1.pt]{-}(1.5,1.3){.6}{30}{150}
\psarc[linecolor=black, linewidth=1.pt]{-}(1.5,1.7){.6}{210}{330}
\rput(0,0){\psline[linecolor=black,linewidth=1pt]{-}(2.15,1.5)(3,1.5)}
\put(1.,1.5){\pscircle[fillstyle=solid,fillcolor=white,linecolor=black]{.15}}
\put(2.,1.5){\pscircle[fillstyle=solid,fillcolor=black,linecolor=black]{.15}}
\rput(1.5,2.5){\makebox(0,0)[cc]{\hbox{\tcr{$\mathbf Z_\alpha$}}}}
\rput(1.5,1.5){\makebox(0,0)[cc]{\hbox{\tcr{$\mathbf Z_\epsilon$}}}}
\rput(1.5,.5){\makebox(0,0)[cc]{\hbox{\tcr{$\mathbf Z_\beta$}}}}
}
}
\newcommand{\NN}{
{\psset{unit=1}
\rput(0,0){\psline[linecolor=blue,linewidth=2pt]{-}(0,0)(5.,0.)}
\rput(0,0){\psline[linecolor=blue,linewidth=2pt]{-}(5,0)(5.,3.)}
\rput(0,0){\psline[linecolor=blue,linewidth=2pt]{-}(5,3)(0,3)}
\rput(0,0){\psline[linecolor=blue,linewidth=2pt]{-}(0,3)(0,0)}
\rput(0,0){\psline[linecolor=black,linewidth=1pt]{-}(0,1.5)(5.,1.5)}
\rput(2.5,2.){\makebox(0,0)[cc]{\hbox{\tcr{$\mathbf {\sum_{j=1}^\infty (-1)^{j-1}Z_{\alpha+j\epsilon}}$}}}}
\rput(2.5,.5){\makebox(0,0)[cc]{\hbox{\tcr{$\mathbf {Z_\beta+Z_{\beta+\epsilon}}$}}}}
}
}

\put(-4.1,0.1){\NET}

\put(2.1,0.1){\NN}
}
\end{pspicture}
\caption{\small
Elementary move R1.}
\label{fig:R1}
\end{figure}
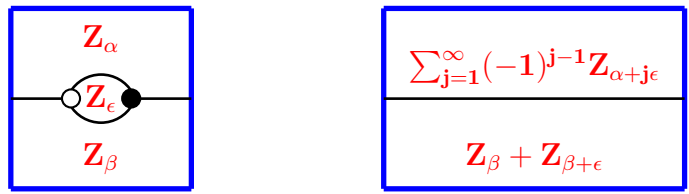

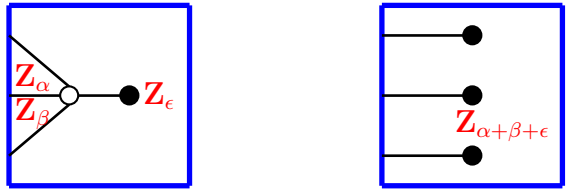
\begin{figure}[H]
\begin{pspicture}(-3.5,0.)(3.5,3){\psset{unit=0.8}
\newcommand{\NET}{%
{\psset{unit=1}
\rput(0,0){\psline[linecolor=blue,linewidth=2pt]{-}(0,0)(3.,0.)}
\rput(0,0){\psline[linecolor=blue,linewidth=2pt]{-}(3,0)(3.,3.)}
\rput(0,0){\psline[linecolor=blue,linewidth=2pt]{-}(3,3)(0,3)}
\rput(0,0){\psline[linecolor=blue,linewidth=2pt]{-}(0,3)(0,0)}
\rput(0,0){\psline[linecolor=black,linewidth=1pt]{-}(0,2.5)(1,1.65)}
\rput(0,0){\psline[linecolor=black,linewidth=1pt]{-}(0,1.5)(0.85,1.5)}
\rput(0,0){\psline[linecolor=black,linewidth=1pt]{-}(0,0.5)(1,1.35)}
\rput(0,0){\psline[linecolor=black,linewidth=1pt]{-}(1.15,1.5)(1.85,1.5)}
\put(1,1.5){\pscircle[fillstyle=solid,fillcolor=white,linecolor=black]{.15}}
\put(2,1.5){\pscircle[fillstyle=solid,fillcolor=black,linecolor=black]{.15}}
\rput(.4,1.8){\makebox(0,0)[cc]{\hbox{\tcr{$\mathbf Z_\alpha$}}}}
\rput(0.4,1.2){\makebox(0,0)[cc]{\hbox{\tcr{$\mathbf Z_\beta$}}}}
\rput(2.5,1.5){\makebox(0,0)[cc]{\hbox{\tcr{$\mathbf Z_\epsilon$}}}}
}
}
\newcommand{\NN}{
{\psset{unit=1}
\rput(0,0){\psline[linecolor=blue,linewidth=2pt]{-}(0,0)(3.,0.)}
\rput(0,0){\psline[linecolor=blue,linewidth=2pt]{-}(3,0)(3.,3.)}
\rput(0,0){\psline[linecolor=blue,linewidth=2pt]{-}(3,3)(0,3)}
\rput(0,0){\psline[linecolor=blue,linewidth=2pt]{-}(0,3)(0,0)}
\rput(0,0){\psline[linecolor=black,linewidth=1pt]{-}(0,2.5)(1.35,2.5)}
\rput(0,0){\psline[linecolor=black,linewidth=1pt]{-}(0,1.5)(1.35,1.5)}
\rput(0,0){\psline[linecolor=black,linewidth=1pt]{-}(0,0.5)(1.35,0.5)}
\put(1.5,1.5){\pscircle[fillstyle=solid,fillcolor=black,linecolor=black]{.15}}
\put(1.5,0.5){\pscircle[fillstyle=solid,fillcolor=black,linecolor=black]{.15}}
\put(1.5,2.5){\pscircle[fillstyle=solid,fillcolor=black,linecolor=black]{.15}}
\rput(2,1){\makebox(0,0)[cc]{\hbox{\tcr{$\mathbf {Z_{\alpha+\beta+\epsilon}}$}}}}
}
}

\put(-3.1,0.1){\NET}

\put(3.1,0.1){\NN}
}
\end{pspicture}
\caption{\small
Elementary move R2.}
\label{fig:R2}
\end{figure}

\begin{figure}[H]
\begin{pspicture}(-3.5,0.)(3.5,3){\psset{unit=0.8}
\newcommand{\NET}{%
{\psset{unit=1}
\rput(0,0){\psline[linecolor=blue,linewidth=2pt]{-}(0,0)(3.,0.)}
\rput(0,0){\psline[linecolor=blue,linewidth=2pt]{-}(3,0)(3.,3.)}
\rput(0,0){\psline[linecolor=blue,linewidth=2pt]{-}(3,3)(0,3)}
\rput(0,0){\psline[linecolor=blue,linewidth=2pt]{-}(0,3)(0,0)}
\rput(0,0){\psline[linecolor=black,linewidth=1pt]{-}(1.15,1.5)(1.85,1.5)}
\put(1,1.5){\pscircle[fillstyle=solid,fillcolor=white,linecolor=black]{.15}}
\put(2,1.5){\pscircle[fillstyle=solid,fillcolor=black,linecolor=black]{.15}}
\rput(1.5,0.5){\makebox(0,0)[cc]{\hbox{\tcr{$\mathbf Z_\epsilon$}}}}
}
}
\newcommand{\NN}{
{\psset{unit=1}
\rput(0,0){\psline[linecolor=blue,linewidth=2pt]{-}(0,0)(3.,0.)}
\rput(0,0){\psline[linecolor=blue,linewidth=2pt]{-}(3,0)(3.,3.)}
\rput(0,0){\psline[linecolor=blue,linewidth=2pt]{-}(3,3)(0,3)}
\rput(0,0){\psline[linecolor=blue,linewidth=2pt]{-}(0,3)(0,0)}
\rput(1.5,1.){\makebox(0,0)[cc]{\hbox{\tcr{$\mathbf {Z_{\epsilon}}$}}}}
}
}

\put(-3.1,0.1){\NET}

\put(3.1,0.1){\NN}
}
\end{pspicture}
\caption{\small
Elementary move R3.}
\label{fig:R3}
\end{figure}
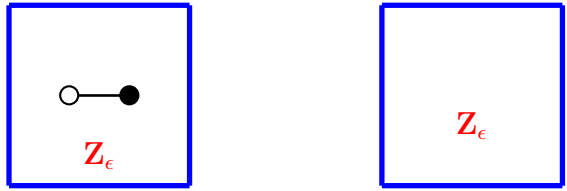

\begin{definition} Two networks are \emph{move equivalent} if they are connected by a sequence of elementary moves.
\end{definition}

The corresponding (move) equivalence is called \emph{quantum}
(move) equivalence.

The following result extends the results of \cite{Po} to quantum networks.

\begin{lemma} Two quantum move equivalent networks are quantum equivalent.
\end{lemma}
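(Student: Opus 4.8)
The plan is to reduce the lemma to a single local computation for each of the six elementary moves and then to verify each such computation directly in the completed quantum ring $\hat R$. The reduction rests on two observations. By induction on the length of a connecting sequence of moves, it suffices to treat one elementary move at a time. Each move $M1$--$M3$, $R1$--$R3$ is supported inside a small disk $D$, and outside $D$ the two networks $N$ and $N'$ agree together with their face weights; hence any oriented path from a source $a$ to a sink $b$ decomposes into external segments, common to both networks, interleaved with segments running inside $D$. Moreover, by the Weyl-ordering convention $w_p=Z_{\mathbf v_p}$, the weight of a path is the normally ordered product of its elementary face contributions, and a face lying outside $D$ $q$-commutes with a face inside $D$ according only to the dual-quiver form $\langle\cdot,\cdot\rangle$, which the moves preserve across $\partial D$. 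Consequently it suffices to fix a pair of marked points on $\partial D$ and to prove that the generating series of the weights of all path-segments inside $D$ joining them, taken with the self-crossing sign $(-1)^{\operatorname{cross}(p)}$, is the same element of $\hat R$ before and after the move. I would phrase this as the equality of a local transfer matrix with entries in $\hat R$, so that $\operatorname{Meas}_q(a,b)$ for $N$ and $N'$ coincide term by term after inserting the common external factors.

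I would dispatch the combinatorial moves $M2$, $M3$, $R2$, $R3$ first. None of these creates an oriented cycle inside $D$, so the local transfer matrix has entries that are single Weyl monomials, or a short finite sum, and the verification is the bookkeeping identity that the relabelled weights of Figures~\ref{fig:M2}, \ref{fig:M3}, \ref{fig:R2} and~\ref{fig:R3} reproduce the same $Z_{\mathbf v_p}$ for each surviving segment. Because $Z_{\mathbf v_p}$ depends only on the covector $\tilde p$ computed from the winding data of the completed loop, this amounts to comparing the intersection indices $\tilde p(\alpha)$ on the two sides of the move and confirming that the primed weights satisfy the $q$-commutation relations of the mutated dual quiver, i.e.\ that $\Upsilon_{N'}$ is the correct target torus.

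The substance lies in $M1$ (the square, or spider, move, Figure~\ref{fig:M1}) and $R1$ (Figure~\ref{fig:R1}), where a path may wind around the central face $\epsilon$ arbitrarily many times, producing the geometric series displayed there, such as $Z'_\beta=\sum_{j\ge1}(-1)^{j-1}Z_{\beta+j\epsilon}$ and $Z'_\delta=Z_\delta+Z_{\delta+\epsilon}$. For these I would compute the relevant $2\times2$ local measurement matrix explicitly, organizing the local paths by their number of turns around $\epsilon$. The key algebraic input is that $Z_\epsilon$ $q$-commutes with each neighbouring weight, so that the alternating sum resums to an expression of the form $Z_\beta(1+c\,Z_\epsilon)^{-1}$ with $c$ a power of $q$; since $\epsilon$ lies in the maximal ideal $\mathfrak m$, both $(1+Z_\epsilon)^{-1}$ and $(1+Z_\epsilon^{-1})^{-1}$ are legitimate elements of $\hat R$, which is precisely why the lemma is stated over the completed ring rather than over $\Upsilon_N$ itself. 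One then checks that substituting the primed weights makes the two local transfer matrices coincide, and separately that these weights obey the commutation relations of the mutated quiver.

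The hard part will be the $M1$ computation in the noncommutative setting: one must simultaneously establish invariance of the boundary measurement and compatibility of the new weights with the new quantum torus, all while keeping the normal ordering and the $(-1)^{\operatorname{cross}(p)}$ signs consistent through the resummation, where a turn around $\epsilon$ contributes both a self-crossing and a factor of $q$. The cleanest route is to recognize $M1$ as the quantum geometric $R$-matrix/urban-renewal move, reducing the whole identity to the single scalar resummation $\sum_{j\ge0}(-1)^{j}q^{kj}Z_{j\epsilon}=(1+q^{k}Z_\epsilon)^{-1}$ in $\hat R$ for the appropriate exponent $k$ read off from $\langle\cdot,\cdot\rangle$; the move $R1$ is then the same resummation applied to a bigon, and the four combinatorial moves are finite or degenerate specializations of the identical bookkeeping. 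Once $M1$ is settled, assembling the external factors and invoking the locality reduction completes the proof.
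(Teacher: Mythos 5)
Your plan matches the paper's proof in substance: the paper likewise reduces to checking each elementary move locally, defers the combinatorial measurement-invariance to Postnikov's argument (verified for the two essentially different perfect orientations of the square), and isolates as the genuinely quantum content the verification that the primed weights form a $q$-commuting family via exactly the telescoping resummation you describe, e.g.\ $(Z_\alpha+Z_{\alpha+\epsilon})\sum_{j\ge 1}(-1)^{j-1}Z_{\beta+j\epsilon}=q^{\langle\alpha,\beta+\epsilon\rangle}Z_{\alpha+\beta+\epsilon}$ using $\langle\alpha,\beta+(k+1)\epsilon\rangle=\langle\alpha+\epsilon,\beta+k\epsilon\rangle$. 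One small correction: the series in the M1 rule are the $Y$-mutation formulas for the face weights rather than sums over paths winding about $\epsilon$ (winding genuinely occurs only in R1), but this does not affect the validity of your argument.
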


\begin{proof}  The proof follows ~\cite{Po}. Compared to the commutative case we just need  to check one additional condition that quantum parameters elementary transformations also  form q-commutative family. The cases R2,R3, M2, and M3 are evident. Let's consider M1 and R1. The case M1 is proved in \cite{FG2}.
We give the proof here for completeness.

We want to show that  $Z'_\alpha, Z'_\beta, Z'_\gamma,Z'_\delta$ and $Z'_\epsilon$ q-commute.

Note that $\langle \alpha,\beta+(k+1)\epsilon\rangle=\langle \alpha, \beta\rangle-(k+1)=\langle\alpha+\epsilon,\beta+k\epsilon\rangle$, therefore $Z'_\alpha Z'_\beta= (Z_{\alpha}+Z_{\alpha+\epsilon}) \sum_{j=1}^\infty (-1)^{j-1} Z_{\beta+j\epsilon}=q^{\langle \alpha,\beta+\epsilon\rangle}Z_{\alpha+\beta+\epsilon}$ while $Z'_\beta Z'_\alpha=q^{-\langle\alpha,\beta+\epsilon\rangle} Z_{\alpha+\beta+\epsilon}$.
Hence, $Z'_\beta Z'_\alpha=q^{-2\langle\alpha,\beta+\epsilon\rangle} Z'_\alpha Z'_\beta$. 
Commutation relations for all other pairs of parameters can be checked similarly.
 
Different perfect orientations are in one-to-one correspondence with the \emph{almost perfect matchings}  (see~\cite{PSW}). Up to evident symmetries, there are only two essentially different almost perfect matchings and, hence, we need to check two  perfect orientations. The straightforward computation shows that the elementary move M1 does not change measurements for any choice of perfect orientation.

The case R1 is similar.
\end{proof}

\begin{definition}(\cite{Po})  We say that a plabic network (or graph) is \emph{reduced} if it has no isolated connected components and there is no network/graph in its move-equivalence class to which we can apply a reduction (R1) or (R2). A leafless reduced network/graph is a reduced network/graph without non-boundary leaves.
\end{definition}

The following statements are proved in~\cite{Po} .

\begin{lemma} \cite{Po} Any network is move equivalent to a \emph{reduced} network.
\end{lemma}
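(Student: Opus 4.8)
The final statement to prove is the following lemma attributed to Postnikov: \emph{any network is move equivalent to a reduced network.} The plan is to argue by a monotonicity (well-foundedness) argument on a suitable complexity measure attached to each plabic network, showing that the reduction moves (R1) and (R2), possibly interspersed with the equivalences (M1)--(M3) and (R3), strictly decrease this measure and therefore must terminate at a network admitting no further reductions, which is reduced by definition.

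First I would fix a nonnegative integer complexity invariant. The natural choice is the number of edges (or, equivalently, the number of internal vertices) of the plabic graph underlying the network. The key observation is that each of the two reduction moves genuinely removes structure: move (R1) contracts a pair of opposite-colored vertices joined by a double edge (a ``bubble'') into a single strand, strictly decreasing the edge count, and move (R2) absorbs a ``lollipop''/parallel-bigon configuration, again strictly decreasing the number of edges. Move (R3) removes an isolated component and also lowers the count. The moves (M1)--(M3), by contrast, are \emph{reversible} square/triangle/contraction moves that preserve the boundary measurement (as established in the preceding lemma on quantum move-equivalence) but do not by themselves increase the reduction complexity beyond a controlled bound. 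Thus I would set up the argument so that one tracks the minimal edge number over the move-equivalence class.

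The algorithm I would describe is then: starting from an arbitrary network $N$, search its move-equivalence class for any configuration to which (R1) or (R2) applies; if one exists, perform the necessary (M1)--(M3) moves to expose it and then apply the reduction, strictly lowering the edge count; repeat. Since the edge count is a nonnegative integer that strictly decreases at each reduction step and the intervening (M)-moves only rearrange a graph of fixed size, the process cannot continue indefinitely. It terminates at a network $N'$ that is move-equivalent to $N$ and to which no further (R1) or (R2) can be applied even after any sequence of (M)-moves; by the definition of \emph{reduced} given just above, $N'$ is reduced. Throughout, the preceding lemma guarantees that each move preserves the quantum boundary measurements, so $N'$ is not merely move-equivalent but realizes the same measurement matrix, which is what makes the reduced representative useful.

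The main obstacle, and the point where genuine care is required, is justifying that the interleaving of non-reducing (M)-moves with reductions does not create an infinite loop: a priori one could imagine applying (M)-moves forever without ever exposing a reducible configuration. The clean way to handle this is to appeal to Postnikov's structural result that the \emph{minimal} number of faces (equivalently, the minimal edge count) over the whole move-equivalence class is a well-defined invariant, and that a network fails to be reduced precisely when its edge count exceeds this minimum, in which case some sequence of moves does expose a reduction. Since our argument cites \cite{Po} for the underlying combinatorial facts and only needs the quantum move-equivalence lemma (already proved above) to propagate the boundary-measurement invariance, the remaining content is exactly this termination/well-foundedness step, which I would present as a descent on the edge-count invariant rather than grinding through the case analysis of which (M)-moves to apply.
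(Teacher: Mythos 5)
The paper does not actually prove this lemma: it is quoted from Postnikov \cite{Po} without proof, so there is no internal argument to compare yours against. Your overall strategy --- descent on a complexity measure that is unchanged by the equivalences (M1)--(M3) and strictly decreased by the reductions (R1)--(R2) --- is the right one and is essentially Postnikov's.

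There is, however, a concrete gap in your choice of measure. The edge count (equivalently the internal vertex count) is \emph{not} invariant under the moves (M2) and (M3): (M2) contracts an edge between two same-colored vertices and (M3) deletes a degree-two vertex, so each of them lowers the edge count by one and their inverses raise it. Hence your claim that the intervening (M)-moves ``only rearrange a graph of fixed size'' is false, and a round consisting of ``(M)-moves to expose a reducible configuration, then one reduction'' need not decrease the edge count at all; the termination argument collapses. Your proposed patch --- invoking that the minimal face (or edge) number over the move-equivalence class is a well-defined invariant detecting reducedness --- is essentially the statement being proved, so it cannot be assumed. The repair is to descend on the number of \emph{internal faces} instead: (M1)--(M3) preserve it (none of them creates or destroys a face), while (R1) removes the bigon face and (R2) merges three faces into one, so each reduction step strictly lowers a nonnegative integer that the (M)-moves cannot raise; (R3) is disposed of separately, since each application deletes one of the finitely many isolated components. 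Note also that your worry about ``applying (M)-moves forever without exposing a reduction'' is a non-issue under the paper's definition of reduced: a network fails to be reduced precisely when \emph{some} member of its move-equivalence class admits (R1) or (R2), so such a member exists by hypothesis at every non-terminal step and no search or secondary termination argument is needed there.
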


\begin{lemma}\cite{Po} Two reduced equivalent networks are (M1-M3)-move equivalent. 
\end{lemma}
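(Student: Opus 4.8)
The final statement to prove is:

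\begin{lemma}
Two reduced equivalent networks are (M1-M3)-move equivalent.
\end{lemma}

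Here ``equivalent'' means having the same boundary measurements, while ``(M1-M3)-move equivalent'' means connected by a sequence of the color-preserving moves M1, M2, M3 (excluding the reductions R1, R2, R3).

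The plan is to reduce the quantum statement to the classical one proved by Postnikov in~\cite{Po}, by showing that the only genuinely new ingredient in the quantum setting—the noncommutativity of the face weights—is already accounted for by the $q$-commutation relations verified in the previous lemma. First I would recall the structure of Postnikov's classical argument: for reduced plabic graphs, move-equivalence of the underlying undirected graphs is governed purely by the combinatorics (two reduced graphs with the same boundary measurement have the same decorated strand permutation, and reduced graphs with the same permutation are connected by (M1–M3)), while the weights are then matched up by tracking how each move transforms the face weights. The combinatorial skeleton of this argument is insensitive to whether the weights commute, so it transfers verbatim; what must be checked is the weight-matching step.

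Next I would isolate precisely what changes quantum-mechanically. In the classical case the boundary measurement matrix determines the face weights up to the gauge action, and two reduced networks with equal measurements differ by a sequence of (M1–M3) acting on weights. In the quantum case the face weights $Z_\alpha$ generate the quantum torus $\Upsilon_N$ with relations $Z_\alpha Z_\beta = q^{-2\langle\beta,\alpha\rangle}Z_\beta Z_\alpha$ governed by the skew form~(\ref{eq:Omega}). The key point is that each elementary move M1, M2, M3 transforms the weights by the \emph{same} rational formulas as in the classical case (as recorded in Fig.~\ref{fig:M1}–\ref{fig:M3}), and that these transformed weights again $q$-commute with the correct exponents; this was exactly the content of the proof of the preceding lemma (the computation $Z'_\beta Z'_\alpha = q^{-2\langle\alpha,\beta+\epsilon\rangle}Z'_\alpha Z'_\beta$ and its analogues). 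Therefore the quantum weights transform compatibly, and a sequence of classical moves matching commutative weights lifts to a sequence of quantum moves matching the noncommutative weights, because at each stage the Weyl-ordered monomials $Z_{\bf v}$ are determined by the same lattice vectors ${\bf v}\in\Lambda$ that appear classically.

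Concretely, the steps in order are: (i) pass from the two reduced quantum networks to their underlying reduced plabic graphs and invoke the classical result that equal boundary measurements force the same strand permutation, hence an (M1–M3)-sequence between the underlying graphs; (ii) pull this sequence back to the quantum networks, at each move applying the transformation rules of Fig.~\ref{fig:M1}–\ref{fig:M3}, which are well-defined in $\hat R$ since the relevant inverses $(1+x)^{-1}$ and $(1+x^{-1})^{-1}$ lie in $\hat R$ for $x\in\mathfrak m$; (iii) verify that after running the full sequence the two quantum weight assignments coincide, using that the boundary measurements (which are equal by hypothesis) determine each Weyl-ordered weight $Z_{{\bf v}_p}$ through its exponent vector ${\bf v}_p\in\Lambda$, and that the $q$-commutation is preserved at every step by the previous lemma. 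I expect the main obstacle to be step (iii): one must argue that equality of the quantum boundary measurements—elements of $\Upsilon_N$ rather than mere numbers—rigidifies the weights up to gauge exactly as in the commutative theory, i.e.\ that no new coincidences among measurements can arise from the noncommutativity that would fail to be realized by (M1–M3) moves. Since the exponent vectors are integer-lattice data independent of $q$, specializing $q\to 1$ shows the classical weights agree, and then the $q$-commutation constraints leave no residual freedom, so the quantum weights agree as well; carefully spelling out this specialization-and-rigidity argument is the delicate part.
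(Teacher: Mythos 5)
The paper offers no proof of this lemma at all: it is one of two statements introduced by the sentence ``The following statements are proved in~\cite{Po}'' and is simply quoted from Postnikov. There is therefore no in-paper argument to compare your proposal against, and the first thing to observe is that you have read the lemma as a statement about \emph{quantum} networks, whereas in the paper it is recorded as Postnikov's classical theorem (the paper's own quantum contribution in this section is confined to the \emph{other} direction, namely that quantum move-equivalent networks are quantum equivalent, which it proves by checking the $q$-commutation of the transformed weights under M1 and R1). Under the classical reading your step (i) --- ``invoke the classical result that equal boundary measurements force the same strand permutation, hence an (M1--M3)-sequence between the underlying graphs'' --- is essentially the statement being proved, so the argument would be circular; under the quantum reading you are proving something strictly stronger than what the paper asserts or uses.

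Taking the quantum reading seriously, the genuine gap is the one you yourself flag in step (iii). Postnikov's classical proof has two inputs: the combinatorial fact that reduced plabic graphs with the same decorated trip permutation are connected by (M1--M3), and the injectivity of the boundary measurement map on reduced networks modulo gauge, which is what lets one match the weights after transporting one network onto the other. Your proposal needs the quantum analogue of the second input, and the argument offered --- specialize $q\to 1$ to match the classical weights, then claim ``the $q$-commutation constraints leave no residual freedom'' --- does not establish it. Equality of boundary measurements in the quantum torus is an identity among noncommutative Laurent series whose individual Weyl-ordered monomials carry $q$-power normalizations; nothing in the specialization argument rules out that two distinct quantum weight assignments with the same classical limit produce the same quantum measurements, nor is it explained how the quantum tori of the two networks (built on a priori different face lattices with different skew forms) are being identified so that ``equal measurements'' even makes sense before the graphs have been matched. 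Closing this would require a quantum version of Postnikov's injectivity theorem, which is a substantive statement that neither the paper nor your sketch supplies.
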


\begin{definition} We call a maximal simple oriented path $P=(p_0,p_1,\dots,p_h)$, $p_i\ne p_j$ for all $i\ne j$ \emph{unequivocal} if there is no oriented path
$(p_k,q_1,\dots,q_t,p_\ell)$ such that 
$q_s\ne p_r$ for all $1\le s\le t$ and $0\le r\le h$.
\end{definition}

Let $P$ be an unequivocal path in a network $N\in\operatorname{Net}_{m,n}$. Reverse the orientation of $P$ keeping face weights we obtain the new network $N'$.

\begin{lemma}\label{lem:rigid}   Reversing the orientation of unequivocal path $P$ does not change quantum grassmannian measurement $\operatorname{Meas}_q(N')=\operatorname{Meas}_q(N)$. 
\end{lemma}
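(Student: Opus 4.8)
The plan is to prove Lemma~\ref{lem:rigid} by showing that reversing an unequivocal path~$P$ is realizable as a composition of the elementary moves M1--M3 and R1--R3, so that the already-established invariance of $\operatorname{Meas}_q$ under quantum move equivalence does all the work. First I would make precise the combinatorial content of ``unequivocal'': the defining condition says that no path can leave $P$ at some $p_k$, travel through vertices disjoint from $P$, and re-enter $P$ at a later $p_\ell$. This rigidity is exactly what guarantees that $P$ together with its incident half-edges can be isotoped across the network without forcing any cycle creation or any interaction with a parallel oriented strand. The geometric picture to keep in mind is that an unequivocal path has a well-defined ``left side'' and ``right side'' in a neighborhood, and the absence of detours means the face weights adjacent to $P$ are laid out in a linear (rather than branching) pattern along $P$.

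The key steps, in order, are as follows. First I would set up a local model: near each internal vertex $p_i$ of $P$ the path enters and exits along two of the three incident edges, and the color of $p_i$ (black or white) together with the orientation tells us which elementary move reverses the local orientation while preserving boundary measurements. Second, I would reverse the orientation of $P$ one vertex at a time, pushing the reversal along the path from the source end to the sink end. At a trivalent white vertex the relevant local reconfiguration is governed by move M1 (the square move / spider move), whose explicit weight transformation is recorded in Fig.~\ref{fig:M1}, and at the endpoints the reductions R1 and R2 (Fig.~\ref{fig:R1}, Fig.~\ref{fig:R2}) absorb the boundary half-edges; the contraction/expansion moves M2, M3, R3 handle degree-two vertices and isolated pieces. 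Third, I would check that the unequivocality hypothesis is precisely the condition ensuring that at every stage the intermediate graph stays in the same move-equivalence class, i.e.\ that no step demands reversing an edge lying on a second oriented path that would create an oriented cycle or an unremovable crossing. Since each elementary move preserves $\operatorname{Meas}_q$ (by the preceding lemma on quantum move equivalence), the composite transformation $N \rightsquigarrow N'$ preserves it as well, giving $\operatorname{Meas}_q(N')=\operatorname{Meas}_q(N)$.

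The main obstacle I anticipate is the bookkeeping for the intermediate stages: when one pushes the reversal of $P$ through the graph, the graph temporarily ceases to be the clean bicolored network one started with, and one must verify that every local patch encountered is in fact one of the six standard patterns to which M1--M3 or R1--R3 apply. In particular, one has to rule out the degenerate situation in which an edge of $P$ is forced to coincide in some patch with an edge carrying the reversed orientation of a neighboring strand---and it is exactly here that the unequivocal condition is used: the nonexistence of a return path $(p_k,q_1,\dots,q_t,p_\ell)$ disjoint from $P$ forbids precisely the configuration that would obstruct the local move. I would therefore spend most of the detailed argument establishing the claim that unequivocality is equivalent to the statement ``$P$ admits an isotopy of orientation-reversal through elementary moves,'' and then the invariance is immediate. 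A cleaner alternative I would keep in reserve is a direct sign-and-weight argument: express $\operatorname{Meas}_q$ via the quantum boundary measurement matrix $Q^{gr}_q$ and show that reversing $P$ corresponds to right-multiplication by an element of $GL_n(\Upsilon_N)$, which is quotiented out in the definition of $Gr_q(n,m+n)$; this would bypass the move calculus entirely, and I expect the crossing-index factor $(-1)^{\operatorname{cross}(p)}$ together with the Weyl-ordering relation $Z_{\aa+\bb}=q^{-\langle\aa,\bb\rangle}Z_\aa Z_\bb$ to conspire so that the reversal acts by an invertible column operation.
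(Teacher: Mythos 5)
Your primary plan---realizing the orientation reversal of $P$ as a composition of the elementary moves M1--M3, R1--R3---cannot work, for a structural reason. Those moves preserve the boundary measurement matrix on the nose; in particular they never change which boundary vertices are sources and which are sinks. Reversing an unequivocal path $P$ from a boundary source $a$ to a boundary sink $b$ turns $a$ into a sink and $b$ into a source, so $N$ and $N'$ are \emph{not} move equivalent, and indeed their boundary measurement matrices genuinely differ. The content of the lemma is precisely that the two matrices agree only after passing to the quotient $Gr_q(n,m+n)=\operatorname{Mat}_{(m+n)\times n}(\Upsilon_N)/GL_n(\Upsilon_N)$, i.e.\ up to right multiplication by an invertible matrix over the quantum torus. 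No amount of bookkeeping about which local patches appear during the isotopy will repair this: the obstruction is global (the source/sink partition changes), not local.

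Your ``reserve'' plan is the correct one, and it is what the paper actually does; it needs to be promoted from a remark to the whole proof. Unequivocality is used twice: first, it guarantees that $P$ is the unique path from $a$ to $b$, so $(Q_q)_{ba}=w_P=Z_{{\bf v}_P}$ is an invertible element of the quantum torus and $w_{P^{-1}}=(w_P)^{-1}$; second, it forces any other path $R:s\leadsto t$ to meet $P$ in at most one interval $[V,W]$, joining at a black vertex and separating at a white one. This lets one express every entry of the reversed measurement matrix as a Weyl-ordered combination of old entries, e.g.\ $(Q'_q)_{tb}=\col{(Q_q)_{ba}^{-1}(Q_q)_{ta}}$ and $(Q^{gr}_q)'_{ts}=(Q^{gr}_q)_{ts}+(Q^{gr}_q)_{bs}(Q^{gr}_q)'_{tb}$, and to assemble these identities into $\left(Q^{gr}_q\right)'=Q^{gr}_q\,C$ with $C\in GL_n(\Upsilon_N)$ explicit (its nontrivial row built from $(-1)^{\lfloor |j-a_0+1/2|\rfloor}q^{1/2}(Q_q)_{ja}(Q_q)_{ba}^{-1}$ together with a shift block $q\,\delta_{i-1,j}$); the case $b<a$ is handled by exhibiting the analogous $\widetilde C=C^{-1}$. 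The crossing signs $(-1)^{\operatorname{cross}(p)}$ and the relation $Z_{\aa+\bb}=q^{-\langle\aa,\bb\rangle}Z_\aa Z_\bb$ do, as you anticipated, conspire to make the column operation consistent, but verifying this for all mutual positions of $s,t,a,b$ is the actual work of the proof and cannot be waved away.
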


\begin{example} Consider networks on the Fig.~\ref{fig:network}.

\begin{figure}[h]
\begin{pspicture}(-3.5,0.)(3.5,3){\psset{unit=0.8}
\newcommand{\NET}{%
{\psset{unit=1}
\rput(0,0){\psline[linecolor=blue,linestyle=dashed,linewidth=2pt]{-}(0,0)(3.,0.)}
\rput(0,0){\psline[linecolor=blue,linestyle=dashed,linewidth=2pt]{-}(3,0)(3.,3.)}
\rput(0,0){\psline[linecolor=blue,linestyle=dashed,linewidth=2pt]{-}(3,3)(0,3)}
\rput(0,0){\psline[linecolor=blue,linestyle=dashed,linewidth=2pt]{-}(0,3)(0,0)}
\rput(0,0){\psline[linecolor=black,linewidth=2pt]{<-}(2,1)(3,1)}
\rput(0,0){\psline[linecolor=black,linewidth=2pt]{<-}(0,1)(2,1)}
\rput(0,0){\psline[linecolor=black,linewidth=2pt]{<-}(0,2.5)(1,2)}
\rput(0,0){\psline[linecolor=black,linewidth=2pt]{->}(2.,1)(1.5,2)}
\rput(2,2.5){\makebox(0,0)[cc]{\hbox{\tcr{$\mathbf \alpha$}}}}
\rput(2.5,1.5){\makebox(0,0)[cc]{\hbox{\tcr{$\mathbf \beta$}}}}
\rput(0.3,2.){\makebox(0,0)[cc]{\hbox{\tcr{$\mathbf \gamma$}}}}
\rput(0.8,1.5){\makebox(0,0)[cc]{\hbox{\tcr{$\mathbf \delta$}}}}
\rput(1.7,0.6){\makebox(0,0)[cc]{\hbox{\tcr{$\mathbf \epsilon$}}}}

\rput(3.3,2.){\makebox(0,0)[cc]{\hbox{\tcr{$\mathbf  2$}}}}
\rput(3.3,1){\makebox(0,0)[cc]{\hbox{\tcr{$\mathbf 1$}}}}
\rput(-0.3,1.){\makebox(0,0)[cc]{\hbox{\tcr{$\mathbf 5$}}}}
\rput(-0.3,1.5){\makebox(0,0)[cc]{\hbox{\tcr{$\mathbf 4$}}}}
\rput(-0.3,2.5){\makebox(0,0)[cc]{\hbox{\tcr{$\mathbf 3$}}}}

}
}
\put(-3.1,0.1){\NET}
\put(0,0){\psline[linecolor=black,linewidth=2pt]{<-}(-3.1,1.6)(-2.1,2.1)}
\put(0,0){\psline[linecolor=black,linewidth=2pt]{<-}(-2.1,2.1)(-1.6,2.1)}
\put(0,0){\psline[linecolor=black,linewidth=2pt]{<-}(-1.5,2.1)(-0.1,2.1)}

\put(3.1,0.1){\NET}
\put(0,0){\psline[linecolor=black,linewidth=2pt]{->}(3.1,1.6)(4.1,2.1)}
\put(0,0){\psline[linecolor=black,linewidth=2pt]{->}(4.1,2.1)(4.6,2.1)}
\put(0,0){\psline[linecolor=black,linewidth=2pt]{->}(4.6,2.1)(6.1,2.1)}
}
\end{pspicture}
\caption{\small
Changing orientation of the path $P: {\bf 2}\leadsto {\bf 4}$ transforms network $N$ on the left into network $N'$ on the right}
\label{fig:network}
\end{figure}
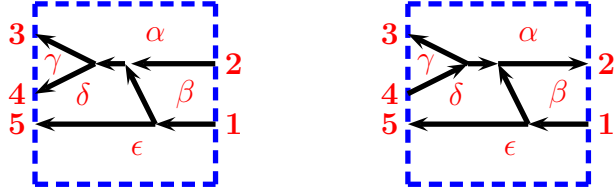

The corresponding quantum grassmannian measurement matrices are \newline
$$Q^{gr}_q=\begin{pmatrix}
q^{-1/2}   & 0 \\
0 &  q^{-3/2} \\
-q^{-2} Z_{\alpha+\beta} & q^{-2} Z_\alpha \\
-q^{-2} Z_{\alpha+\beta+\gamma} & q^{-2} Z_{\alpha+\gamma}\\
-q^{-2} Z_{\alpha+\beta+\gamma+\delta} & 0 
\end{pmatrix} \text{ and } 
\left(Q^{gr}_q\right)'=\begin{pmatrix}
q^{-1/2}   & 0 \\
q^{-1} Z_{\beta} & q^{-1} Z_{\delta+\epsilon+\beta} \\
0 & q^{-1} Z_{-\gamma}\\
0 &  q^{-3/2} \\
-q^{-2} Z_{\alpha+\beta+\gamma+\delta} & 0 
\end{pmatrix}.$$ 
\smallskip

Note that 
$Q^{gr}_q C=\left(Q^{gr}_q\right)'$, where $C=
\begin{pmatrix} 1 & 0 \\
q^{1/2} Z_\beta & q^{1/2} Z_{\delta+\epsilon+\beta}
\end{pmatrix}.$

Indeed, consider for example $(Q^{gr}_q C)_{31}=(Q^{gr}_q)_{31}+q^{1/2}(Q^{gr}_q)_{32} Z_\beta=-q^{-2} Z_{\alpha+\beta}+q^{-3/2} Z_\alpha Z_\beta$.
Recall that $Z_\alpha Z_\beta=q^{-1/2}Z_{\alpha+\beta}$. Therefore, $(Q^{gr}_q C)_{31}=0=\left(Q^{gr}_q\right)' _{31}$. Similarly, we can prove equalities for all the entries of these $5\times 2$ matrices and we observe that  $\operatorname{Meas}_q(N)=\operatorname{Meas}_q(N')\in \operatorname{Gr}_q(2,5)$.
\end{example}

\begin{proof} Let $N'$ be the network obtained as a result of the change of the directions of all arrows of the simple unequivocal path $P$ in $N$ from a boundary vertex  $a$ to a boundary vertex $b$. We will denote by $P^{-1}$ the path in $N'$ obtained from $P$ by orientation reversing. We assume first that the boundary vertices are labelled so that $1\le a < b\le m+n$. Since path $P$ is unequivocal  there is only one path $P$ from $a$ to $b$, and $Q_q(a, b)=w_P$. Moreover, any other path $R$ from $s$ to $t$ where both $s$ and $t$ are distinct from $a$ and $b$ has at most one common interval $[V,W]$  with path $P$. The first point $V$ (counting from $s$) where two paths meet has two incoming arrows and one outgoing and, hence, is colored black, the point $W$ where two paths separate is white (see Figure~\ref{fig:network1}). Similarly, any path from $a$ to a sink different from $b$ separates from $P$ at a white point; any path from a sink different from $a$ to $b$ joins path $P$ at a black point.

Let $Q^{gr}_q$ be the quantum grassmannian bounded measurement matrix of the network $N$, $\left(Q^{gr}_q\right)'$ be the quantum grassmannian boundary measurement matrix\
of  $N'$.  Let  $F_P\subset Faces$ denote the subset of all faces to the right of the path $P$, ${\bf v}_P=\sum_{\alpha\in F_p}\alpha$. Then, 
$w_P=Z_{{\bf v}_P}$, $F_{P^{-1}}=Faces\setminus F_P$, ${\bf v}_{P^{-1}}=-{\bf v}_P$, $w_{P^{-1}}=Z_{{\bf v}_{P^{-1}}}=Z_{-{\bf v}_P}=(w_P)^{-1}$.

Consider first the case $1\le a<b$.

Let $s<a<t<b$ in the cyclic order of the boundary vertices (see Figure~\ref{fig:network1}).
Observe, $w_{b\to W\to t}=Z_{\alpha+\beta+\delta}= \col{Z_{\beta+\delta}Z_\alpha} =\col{w_{a\to V\to W\to t}\cdot w_{a\to V\to W\to b}^{-1}}=
\col{w_{a\to V\to W\to t}\cdot (Q'_q)_{ab}}$  Since the equality holds for any directed path from $b$ to $t$, and 
$\col{(Q'_q)_{ab}(Q_q)_{ta}}$ $=$ $\col{(Q_q)_{ta} (Q_q)_{ba}^{-1}}=
q^{-1/2}(Q_q)_{ta}(Q_q)_{ba}^{-1}$, we conclude that $(Q'_q)_{tb}=\col{(Q'_q)_{ab}(Q_q)_{ta}}$ $=$ $\col{(Q_q)_{ba}^{-1}(Q_q)_{ta}}=
q^{-1/2}(Q_q)_{ta}(Q_q)_{ba}^{-1}$.
Similarly, $(Q'_q)_{as}=
\col{(Q'_q)_{ab}(Q_q)_{bs}}$ $=$ $\col{(Q_q)_{ba}^{-1}(Q_q)_{bs}}=
q^{1/2}(Q_q)_{bs}(Q_q)_{ba}^{-1}$ and $(Q_q)_{ts}$
$=\col{(Q_q)_{bs}(Q'_q)_{tb}} $
$=q^{1/2}(Q_q)_{bs}(Q'_q)_{tb}$. Therefore, $(Q^{gr}_q)'_{ts}=(Q^{gr}_q)_{ts}+(Q^{gr}_q)_{bs}(Q^{gr}_q)'_{tb}=0$.
Here, $\alpha,\beta,\gamma,\delta$ are appropriate subsets of $Faces$.


\begin{figure}[H]
\begin{pspicture}(-3.5,0.)(3.5,3){\psset{unit=0.8}
\newcommand{\NET}{%
{\psset{unit=1}
\rput(0,0){\psline[linecolor=blue,linestyle=dashed,linewidth=2pt]{-}(0,0)(3.,0.)}
\rput(0,0){\psline[linecolor=blue,linestyle=dashed,linewidth=2pt]{-}(3,0)(3.,3.)}
\rput(0,0){\psline[linecolor=blue,linestyle=dashed,linewidth=2pt]{-}(3,3)(0,3)}
\rput(0,0){\psline[linecolor=blue,linestyle=dashed,linewidth=2pt]{-}(0,3)(0,0)}
\rput(0,0){\psline[linecolor=black,linewidth=1pt]{<-}(2.1,1.9)(3,1)}
\rput(0,0){\psline[linecolor=black,linewidth=3pt]{<-}(1.15,2)(1.9,2)}
\rput(0,0){\psline[linecolor=black,linewidth=3pt]{<-}(2.1,2)(3,2)}
\rput(0,0){\psline[linecolor=black,linewidth=3pt]{<-}(0,1.5)(1,2)}
\rput(0,0){\psline[linecolor=black,linewidth=1pt]{<-}(0,2.5)(1,2)}
\rput(2,2.5){\makebox(0,0)[cc]{\hbox{{$\mathbf \alpha$}}}}
\rput(2.7,1.6){\makebox(0,0)[cc]{\hbox{{$\mathbf \beta$}}}}
\rput(0.3,2.){\makebox(0,0)[cc]{\hbox{{$\mathbf \gamma$}}}}
\rput(1.2,1.){\makebox(0,0)[cc]{\hbox{{$\mathbf \delta$}}}}
\put(2,2){\pscircle[fillstyle=solid,fillcolor=black,linecolor=black]{.15}}
\put(1,2){\pscircle[fillstyle=solid,fillcolor=white,linecolor=black]{.15}}

\rput(3.3,2.){\makebox(0,0)[cc]{\hbox{{$\bold a$}}}}
\rput(3.3,1){\makebox(0,0)[cc]{\hbox{{$\mathbf s$}}}}
\rput(-0.3,1.5){\makebox(0,0)[cc]{\hbox{{$\mathbf b$}}}}
\rput(-0.3,2.5){\makebox(0,0)[cc]{\hbox{{$\mathbf t$}}}}
\rput(2,1.5){\makebox(0,0)[cc]{\hbox{{$\mathbf V$}}}}
\rput(1,2.5){\makebox(0,0)[cc]{\hbox{{$\mathbf W$}}}}

}
}

\newcommand{\NNET}{%
{\psset{unit=1}
\rput(0,0){\psline[linecolor=blue,linestyle=dashed,linewidth=2pt]{-}(0,0)(3.,0.)}
\rput(0,0){\psline[linecolor=blue,linestyle=dashed,linewidth=2pt]{-}(3,0)(3.,3.)}
\rput(0,0){\psline[linecolor=blue,linestyle=dashed,linewidth=2pt]{-}(3,3)(0,3)}
\rput(0,0){\psline[linecolor=blue,linestyle=dashed,linewidth=2pt]{-}(0,3)(0,0)}
\rput(0,0){\psline[linecolor=black,linewidth=1pt]{<-}(2.1,1.9)(3,1)}
\rput(0,0){\psline[linecolor=black,linewidth=3pt]{->}(1.15,2)(1.9,2)}
\rput(0,0){\psline[linecolor=black,linewidth=3pt]{->}(2.1,2)(3,2)}
\rput(0,0){\psline[linecolor=black,linewidth=3pt]{->}(0,1.5)(0.9,1.9)}
\rput(0,0){\psline[linecolor=black,linewidth=1pt]{<-}(0,2.5)(1,2)}
\rput(2,2.5){\makebox(0,0)[cc]{\hbox{{$\mathbf \alpha$}}}}
\rput(2.7,1.6){\makebox(0,0)[cc]{\hbox{{$\mathbf \beta$}}}}
\rput(0.3,2.){\makebox(0,0)[cc]{\hbox{{$\mathbf \gamma$}}}}
\rput(1.2,1.){\makebox(0,0)[cc]{\hbox{{$\mathbf \delta$}}}}
\put(2,2){\pscircle[fillstyle=solid,fillcolor=black,linecolor=black]{.15}}
\put(1,2){\pscircle[fillstyle=solid,fillcolor=white,linecolor=black]{.15}}

\rput(3.3,2.){\makebox(0,0)[cc]{\hbox{{$\bold a$}}}}
\rput(3.3,1){\makebox(0,0)[cc]{\hbox{{$\mathbf s$}}}}
\rput(-0.3,1.5){\makebox(0,0)[cc]{\hbox{{$\mathbf b$}}}}
\rput(-0.3,2.5){\makebox(0,0)[cc]{\hbox{{$\mathbf t$}}}}
\rput(-0.3,2.5){\makebox(0,0)[cc]{\hbox{{$\mathbf t$}}}}
\rput(2,1.5){\makebox(0,0)[cc]{\hbox{{$\mathbf V$}}}}
\rput(1,2.5){\makebox(0,0)[cc]{\hbox{{$\mathbf W$}}}}

}
}

\put(-3.1,0.1){\NET}
\put(3.1,0.1){\NNET}

}
\end{pspicture}
\caption{\small
Change of the orientation of the path $P: {\bf a}\leadsto {\bf b}$, $s<a<t<b$.}
\label{fig:network1}
\end{figure}

In the same way we investigate all the remaining mutual positions of $s,t$ and $1\le a<b$ which leads to the following matrix identity. 
Let $a_0={\mathbb J}^{-1}(a)$, $f_0$ be the index of source of $X$ such that $b$ lies between the source $f_0$ and $f_0+1$
(equivalently, ${\mathbb J}(f_0)<b<{\mathbb J}(f_0+1)$.  Note that $a_0\le f_0$ since $a<b$. Define $n\times n$ matrix $C$ for $1<a<b$ as follows
$$C_{ij}=\begin{cases}
\delta_{ij}, &\text{ if } i<a_0 \text{ or } i> f_0;\\
(-1)^{\big\lfloor{|j-a_0+1/2|}\big\rfloor}q^{1/2}(Q_q)_{ja}(Q_q)_{ba}^{-1} &\text{ if } i=a_0;\\
q\delta_{i-1,j}, &\text{ if } a_0<i\le f_0.
\end{cases}
$$

Then,  $\left(Q^{gr}_q\right)'=Q^{gr}_q C$.

To study $1\le b<a$, note that $\left(Q^{gr}_q\right)'=Q^{gr}\cdot C$ implies $Q^{gr}_q=\left(Q^{gr}_q\right)'\cdot C^{-1}$ where $C^{-1}$ is obtained 
from $C$ by changing the signs of the off-diagonal elements and adjusting the powers of $q$. More exactly,  
define $n\times n$ matrix $\widetilde C$ for $1<b<a$ as follows
$$\widetilde C_{ij}=\begin{cases}
\delta_{ij}, &\text{ if } i\le f_0 \text{ or } i> a_0;\\
(-1)^{\big\lfloor{|j-a_0-1/2|}\big\rfloor}q^{-1/2}(Q_q)_{ja}(Q_q)_{ba}^{-1} &\text{ if } i=a_0;\\
q^{-1}\delta_{i+1,j}, &\text{ if } f_0< i< a_0.
\end{cases}
$$

This observation proves Lemma~\ref{lem:rigid}  $1\le b<a$.
\end{proof}


\begin{example} Consider the networks in Figure~\ref{fig:network3}.

\begin{figure}[h]
\begin{pspicture}(-10,0.)(10,4.5){\psset{unit=0.8}
\newcommand{\NET}{%
{\psset{unit=1}
\rput(0,0){\psline[linecolor=blue,linewidth=2pt]{-}(0,0)(8.,0.)}
\rput(0,0){\psline[linecolor=blue,linewidth=2pt]{-}(8,0)(8.,4.)}
\rput(0,0){\psline[linecolor=blue,linewidth=2pt]{-}(8,4)(0,4)}
\rput(0,0){\psline[linecolor=blue,linewidth=2pt]{-}(0,4)(0,0)}
\rput(0,0){\psline[linecolor=black,linewidth=3pt]{<-}(0,2)(8,2)}
\rput(0,0){\psline[linecolor=black,linewidth=1pt]{<-}(1,0)(1,2)}
\rput(0,0){\psline[linecolor=black,linewidth=1pt]{->}(2,0)(2,1.9)}
\rput(0,0){\psline[linecolor=black,linewidth=1pt]{<-}(3,0)(3,2)}
\rput(0,0){\psline[linecolor=black,linewidth=1pt]{->}(4,0)(4,1.9)}
\rput(0,0){\psline[linecolor=black,linewidth=1pt]{<-}(5,0)(5,2)}
\rput(0,0){\psline[linecolor=black,linewidth=1pt]{->}(6,0)(6,1.9)}
\rput(0,0){\psline[linecolor=black,linewidth=1pt]{<-}(7,0)(7,2)}

\rput(0,0){\psline[linecolor=black,linewidth=1pt]{<-}(1.4,4)(1.4,2)}
\rput(0,0){\psline[linecolor=black,linewidth=1pt]{->}(2.4,4)(2.4,2.1)}
\rput(0,0){\psline[linecolor=black,linewidth=1pt]{<-}(3.4,4)(3.4,2)}
\rput(0,0){\psline[linecolor=black,linewidth=1pt]{->}(4.4,4)(4.4,2.1)}
\rput(0,0){\psline[linecolor=black,linewidth=1pt]{<-}(5.4,4)(5.4,2)}
\rput(0,0){\psline[linecolor=black,linewidth=1pt]{->}(6.4,4)(6.4,2.1)}
\rput(0,0){\psline[linecolor=black,linewidth=1pt]{<-}(7.4,4)(7.4,2)}
\put(2,2){\pscircle[fillstyle=solid,fillcolor=black,linecolor=black]{.15}}
\put(1,2){\pscircle[fillstyle=solid,fillcolor=white,linecolor=black]{.15}}
\put(3,2){\pscircle[fillstyle=solid,fillcolor=white,linecolor=black]{.15}}
\put(4,2){\pscircle[fillstyle=solid,fillcolor=black,linecolor=black]{.15}}
\put(5,2){\pscircle[fillstyle=solid,fillcolor=white,linecolor=black]{.15}}
\put(6,2){\pscircle[fillstyle=solid,fillcolor=black,linecolor=black]{.15}}
\put(7,2){\pscircle[fillstyle=solid,fillcolor=white,linecolor=black]{.15}}
\put(1.4,2){\pscircle[fillstyle=solid,fillcolor=white,linecolor=black]{.15}}
\put(2.4,2){\pscircle[fillstyle=solid,fillcolor=black,linecolor=black]{.15}}
\put(3.4,2){\pscircle[fillstyle=solid,fillcolor=white,linecolor=black]{.15}}
\put(4.4,2){\pscircle[fillstyle=solid,fillcolor=black,linecolor=black]{.15}}
\put(5.4,2){\pscircle[fillstyle=solid,fillcolor=white,linecolor=black]{.15}}
\put(6.4,2){\pscircle[fillstyle=solid,fillcolor=black,linecolor=black]{.15}}
\put(7.4,2){\pscircle[fillstyle=solid,fillcolor=white,linecolor=black]{.15}}

\rput(4,-.5){\makebox(0,0)[cc]{\hbox{ {$\bold 1$}}}}
\rput(5,-.5){\makebox(0,0)[cc]{\hbox{ {$\mathbf 2$}}}}
\rput(6,-.5){\makebox(0,0)[cc]{\hbox{ {$\mathbf 3$}}}}
\rput(7,-.5){\makebox(0,0)[cc]{\hbox{ {$\mathbf 4$}}}}
\rput(8.3,2.){\makebox(0,0)[cc]{\hbox{ {$\mathbf 5$}}}}
\rput(7.4,4.4){\makebox(0,0)[cc]{\hbox{ {$\mathbf 6$}}}}
\rput(6.4,4.4){\makebox(0,0)[cc]{\hbox{ {$\mathbf 7$}}}}
\rput(5.4,4.4){\makebox(0,0)[cc]{\hbox{ {$\mathbf 8$}}}}
\rput(4.4,4.4){\makebox(0,0)[cc]{\hbox{ {$\mathbf 9$}}}}
\rput(3.4,4.4){\makebox(0,0)[cc]{\hbox{ {${\mathbf 10}$}}}}
\rput(2.4,4.4){\makebox(0,0)[cc]{\hbox{ {${\mathbf 11}$}}}}
\rput(1.4,4.4){\makebox(0,0)[cc]{\hbox{ {${\mathbf 12}$}}}}
\rput(-.4,2.){\makebox(0,0)[cc]{\hbox{ {${\mathbf 13}$}}}}
\rput(1,-.5){\makebox(0,0)[cc]{\hbox{ {${\mathbf 14}$}}}}
\rput(2,-.5){\makebox(0,0)[cc]{\hbox{ {${\mathbf 15}$}}}}
\rput(3,-.5){\makebox(0,0)[cc]{\hbox{ {${\mathbf 16}$}}}}

\rput(4.5,.5){\makebox(0,0)[cc]{\hbox{ {$\bold \alpha_1$}}}}
\rput(5.5,.5){\makebox(0,0)[cc]{\hbox{ {$\mathbf \alpha_2$}}}}
\rput(6.5,.5){\makebox(0,0)[cc]{\hbox{ {$\mathbf \alpha_3$}}}}
\rput(7.5,.5){\makebox(0,0)[cc]{\hbox{ {$\mathbf \alpha_4$}}}}
\rput(7.7,2.4){\makebox(0,0)[cc]{\hbox{ {$\mathbf \alpha_5$}}}}
\rput(7,2.4){\makebox(0,0)[cc]{\hbox{ {$\mathbf \alpha_6$}}}}
\rput(6,2.4){\makebox(0,0)[cc]{\hbox{ {$\mathbf \alpha_7$}}}}
\rput(5,2.4){\makebox(0,0)[cc]{\hbox{ {$\mathbf \alpha_8$}}}}
\rput(4,2.4){\makebox(0,0)[cc]{\hbox{ {$\mathbf \alpha_9$}}}}
\rput(3,2.4){\makebox(0,0)[cc]{\hbox{ {$\mathbf \alpha_{10}$}}}}
\rput(2,2.4){\makebox(0,0)[cc]{\hbox{ {$\mathbf \alpha_{11}$}}}}
\rput(0.7,2.4){\makebox(0,0)[cc]{\hbox{ {$\mathbf \alpha_{12}$}}}}
\rput(0.5,.5){\makebox(0,0)[cc]{\hbox{ {$\mathbf \alpha_{13}$}}}}
\rput(1.5,.5){\makebox(0,0)[cc]{\hbox{ {$\mathbf \alpha_{14}$}}}}
\rput(2.5,.5){\makebox(0,0)[cc]{\hbox{ {$\mathbf \alpha_{15}$}}}}
\rput(3.5,.5){\makebox(0,0)[cc]{\hbox{ {$\mathbf \alpha_{16}$}}}}

}
}

\newcommand{\NNET}{%
{\psset{unit=1}
\rput(0,0){\psline[linecolor=blue,linewidth=2pt]{-}(0,0)(8.,0.)}
\rput(0,0){\psline[linecolor=blue,linewidth=2pt]{-}(8,0)(8.,4.)}
\rput(0,0){\psline[linecolor=blue,linewidth=2pt]{-}(8,4)(0,4)}
\rput(0,0){\psline[linecolor=blue,linewidth=2pt]{-}(0,4)(0,0)}
\rput(0,0){\psline[linecolor=black,linewidth=3pt]{->}(0,2)(8,2)}
\rput(0,0){\psline[linecolor=black,linewidth=1pt]{<-}(1,0)(1,2)}
\rput(0,0){\psline[linecolor=black,linewidth=1pt]{->}(2,0)(2,1.9)}
\rput(0,0){\psline[linecolor=black,linewidth=1pt]{<-}(3,0)(3,2)}
\rput(0,0){\psline[linecolor=black,linewidth=1pt]{->}(4,0)(4,1.9)}
\rput(0,0){\psline[linecolor=black,linewidth=1pt]{<-}(5,0)(5,2)}
\rput(0,0){\psline[linecolor=black,linewidth=1pt]{->}(6,0)(6,1.9)}
\rput(0,0){\psline[linecolor=black,linewidth=1pt]{<-}(7,0)(7,2)}

\rput(0,0){\psline[linecolor=black,linewidth=1pt]{<-}(1.4,4)(1.4,2)}
\rput(0,0){\psline[linecolor=black,linewidth=1pt]{->}(2.4,4)(2.4,2.1)}
\rput(0,0){\psline[linecolor=black,linewidth=1pt]{<-}(3.4,4)(3.4,2)}
\rput(0,0){\psline[linecolor=black,linewidth=1pt]{->}(4.4,4)(4.4,2.1)}
\rput(0,0){\psline[linecolor=black,linewidth=1pt]{<-}(5.4,4)(5.4,2)}
\rput(0,0){\psline[linecolor=black,linewidth=1pt]{->}(6.4,4)(6.4,2.1)}
\rput(0,0){\psline[linecolor=black,linewidth=1pt]{<-}(7.4,4)(7.4,2)}
\put(2,2){\pscircle[fillstyle=solid,fillcolor=black,linecolor=black]{.15}}
\put(1,2){\pscircle[fillstyle=solid,fillcolor=white,linecolor=black]{.15}}
\put(3,2){\pscircle[fillstyle=solid,fillcolor=white,linecolor=black]{.15}}
\put(4,2){\pscircle[fillstyle=solid,fillcolor=black,linecolor=black]{.15}}
\put(5,2){\pscircle[fillstyle=solid,fillcolor=white,linecolor=black]{.15}}
\put(6,2){\pscircle[fillstyle=solid,fillcolor=black,linecolor=black]{.15}}
\put(7,2){\pscircle[fillstyle=solid,fillcolor=white,linecolor=black]{.15}}
\put(1.4,2){\pscircle[fillstyle=solid,fillcolor=white,linecolor=black]{.15}}
\put(2.4,2){\pscircle[fillstyle=solid,fillcolor=black,linecolor=black]{.15}}
\put(3.4,2){\pscircle[fillstyle=solid,fillcolor=white,linecolor=black]{.15}}
\put(4.4,2){\pscircle[fillstyle=solid,fillcolor=black,linecolor=black]{.15}}
\put(5.4,2){\pscircle[fillstyle=solid,fillcolor=white,linecolor=black]{.15}}
\put(6.4,2){\pscircle[fillstyle=solid,fillcolor=black,linecolor=black]{.15}}
\put(7.4,2){\pscircle[fillstyle=solid,fillcolor=white,linecolor=black]{.15}}

\rput(4,-.5){\makebox(0,0)[cc]{\hbox{ {$\bold 1$}}}}
\rput(5,-.5){\makebox(0,0)[cc]{\hbox{ {$\mathbf 2$}}}}
\rput(6,-.5){\makebox(0,0)[cc]{\hbox{ {$\mathbf 3$}}}}
\rput(7,-.5){\makebox(0,0)[cc]{\hbox{ {$\mathbf 4$}}}}
\rput(8.3,2.){\makebox(0,0)[cc]{\hbox{ {$\mathbf 5$}}}}
\rput(7.4,4.4){\makebox(0,0)[cc]{\hbox{ {$\mathbf 6$}}}}
\rput(6.4,4.4){\makebox(0,0)[cc]{\hbox{ {$\mathbf 7$}}}}
\rput(5.4,4.4){\makebox(0,0)[cc]{\hbox{ {$\mathbf 8$}}}}
\rput(4.4,4.4){\makebox(0,0)[cc]{\hbox{ {$\mathbf 9$}}}}
\rput(3.4,4.4){\makebox(0,0)[cc]{\hbox{ {${\mathbf 10}$}}}}
\rput(2.4,4.4){\makebox(0,0)[cc]{\hbox{ {${\mathbf 11}$}}}}
\rput(1.4,4.4){\makebox(0,0)[cc]{\hbox{ {${\mathbf 12}$}}}}
\rput(-.4,2.){\makebox(0,0)[cc]{\hbox{ {${\mathbf 13}$}}}}
\rput(1,-.5){\makebox(0,0)[cc]{\hbox{ {${\mathbf 14}$}}}}
\rput(2,-.5){\makebox(0,0)[cc]{\hbox{ {${\mathbf 15}$}}}}
\rput(3,-.5){\makebox(0,0)[cc]{\hbox{ {${\mathbf 16}$}}}}

\rput(4.5,.5){\makebox(0,0)[cc]{\hbox{ {$\bold \alpha_1$}}}}
\rput(5.5,.5){\makebox(0,0)[cc]{\hbox{ {$\mathbf \alpha_2$}}}}
\rput(6.5,.5){\makebox(0,0)[cc]{\hbox{ {$\mathbf \alpha_3$}}}}
\rput(7.5,.5){\makebox(0,0)[cc]{\hbox{ {$\mathbf \alpha_4$}}}}
\rput(7.7,2.4){\makebox(0,0)[cc]{\hbox{ {$\mathbf \alpha_5$}}}}
\rput(7,2.4){\makebox(0,0)[cc]{\hbox{ {$\mathbf \alpha_6$}}}}
\rput(6,2.4){\makebox(0,0)[cc]{\hbox{ {$\mathbf \alpha_7$}}}}
\rput(5,2.4){\makebox(0,0)[cc]{\hbox{ {$\mathbf \alpha_8$}}}}
\rput(4,2.4){\makebox(0,0)[cc]{\hbox{ {$\mathbf \alpha_9$}}}}
\rput(3,2.4){\makebox(0,0)[cc]{\hbox{ {$\mathbf \alpha_{10}$}}}}
\rput(2,2.4){\makebox(0,0)[cc]{\hbox{ {$\mathbf \alpha_{11}$}}}}
\rput(0.7,2.4){\makebox(0,0)[cc]{\hbox{ {$\mathbf \alpha_{12}$}}}}
\rput(0.5,.5){\makebox(0,0)[cc]{\hbox{ {$\mathbf \alpha_{13}$}}}}
\rput(1.5,.5){\makebox(0,0)[cc]{\hbox{ {$\mathbf \alpha_{14}$}}}}
\rput(2.5,.5){\makebox(0,0)[cc]{\hbox{ {$\mathbf \alpha_{15}$}}}}
\rput(3.5,.5){\makebox(0,0)[cc]{\hbox{ {$\mathbf \alpha_{16}$}}}}

}
}

\put(-10,1){\NET}
\put(0,1){\NNET}

}
\end{pspicture}
\caption{\small
Change of the orientation of the unequivocal path $P: {\bf 5}\leadsto {\bf 13}$, $1<5<13$.}
\label{fig:network3}
\end{figure}

Matrices $Q^{gr}_q$ and $\left(Q^{gr}_q\right)'$ have the following form.

\begin{align*}
Q^{gr}_q &{=}
{\tiny 
\begin{pmatrix}
q^{-1/2} & 0 & 0 & 0 & 0 & 0 & 0 \\
0 & q^{-1}Z_{-\alpha_2} & -q^{-1}Z_{-\sum_{j=2}^4\alpha_j} 
& q^{-1}Z_{-\sum_{j=2}^6\alpha_j} 
& 0 & 0 & 0\\
0 & q^{-3/2} & 0 & 0 & 0 & 0 & 0 \\
0 & 0 & q^{-2} Z_{-\alpha_4} & 0 & 0 & 0 & 0 \\
0 & 0 & q^{-5/2} & 0 & 0 & 0 & 0 \\
0 & 0 & q^{-3} Z_{\alpha_6} & 0 & 0 & 0 & 0 \\
0 & 0 & 0 & q^{-7/2} & 0 & 0 & 0 \\
0 & q^{-4}Z_{\sum_{j=3}^7\alpha_j} 
& -q^{-4} Z_{\sum_{j=5}^7\alpha_j} 
& q^{-4} Z_{\alpha_7} & 0 & 0 & 0 \\
0 & 0 & 0 & 0 & q^{-9/2}  & 0 & 0 \\
q^{-5} Z_{\sum_{j=1}^9\alpha_j}& -q^{-5}Z_{\sum_{j=3}^9\alpha_j} & q^{-5} Z_{\sum_{j=5}^9\alpha_j}  & -q^{-5} Z_{\sum_{j=7}^9\alpha_j} & q^{-5} Z_{\alpha_9} & 0 & 0 \\
0 & 0 & 0 & 0 & 0 & q^{-11/2}  & 0 \\
-q^{-6} Z_{\sum_{j=1}^{11}\alpha_j}& q^{-6} Z_{\sum_{j=3}^{11}\alpha_j} & -q^{-6} Z_{\sum_{j=5}^{11}\alpha_j}  & q^{-6} Z_{\sum_{j=7}^{11}\alpha_j} & -q^{-6} Z_{\sum_{j=9}^{11}\alpha_j} & q^{-6} Z_{\alpha_{11}}  & -q^{-6} Z_{-\sum_{j=12}^{14}\alpha_j}  \\
-q^{-6} Z_{\sum_{j=1}^{12}\alpha_j}& q^{-6} Z_{\sum_{j=3}^{12}\alpha_j} & -q^{-6} Z_{\sum_{j=5}^{12}\alpha_j}  & q^{-6} Z_{\sum_{j=7}^{12}\alpha_j} & -q^{-6} Z_{\sum_{j=9}^{12}\alpha_j} & q^{-6} Z_{\sum_{j=11}^{12}\alpha_j}  & -q^{-6} Z_{-\sum_{j=13}^{14}\alpha_j}  \\
-q^{-6} Z_{\sum_{j=1}^{13}\alpha_j}& q^{-6} Z_{\sum_{j=3}^{13}\alpha_j} & -q^{-6} Z_{\sum_{j=5}^{13}\alpha_j}  & q^{-6} Z_{\sum_{j=7}^{13}\alpha_j} & -q^{-6} Z_{\sum_{j=9}^{13}\alpha_j} & q^{-6} Z_{\sum_{j=11}^{13}\alpha_j}  & -q^{-6} Z_{-\alpha_{14}}  \\
0 & 0 & 0 & 0 & 0 & 0 & q^{-13/2}  \\
q^{-7} Z_{\sum_{j=1}^{15}\alpha_j}& -q^{-7} Z_{\sum_{j=3}^{15}\alpha_j} & q^{-7} Z_{\sum_{j=5}^{15}\alpha_j}  & -q^{-7} Z_{\sum_{j=7}^{15}\alpha_j} & q^{-7} Z_{\sum_{j=9}^{15}\alpha_j} & 0  & 0  \\
\end{pmatrix}
}
\\
\left(Q^{gr}_q\right)' &{=}
{\tiny
\begin{pmatrix}
q^{\frac{-1}{2}}  & 0 & 0 & 0 & 0 & 0 & 0 \\
q^{-1}Z_{\alpha_1} & 0 & 0 &-q^{-1}Z_{-\sum_{j=2}^8\alpha_j} 
& q^{-1}Z_{-\sum_{j=2}^{10}\alpha_j} 
& -q^{-1}Z_{-\sum_{j=2}^{12}\alpha_j} & q^{-1}Z_{-\sum_{j=2}^{14}\alpha_j} \\
0 & q^{-\frac{3}{2}} & 0 & 0 & 0 & 0 & 0 \\
-q^{-2}Z_{\sum_{j=1}^3\alpha_j} & q^{-2} Z_{\alpha_3} & q^{-2} Z_{-\sum_{j=4}^6\alpha_j} 
& -q^{-2} Z_{-\sum_{j=4}^{8}\alpha_j} 
& q^{-2}Z_{-\sum_{j=4}^{10}\alpha_j} & -q^{-2}Z_{-\sum_{j=4}^{12}\alpha_j} & q^{-2}Z_{-\sum_{j=4}^{14}\alpha_j} \\
-q^{-2}Z_{\sum_{j=1}^4\alpha_j} & q^{-2} Z_{\sum_{j=3}^4\alpha_j} & q^{-2} Z_{-\sum_{j=5}^6\alpha_j} 
& -q^{-2} Z_{-\sum_{j=5}^{8}\alpha_j} 
& q^{-2}Z_{-\sum_{j=5}^{10}\alpha_j} & -q^{-2}Z_{-\sum_{j=5}^{12}\alpha_j} & q^{-2}Z_{-\sum_{j=5}^{14}\alpha_j} \\
-q^{-2}Z_{\sum_{j=1}^5\alpha_j} & q^{-2} Z_{\sum_{j=3}^5\alpha_j} & q^{-2} Z_{-\alpha_6} 
& -q^{-2} Z_{-\sum_{j=6}^{8}\alpha_j} 
& q^{-2}Z_{-\sum_{j=6}^{10}\alpha_j} & -q^{-2}Z_{-\sum_{j=6}^{12}\alpha_j} & q^{-2}Z_{-\sum_{j=6}^{14}\alpha_j} \\
0 & 0 & q^{-5/2} & 0 & 0 & 0 & 0 \\
q^{-3}Z_{\sum_{j=1}^{7}\alpha_j}  & 0 & 0 & q^{-3} Z_{-\alpha_{8}} & -q^{-3} Z_{-\sum_{j=8}^{10}\alpha_{j}} & q^{-3}Z_{-\sum_{j=8}^{12}\alpha_j}  & -q^{-3}Z_{-\sum_{j=8}^{14}\alpha_j}  \\
0 & 0 & 0 & q^{-7/2} & 0 & 0 & 0 \\
0  & 0 & 0 & 0 & q^{-4} Z_{-\alpha_{10}} & -q^{-4} Z_{-\sum_{j=10}^{12}\alpha_j}  & q^{-4}Z_{-\sum_{j=10}^{14}\alpha_j}  \\
0 & 0 & 0 & 0 & q^{-9/2}  & 0 & 0 \\
0 & 0 & 0 & 0 & 0 & q^{-5}Z_{-\alpha_{12}} & 0 \\
0 & 0 & 0 & 0 & 0 & q^{-11/2}   & 0 \\
0 & 0 & 0 & 0 & 0 & q^{-6} Z_{\alpha_{13}} & 0 \\
0 & 0 & 0 & 0 & 0 & 0 & q^{-13/2} \\
0 & 0 & 0 & 0 & q^{-7} Z_{\sum_{j=11}^{15} \alpha_j} &- q^{-7} Z_{\sum_{j=13}^{15} \alpha_j } & q^{-7} Z_{\alpha_{15}} \\
\end{pmatrix}
}
\\
C &{=}
{\tiny
\begin{pmatrix}
1 & 0 & 0 & 0 & 0 & 0 & 0 \\
0 & 1 & 0 & 0 & 0 & 0 & 0 \\
-q^{\frac{1}{2}}Z_{\sum_{j=1}^4\alpha_j} & q^{\frac{1}{2}} Z_{\sum_{j=3}^4\alpha_j} & q^{\frac{1}{2}}  Z_{-\sum_{j=5}^6\alpha_j} 
& -q^{\frac{1}{2}}  Z_{-\sum_{j=5}^{8}\alpha_j} 
& q^{\frac{1}{2}} Z_{-\sum_{j=5}^{10}\alpha_j} & -q^{\frac{1}{2}} Z_{-\sum_{j=5}^{12}\alpha_j} & q^{\frac{1}{2}} Z_{-\sum_{j=5}^{14}\alpha_j} \\
0 & 0 & q & 0 & 0 & 0 & 0 \\
0 & 0 & 0 & q & 0 & 0 & 0 \\
0 & 0 & 0 & 0 & q  & 0 & 0 \\
0 & 0 & 0 & 0 & 0 & 0 & 1 \\
\end{pmatrix}
}
\\
\end{align*}

Straightforward checking proves $\left( Q^{gr}_q\right)'=Q^{gr}_q C$ showing that path orientation reversion does not change the quantum grassmannian measurement. 

\end{example}

Consider the network for $\X_{SL_n,\Sigma}$ shown for $n=6$ in Figure~\ref{fi:plabic_weights}. 
The boundary measurement matrix $Q^{gr}_q$ has size
$3n\times n$. The top $n\times n$ part $U=(Q^{gr}_q)_{[1,n]}$ is the diagonal matrix with $j$th diagonal elements $q^{-j+\frac{1}{2}}$;
the middle part of the quantum grassmannian matrix $(Q^{gr}_q)_{[n+1,2n]}=q^{-n} \M_1 S$; and the bottom part $(Q^{gr}_q)_{[2n+1,3n]}=q^{-n}  \M_2 S$.

Let's change the orientation of all snakelike right to left horizontal paths of the network Fig.~\ref{fi:plabic_weights}.  The result is shown on the Fig.~\ref{fig:n6}.

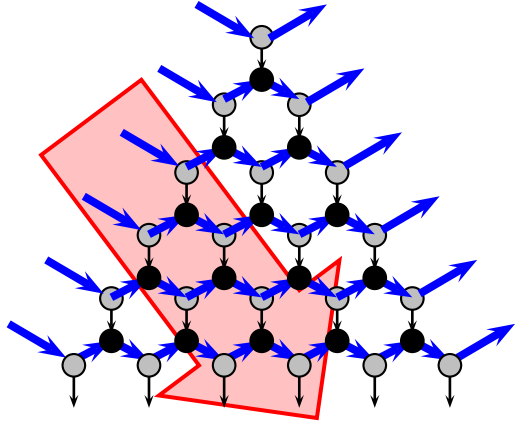
\begin{figure}[H]
	\begin{pspicture}(-3,-2)(4,4){
		%
		\psBigArrow[fillstyle=solid,opacity=0.8,fillcolor=red!30,linecolor=red,linewidth=1.5pt](-2.0,2)(1.,-2)

		\newcommand{\LEFTDOWNARROW}{%
			{\psset{unit=1}
				\rput(0,0){\psline[linecolor=blue,linewidth=3pt]{->}(0,0)(.765,.45)}
		}}
		\newcommand{\DOWNARROW}{%
	{\psset{unit=1}
					\rput(0,0){\psline[linecolor=black,linewidth=1pt]{->}(0,0.1)(0,-0.566)}
		\put(0,0){\pscircle[fillstyle=solid,fillcolor=lightgray]{.15}}
}}
		\newcommand{\LEFTUPARROW}{%
	{\psset{unit=1}
		\rput(0,0){\psline[linecolor=blue,linewidth=3pt]{<-}(0,0)(-.765,.45)}
}}
	\newcommand{\STARUP}{
			{\psset{unit=1}
	\rput(0,0){\psline[linecolor=blue,linewidth=3pt]{->}(0,0)(.5,-.26)}
	\rput(0,0){\psline[linecolor=black,linewidth=1pt]{<-}(0,0.1)(0,.466)}
	\rput(0,0){\psline[linecolor=blue,linewidth=3pt]{<-}(0,0)(-.5,-.26)}
	\put(0,0){\pscircle[fillstyle=solid,fillcolor=black]{.15}}
	\put(0,.566){\pscircle[fillstyle=solid,fillcolor=lightgray]{.15}}
}}
		\newcommand{\PATGEN}{%
			{\psset{unit=1}
				\rput(0,0){\psline[linecolor=blue,linewidth=2pt]{->}(0,0)(.45,.765)}
				\rput(0,0){\psline[linecolor=blue,linewidth=2pt]{->}(1,0)(0.1,0)}
				\rput(0,0){\psline[linecolor=blue,linewidth=2pt]{->}(0,0)(.45,-.765)}
				\put(0,0){\pscircle[fillstyle=solid,fillcolor=lightgray]{.1}}
		}}
		\newcommand{\PATLEFT}{%
			{\psset{unit=1}
				\rput(0,0){\psline[linecolor=blue,linewidth=2pt,linestyle=dashed]{->}(0,0)(.45,.765)}
				\rput(0,0){\psline[linecolor=blue,linewidth=2pt]{->}(1,0)(0.1,0)}
				\rput(0,0){\psline[linecolor=blue,linewidth=2pt]{->}(0,0)(.45,-.765)}
				\put(0,0){\pscircle[fillstyle=solid,fillcolor=lightgray]{.1}}
		}}
		\newcommand{\PATRIGHT}{%
			{\psset{unit=1}
				\rput(0,0){\psline[linecolor=blue,linewidth=2pt,linestyle=dashed]{->}(0,0)(.45,-.765)}
				\put(0,0){\pscircle[fillstyle=solid,fillcolor=lightgray]{.1}}
		}}
		\newcommand{\PATBOTTOM}{%
			{\psset{unit=1}
				\rput(0,0){\psline[linecolor=blue,linewidth=2pt]{->}(0,0)(.45,.765)}
				\rput(0,0){\psline[linecolor=blue,linewidth=2pt,linestyle=dashed]{->}(1,0)(0.1,0)}
				\put(0,0){\pscircle[fillstyle=solid,fillcolor=lightgray]{.1}}
		}}
		\newcommand{\PATTOP}{%
			{\psset{unit=1}
				\rput(0,0){\psline[linecolor=blue,linewidth=2pt]{->}(1,0)(0.1,0)}
				\rput(0,0){\psline[linecolor=blue,linewidth=2pt]{->}(0,0)(.45,-.765)}
				\put(0,0){\pscircle[fillstyle=solid,fillcolor=lightgray]{.1}}
		}}
		\newcommand{\PATBOTRIGHT}{%
			{\psset{unit=1}
				\rput(0,0){\psline[linecolor=blue,linewidth=2pt]{->}(0,0)(.45,.765)}
				\put(0,0){\pscircle[fillstyle=solid,fillcolor=lightgray]{.1}}
				\put(.5,0.85){\pscircle[fillstyle=solid,fillcolor=lightgray]{.1}}
		}}
		\multiput(-2,-1.176)(1.0,0){5}{\STARUP}
		\multiput(-1.5,-0.335)(1.0,0){4}{\STARUP}
		\multiput(-1.0,0.5)(1.0,0){3}{\STARUP}
		\multiput(-.5,1.4)(1.0,0){2}{\STARUP}
		\put(0,2.3){\STARUP}
		\multiput(2.6,-1.4)(-0.5,.85){6}{\LEFTDOWNARROW}
		\multiput(-2.6,-1.4)(0.5,.85){6}{\LEFTUPARROW}
		\multiput(-2.5,-1.5)(1.0,0){6}{\DOWNARROW}
		%
		
	}
	\end{pspicture}
	\caption{\small
		This network is obtained by the simultaneous change of orientations of the snakelike horizontal bold paths (colored blue). The big arrow shows the direction of non-normalized transport matrix $M_3$.
	}
	\label{fig:n6}
\end{figure}

The orientation change of all bold paths (see Figure~\ref{fig:n6}) leads to the new quantum grassmannian measurement $\left(Q^{gr}_q\right)'$.
Its middle part is the submatrix $(\left(Q^{gr}_q\right)'_{[n+1,2n]}=U$, the bottom $n\times n$ part $(\left(Q^{gr}_q\right)'_{[2n+1,3n]}=q^{-n}\M_3 S$. 
Using the fact that $Q^{gr}_q$ and $\left(Q^{gr}_q\right)'$ represent the same quantum grassmann element, we obtain
$q^{-n}\M_1 S C=U$, $q^{-n}\M_2 S C=q^{-n} \M_3 S$. Find $C$ from the first equation: $C=S^{-1}\M_1^{-1}q^n U$. Substituting the expression for $C$ into second equation we obtain $\M_2=\M_3SU^{-1}q^{-n}\M_1$. Note that $US=SU^{-1}q^{-n}$, then $US\M_2=(US\M_3)(US\M_1)$.
We conclude that $M_2=M_3 M_1$.

This is clearly equivalent to the second part of Theorem~\ref{th:MMsquare}
 $$T_1 T_2 T_3 =1.$$

%
%
%
%
%

Commutation relations between face weights induce $R$-matrix commutation relations between entries of $Q_q$.

Namely, the next lemma describes the commutation relation between elements of $Q_q$.
\begin{lemma}\label{lem:r-matrix}
$R_m \sheet{1}{Q_q}\otimes\sheet{2}{Q_q}=\sheet{2}{Q_q}\otimes\sheet{1}{Q_q} R_n$, where $R_m, R_n$ are given by formula ~\ref{R-matrix1}.
\end{lemma}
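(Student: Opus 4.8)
The plan is to prove the coordinate form of the asserted identity directly, by a path‑pair analysis that runs exactly parallel to the proof of the groupoid relation (Theorem~\ref{th:groupoid}) given above. Writing out $R_m$ and $R_n$ from (\ref{R-matrix1}) in components, the matrix equation $R_m\,\sheet{1}{Q_q}\otimes\sheet{2}{Q_q}=\sheet{2}{Q_q}\otimes\sheet{1}{Q_q}\,R_n$ is equivalent to a family of quadratic relations among the entries $(Q_q)_{ba}=\sum_{P:a\leadsto b}w(P)$, one for each ordered pair of sinks $(b_1,b_2)$ and sources $(a_1,a_2)$. Because every arc of $\Gamma$ runs right to left, each maximal path is horizontally monotone and hence simple, so $\mathrm{cross}(P)=0$ and no signs enter (the sign‑reversing involution that handles self‑crossings is needed only for the cyclic generalization of Section~\ref{s:cycles}). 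Expanding each side as a double sum $\sum_{P:a_1\leadsto b_1}\sum_{P':a_2\leadsto b_2}w(P)\,w(P')$ reduces the statement to comparing weights of pairs of paths after a prescribed reordering.

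First I would compute the commutation factor of two path weights. Since $w(P)=Z_{{\mathbf v}_P}$ depends only on the class ${\mathbf v}_P\in\Lambda$ determined by the covector $\tilde P$ introduced above, one has $w(P)\,w(P')=q^{2\langle {\mathbf v}_P,{\mathbf v}_{P'}\rangle}\,w(P')\,w(P)$; using the dual‑graph description of $\langle\cdot,\cdot\rangle$ in (\ref{eq:Omega}) together with the intersection‑index formula for $\tilde P$, the exponent $\langle {\mathbf v}_P,{\mathbf v}_{P'}\rangle$ equals the algebraic (signed) intersection number of the two loops obtained by closing $P$ and $P'$ along the boundary. As this number is a homotopy invariant, it depends only on whether the endpoint pairs $\{a_1,b_1\}$ and $\{a_2,b_2\}$ are linked on the boundary circle, producing a commutation factor $q^{0}$ in the unlinked case and $q^{\pm1}$ in the linked case. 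This is the geometric heart of the argument, and in the limit $\hbar\to 0$ it recovers the semiclassical brackets of Remark~\ref{rem:M-M} (the semiclassical statement itself being \cite{GSV}).

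Next I would set up the surgery (tail‑swap) bijection. For a pair $(P,P')$ with linked endpoints the two paths must cross; cutting both at their first shared vertex and exchanging tails yields a pair $(\widetilde P,\widetilde P')$ in which the two sink endpoints (respectively source endpoints) are interchanged, while the union of the faces lying to the right is preserved, so that $w(P)\,w(P')$ and $w(\widetilde P)\,w(\widetilde P')$ differ only by an explicit power of $q$ — precisely as in the identities $\col{CB}\,\col{A}=q\,\col{C}\,\col{BA}$ and $\col{B}\,\col{A}=\col{BA}$ used in the groupoid proof. Pairs with unlinked endpoints are fixed by the combinatorics and assemble into the diagonal part of $R$, whereas the linked pairs, grouped by this involution, contribute the off‑diagonal $(q-q^{-1})$ terms. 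Matching the collected contributions case by case against the coordinate expansions of $R_m\,\sheet{1}{Q_q}\otimes\sheet{2}{Q_q}$ and $\sheet{2}{Q_q}\otimes\sheet{1}{Q_q}\,R_n$ finishes the proof; the degenerate cases $a_1=a_2$ or $b_1=b_2$ collapse to the single‑source/single‑sink pairwise commutation already obtained in the first step.

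The hard part will be the exact $q$‑power bookkeeping in the linked case: one must check that the surgery weight‑ratio and the commutation factor conspire to give the coefficient $q-q^{-1}$ (not merely its semiclassical shadow), and that the boundary configurations where the tail‑swap degenerates — the analogue of Case II in the groupoid proof, where a common initial or terminal segment pins one endpoint to an extreme value — are counted once and only once. If the direct bookkeeping becomes unwieldy, a convenient technical fallback is to invoke the move‑invariance of $\mathrm{Meas}_q$ established in the quantum move‑equivalence lemmas above: since both sides of the relation are expressed through the boundary measurement matrix alone, one may replace $\Gamma$ by a reduced wiring‑diagram representative in which every elementary slice is a single crossing and the verification is purely local.
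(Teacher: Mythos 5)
Your overall architecture rests on the claim in your second step: that $w(P)\,w(P')=q^{2\langle {\bf v}_P,{\bf v}_{P'}\rangle}w(P')\,w(P)$ with an exponent that is a homotopy invariant determined solely by whether the endpoint pairs are linked on the boundary ($q^{0}$ unlinked, $q^{\pm1}$ linked). This claim is false, and the failure is fatal to both halves of your plan. First, as stated it is not even well posed: two loops obtained by closing arcs along the boundary of a disk are null\,-homotopic, so their algebraic intersection number is identically zero. Second, the paper's own running example (Fig.~\ref{fig:netw}, \ref{fig:netw1}) contradicts the dichotomy: there the \emph{linked} pair $(Q_q)_{11}=Z_{\alpha+\beta}$, $(Q_q)_{22}=Z_{\alpha+\gamma}$ has $\langle\alpha+\beta,\alpha+\gamma\rangle=0$ and commutes, while the \emph{unlinked} (nested) pair $(Q_q)_{21}=Z_{\alpha+\beta+\gamma}$, $(Q_q)_{12}=Z_\alpha$ has exponent $\pm1$ — exactly the opposite of what you assert, and consistent with the quantum-matrix pattern in which antidiagonal entries commute and diagonal entries satisfy $t_{11}t_{22}-t_{22}t_{11}=(q-q^{-1})t_{12}t_{21}$. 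Third, and most structurally, if all pairs $(P,P')$ with fixed endpoints quasi-commuted with one and the same exponent, the entries $(Q_q)_{b_1a_1}$ and $(Q_q)_{b_2a_2}$ would quasi-commute as sums of monomials; but the target relation forces their commutator to equal $(q-q^{-1})$ times a product over a \emph{different} set of path pairs, which is not a quasi-commutation. Hence the exponent necessarily varies with the actual paths, not just their endpoints, and your ``commuting components'' and your surgery bookkeeping both lose their foundation. A secondary problem is that the tail-swap at the first shared vertex maps crossing pairs onto pairs that still share a vertex, so disjoint pairs contributing to the $(q-q^{-1})$ term are never reached; your involution is not surjective onto the index set it must account for.

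For comparison, the paper's proof sidesteps all global path combinatorics: it slices the acyclic network into elementary one-fork pieces, writes $Q_q=\prod_i X_i$ with each $X_i$ an elementary matrix $L_i$ or $U_i$ differing from a diagonal matrix in a single $2\times1$ or $1\times2$ block, verifies $R_{m_i}\sheet{1}{X_i}\otimes\sheet{2}{X_i}=\sheet{2}{X_i}\otimes\sheet{1}{X_i}R_{m_{i+1}}$ by a one-line computation, and telescopes. Your proposed ``fallback'' via move-invariance points in this direction, but note that the paper needs no move-equivalence at all — the concatenation decomposition is available directly for any right-to-left acyclic network. If you want to salvage a path-level argument, the correct local datum is not an endpoint-determined intersection number but the contribution of each shared vertex/segment of $P$ and $P'$, which is precisely what the elementary-slice factorization packages for you.
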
  

\begin{proof} We will prove this statement using factorization of matrix $Q_q$ into a product of elementary matrices. Let $N_1$ be a network in rectangle with $m$ sinks on the left and $n$ sources on the right. $N_1$ can be presented as a concatenation of elementary networks of two special kinds.

\begin{figure}[H]
\begin{pspicture}(0,0)(6,3){\psset{unit=0.8}
\newcommand{\NETA}{%
{\psset{unit=1}
{\psline[linecolor=blue,linestyle=dashed,linewidth=2pt]{-}(0,0)(2.,0.)}
{\psline[linecolor=blue,linestyle=dashed,linewidth=2pt]{-}(2,0)(2.,3.)}
{\psline[linecolor=blue,linestyle=dashed,linewidth=2pt]{-}(2,3)(0,3)}
{\psline[linecolor=blue,linestyle=dashed,linewidth=2pt]{-}(0,3)(0,0)}
{\psline[linecolor=black,linewidth=2pt]{<-}(0,1)(2,1)}
{\psline[linecolor=black,linewidth=2pt]{<-}(0,1.5)(1,1.5)}
{\psline[linecolor=black,linewidth=2pt]{<-}(1,1.5)(2,1.5)}
{\psline[linecolor=black,linewidth=2pt]{<-}(0,2)(1,1.5)}
{\psline[linecolor=black,linewidth=2pt]{<-}(0,2.5)(2,2.5)}
}
}
\newcommand{\NETB}{%
{\psset{unit=1}
{\psline[linecolor=blue,linestyle=dashed,linewidth=2pt]{-}(0,0)(2.,0.)}
{\psline[linecolor=blue,linestyle=dashed,linewidth=2pt]{-}(2,0)(2,3.)}
{\psline[linecolor=blue,linestyle=dashed,linewidth=2pt]{-}(2,3)(0,3)}
{\psline[linecolor=blue,linestyle=dashed,linewidth=2pt]{-}(0,3)(0,0)}
{\psline[linecolor=black,linewidth=2pt]{<-}(0,1)(2,1)}
{\psline[linecolor=black,linewidth=2pt]{<-}(0,1.5)(1,1.5)}
{\psline[linecolor=black,linewidth=2pt]{<-}(1.1,1.5)(2,1.5)}
{\psline[linecolor=black,linewidth=2pt]{<-}(1.05,1.55)(2,2)}
{\psline[linecolor=black,linewidth=2pt]{<-}(0,2.5)(2,2.5)}
}
}
\put(0,0){\NETA}
\put(4,0){\NETB}
}
\end{pspicture}
\caption{\small Elementary forks}
\label{fig:elementaryfork}
\end{figure}
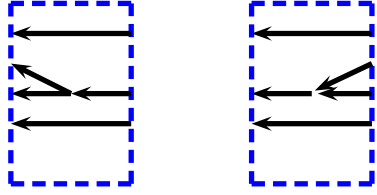

Let $N$ be a network obtained by adding the fork to $j$th sink, $j\in [1,m]$ (see Figure~\ref{fig:addfork}).

\begin{figure}[H]
\begin{pspicture}(-5,-1)(10,4){\psset{unit=0.8}

{\psline[linecolor=blue,linestyle=dashed,linewidth=2pt]{-}(3,0)(6.,0.)}
{\psline[linecolor=blue,linestyle=dashed,linewidth=2pt]{-}(6,0)(6.,3.2)}
{\psline[linecolor=blue,linestyle=dashed,linewidth=2pt]{-}(6,3.2)(3,3.2)}
{\psline[linecolor=blue,linestyle=dashed,linewidth=2pt]{-}(3,3.2)(3,0)}
{\psline[linecolor=black,linewidth=2pt]{<-}(3,0.7)(5,0.7)}
{\psline[linecolor=black,linewidth=2pt]{<-}(5,0.7)(6,0.7)}
{\psline[linecolor=black,linewidth=2pt]{<-}(4,2)(4.5,2)}
{\psline[linecolor=black,linewidth=2pt]{<-}(4.5,2)(6,2)}
{\psline[linecolor=black,linewidth=2pt]{<-}(3,1.5)(4,2)}
{\psline[linecolor=black,linewidth=2pt]{<-}(3,2.5)(4,2)}
{\psline[linecolor=black,linewidth=2pt]{->}(5,0.7)(4.5,2)}

{\psline[linecolor=blue,linestyle=dashed,linewidth=2pt]{-}(0,0)(2.,0.)}
{\psline[linecolor=blue,linestyle=dashed,linewidth=2pt]{-}(2,0)(2.,3.2)}
{\psline[linecolor=blue,linestyle=dashed,linewidth=2pt]{-}(2,3.2)(0,3.2)}
{\psline[linecolor=blue,linestyle=dashed,linewidth=2pt]{-}(0,3.2)(0,0)}
{\psline[linecolor=black,linewidth=2pt]{<-}(0,0.7)(2,0.7)}
{\psline[linecolor=black,linewidth=2pt]{<-}(1,1.5)(2,1.5)}
{\psline[linecolor=black,linewidth=2pt]{<-}(0,1.5)(1,1.5)}
{\psline[linecolor=black,linewidth=2pt]{<-}(0,2.2)(1,1.5)}
{\psline[linecolor=black,linewidth=2pt]{<-}(0,2.5)(2,2.5)}
\rput(1.5,2.8){\makebox(0,0)[cc]{\hbox{{$\mathbf  f_1$}}}}
\rput(1.5,2.3){\makebox(0,0)[cc]{\hbox{{$\mathbf  \dots $}}}}
\rput(1.5,1.9){\makebox(0,0)[cc]{\hbox{{$\mathbf  f_2 $}}}}
\rput(0.3,1.75){\makebox(0,0)[cc]{\hbox{{$\mathbf  a_2 $}}}}
\rput(1.5,1.2){\makebox(0,0)[cc]{\hbox{{$\mathbf  f_3 $}}}}
\rput(1.5,0.4){\makebox(0,0)[cc]{\hbox{{$\mathbf  f_4 $}}}}

}
\end{pspicture}
\caption{\small Adding fork to the second sink. Note, $\col{a_2 f_1 f_2 \dots f_4}=1$.}
\label{fig:addfork}
\end{figure}
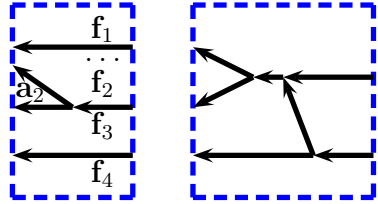

Using the construction above, we write that, $Q_q=\prod_i X_i$, where $X_i=L_i$ or $X_i=U_i$ is an  $m_i\times m_{i+1}$ matrix  where ($m_{i+1}=m_i-1$ in the first case or $m_{i+1}=m_i+1$ in the second.) 
$$L_i=\begin{pmatrix}
t_1 & 0 & \dots & \dots &  0 & 0 & \dots\\
0 & t_2 & \dots & \dots  &   0 & 0 & \dots\\
\vdots & & \ddots &     &  0 & 0 & \dots \\
\hline 
0& &     \dots & t_i & 0 & 0 & \dots \\
0 & &  \dots   & \col{t_i Z_{a_i}} & 0 & 0 & \dots \\
\hline 
0 &  & \dots  & 0  & t_{i+1} & 0 & \dots \\
 &  & \dots  &   &  &  & \ddots \\
\end{pmatrix}, \quad t_1=\col{Z_{f_1}}, t_2=\col{Z_{f_1} Z_{f_2}}, \text{etc}
$$
and 
$$U_i=\begin{pmatrix}
t_1 & 0 & \dots & \dots &  0 & 0 & \dots\\
0 & t_2 & \dots & \dots  &   0 & 0 & \dots\\
\vdots & & \ddots &     &  0 & 0 & \dots \\
\hline 
0& &     \dots & \col{t_i} & \col{t_i Z_{a_i}} & 0 & \dots \\
\hline 
0 & &  \dots   & 0 & 0 & t_{i+1} & \dots \\
 &  & \dots  &   &  &  & \ddots \\
\end{pmatrix}, \quad t_1=\col{Z_{f_1}}, t_2=\col{Z_{f_1} Z_{f_2}}, \text{etc}.
$$

Variables $t_j $ commute with each other and commute with $Z_{a_i}$ unless $j=i$. Otherwise,  $Z_{a_i} t_i=q t_i Z_{a_i}$, $\col{Z_{a_i} t_i}=q^{-1/2}Z_{a_i} t_i$. All entries of $X_i$  commute with all entries of $X_j$ for $i\ne j$.

Clearly, it is enough to check the relations for $2\times 1$ and $1\times 2$ matrices. 

For $m_i=1$, we have $R_{m_i}\sheet{1}{L_i}\otimes\sheet{2}{L_i}=\sheet{2}{L_i}\otimes\sheet{1}{L_i}R_{m_{i+1}}$, 
$R_{m_i}\sheet{1}{U_i}\otimes\sheet{2}{U_i}=\sheet{2}{U_i}\otimes\sheet{1}{U_i}R_{m_{i+1}}$.
Indeed,
$$\begin{pmatrix} q & 0 & 0 & 0\\ 0 & 1 & 0 & 0\\ 0 & q-q^{-1} & 1 & 0 \\ 0 & 0 & 0 & q\end{pmatrix}\left[
\sheet{1}{\begin{pmatrix} a \\ b\end{pmatrix}}\otimes \sheet{2}{\begin{pmatrix} a\\ b\end{pmatrix}}\right]=
\begin{pmatrix} q & 0 & 0 & 0\\ 0 & 1 & 0 & 0\\ 0 & q-q^{-1} & 1 & 0 \\ 0 & 0 & 0 & q\end{pmatrix}\begin{pmatrix} a^2\\a b\\b a\\b^2\end{pmatrix}=$$
$$=\begin{pmatrix} qa^2\\qba\\qab\\qb^2\end{pmatrix}=\left[\sheet{2}{\begin{pmatrix} a\\b\end{pmatrix}}\otimes\sheet{1}{\begin{pmatrix} a\\ b\end{pmatrix}}\right]\cdot\begin{pmatrix}q\end{pmatrix}
$$

Then, $R_{m_1}\sheet{1}{Q_q}\otimes\sheet{2}{Q_q}=R_{m_1} \prod^n_{i=1}\sheet{1}{X_i}\otimes \prod_{i=1}^n \sheet{2}{X_i}=R_{m_1} \sheet{1} {X_1}\prod_{i=2}^n\sheet{1}{X_i}\otimes \sheet{2}{X_1}\prod_{i=2}^n \sheet{2}{X_i}=\sheet{2}{X_1}\otimes\sheet{1}{X_1}R_{m_2}\prod_{i=2}^n \sheet{1}{X_i}\otimes\prod_{i+2}^n \sheet{2}{X_i}=\dots=\prod_{i=1}^n \sheet{2}{X_i}\otimes\prod_{i=1}^n \sheet{1}{X_i}R_{m_n}=\sheet{2}{Q_q}\otimes\sheet{1}{Q_q}R_{m_n}$.
\end{proof}

\begin{corollary} Theorem~\ref{th:MM}  and Remark~\ref{rem:M-M} follow from Lemma~\ref{lem:r-matrix}.
\end{corollary}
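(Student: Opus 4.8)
The plan is to assemble Theorem~\ref{th:MM} and Remark~\ref{rem:M-M} from Lemma~\ref{lem:r-matrix} in three stages, following the normalization prescription of Definition~\ref{def:M}. First I would apply Lemma~\ref{lem:r-matrix} to the specific plabic network $G$ of Figure~\ref{fi:plab}, augmented by the three corner weights $Z_{n,0,0},Z_{0,n,0},Z_{0,0,n}$, which is a planar directed network with $n$ sources and $2n$ sinks. For this network the boundary measurement matrix $Q_q$ is the $2n\times n$ matrix $\mathcal M=\begin{pmatrix}\mathcal M_1\\\mathcal M_2\end{pmatrix}$, so Lemma~\ref{lem:r-matrix} reads $\mathcal R_{2n}(q)\,\sheet{1}{\mathcal M}\otimes\sheet{2}{\mathcal M}=\sheet{2}{\mathcal M}\otimes\sheet{1}{\mathcal M}\,\mathcal R_n(q)$ with the $R$-matrices (\ref{R-matrix1}).

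Second, I would decompose $\mathcal R_{2n}(q)$ into its $n\times n$ diagonal and off-diagonal blocks exactly as carried out before the statement of Theorem~\ref{th:MM}: the pairs of first indices $(i,j)$ and $(n+i,n+j)$ reproduce $\mathcal R_n(q)$ and hence relation (\ref{R1}) for each of $\mathcal M_1$ and $\mathcal M_2$; the index pair $(i,n+j)$ gives the bare cross relation (\ref{R2}); and the remaining block $(n+i,j)$, after pushing the permutation $P_n$ through the product and invoking the identity $(q-q^{-1})P_n=\mathcal R_n(q)-\mathcal R_n^{\text{T}}(q^{-1})$ from (\ref{R-rel}), yields the transposed way of writing the same relation (\ref{R2}). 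This step is essentially the computation already displayed with its $\qed$, so in the write-up I would simply cite it as establishing (\ref{R1}) and (\ref{R2}).

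Third, I would pass from $\mathcal M_i$ to the normalized $M_i=QS\mathcal M_i D^{-1}$ of Definition~\ref{def:M}. Here I use the two intertwining identities stated just after that definition: $\sheet{1}Q\otimes\sheet{2}Q$ commutes with $\mathcal R_n(q)$ for any diagonal $Q$, while $\sheet{1}S\otimes\sheet{2}S\,\mathcal R_n(q)=\mathcal R_n^{\text{T}}(q)\,\sheet{1}S\otimes\sheet{2}S$ for any antidiagonal $S$. Conjugating (\ref{R1}) by $QS\otimes QS$ therefore turns the $R$-matrix on the left into its transpose, producing precisely $R_n^{\text{T}}(q)\,\sheet{1}M_i\otimes\sheet{2}M_i=\sheet{2}M_i\otimes\sheet{1}M_i\,R_n(q)$, whereas conjugating (\ref{R2}) leaves an untransposed $R_n(q)$ on the right. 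The Casimir factors $D_1$ and $D_1 D_2$ commute with everything in sight except through the single nontrivial pair $(D_1,D_2)$, so their only residual effect is the overall scalar $q^{-1/n}$ in the trigonometric normalization (\ref{R-matrix}); this reproduces Theorem~\ref{th:MM}.

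Finally, Remark~\ref{rem:M-M} is pure $R$-matrix algebra: using $\mathcal R_k^{-1}(q)=\mathcal R_k(q^{-1})$ and the total-transposition symmetry recorded in (\ref{R-rel}) one rewrites $R_n^{\text{T}}(q)\,\sheet{1}M_i\otimes\sheet{2}M_i=\sheet{2}M_i\otimes\sheet{1}M_i\,R_n(q)$ in the equivalent form $\sheet{1}M_i\otimes\sheet{2}M_i\,R_n^{\text{T}}(q)=R_n(q)\,\sheet{2}M_i\otimes\sheet{1}M_i$, and the quoted semiclassical brackets follow by expanding at $q=e^{\hbar/2}$ and keeping the term linear in $\hbar$. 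The main obstacle I anticipate is the bookkeeping in the third stage: one must track carefully how the antidiagonal $S$ transposes the $R$-matrix and how the diagonal $Q$ together with the Casimir normalizers conspire to produce exactly the scalar $q^{-1/n}$ and no spurious powers of $q$, since a sign or exponent slip there would spoil the precise trigonometric form (\ref{R-matrix}) while leaving the underlying algebra intact.
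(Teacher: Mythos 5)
Your proposal is correct and follows essentially the same route as the paper: the paper's own proof of this corollary is just a pointer back to the block decomposition of $\mathcal R_{2n}(q)$ into relations (\ref{R1}) and (\ref{R2}) and the subsequent normalization by $Q$, $S$, $D_1$, $D_2$ already carried out in Sec.~\ref{sec:QFG}, which is exactly the chain you reassemble. The only heavy step is the one you already flag (tracking how $S$ transposes the $R$-matrix and how the Casimir normalizers affect only the cross relation), and your treatment of it matches the paper's.
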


\begin{proof} Indeed, it is enough to consider $2n\times n$ matrix $B$ of boundary measurements of the network shown on Figure~\ref{fi:plabic_weights}.
Denote by $\M_1$ the top $n\times n$ block of $Q_q$, $\M_2$ is the bottom $n\times n$ block. $Q_q$ satisfies $R$-matrix relation \ref{lem:r-matrix}.
Choose subset of these relations between tensor products of elements of $\M_1$ and $\M_2$ we obtain Theorem~\ref{th:MM}.
Relations between tensor products of elements of each matrix $\M_i$ give Remark~\ref{rem:M-M}.
\end{proof}

%
%
%
%
%
%
%
%
%

\section{Directed networks with cycles}\label{s:cycles}

In this section we generalize Lemma~\ref{lem:r-matrix} to the case of planar networks containing oriented cycles and interlacing sources and sinks.  $R$-matrix formulation of commutation relation of elements of transport matrices is not valid for more general networks with interlacing sources and sinks. The corresponding statement is formulated in Theorem~\ref{thm:qnetwork}.  For the case of networks with separated sources and sinks
commutation relation of Theorem~\ref{thm:qnetwork}   coincide with those of Lemma~\ref{lem:r-matrix}.

\begin{definition}
We assign to every oriented path $P:j\leadsto i$ from a source $j$  to a sink $i$  the {\sl quantum weight}
$$
w(P)=\col{\mathop{\prod_{\text{ faces }\alpha\text{ lie to the right}}}_{\text{ of the path } P} Z_\alpha},
$$
where the product is taken with repetitions, 
\end{definition}

\begin{definition}
For {\bf any} planar directed network $\mathcal N$, define {\sl transport elements}
$$
(\alpha,a):=\sum_{\text{all paths }\alpha\leadsto a }(-1)^{\#\text{\,self-intersections}} w(P_{\alpha\leadsto a})
$$
where the sum ranges all paths from the source $j$ to the sink $i$. This sum is finite for acyclic networks and can be infinite for networks containing cycles. 
In this section, we let Greek letters denote sources and Latin letters denote sinks. We draw these transport elements as simple directed paths $\alpha\to a$.
\end{definition}
\begin{theorem}\label{thm:qnetwork}
For any planar network, we have the algebra of transport elements:
\begin{align*}
&
\raisebox{-15pt}{\mbox{
{\psset{unit=0.2}
\begin{pspicture}(-4,-3)(4,3)
\pscircle[linecolor=red,linestyle=dashed](0,0){3}
\psarc[linewidth=2pt,linecolor=blue]{->}(0,-6){5.2}{60}{120}
\psarc[linewidth=2pt,linecolor=blue]{<-}(0,6){5.2}{240}{300}
\put(3,2){\makebox(0,0)[lc]{\hbox{{$\alpha$}}}}
\put(3,-2){\makebox(0,0)[lc]{\hbox{{$\beta$}}}}
\put(-3,2){\makebox(0,0)[rc]{\hbox{{$a$}}}}
\put(-3,-2){\makebox(0,0)[rc]{\hbox{{$b$}}}}
\end{pspicture}
}}}
\quad 
[(\alpha,a),(\beta,b)]=(q-q^{-1})(\alpha,b)(\beta,a);
\\
&\raisebox{-15pt}{\mbox{
{\psset{unit=0.2}
\begin{pspicture}(-4,-3)(4,3)
\pscircle[linecolor=red,linestyle=dashed](0,0){3}
\psline[linewidth=2pt,linecolor=blue]{->}(2.8,1)(-2.8,-1)
\psline[linewidth=4pt,linecolor=white](2.8,-1)(-2.8,1)
\psline[linewidth=2pt,linecolor=blue]{->}(2.8,-1)(-2.8,1)
\put(3,2){\makebox(0,0)[lc]{\hbox{{$\alpha$}}}}
\put(3,-2){\makebox(0,0)[lc]{\hbox{{$\beta$}}}}
\put(-3,2){\makebox(0,0)[rc]{\hbox{{$a$}}}}
\put(-3,-2){\makebox(0,0)[rc]{\hbox{{$b$}}}}
\end{pspicture}
}}}
\quad
[(\alpha,b),(\beta,a)]=0,
\qquad
\raisebox{-15pt}{\mbox{
{\psset{unit=0.2}
\begin{pspicture}(-4,-3)(4,3)
\pscircle[linecolor=red,linestyle=dashed](0,0){3}
\psarc[linewidth=2pt,linecolor=blue]{->}(0,-6){5.2}{60}{120}
\psarc[linewidth=2pt,linecolor=blue]{->}(0,6){5.2}{240}{300}
\put(3,2){\makebox(0,0)[lc]{\hbox{{$a$}}}}
\put(3,-2){\makebox(0,0)[lc]{\hbox{{$\beta$}}}}
\put(-3,2){\makebox(0,0)[rc]{\hbox{{$\alpha$}}}}
\put(-3,-2){\makebox(0,0)[rc]{\hbox{{$b$}}}}
\end{pspicture}
}}}
\quad
[(\alpha,a),(\beta,b)]=0,
\\
&\raisebox{-15pt}{\mbox{
{\psset{unit=0.2}
\begin{pspicture}(-4,-3)(4,3)
\pscircle[linecolor=red,linestyle=dashed](0,0){3}
\rput{30}(0,0){\psarc[linewidth=2pt,linecolor=blue]{->}(0,-6){5.2}{60}{120}}
\rput{-30}(0,0){\psarc[linewidth=2pt,linecolor=blue]{<-}(0,6){5.2}{240}{300}}
\put(3.2,0){\makebox(0,0)[lc]{\hbox{{$\alpha$}}}}
\put(-2,2.5){\makebox(0,0)[rb]{\hbox{{$a$}}}}
\put(-2,-2.5){\makebox(0,0)[rt]{\hbox{{$b$}}}}
\end{pspicture}
}}}
\quad 
(\alpha,a)(\alpha,b)=q^{-1}(\alpha,b)(\alpha,a),
\qquad
\raisebox{-15pt}{\mbox{
{\psset{unit=0.2}
\begin{pspicture}(-4,-3)(4,3)
\pscircle[linecolor=red,linestyle=dashed](0,0){3}
\rput{-30}(0,0){\psarc[linewidth=2pt,linecolor=blue]{->}(0,-6){5.2}{60}{120}}
\rput{30}(0,0){\psarc[linewidth=2pt,linecolor=blue]{<-}(0,6){5.2}{240}{300}}
\put(-3.2,0){\makebox(0,0)[rc]{\hbox{{$a$}}}}
\put(2,2.5){\makebox(0,0)[lb]{\hbox{{$\alpha$}}}}
\put(2,-2.5){\makebox(0,0)[lt]{\hbox{{$\beta$}}}}
\end{pspicture}
}}}
\quad
(\beta,a)(\alpha,a)=q^{-1}(\alpha,a)(\beta,a).
\end{align*}
\end{theorem}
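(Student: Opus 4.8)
The plan is to reduce the four assertions to a single universal computation and then to dispose of the two features that distinguish the present setting from Lemma~\ref{lem:r-matrix}, namely the possible interlacing of sources and sinks and the possible presence of oriented cycles. First I would observe that the commutation relation between two transport elements $(\alpha,a)$ and $(\beta,b)$ depends only on the cyclic order of the four boundary points $\alpha,\beta,a,b$ along the boundary: the interior of the network enters only through the individual path sums, and the relation is governed by how the pairings $\alpha\!\to\! a$ and $\beta\!\to\! b$ link one another on the boundary circle. This leaves exactly the handful of configurations drawn in the statement, and it suffices to verify one representative of each.

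\textbf{The separated acyclic backbone.} For a network with all sources on one boundary arc, all sinks on the complementary arc, and no oriented cycles, everything is already contained in Lemma~\ref{lem:r-matrix}. I would expand the identity $\mathcal R_m\,\sheet{1}{Q_q}\otimes\sheet{2}{Q_q}=\sheet{2}{Q_q}\otimes\sheet{1}{Q_q}\,\mathcal R_n$ in components, using the explicit form (\ref{R-matrix1}) of $\mathcal R_k$. Writing $(\alpha,a)=(Q_q)_{a\alpha}$, the diagonal part $\sum e_{ii}\otimes e_{jj}+(q-1)\sum e_{ii}\otimes e_{ii}$ produces the scalar relations: it yields $[(\alpha,b),(\beta,a)]=0$ in the crossing case and the $q^{-1}$-commutation of a single source with two sinks and of a single sink with two sources (the two relations on the last line). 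The off-diagonal term $(q-q^{-1})\sum_{j<i}e_{ij}\otimes e_{ji}$, which occurs on both sides with interchanged roles, is precisely what supplies the correction $(q-q^{-1})(\alpha,b)(\beta,a)$ in the nested case. Thus, for separated sources and sinks, all four relations are a direct transcription of the $R$-matrix relation.

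\textbf{Interlacing.} The one configuration not of separated type is the interlacing one (the right-hand picture in the second line), in which a source and a sink alternate around the circle and the $R$-matrix form is no longer available. Here I would invoke Lemma~\ref{lem:rigid}: reversing the orientation of an unequivocal path turns a source into a sink and conversely, and transforms the quantum Grassmannian measurement by right multiplication by the invertible matrix $C$ exhibited in its proof, without altering the measurement itself. Choosing the reversal so as to bring $\alpha,\beta,a,b$ into a separated cyclic order, I would read off the relation for $(\alpha,a),(\beta,b)$ from the backbone and transport it back through $C$. Tracking the signs and the half-integer powers of $q$ carried by $C$ shows that the surviving correction terms cancel, giving the stated vanishing commutator $[(\alpha,a),(\beta,b)]=0$; the same mechanism reproduces the remaining relations with their coefficients.

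\textbf{Cycles, and the main difficulty.} It remains to admit oriented cycles. Each transport element then becomes an infinite sum over paths that may wind arbitrarily often, and $(\alpha,a)\in\hat R$ is the value of the corresponding geometric series. Two routes are available. One may first remove loops by the equivalence moves, above all the move R1 of Figure~\ref{fig:R1}, which replaces a loop by its geometrically summed weights; since move-equivalent networks are quantum-equivalent (preserving both the boundary measurement and the $q$-commutativity of the face weights), this reduces the claim to the loopless case already treated. Alternatively, and more robustly, I would argue directly as in the proof of Theorem~\ref{th:groupoid}: write each product $(\alpha,a)(\beta,b)$ as a double sum over pairs of paths, reorder each term to Weyl form picking up the power $q^{2\langle{\bf v}_{P'},{\bf v}_P\rangle}$ governed by the skew form, and set up the uncrossing involution that swaps the tails of two paths past a crossing. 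Because this involution and the reordering are \emph{local} at the crossing, they are insensitive to the behaviour of the paths elsewhere, and in particular to winding around cycles. The delicate point, which I expect to be the main obstacle, is the simultaneous bookkeeping of the self-intersection signs $(-1)^{\#\,\text{self-intersections}}$ and the reordering $q$-powers: one must check that they combine so that the geometric series telescope to well-defined elements of $\hat R$ and that the uncrossing map is a genuine sign-reversing bijection in the presence of windings, so that the residual terms assemble exactly into $(q-q^{-1})(\alpha,b)(\beta,a)$ or cancel. Establishing this convergence-and-sign compatibility is the technical heart of the argument.
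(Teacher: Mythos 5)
Your proposal correctly identifies the acyclic separated backbone (Lemma~\ref{lem:r-matrix}) and the role of path reversal, but it misses the key idea that actually carries the paper's proof, and the part you flag as ``the technical heart'' is left unresolved rather than proved. The paper does not attack cycles by move-equivalence or by a direct uncrossing involution on infinite path sums. Instead it proceeds by induction on \emph{closing a neighbouring sink--source pair} $(a,\alpha)$: gluing the sink $a$ to the adjacent source $\alpha$ creates the cycles, and the new transport elements are expressed in closed form through the old ones as
$\overline{(\beta,b)}=(\beta,b)\mp(\beta,a)\,\frac{1}{1+(\alpha,a)}\,(\alpha,b)$,
where the factor $\frac{1}{1+(\alpha,a)}$ is exactly the geometric series over windings through the new cycle, with the self-intersection signs built in. The theorem is then verified by a purely algebraic computation: six boundary configurations, each with two variants, using only the inductive commutation relations for the unbarred elements and the identity $[A,(1+B)^{-1}]=-(1+B)^{-1}[A,B](1+B)^{-1}$. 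This device disposes simultaneously of the convergence, the sign bookkeeping, and the interlacing (closures progressively produce the interlaced boundary configurations), which is precisely the bundle of difficulties your proposal defers.

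Concretely, the gaps in your route are these. First, the reduction of the interlaced case to the separated one via Lemma~\ref{lem:rigid} requires an \emph{unequivocal} path between the chosen boundary vertices, which need not exist in a general planar network, and even when it does the matrix $C$ mixes the measurement entries nontrivially, so ``transporting the relation back through $C$'' is itself a computation comparable in length to the paper's case analysis and is not sketched. Second, for cycles, route (a) fails because the moves M1--M3, R1--R3 reduce a network only to a \emph{reduced} network, which may still contain oriented cycles; and route (b) is exactly the statement you would need to prove --- that the uncrossing involution is sign-reversing and compatible with Weyl reordering on infinite winding sums --- and you explicitly leave it unestablished. Without either the closure formula or a completed version of (b), the cyclic case of the theorem is not proved.
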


For acyclic networks these theorem was proven above; we now consider the case of network with cycles. The proof will be by induction. We treat in details only cases with four distinct sources and sinks (the first three cases in the theorem). 

We consider all possible cases corresponding to the situation in which we close the sink $a$ and the (neighbour) source $\alpha$. For a path $(\beta,b)$ we have two possibilities: 
$$
\begin{pspicture}(-2,-1)(2,2.5)
\psframe[linewidth=2pt,linecolor=blue,linestyle=dashed](-1,-0.5)(1,1.5)
\psarc[linewidth=2pt,linecolor=red,linestyle=dashed](-1,1.5){0.5}{90}{270}
\psarc[linewidth=2pt,linecolor=red,linestyle=dashed](1,1.5){0.5}{-90}{90}
\psline[linewidth=2pt,linecolor=red,linestyle=dashed]{->}(-1,2)(0,2)
\psline[linewidth=2pt,linecolor=red,linestyle=dashed](0,2)(1,2)
\psline[linewidth=1.5pt,linecolor=red,linestyle=solid](-1,1)(0,1)
\psline[linewidth=1.5pt,linecolor=red,linestyle=solid]{<-}(0,1)(1,1)
\psline[linewidth=1.5pt,linecolor=red,linestyle=solid]{<-}(0,0)(1,0)
\psline[linewidth=1.5pt,linecolor=red,linestyle=solid](-1,0)(0,0)
\psbezier[linecolor=white,linewidth=2.5pt]{<-}(-1,0)(-0.3,0)(0.3,1)(1,1)
\psbezier[linecolor=blue,linewidth=1.5pt]{<-}(-1,0)(-0.3,0)(0.3,1)(1,1)
\psbezier[linecolor=white,linewidth=2.5pt]{<-}(-1,1)(-0.3,1)(0.3,0)(1,0)
\psbezier[linecolor=blue,linewidth=1.5pt]{<-}(-1,1)(-0.3,1)(0.3,0)(1,0)
\put(1.2,1){\makebox(0,0)[lt]{\hbox{{$\alpha$}}}}
\put(1.2,0){\makebox(0,0)[lc]{\hbox{{$\beta$}}}}
\put(-1.2,1){\makebox(0,0)[rt]{\hbox{{$a$}}}}
\put(-1.2,0){\makebox(0,0)[rc]{\hbox{{$b$}}}}
\end{pspicture}
\begin{pspicture}(-2,-1)(2,2.5)
\psframe[linewidth=2pt,linecolor=blue,linestyle=dashed](-1,-0.5)(1,1.5)
\psarc[linewidth=2pt,linecolor=red,linestyle=dashed](-1,1.5){0.5}{90}{270}
\psarc[linewidth=2pt,linecolor=red,linestyle=dashed](1,1.5){0.5}{-90}{90}
\psline[linewidth=2pt,linecolor=red,linestyle=dashed]{->}(-1,2)(0,2)
\psline[linewidth=2pt,linecolor=red,linestyle=dashed](0,2)(1,2)
\psline[linewidth=1.5pt,linecolor=red,linestyle=solid](-1,1)(0,1)
\psline[linewidth=1.5pt,linecolor=red,linestyle=solid]{<-}(0,1)(1,1)
\psline[linewidth=1.5pt,linecolor=red,linestyle=solid](0,0)(1,0)
\psline[linewidth=1.5pt,linecolor=red,linestyle=solid]{->}(-1,0)(0,0)
\psarc[linecolor=white,linewidth=2.5pt]{->}(-1,0.5){0.5}{-90}{90}
\psarc[linecolor=blue,linewidth=1.5pt]{->}(-1,0.5){0.5}{-90}{90}
\psarc[linecolor=white,linewidth=2.5pt]{->}(1,0.5){0.5}{90}{270}
\psarc[linecolor=blue,linewidth=1.5pt]{->}(1,0.5){0.5}{90}{270}
\put(1.2,1){\makebox(0,0)[lt]{\hbox{{$\alpha$}}}}
\put(1.2,0){\makebox(0,0)[lc]{\hbox{{$b$}}}}
\put(-1.2,1){\makebox(0,0)[rt]{\hbox{{$a$}}}}
\put(-1.2,0){\makebox(0,0)[rc]{\hbox{{$\beta$}}}}
\end{pspicture}
$$

We begin with observation that in both these cases,
$$
\col{(\alpha,b)(\alpha,a)^n(\beta,a)}=(\alpha,b)(\alpha,a)^n(\beta,a)=(\beta,a)(\alpha,a)^n(\alpha,b),
$$
where on the right we assume the natural order of the product of operators. The effect of closing the line between $a$ and $\alpha$ changes the transport element from $\beta$ to $b$:
in the respective cases, we have
$$
\overline{(\beta,b)}=(\beta,b)-(\beta,a)\frac{1}{1+(\alpha,a)}(\alpha,b),\ \hbox{and}\ \overline{(\beta,b)}=(\beta,b)+(\beta,a)\frac{1}{1+(\alpha,a)}(\alpha,b).
$$
Here and hereafter, we understand rational expressions as geometrical-progression expansions in powers of the corresponding operator. We also use the standard commutation relation formulas
$$
\Bigl[ A,\frac{1}{1+B}\Bigr]=-\frac{1}{1+B}[A,B]\frac{1}{1+B}\quad \forall A,B,
$$
and use the color graphics to indicate permutations of operators in formulas of this section: a pair of operators painted red produces the factor $q$ upon permuting these operators in the operatorial product, and a pair of operators painted blue produces a factor $q^{-1}$ upon the corresponding permutation; pairs of operators painted magenta commute.

 Below we have six cases of mutual distribution of sources $\{\alpha,\beta,\gamma\}$ and sinks $\{a,b,c\}$  (Note, that planarity condition requires $\alpha$ and $a$ always to be neighbour), and in each such case we have two choices of transport elements: $\{\overline{(\beta,b)},\overline{(\gamma,c)}\}$ and $\{\overline{(\beta,c)},\overline{(\gamma,b)}\}$, so, altogether, we have 12 variants to be checked.

{\bf Case 1}. 
\raisebox{-25pt}{\mbox{
{\psset{unit=0.7}
\begin{pspicture}(-2,-1.5)(2,2.5)
\psframe[linewidth=2pt,linecolor=blue,linestyle=dashed](-1,-1.5)(1,1.5)
\psarc[linewidth=2pt,linecolor=red,linestyle=dashed](-1,1.5){0.5}{90}{270}
\psarc[linewidth=2pt,linecolor=red,linestyle=dashed](1,1.5){0.5}{-90}{90}
\psline[linewidth=2pt,linecolor=red,linestyle=dashed]{->}(-1,2)(0,2)
\psline[linewidth=2pt,linecolor=red,linestyle=dashed](0,2)(1,2)
\psline[linewidth=1.5pt,linecolor=red,linestyle=solid]{<-}(-1,1)(-0.5,1)
\psline[linewidth=1.5pt,linecolor=red,linestyle=solid]{<-}(-1,0)(-0.5,0)
\psline[linewidth=1.5pt,linecolor=red,linestyle=solid]{<-}(-1,-1)(-0.5,-1)
\psline[linewidth=1.5pt,linecolor=red,linestyle=solid]{<-}(0.5,1)(1,1)
\psline[linewidth=1.5pt,linecolor=red,linestyle=solid]{<-}(0.5,0)(1,0)
\psline[linewidth=1.5pt,linecolor=red,linestyle=solid]{<-}(0.5,-1)(1,-1)
\put(1.2,1){\makebox(0,0)[lt]{\hbox{{$\alpha$}}}}
\put(1.2,0){\makebox(0,0)[lc]{\hbox{{$\beta$}}}}
\put(1.2,-1){\makebox(0,0)[lc]{\hbox{{$\gamma$}}}}
\put(-1.2,1){\makebox(0,0)[rt]{\hbox{{$a$}}}}
\put(-1.2,0){\makebox(0,0)[rc]{\hbox{{$b$}}}}
\put(-1.2,-1){\makebox(0,0)[rc]{\hbox{{$c$}}}}
\end{pspicture}
}}}
Variant (a):  $\{\overline{(\beta,b)},\overline{(\gamma,c)}\}$.
\begin{align*}
&[\overline{(\beta,b)},\overline{(\gamma,c)}]=\left[ \Bigl((\beta,b)-(\beta,a)\frac{1}{1+(\alpha,a)}(\alpha,b) \Bigr), \Bigl((\gamma,c)-(\gamma,a)\frac{1}{1+(\alpha,a)}(\alpha,c) \Bigr) \right]\\
=&(q-q^{-1})(\beta,c)(\gamma,b)+\tcb{(\gamma,a)}\frac{-1}{1+(\alpha,a)}(q-q^{-1})\tcb{(\beta,a)}\tcr{(\alpha,b)}\frac{1}{1+(\alpha,a)}\tcr{(\alpha,c)}\\
&-(q-q^{-1})(\beta,c)(\gamma,a)\frac{1}{1+(\alpha,a)}(\alpha,b)+(\beta,a)\frac{1}{1+(\alpha,a)}(q-q^{-1})(\alpha,c)(\gamma,a)\frac{1}{1+(\alpha,a)}(\alpha,b)\\
&-(q-q^{-1})(\beta,a)\frac{1}{1+(\alpha,a)}(\alpha,c)(\gamma,b)+(\beta,a)\frac{1}{1+(\alpha,a)}\tcr{(\alpha,b)}(\gamma,a)\frac{1}{1+(\alpha,a)}\tcr{(\alpha,c)}\\
&-\tcb{(\gamma,a)}\frac{1}{1+(\alpha,a)}(\alpha,c)\tcb{(\beta,a)}\frac{1}{1+(\alpha,a)}(\alpha,b)\\
=&(q-q^{-1})\Bigl[(\beta,c)(\gamma,b)- (\beta,c)(\gamma,a)\frac{1}{1+(\alpha,a)}(\alpha,b) -(\beta,a)\frac{1}{1+(\alpha,a)}(\alpha,c)(\gamma,b) \Bigr. \\
&+\Bigl.  (\beta,a)\frac{1}{1+(\alpha,a)}(\alpha,c)(\gamma,a)\frac{1}{1+(\alpha,a)}(\alpha,b)\Bigr]\\
&+\bigl((q-q^{-1})-q+q^{-1}\bigr)(\beta,a)\frac{1}{1+(\alpha,a)}(\alpha,c)(\gamma,a)\frac{1}{1+(\alpha,a)}(\alpha,b)\\
=&(q-q^{-1})\overline{(\beta,c)}\overline{(\gamma,b)}.
\end{align*}
Variant (b):  $\{\overline{(\beta,c)},\overline{(\gamma,b)}\}$.
\begin{align*}
&[\overline{(\beta,c)},\overline{(\gamma,b)}]=\left[ \Bigl((\beta,c)-(\beta,a)\frac{1}{1+(\alpha,a)}(\alpha,c) \Bigr), \Bigl((\gamma,b)-(\gamma,a)\frac{1}{1+(\alpha,a)}(\alpha,b) \Bigr) \right]\\
=&(\gamma,a)\frac{-1}{1+(\alpha,a)}(q-q^{-1})(\beta,a)(\alpha,c)\frac{1}{1+(\alpha,a)}(\alpha,b)  +(q-q^{-1})(\gamma,a)\frac{1}{1+(\alpha,a)}(\beta,b)(\alpha,c)\\
&-(q-q^{-1}) (\beta,b)(\gamma,a)\frac{1}{1+(\alpha,a)}(\alpha,c)- (q-q^{-1})  (\beta,b)(\gamma,a)\frac{1}{1+(\alpha,a)}(\alpha,c)\\ 
&-(q-q^{-1})(\beta,a)\frac{-1}{1+(\alpha,a)}\tcm{(\alpha,b)(\gamma,a)}\frac{1}{1+(\alpha,a)}(\alpha,c)\\
&+(\beta,a)\frac{1}{1+(\alpha,c)}(\alpha,c)(\gamma,a)\frac{1}{1+(\alpha,a)}(\alpha,b) -\tcb{(\gamma,a)}\frac{1}{1+(\alpha,a)}\tcr{(\alpha,b)}\tcb{(\beta,a)}\frac{1}{1+(\alpha,a)}\tcr{(\alpha,c)}\\
&\quad\hbox{(two last terms mutually cancelled)}\\
=&(q-q^{-1})\left[ -(\gamma,a)\frac{1}{1+(\alpha,a)}(\beta,a)\tcb{(\alpha,c)}\frac{1}{1+(\alpha,a)}\tcb{(\alpha,b)}- (\gamma,a)\Bigl[(\beta,b), \frac{1}{1+(\alpha,a)}\Bigr](\alpha,c)\right.\\
&\left. +\tcr{(\beta,a)}\frac{1}{1+(\alpha,a)}\tcr{(\gamma,a)}(\alpha,b)\frac{1}{1+(\alpha,a)}(\alpha,c)\right]\\
=&-(q-q^{-1})\bigl(q^{-1}+(q-q^{-1})-q\bigr) (\gamma,a)\frac{1}{1+(\alpha,a)}(\beta,a)(\alpha,b)\frac{1}{1+(\alpha,a)}(\alpha,c)=0.
\end{align*}

{\bf Case 2}. 
\raisebox{-25pt}{\mbox{
{\psset{unit=0.7}
\begin{pspicture}(-2,-1.5)(2,2.5)
\psframe[linewidth=2pt,linecolor=blue,linestyle=dashed](-1,-1.5)(1,1.5)
\psarc[linewidth=2pt,linecolor=red,linestyle=dashed](-1,1.5){0.5}{90}{270}
\psarc[linewidth=2pt,linecolor=red,linestyle=dashed](1,1.5){0.5}{-90}{90}
\psline[linewidth=2pt,linecolor=red,linestyle=dashed]{->}(-1,2)(0,2)
\psline[linewidth=2pt,linecolor=red,linestyle=dashed](0,2)(1,2)
\psline[linewidth=1.5pt,linecolor=red,linestyle=solid]{<-}(-1,1)(-0.5,1)
\psline[linewidth=1.5pt,linecolor=red,linestyle=solid]{<-}(-1,0)(-0.5,0)
\psline[linewidth=1.5pt,linecolor=red,linestyle=solid]{->}(-1,-1)(-0.5,-1)
\psline[linewidth=1.5pt,linecolor=red,linestyle=solid]{<-}(0.5,1)(1,1)
\psline[linewidth=1.5pt,linecolor=red,linestyle=solid]{<-}(0.5,0)(1,0)
\psline[linewidth=1.5pt,linecolor=red,linestyle=solid]{->}(0.5,-1)(1,-1)
\put(1.2,1){\makebox(0,0)[lt]{\hbox{{$\alpha$}}}}
\put(1.2,0){\makebox(0,0)[lc]{\hbox{{$\beta$}}}}
\put(1.2,-1){\makebox(0,0)[lc]{\hbox{{$c$}}}}
\put(-1.2,1){\makebox(0,0)[rt]{\hbox{{$a$}}}}
\put(-1.2,0){\makebox(0,0)[rc]{\hbox{{$b$}}}}
\put(-1.2,-1){\makebox(0,0)[rc]{\hbox{{$\gamma$}}}}
\end{pspicture}
}}}
Variant (a):  $\{\overline{(\beta,b)},\overline{(\gamma,c)}\}$.
\begin{align*}
&[\overline{(\gamma,c)},\overline{(\beta,b)}]=\left[ \Bigl((\gamma,c)+(\gamma,a)\frac{1}{1+(\alpha,a)}(\alpha,c) \Bigr), \Bigl((\beta,b)-(\beta,a)\frac{1}{1+(\alpha,a)}(\alpha,b) \Bigr) \right]\\
=&(\gamma,a)\frac{-1}{1+(\alpha,a)}(q-q^{-1})(\alpha,b)(\beta,a)\frac{1}{1+(\alpha,a)}(\alpha,c)\\
&-(\gamma,a)\frac{1}{1+(\alpha,a)}\tcb{(\alpha,c)}(\beta,a)\frac{1}{1+(\alpha,a)}\tcb{(\alpha,b)}+\tcr{(\beta,a)}\frac{1}{1+(\alpha,a)}(\alpha,b)\tcr{(\gamma,a)}\frac{1}{1+(\alpha,a)}(\alpha,c)=0
\end{align*}
Variant (b):  $\{\overline{(\beta,c)},\overline{(\gamma,b)}\}$.
\begin{align*}
&[\overline{(\gamma,b)},\overline{(\beta,c)}]=\left[ \Bigl((\gamma,b)-(\gamma,a)\frac{1}{1+(\alpha,a)}(\alpha,b) \Bigr) , \Bigl((\beta,c)-(\beta,a)\frac{1}{1+(\alpha,a)}(\alpha,c) \Bigr) \right]\\
=&(q-q^{-1}) (\gamma,a)(\beta,b)\frac{1}{1+(\alpha,a)}(\alpha,c)+(\beta,a)\frac{-1}{1+(\alpha,a)}(q-q^{-1})(\gamma,a)(\alpha,b)\frac{1}{1+(\alpha,a)}(\alpha,c)\\
&+(\gamma,a)\frac{1}{1+(\alpha,a)}(q-q^{-1})(\alpha,c)(\beta,a)\frac{1}{1+(\alpha,a)}(\alpha,b)+(q-q^{-1})(\gamma,a)\frac{1}{1+(\alpha,a)}(\alpha,c)(\beta,b)\\
&+\tcb{(\gamma,a)}\frac{1}{1+(\alpha,c)}\tcr{(\alpha,b)}\tcb{(\beta,a)}\frac{1}{1+(\alpha,a)}\tcr{(\alpha,c)} -(\beta,a)\frac{1}{1+(\alpha,a)}(\alpha,c)(\gamma,a)\frac{1}{1+(\alpha,a)}(\alpha,b)\\
&\quad\hbox{(two last terms mutually cancelled)}\\
=&-(q-q^{-1})\left[ - (\gamma,a)\Bigl[(\beta,b), \frac{1}{1+(\alpha,a)}\Bigr](\alpha,c) +\tcr{(\beta,a)}\frac{1}{1+(\alpha,a)}\tcr{(\gamma,a)}(\alpha,b)\frac{1}{1+(\alpha,a)}(\alpha,c)\right. \\
&\left. -(\gamma,a)\frac{1}{1+(\alpha,a)}(\beta,a)\tcb{(\alpha,c)}\frac{1}{1+(\alpha,a)}\tcb{(\alpha,b)} \right]\\
=&(q-q^{-1})\bigl((q-q^{-1})+q^{-1}-q\bigr) (\gamma,a)\frac{1}{1+(\alpha,a)}(\beta,a)(\alpha,b)\frac{1}{1+(\alpha,a)}(\alpha,c)=0.
\end{align*}

{\bf Case 3}. 
\raisebox{-25pt}{\mbox{
{\psset{unit=0.7}
\begin{pspicture}(-2,-1.5)(2,2.5)
\psframe[linewidth=2pt,linecolor=blue,linestyle=dashed](-1,-1.5)(1,1.5)
\psarc[linewidth=2pt,linecolor=red,linestyle=dashed](-1,1.5){0.5}{90}{270}
\psarc[linewidth=2pt,linecolor=red,linestyle=dashed](1,1.5){0.5}{-90}{90}
\psline[linewidth=2pt,linecolor=red,linestyle=dashed]{->}(-1,2)(0,2)
\psline[linewidth=2pt,linecolor=red,linestyle=dashed](0,2)(1,2)
\psline[linewidth=1.5pt,linecolor=red,linestyle=solid]{<-}(-1,1)(-0.5,1)
\psline[linewidth=1.5pt,linecolor=red,linestyle=solid]{<-}(-1,0)(-0.5,0)
\psline[linewidth=1.5pt,linecolor=red,linestyle=solid]{->}(-1,-1)(-0.5,-1)
\psline[linewidth=1.5pt,linecolor=red,linestyle=solid]{<-}(0.5,1)(1,1)
\psline[linewidth=1.5pt,linecolor=red,linestyle=solid]{->}(0.5,0)(1,0)
\psline[linewidth=1.5pt,linecolor=red,linestyle=solid]{<-}(0.5,-1)(1,-1)
\put(1.2,1){\makebox(0,0)[lt]{\hbox{{$\alpha$}}}}
\put(1.2,0){\makebox(0,0)[lc]{\hbox{{$c$}}}}
\put(1.2,-1){\makebox(0,0)[lc]{\hbox{{$\beta$}}}}
\put(-1.2,1){\makebox(0,0)[rt]{\hbox{{$a$}}}}
\put(-1.2,0){\makebox(0,0)[rc]{\hbox{{$b$}}}}
\put(-1.2,-1){\makebox(0,0)[rc]{\hbox{{$\gamma$}}}}
\end{pspicture}
}}}
Variant (a):  $\{\overline{(\gamma,b)},\overline{(\beta,c)}\}$.
\begin{align*}
&[\overline{(\gamma,b)},\overline{(\beta,c)}]=\left[ \Bigl((\gamma,b)-(\gamma,a)\frac{1}{1+(\alpha,a)}(\alpha,b) \Bigr), \Bigl((\beta,c)+(\beta,a)\frac{1}{1+(\alpha,a)}(\alpha,c) \Bigr) \right]\\
=&-(q-q^{-1})(\gamma,c)(\beta,b)-(q-q^{-1})(\gamma,a)(\beta,a)\frac{1}{1+(\alpha,a)}(\alpha,c)\\
&-(q-q^{-1})(\beta,a)\frac{-1}{1+(\alpha,a)}(\gamma,a)(\alpha,b)\frac{1}{1+(\alpha,a)}(\alpha,c)
+(q-q^{-1})(\gamma,c)(\beta,a)\frac{1}{1+(\alpha,a)}(\alpha,b)\\
&-(\gamma,a)\frac{1}{1+(\alpha,a)}\tcr{(\alpha,b)}(\beta,a)\frac{1}{1+(\alpha,a)}\tcr{(\alpha,c)}
+\tcr{(\beta,a)}\frac{1}{1+(\alpha,a)}(\alpha,c)\tcr{(\gamma,a)}\frac{1}{1+(\alpha,a)}(\alpha,b)\\
&\quad\hbox{(two last terms mutually cancelled)}\\
=&-(q-q^{-1})(\gamma,c)\left[(\beta,b) -(\beta,a)\frac{1}{1+(\alpha,a)}(\alpha,b)\right]- (q-q^{-1})(\gamma,a)\frac{1}{1+(\alpha,a)}(\alpha,c)(\beta.b)\\
&- (q-q^{-1}) (\gamma,a)\Bigl[(\beta,b), \frac{1}{1+(\alpha,a)}\Bigr](\alpha,c)+(q-q^{-1})\tcr{(\beta,a)}\frac{1}{1+(\alpha,a)}\tcr{(\gamma,a)}(\alpha,b)\frac{1}{1+(\alpha,a)}(\alpha,c)\\
=&-(q-q^{-1})(\gamma,c)\left[(\beta,b) -(\beta,a)\frac{1}{1+(\alpha,a)}(\alpha,b)\right]- (q-q^{-1})(\gamma,a)\frac{1}{1+(\alpha,a)}(\alpha,c)(\beta.b)\\
&-(q-q^{-1})\bigl((q-q^{-1})-q\bigr) (\gamma,a)\frac{1}{1+(\alpha,a)}(\beta,a)\tcr{(\alpha,b)}\frac{1}{1+(\alpha,a)}\tcr{(\alpha,c)}\\
=&-(q-q^{-1})\left[(\gamma,c)+(\gamma,a)\frac{1}{1+(\alpha,a)}(\alpha,c) \right]\left[(\beta,b) -(\beta,a)\frac{1}{1+(\alpha,a)}(\alpha,b)\right]=
- (q-q^{-1})\overline{(\gamma,c)}\,\overline{(\beta,b)}
\end{align*}
Variant (b):  $\{\overline{(\gamma,c)},\overline{(\beta,b)}\}$.
\begin{align*}
&[\overline{(\gamma,c)},\overline{(\beta,b)}]=\left[ \Bigl((\gamma,c)+(\gamma,a)\frac{1}{1+(\alpha,a)}(\alpha,c) \Bigr), \Bigl((\beta,b)-(\beta,a)\frac{1}{1+(\alpha,a)}(\alpha,b) \Bigr) \right]\\
=&-(\gamma,a)\frac{1}{1+(\alpha,a)}(q-q^{-1})(\alpha,b)(\beta,a)\frac{1}{1+(\alpha,a)}(\alpha,c)\\
&-(\gamma,a)\frac{1}{1+(\alpha,a)}\tcb{(\alpha,c)}(\beta,a)\frac{1}{1+(\alpha,a)}\tcb{(\alpha,b)}+\tcr{(\beta,a)}\frac{1}{1+(\alpha,c)}(\alpha,b)\tcr{(\gamma,a)}\frac{1}{1+(\alpha,a)}(\alpha,c)=0.
\end{align*}

{\bf Case 4}. 
\raisebox{-25pt}{\mbox{
{\psset{unit=0.7}
\begin{pspicture}(-2,-1.5)(2,2.5)
\psframe[linewidth=2pt,linecolor=blue,linestyle=dashed](-1,-1.5)(1,1.5)
\psarc[linewidth=2pt,linecolor=red,linestyle=dashed](-1,1.5){0.5}{90}{270}
\psarc[linewidth=2pt,linecolor=red,linestyle=dashed](1,1.5){0.5}{-90}{90}
\psline[linewidth=2pt,linecolor=red,linestyle=dashed]{->}(-1,2)(0,2)
\psline[linewidth=2pt,linecolor=red,linestyle=dashed](0,2)(1,2)
\psline[linewidth=1.5pt,linecolor=red,linestyle=solid]{<-}(-1,1)(-0.5,1)
\psline[linewidth=1.5pt,linecolor=red,linestyle=solid]{->}(-1,0)(-0.5,0)
\psline[linewidth=1.5pt,linecolor=red,linestyle=solid]{->}(-1,-1)(-0.5,-1)
\psline[linewidth=1.5pt,linecolor=red,linestyle=solid]{<-}(0.5,1)(1,1)
\psline[linewidth=1.5pt,linecolor=red,linestyle=solid]{->}(0.5,0)(1,0)
\psline[linewidth=1.5pt,linecolor=red,linestyle=solid]{->}(0.5,-1)(1,-1)
\put(1.2,1){\makebox(0,0)[lt]{\hbox{{$\alpha$}}}}
\put(1.2,0){\makebox(0,0)[lc]{\hbox{{$c$}}}}
\put(1.2,-1){\makebox(0,0)[lc]{\hbox{{$b$}}}}
\put(-1.2,1){\makebox(0,0)[rt]{\hbox{{$a$}}}}
\put(-1.2,0){\makebox(0,0)[rc]{\hbox{{$\gamma$}}}}
\put(-1.2,-1){\makebox(0,0)[rc]{\hbox{{$\beta$}}}}
\end{pspicture}
}}}
Variant (a):  $\{\overline{(\gamma,b)},\overline{(\beta,c)}\}$.
\begin{align*}
&[\overline{(\gamma,b)},\overline{(\beta,c)}]=\left[ \Bigl((\gamma,b)+(\gamma,a)\frac{1}{1+(\alpha,a)}(\alpha,b) \Bigr), \Bigl((\beta,c)+(\beta,a)\frac{1}{1+(\alpha,a)}(\alpha,c) \Bigr) \right]\\
=&(q-q^{-1})\tcm{(\beta,a)\frac{1}{1+(\alpha,a)}(\gamma,c)}(\alpha,b)
-(q-q^{-1})(\gamma,c)(\beta,a)\frac{1}{1+(\alpha,a)}(\alpha,b)\\
&+(\gamma,a)\frac{1}{1+(\alpha,a)}(\alpha,b)(\beta,a)\frac{1}{1+(\alpha,a)}(\alpha,c)
-\tcr{(\beta,a)}\frac{1}{1+(\alpha,a)}\tcb{(\alpha,c)}\tcr{(\gamma,a)}\frac{1}{1+(\alpha,a)}\tcb{(\alpha,b)}=0.
\end{align*}
Variant (b):  $\{\overline{(\gamma,c)},\overline{(\beta,b)}\}$.
\begin{align*}
&[\overline{(\gamma,c)},\overline{(\beta,b)}]=\left[ \Bigl((\gamma,c)+(\gamma,a)\frac{1}{1+(\alpha,a)}(\alpha,c) \Bigr), \Bigl((\beta,b)-(\beta,a)\frac{1}{1+(\alpha,a)}(\alpha,b) \Bigr) \right]\\
=&-(q-q^{-1})(\gamma,b)(\beta,c)-(q-q^{-1})(\gamma,b)(\beta,a)\frac{1}{1+(\alpha,a)}(\alpha,c)\\
&-(q-q^{-1})(\gamma,a)\frac{1}{1+(\alpha,a)}(\alpha,b)(\beta,c)\\
&+(\gamma,a)\frac{1}{1+(\alpha,a)}\tcb{(\alpha,c)}(\beta,a)\frac{1}{1+(\alpha,a)}\tcb{(\alpha,b)}-\tcr{(\beta,a)}\frac{1}{1+(\alpha,a)}(\alpha,b)\tcr{(\gamma,a)}\frac{1}{1+(\alpha,a)}(\alpha,c)\\
=&-(q-q^{-1})\left[(\gamma,b)+(\gamma,a)\frac{1}{1+(\alpha,a)}(\alpha,b) \right]\left[(\beta,c) -(\beta,a)\frac{1}{1+(\alpha,a)}(\alpha,c)\right]=
- (q-q^{-1})\overline{(\gamma,b)}\,\overline{(\beta,c)}.
\end{align*}

{\bf Case 5}. 
\raisebox{-25pt}{\mbox{
{\psset{unit=0.7}
\begin{pspicture}(-2,-1.5)(2,2.5)
\psframe[linewidth=2pt,linecolor=blue,linestyle=dashed](-1,-1.5)(1,1.5)
\psarc[linewidth=2pt,linecolor=red,linestyle=dashed](-1,1.5){0.5}{90}{270}
\psarc[linewidth=2pt,linecolor=red,linestyle=dashed](1,1.5){0.5}{-90}{90}
\psline[linewidth=2pt,linecolor=red,linestyle=dashed]{->}(-1,2)(0,2)
\psline[linewidth=2pt,linecolor=red,linestyle=dashed](0,2)(1,2)
\psline[linewidth=1.5pt,linecolor=red,linestyle=solid]{<-}(-1,1)(-0.5,1)
\psline[linewidth=1.5pt,linecolor=red,linestyle=solid]{->}(-1,0)(-0.5,0)
\psline[linewidth=1.5pt,linecolor=red,linestyle=solid]{<-}(-1,-1)(-0.5,-1)
\psline[linewidth=1.5pt,linecolor=red,linestyle=solid]{<-}(0.5,1)(1,1)
\psline[linewidth=1.5pt,linecolor=red,linestyle=solid]{->}(0.5,0)(1,0)
\psline[linewidth=1.5pt,linecolor=red,linestyle=solid]{<-}(0.5,-1)(1,-1)
\put(1.2,1){\makebox(0,0)[lt]{\hbox{{$\alpha$}}}}
\put(1.2,0){\makebox(0,0)[lc]{\hbox{{$c$}}}}
\put(1.2,-1){\makebox(0,0)[lc]{\hbox{{$\beta$}}}}
\put(-1.2,1){\makebox(0,0)[rt]{\hbox{{$a$}}}}
\put(-1.2,0){\makebox(0,0)[rc]{\hbox{{$\gamma$}}}}
\put(-1.2,-1){\makebox(0,0)[rc]{\hbox{{$b$}}}}
\end{pspicture}
}}}
Variant (a):  $\{\overline{(\gamma,b)},\overline{(\beta,c)}\}$.
\begin{align*}
&[\overline{(\gamma,b)},\overline{(\beta,c)}]=\left[ \Bigl((\gamma,b)+(\gamma,a)\frac{1}{1+(\alpha,a)}(\alpha,b) \Bigr), \Bigl((\beta,c)+(\beta,a)\frac{1}{1+(\alpha,a)}(\alpha,c) \Bigr) \right]\\
=&(q-q^{-1})\tcm{(\beta,a)\frac{1}{1+(\alpha,a)}(\gamma,c)}(\alpha,b)
-(q-q^{-1})(\gamma,c)(\beta,a)\frac{1}{1+(\alpha,a)}(\alpha,b)\\
&+(\gamma,a)\frac{1}{1+(\alpha,a)}(\alpha,b)(\beta,a)\frac{1}{1+(\alpha,a)}(\alpha,c)
-\tcr{(\beta,a)}\frac{1}{1+(\alpha,a)}\tcb{(\alpha,c)}\tcr{(\gamma,a)}\frac{1}{1+(\alpha,a)}\tcb{(\alpha,b)}=0.
\end{align*}
Variant (b):  $\{\overline{(\gamma,c)},\overline{(\beta,b)}\}$.
\begin{align*}
&[\overline{(\gamma,c)},\overline{(\beta,b)}]=\left[ \Bigl((\gamma,c)+(\gamma,a)\frac{1}{1+(\alpha,a)}(\alpha,c) \Bigr), \Bigl((\beta,b)-(\beta,a)\frac{1}{1+(\alpha,a)}(\alpha,b) \Bigr) \right]\\
=&-(q-q^{-1})(\gamma,a)\frac{1}{1+(\alpha,a)}(\alpha,b)(\beta,a)\frac{1}{1+(\alpha,a)}(\alpha,c)\\
&-(\gamma,a)\frac{1}{1+(\alpha,a)}\tcb{(\alpha,c)}(\beta,a)\frac{1}{1+(\alpha,a)}\tcb{(\alpha,b)}+\tcr{(\beta,a)}\frac{1}{1+(\alpha,a)}(\alpha,b)\tcr{(\gamma,a)}\frac{1}{1+(\alpha,a)}(\alpha,c)=0.
\end{align*}

{\bf Case 6}. 
\raisebox{-25pt}{\mbox{
{\psset{unit=0.7}
\begin{pspicture}(-2,-1.5)(2,2.5)
\psframe[linewidth=2pt,linecolor=blue,linestyle=dashed](-1,-1.5)(1,1.5)
\psarc[linewidth=2pt,linecolor=red,linestyle=dashed](-1,1.5){0.5}{90}{270}
\psarc[linewidth=2pt,linecolor=red,linestyle=dashed](1,1.5){0.5}{-90}{90}
\psline[linewidth=2pt,linecolor=red,linestyle=dashed]{->}(-1,2)(0,2)
\psline[linewidth=2pt,linecolor=red,linestyle=dashed](0,2)(1,2)
\psline[linewidth=1.5pt,linecolor=red,linestyle=solid]{<-}(-1,1)(-0.5,1)
\psline[linewidth=1.5pt,linecolor=red,linestyle=solid]{->}(-1,0)(-0.5,0)
\psline[linewidth=1.5pt,linecolor=red,linestyle=solid]{<-}(-1,-1)(-0.5,-1)
\psline[linewidth=1.5pt,linecolor=red,linestyle=solid]{<-}(0.5,1)(1,1)
\psline[linewidth=1.5pt,linecolor=red,linestyle=solid]{<-}(0.5,0)(1,0)
\psline[linewidth=1.5pt,linecolor=red,linestyle=solid]{->}(0.5,-1)(1,-1)
\put(1.2,1){\makebox(0,0)[lt]{\hbox{{$\alpha$}}}}
\put(1.2,0){\makebox(0,0)[lc]{\hbox{{$\beta$}}}}
\put(1.2,-1){\makebox(0,0)[lc]{\hbox{{$c$}}}}
\put(-1.2,1){\makebox(0,0)[rt]{\hbox{{$a$}}}}
\put(-1.2,0){\makebox(0,0)[rc]{\hbox{{$\gamma$}}}}
\put(-1.2,-1){\makebox(0,0)[rc]{\hbox{{$b$}}}}
\end{pspicture}
}}}
Variant (a):  $\{\overline{(\gamma,b)},\overline{(\beta,c)}\}$.
\begin{align*}
&[\overline{(\gamma,b)},\overline{(\beta,c)}]=\left[ \Bigl((\gamma,b)+(\gamma,a)\frac{1}{1+(\alpha,a)}(\alpha,b) \Bigr), \Bigl((\beta,c)-(\beta,a)\frac{1}{1+(\alpha,a)}(\alpha,c) \Bigr) \right]\\
=&(q-q^{-1})(\gamma,c)(\beta,b)-(q-q^{-1})\tcm{(\beta,a)\frac{1}{1+(\alpha,a)}(\gamma,c)}(\alpha,b)\\
&-(q-q^{-1})(\gamma,a)\frac{1}{1+(\alpha,a)}(\alpha,c)(\beta,a)\frac{1}{1+(\alpha,a)}(\alpha,b)+ (q-q^{-1})  (\gamma,a)\frac{1}{1+(\alpha,a)}(\alpha,c)(\beta,b)\\
&-(\gamma,a)\frac{1}{1+(\alpha,a)}(\alpha,b)(\beta,a)\frac{1}{1+(\alpha,a)}(\alpha,c)
-\tcr{(\beta,a)}\frac{1}{1+(\alpha,a)}\tcb{(\alpha,c)}\tcr{(\gamma,a)}\frac{1}{1+(\alpha,a)}\tcb{(\alpha,b)}\\
&\quad\hbox{(two last terms mutually cancelled)}\\
=&(q-q^{-1})\left[(\gamma,c)+(\gamma,a)\frac{1}{1+(\alpha,a)}(\alpha,c) \right]\left[(\beta,b) -(\beta,a)\frac{1}{1+(\alpha,a)}(\alpha,b)\right]=
 (q-q^{-1})\overline{(\gamma,c)}\,\overline{(\beta,b)}.
\end{align*}
Variant (b):  $\{\overline{(\gamma,c)},\overline{(\beta,b)}\}$.
\begin{align*}
&[\overline{(\gamma,c)},\overline{(\beta,b)}]=\left[ \Bigl((\gamma,c)+(\gamma,a)\frac{1}{1+(\alpha,a)}(\alpha,c) \Bigr), \Bigl((\beta,b)-(\beta,a)\frac{1}{1+(\alpha,a)}(\alpha,b) \Bigr) \right]\\
=&-(q-q^{-1})(\gamma,a)\frac{1}{1+(\alpha,a)}(\alpha,b)(\beta,a)\frac{1}{1+(\alpha,a)}(\alpha,c)\\
&-(\gamma,a)\frac{1}{1+(\alpha,a)}\tcb{(\alpha,c)}(\beta,a)\frac{1}{1+(\alpha,a)}\tcb{(\alpha,b)}+\tcr{(\beta,a)}\frac{1}{1+(\alpha,a)}(\alpha,b)\tcr{(\gamma,a)}\frac{1}{1+(\alpha,a)}(\alpha,c)=0.
\end{align*}

\section{Concluding remarks}\label{s:conclusion}
\setcounter{equation}{0}

In this paper, we have found the Darboux coordinate representation for matrices $\mathbb A$ enjoying the quantum reflection equation. We have also solved the problem of representing the braid-group action for the upper-triangular $\mathbb A$ in terms of mutations of cluster variables associated with the corresponding quiver. 

In conclusion, we indicate some directions of development. The first interesting problem is to construct mutation realizations for braid-group and Serre element actions that are Poisson automorphisms in the case of block-upper triangular matrices $\mathbb A$ (the corresponding action in terms of entries of a block-upper-triangular $\mathbb A$ was constructed in \cite{ChM3}). It is not difficult to construct planar networks producing block-triangular transport matrices $M_1$ and $M_2$ enjoying the standard Lie Poisson algebra, then $\mathbb A=M_1^{\text{T}}M_2$ will satisfy the semiclassical reflection equation.

{We have shown that particular sequences of mutations lead to transformations $\beta_{i,i+1}$ for $a_{ij}$ satisfying braid-group relations.
We conjecture that the sequences of mutations itself provide birational transformations of $Z_{abc}$ that also satisfy braid group relations and we checked this conjecture for small examples.}


Finally, the third direction of development is based on the semiclassical groupoid construction; explicit calculations in Sec.12 of \cite{ChM4} show that, given that $B$ is a general $SL_n$-matrix endowed with the standard semiclassical Lie Poisson bracket, solving the matrix equation $B\mathbb A B^{\text{T}}=\mathbb A'$, where $\mathbb A$ and $\mathbb A'$ are unipotent upper-triangular matrices, we obtain that entries of $\mathbb A$ are uniquely determined (provided all upper-right and lower-left minors of $B$ are nonzero); the Lie Poisson bracket on $B$ produces the reflection equation bracket on $\mathbb A$, the mapping $\mathbb A\to \mathbb A'$ is a Poisson anti-automorphism, and finally, entries of $\mathbb A$ and $\mathbb A'$ are mutually Poisson commute. Therefore, having a set $X_\alpha$ of Darboux coordinates for entries of $B$ we automatically obtain that $a_{i,j}\in \mathbb Z[[e^{X_\alpha}]]$ (the determinant of the linear system determining $a_{i,j}$ is the product of central elements of the Lie Poisson algebra for $B$). We therefore obtain a hypothetically alternative Darboux coordinate representation, this time for admissible pairs $(B,\mathbb A)$.

\section*{Acknowledgements}
The authors are grateful to Alexander Shapiro for the useful discussion. 
Both L.Ch. and M.S. are supported by International Laboratory of Cluster Geometry NRU HSE, RF Government grant, ag. \textnumero 2020-220-08-7077;
M.S. was supported by NSF grant DMS-1702115.

\end{document}